\documentclass[11pt]{article}
\usepackage{latexsym,amsmath,amsthm,amssymb,amscd,amsfonts}
\usepackage{epsfig}
\usepackage{nicefrac}
\usepackage[all]{xy}
\usepackage{graphicx}
\usepackage{ulem}

\usepackage[usenames,dvipsnames]{xcolor}

\setlength{\textwidth}{6.0in} \setlength{\evensidemargin}{0.25in}
\setlength{\oddsidemargin}{0.25in} \setlength{\textheight}{9.0in}
\setlength{\topmargin}{-0.5in} \setlength{\parskip}{2mm}
\setlength{\baselineskip}{1.7\baselineskip}

\newtheorem{lemma}{Lemma}[section]
\newtheorem{proposition}[lemma]{Proposition}
\newtheorem{remark}[lemma]{Remark}
\newtheorem{theorem}[lemma]{Theorem}
\newtheorem{definition}[lemma]{Definition}
\newtheorem{claim}[lemma]{Claim}
\newtheorem{corollary}[lemma]{Corollary}

\newtheorem{question}{Question}
\newcounter{varquest}\newtheorem{question'}[varquest]{Question}
\newtheorem{thm}{Theorem}

\newtheorem{defn}[thm]{Definition}

\newtheorem{conj}{Conjecture}
\newtheorem{prop}[thm]{Proposition}

\newtheorem*{defn*}{Definition}

\newtheorem*{remark*}{Remark}

\newtheorem*{thm*}{Theorem}
\newtheorem*{thm'}{Theorem'}
\newtheorem*{exs*}{Examples}
\newtheorem*{ex*}{Example}

\newtheorem*{quesA'}{Question A'}
\newtheorem*{quesD'}{Question D'}

\newcommand{\R}{\mathbb{R}}
\newcommand{\C}{\mathbb{C}}
\newcommand{\D}{\mathbb{D}}

\newcommand{\Q}{\mathbb{Q}}
\newcommand{\T}{\mathbb{T}}

\newcommand{\cc}{\mathbb{\mathcal C}}

\newcommand{\ca}{\mathcal A}
\newcommand{\ce}{\mathcal E}
\newcommand{\cd}{\mathcal D}
\newcommand{\cv}{\mathcal V}
\newcommand{\cu}{\mathcal U}

\newcommand{\cf}{\mathcal{F}}

\newcommand{\id}{\textnormal{Id}\,}

\newcommand{\nbd}{neighbourhood }
\newcommand{\nbds}{neighbourhoods }
\newcommand{\fonction}[5]
{$$ 
\begin{array}{rcccl}
 #1 & : & #2 & \longrightarrow &#3 \\
    &   & #4 & \longmapsto &#5 
\end{array}
$$}

\newcommand{\Graph}{\textnormal{Graph}\,}

\newcommand{\red}{\textnormal{Red}\,}

\newcommand{\priv}{\backslash}
\newcommand{\lra}{\longrightarrow}
\newcommand{\hra}{\hookrightarrow}

\newcommand{\om}{\omega}
\newcommand{\eps}{\varepsilon}
\renewcommand{\phi}{\varphi}

\newcommand{\st}{\textnormal{st}}

\newcommand{\diam}{\text{diam}\,}
\newcommand{\supp}{\text{Supp}\,}

\newcommand{\cqfd}{\hfill $\square$ \vspace{0.1cm}\\ }
\newcommand{\sbull}{{\tiny $\bullet$ }}

\newcommand{\ds}{\displaystyle}
\newcommand{\im}{\textnormal{Im}\,}

\newcommand{\can}{\textnormal{can}}

\newcommand{\nf}[2]{{\nicefrac{#1}{#2}}}

\newcommand{\spec}{\textnormal{Spec}\,}

\newcommand{\osc}{\textnormal{osc}\,}
\newcommand{\lag}{\textnormal{Lag}}
\newcommand{\hz}{\textnormal{HZ}}

\newcommand{\sympeo}{\textnormal{Sympeo}\,}

\newcommand{\size}{\textnormal{size}\,}

\def\eps{\varepsilon}

\def\Ham{\text{Ham}}

\makeatletter \@addtoreset {equation}{section}

\renewcommand\theequation
  {\ifnum \c@subsection>\z@ \arabic{section}.\arabic{subsection}.\arabic{equation}
  \else \arabic{section}.\arabic{equation} \fi}
\makeatother

\newcounter{veq}
\renewcommand*{\theveq}{$\cv$\arabic{veq}}
\newcommand{\numv}{\refstepcounter{veq}\tag{\theveq}}

\newcounter{aeq}
\renewcommand*{\theaeq}{$\ca$\arabic{aeq}}
\newcommand{\numa}{\refstepcounter{aeq}\tag{\theaeq}}

\newcounter{eeq}
\renewcommand*{\theeeq}{$\ce$\arabic{eeq}}
\newcommand{\nume}{\refstepcounter{eeq}\tag{\theeeq}}

\newcounter{feq}
\renewcommand*{\thefeq}{$\cf$\arabic{feq}}
\newcommand{\numf}{\refstepcounter{feq}\tag{\thefeq}}

\begin{document}

%\vspace*{2cm}

\title{\vspace*{-0,5cm}Some quantitative results in $\cc^0$ symplectic geometry.}

\author{Lev Buhovsky$^{1}$, Emmanuel Opshtein}

\footnotetext[1]{This author also uses the spelling ``Buhovski"
for his family name.}

\date{\today}
\maketitle

\begin{abstract}
This paper proceeds with the study of the $\cc^0$-symplectic geometry of smooth submanifolds, as initiated in \cite{opshtein,hulese}, with the main focus on the behaviour of symplectic homeomorphisms with respect to numerical invariants like capacities. Our main result is that a symplectic homeomorphism may preserve and squeeze codimension $4$ 
symplectic submanifolds ($\cc^0$-flexibility),  while this is impossible for codimension $2$ symplectic submanifolds ($\cc^0$-rigidity).  We also discuss $\cc^0$-invariants of coistropic and Lagrangian submanifolds, proving some rigidity results and formulating some conjectures. We finally formulate an Eliashberg-Gromov $\cc^0$-rigidity type question for submanifolds, which we solve in many cases. Our main technical tool is a quantitative $h$-principle result in symplectic geometry. 
\end{abstract}

%{\small \tableofcontents}

\section{Introduction}
The starting point of this paper is the celebrated $\cc^0$-rigidity theorem by Eliashberg-Gromov:
\begin{thm*}
Any diffeomorphism which is a $\cc^0$-limit of symplectic diffeomorphisms is symplectic.
\end{thm*}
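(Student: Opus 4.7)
The plan is to derive a contradiction from Gromov's non-squeezing theorem. Suppose $f=\lim f_n$ in the $\cc^0$-topology, where each $f_n$ is a symplectic diffeomorphism of $(M^{2n},\omega)$ and $f$ is a smooth diffeomorphism. If $f$ fails to be symplectic, then $df_p$ is not a linear symplectic map at some point $p$. Using Darboux charts around $p$ and $f(p)$, I would reduce the problem to the standard model $(\R^{2n},\omega_{\st})$ with $f(0)=0$ and $A:=df_0\in GL(2n,\R)$ satisfying $A^*\omega_{\st}\neq\omega_{\st}$.

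The core step is a linear algebra fact: if $A\in GL(2n,\R)$ is non-symplectic, then, possibly after replacing $A$ by $A^{-1}$, there exist $\delta>0$ and linear symplectic coordinates in which
\[
A(B^{2n}(r))\subset Z^{2n}(r(1-3\delta)) \qquad \forall r>0,
\]
where $Z^{2n}(\rho):=B^2(\rho)\times\R^{2n-2}$ is the standard symplectic cylinder. Indeed, $A(B^{2n}(1))$ is an ellipsoid, and by the symplectic normal form of ellipsoids it is linearly symplectomorphic to $E(a_1,\ldots,a_n):=\{\sum |z_i|^2/a_i^2\leq 1\}$ for some $0<a_1\leq\cdots\leq a_n$, hence contained in $Z^{2n}(a_1)$. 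Since $A$ is non-symplectic, not all $a_i$ equal $1$, so either $a_1<1$ (in which case the containment holds directly for $A$), or $a_n>1$ (in which case $A^{-1}(B^{2n}(1))$ has smallest symplectic semi-axis $1/a_n<1$ and the analogous containment holds for $A^{-1}$). Replacing $f$ by $f^{-1}$ if necessary, I assume we are in the first case.

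The second step transfers this linear squeezing to $f$. By Taylor expansion, $f(x)=Ax+O(|x|^2)$ as $x\to 0$, so for all sufficiently small $r>0$,
\[
f(B^{2n}(r))\subset Z^{2n}(r(1-2\delta)).
\]
Next, using the $\cc^0$ convergence $f_n\to f$ on a fixed neighbourhood of the origin, for $n$ large enough one also has
\[
f_n(B^{2n}(r))\subset Z^{2n}(r(1-\delta)).
\]
But $f_n$ is symplectic, so $f_n(B^{2n}(r))$ is symplectomorphic to $B^{2n}(r)$, which by Gromov's non-squeezing theorem cannot symplectically embed into $Z^{2n}(\rho)$ for any $\rho<r$. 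This is the desired contradiction.

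The main obstacle is the linear step: one must show that a non-symplectic linear isomorphism symplectically squeezes the unit ball in either the forward or inverse direction, which rests on the symplectic normal form of ellipsoids and the dichotomy between the smallest and largest symplectic semi-axes. By contrast, the remainder is a soft combination of Taylor expansion, uniform convergence, and Gromov's non-squeezing theorem; the deep input of the proof is precisely non-squeezing, and the theorem can be seen as one of its most striking consequences.
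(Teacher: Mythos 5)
Your strategy — reduce to the linear part by Taylor expansion, then transfer a squeezing of the linear part to a squeezing of $f_n$ for $n$ large, then contradict Gromov — is the standard one (Hofer--Zehnder, \cite{hoze}), and the ``soft'' part of the argument (Darboux charts, Taylor expansion, uniform convergence, non-squeezing) is fine. The paper does not reprove this theorem; it cites \cite{hoze}, so there is no proof of record to compare against, but your route is the expected one.

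However, your linear lemma is false as stated, and this is a genuine gap. You claim that if $A\in GL(2n,\R)$ is non-symplectic then, after replacing $A$ by $A^{-1}$, the ball $B^{2n}(r)$ is carried into a strictly thinner cylinder. Your reasoning is: $A(B)$ is linearly symplectomorphic to $E(a_1,\ldots,a_n)$, and since $A$ is non-symplectic, ``not all $a_i$ equal $1$.'' That last implication fails. The symplectic widths of $A(B)$ all equal $1$ exactly when $A(B)$ is linearly symplectomorphic to $B$, i.e.\ when $A = S O$ with $S$ symplectic and $O$ orthogonal; and such an $A$ need not be symplectic. The basic counterexample is the anti-symplectic involution $(x,y)\mapsto(x,-y)$, or any orthogonal non-symplectic matrix: for these, $A(B^{2n}(r)) = B^{2n}(r)$, so neither $A$ nor $A^{-1}$ squeezes any ball, and the non-squeezing theorem applied to balls gives no contradiction. (Your secondary reciprocal claim $a_1(A^{-1}) = 1/a_n(A)$ is correct, by Williamson; the problem is solely the dichotomy ``$a_1<1$ or $a_n>1$'', which does not follow from non-symplecticity.)

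The repair requires two additional ideas beyond what you wrote. First, one must run the non-squeezing argument on \emph{arbitrary small ellipsoids} centred at $p$, not just balls: this yields that $A = df_p$ preserves the Gromov width of every linear ellipsoid, and one then invokes the genuinely harder linear-algebra classification (Hofer--Zehnder, Section 2.2) that a linear map preserving the Gromov width of every ellipsoid is either symplectic or anti-symplectic. Using only balls, as you do, detects only the failure of $A \in Sp(2n)\cdot O(2n)$, which is strictly weaker. Second, one must rule out the anti-symplectic alternative; this is typically done by applying the result just proved to $f\times \id$ on $M\times\R^{2}$: the $f_n\times\id$ are symplectic and $\cc^0$-converge to $f\times\id$, so $d(f\times\id)_{(p,q)} = df_p\oplus\id$ must be symplectic or anti-symplectic, but if $df_p$ were anti-symplectic then $df_p\oplus\id$ would be neither — a contradiction. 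This is exactly the stabilisation trick the present paper uses in the proof of proposition~\ref{prop:eli-grom-symp-codim2}. Without both of these additions, your argument does not reach the conclusion.
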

A consequence of this result is that symplectic homeomorphisms, defined as homeomorphisms which are $\cc^0$-limits of symplectic diffeomorphisms, define a proper subset of the volume-preserving homeomorphisms, which deserves attention. A natural question in the field can be stated as follows: {\it can a symplectic homeomorphism do something that a symplectic diffeomorphism cannot?} There is however a rather trivial answer to this question: it can take some smooth object to a singular one. In order to focus on the symplectic side of the problem, we therefore restrict the question.

\begin{question}[$\cc^0$-rigidity]\label{q:c0rigid} Assume that the image of a smooth submanifold $X$ by  a symplectic homeomorphism $h$ is smooth. Is there a symplectic diffeomorphism $f$ such that $f(X)=h(X)$?
\end{question}
Although very natural, this question seems at the moment out of reach, except in some very special situations. This paper is mostly concerned with two weaker versions of question \ref{q:c0rigid} for which we can provide more satisfactory answers. The first one concerns the preservation of some quantitative symplectic invariants. 
\begin{question}\label{q:c0quantrig}
Assume that the image of a smooth symplectic submanifold $X$ by  a symplectic homeomorphism $h$ is 
smooth and symplectic. Does the restriction of $h$ to $X$ preserve some capacities?  
\end{question}
The second one is a relative version of Eliashberg-Gromov's theorem.
\begin{question} \label{q:Eli-Grom-submanifolds}
Assume that the restriction of a symplectic homeomorphism $h : M \rightarrow M' $ to a smooth submanifold $ X $ is a diffeomorphism onto its image $ X' = h(X) $ (which is itself a smooth submanifold). Is it then true that the restriction $ h_{|X} : X \rightarrow X' $ is symplectic (i.e. $h_{|X}^*\om_{|X'}'=\om_{|X}$, where $ \omega $ and $ \omega' $ are the symplectic structures on $ M $ and $ M' $ respectively)?
\end{question}

An important related result in the field is due to Humili\`ere-Leclercq-Seyfaddini:
\begin{thm*}[\cite{hulese}, see also \cite{lasi} and \cite{opshtein} for the Lagrangian and hypersurface case respectively]\label{thm:HLS} If $C$ is a coisotropic submanifold of $M$ and $h(C)$ is smooth (where $ h: M \rightarrow M' $ is a symplectic homeomorphism), then $h(C)$ is coisotropic, and $h$ takes the characteristic foliation of $C$ to that of $h(C)$. 
\end{thm*}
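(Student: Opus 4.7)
The strategy has two parts, each based on a $\cc^0$-stable characterization of the relevant notion (coisotropy, then characteristic foliation), which is then transferred across the $\cc^0$-limit $h=\lim_n \phi_n$ (with $\phi_n \in \Symp(M,M')$) by the conjugation trick underlying the Eliashberg-Gromov rigidity theorem. Throughout, the question is local, so I would work in Darboux charts around $p\in C$ and $p'=h(p)\in h(C)=:C'$, in which the symplectic forms become the standard one and $h$ is a uniform limit of symplectic embeddings $\phi_n$.

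\emph{Step 1: $C'$ is coisotropic.} I would aim at the following characterization: a smooth submanifold $N$ with $\dim N \ge n$ is coisotropic at $q \in N$ iff, for every sufficiently small neighbourhood $U$ of $q$, there is a positive lower bound (with a controlled rate in the size of $U$) on the Hofer norm of any compactly supported Hamiltonian diffeomorphism of $U$ that displaces $N\cap U$ from itself. The ``$\Rightarrow$'' direction is a local version of the positive displacement energy theorem for coisotropics (Chekanov--Bolle--Ginzburg--Usher), applied in the Darboux model. The ``$\Leftarrow$'' direction is constructive: at a non-coisotropic point $q$, one picks $v \in (T_q N)^{\om}\setminus T_q N$, realises $v=X_H(q)$ for some $H$ vanishing on $N$ near $q$, and rescales $H$ to produce displacements of arbitrarily small Hofer norm (the relevant ratio $\osc(H)/|X_H(q)|$ can be made as small as desired in a small enough ball). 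Assuming by contradiction that $C'$ is not coisotropic at $p'$, this supplies Hamiltonian diffeomorphisms $\psi_k$, of vanishing Hofer norm, displacing small pieces of $C'$ near $p'$. Conjugating $\phi_{n(k)}^{-1} \psi_k \phi_{n(k)}$ for $n(k)$ large enough yields Hamiltonian diffeomorphisms of the same Hofer norm that (almost) displace small pieces of $C$ from themselves, contradicting the coisotropy of $C$ via the first direction.

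\emph{Step 2: $h$ preserves characteristic foliations.} Here I would use the characterization that a smooth curve in $C$ is tangent to the characteristic distribution iff it is an orbit of a Hamiltonian flow $\phi^t_H$ with $H|_C \equiv 0$ (equivalently, characteristic vectors at $q\in C$ are exactly the values $X_H(q)$ for such $H$, since such a Hamiltonian satisfies $X_H \in (TC)^{\om} \subset TC$). Given such $H$, the symplectic flows $\phi_n \circ \phi^t_H \circ \phi_n^{-1}$ converge in $\cc^0$ to the topological isotopy $h \circ \phi^t_H \circ h^{-1}$; by the Müller--Oh framework, this is a continuous Hamiltonian isotopy, and since $\phi^t_H$ preserves $C$, the limit isotopy preserves $C'$. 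By the uniqueness theorem for generators of continuous Hamiltonian flows (Viterbo, Buhovsky--Seyfaddini), the limit isotopy is generated by a continuous Hamiltonian $K$; one then argues that $K$ must vanish on $C'$ (for otherwise one could run the Step 1 argument in reverse to produce cheap transverse motion of $C'$). Hence the orbit of $h(q)$ under this limit flow is a characteristic trajectory of $C'$, which shows that $h$ takes each characteristic leaf of $C$ into one of $C'$; symmetry with $h^{-1}$ yields the converse inclusion.

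\emph{Main obstacle.} The hardest technical point is the quantitative transfer in Step~1: one needs a local displacement-energy lower bound for coisotropics that is uniform as the piece shrinks, so that the $\cc^0$-errors from $\phi_n\to h$ do not overwhelm it. This requires precise $\sp(2n,\R)$-control on the Darboux models and a careful dependence of the energy estimate on the size of the neighbourhood. Step~2 is then essentially an application of the continuous Hamiltonian machinery, but identifying the generator $K$ and showing $K|_{C'}\equiv 0$ is itself a delicate $\cc^0$-rigidity statement that ultimately rests on the uniqueness of Hamiltonians for continuous flows, making Step~2 conceptually heavier than Step~1.
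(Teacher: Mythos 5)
The paper does not prove this statement; it is quoted from Humili\`ere--Leclercq--Seyfaddini \cite{hulese} (with the hypersurface and Lagrangian cases attributed to \cite{opshtein} and \cite{lasi}). What the paper does record, in Section~\ref{sec:elgr} just before the proof of proposition~\ref{prop:eli-grom-coisotrop}, is the key ingredient of the HLS proof: the uniqueness theorem (stated as Theorem' there) asserting that for a continuous Hamiltonian flow $\phi^t_H$, the restriction $H|_C$ is a function of time if and only if $\phi^t$ locally preserves $C$ and flows along its characteristics. This is the tool against which your proposal should be measured.

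Your Step~2 is broadly aligned in spirit with the HLS mechanism: you correctly identify the dynamical characterization of characteristic vectors ($X_H$ with $H|_C\equiv 0$) and correctly want to push a conjugated flow $h\circ\phi^t_H\circ h^{-1}$ into the continuous-Hamiltonian framework. But the way you propose to identify the generator $K$ of the limit flow and show $K|_{C'}\equiv 0$ is circular: you invoke ``running the Step~1 argument in reverse,'' but Step~1 is a displacement-energy statement and produces no Hamiltonian at all. The correct route (and the one used in \cite{hulese}) is the transformation law: $\phi_n\circ\phi^t_H\circ\phi_n^{-1}=\phi^t_{H\circ\phi_n^{-1}}$ for smooth symplectomorphisms $\phi_n$, the Hamiltonians $H\circ\phi_n^{-1}$ converge uniformly to $H\circ h^{-1}$, and by the uniqueness theorem for continuous Hamiltonian flows this forces $K=H\circ h^{-1}$, which visibly vanishes on $C'$. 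Once $K$ is identified, the uniqueness theorem (Theorem' above) gives both coisotropy of $C'$ and preservation of the characteristic foliation in one stroke; there is no separate ``Step~1'' in the HLS argument.

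Step~1, as stated, has a genuine gap. There is no established ``localized'' displacement-energy characterization of coisotropy with controlled rates as the neighbourhood shrinks. The positive-energy theorems you cite (Chekanov, Bolle, Ginzburg, Usher) concern closed coisotropics or rely on some global geometry; for a small piece $C\cap U$ of a coisotropic in a Darboux ball the displacement energy tends to zero as $U$ shrinks, and it is far from clear that the decay rate distinguishes coisotropic from non-coisotropic pieces in the uniform way you would need. Moreover, the conjugation $\phi_{n(k)}^{-1}\psi_k\phi_{n(k)}$ only \emph{approximately} displaces a piece of $C$ --- the error from $\phi_n\to h$ does not vanish for a fixed $k$ --- so the contradiction you aim for is not actually reached without the very uniform rate estimate you flag as the ``main obstacle.'' In short, your Step~1 is an alternative philosophy that would require proving a new quantitative result to close, while the route the paper points to (the uniqueness theorem plus the transformation law for Hamiltonians) avoids displacement energy entirely.
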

In fact, \cite{lasi} proves a stronger result for closed Lagrangian submanifolds, under some additional assumptions. It is however not hard to see that their proof implies the previous theorem, in the setting of closed Lagrangian submanifolds. In view of question \ref{q:c0rigid}, this theorem is an important step, since the local symplectic invariants of coisotropic submanifolds are only two: characteristic foliation and transverse symplectic structure.

\subsection{Main results}
 
Our main result is the following flexibility statement:

\begin{thm}\label{thm:flexcodim4}
For $ n \geqslant m+2 $, there exists a symplectic homeomorphism $h : \mathbb{C}^n \rightarrow \mathbb{C}^n $ with support in an arbitrary \nbd of $Q:=\D(1)^m\times\{0_{n-m}\}\subset \C^n$, such that $h_{|Q}=\frac 12 \id$ (here $0_{n-m}$ stands for $0\in \C^{n-m}$).
\end{thm}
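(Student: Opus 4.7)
Note first that $\frac12\,\mathrm{Id}:Q\to \frac12 Q$ pulls the restricted symplectic form back to one quarter of itself, so no symplectic diffeomorphism can realize $h|_Q$. Consequently, $h$ must genuinely be a $C^0$-limit of symplectic diffeomorphisms $h_k$, each supported in the given neighbourhood $U$ of $Q$, whose images $h_k(Q)$ are increasingly wrinkled symplectic submanifolds of $\C^n$ that $C^0$-approximate $\frac12 Q$. Each $h_k(Q)$ still has total symplectic area $\pi^m$, but sits inside a small Hausdorff neighbourhood of $\frac12 Q$; the wrinkles recover the missing area by oscillating in the transverse $\C^{n-m}$-directions. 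The hypothesis $n-m\geq 2$ is what provides the room needed for this kind of oscillation.

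The construction is by iterated refinement. Fix a scale $\delta>0$, subdivide $Q$ into essentially disjoint cubes $Q_i$ of side $\delta$ with centres $q_i$, and enclose each $Q_i$ in a box $B_i=Q_i\times(w_i+D_\eta)$, with $\eta\ll\delta$ and auxiliary centres $w_i\in \C^{n-m}$ chosen so that the $B_i$ are pairwise disjoint and contained in $U$. The existence of such a packing of many small polydisks into an arbitrarily thin normal neighbourhood is itself a manifestation of codimension $\geq 4$: in $\C^{n-m}$ with $n-m\geq 2$, a polydisk of radius $\eta_0$ admits at least $(\eta_0/\eta)^{2(n-m)}\geq(\eta_0/\eta)^4$ disjoint polydisks of radius $\eta$. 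On each box $B_i$ we build a compactly supported symplectic diffeomorphism $\phi_i$ that sends $Q_i$ onto a wrinkled surface whose $\C^m$-projection is $\frac12 Q_i$ but which fully fills the transverse polydisk. The construction of the individual $\phi_i$ is where the quantitative $h$-principle announced in the abstract enters: it converts the formal (volume-preserving but non-symplectic) deformation ``shrink $Q_i$ and compensate'' into an honest symplectic isotopy with an explicit support bound. In codimension $2$ this $h$-principle fails, which is why the conclusion reverses in that case. Since the $\phi_i$ have pairwise disjoint supports, their composition $h_k:=\phi_1\circ\cdots\circ\phi_N$ is a well-defined symplectic diffeomorphism of $\C^n$ with $h_k(Q)$ within Hausdorff distance $O(\delta)$ of $\frac12 Q$ and $\|h_k|_Q-\frac12\mathrm{Id}\|_{C^0}=O(\delta)$.

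Passing to a sequence of scales $\delta_k\to 0$ arranged so that the distances $\|h_{k+1}-h_k\|_{C^0}$ are summable, we extract a $C^0$-limit $h$; showing that $h$ is a homeomorphism, and not merely a continuous map, amounts to checking the same Cauchy property for the sequence of inverses, which is symmetric by construction. The principal technical obstacle is precisely this delicate bookkeeping: each new refinement $\phi_i$ at scale $\delta_{k+1}$ must respect the structure produced at scale $\delta_k$, perturbing it only at a scale comparable to $\delta_{k+1}$ so as not to undo the compression already achieved. It is exactly this \emph{quantitative} aspect of the $h$-principle---control of the support of each local symplectic deformation in terms of the size of the piece being deformed---that replaces the classical qualitative $h$-principle in the proof and that requires the codimension hypothesis $n-m\geq 2$.
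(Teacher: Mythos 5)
Your overall intuition is sound: $h$ is a $\mathcal{C}^0$-limit of symplectomorphisms whose images of $Q$ keep the same area by oscillating in the transverse $\mathbb{C}^{n-m}$-directions, the quantitative $h$-principle supplies moves with controlled support, and $n\geqslant m+2$ is what makes that $h$-principle available. But the construction you propose fails at its first step. You assemble each $h_k$ from \emph{disjointly supported} local perturbations $\phi_i$, each supported in a small box $B_i$ of diameter $O(\delta+\eta)$ around a grid cube $Q_i\subset Q$, and claim that $\phi_i(Q_i)$ projects onto $\frac12 Q_i$. If $Q_i$ is centred at $q_i$, then $\frac12 Q_i$ is centred at $q_i/2$, a distance $|q_i|/2$ from $Q_i$ --- macroscopic for most $i$. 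No symplectomorphism supported in $B_i$ can move $Q_i$ anywhere near $\frac12 Q_i$, so the asserted bound $\|h_k|_Q-\frac12\id\|_{\mathcal{C}^0}=O(\delta)$ cannot hold, and the cube decomposition with disjoint supports is the wrong mechanism for the initial compression. (As a side remark, the boxes $B_i=Q_i\times(w_i+D_\eta)$ are either not pairwise disjoint or do not contain $Q_i\times\{0\}$, so the set-up is internally inconsistent before one even reaches the main issue.)

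The paper takes a different route. It first reduces the polydisc case to a single two-dimensional disc by applying a $\mathbb{C}^3$-disc statement in three-dimensional coordinate slices $\mathbb{C}^3(z_1,z_2,z_i)$, one at a time, and then conjugating to localize the support (lemma~\ref{le:restrsympeo}). For the disc case (theorem~\ref{T:C0-flexibility}), it fixes the target $i^a=a\cdot\id$ and builds a \emph{single global spiral immersion} $f_k:D(2)\to D(1/2^k)$ of area $1-a^2$, so that the area-one discs $i_k(z)=(az,f_k(z),0,\ldots)$ already lie in a fixed thin neighbourhood $W^a(\epsilon)$ of $i^a(\overline{D})$; there is no subdivision of $Q$ into local pieces. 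The quantitative $h$-principle is then used (twice per step) to produce symplectomorphisms $\psi_m$ with $d_{\mathcal{C}^0}(\id,\psi_m)<33\epsilon_m$, summable, and with support in $\phi_{m-1}(U_m)$ where $U_m$ decreases to $i_{k_1}(\overline{D})$. Summability gives the $\mathcal{C}^0$-convergence of $\phi_m=\psi_m\circ\cdots\circ\psi_1$, while the nesting of supports is exactly what makes the limit injective --- this is the ``delicate bookkeeping'' your sketch flags but does not carry out, and it is done at the level of a single sequence of globally defined symplectomorphisms, not an array of disjointly supported local ones.
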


By standard \nbd theorems, this statement holds true when $Q$ is the image of a symplectic embedding of a polydisc $\D(a)^m$ of codimension at least $4$ in any symplectic manifold. Theorem \ref{thm:flexcodim4} implies a negative answer to questions \ref{q:c0rigid}, \ref{q:c0quantrig}, \ref{q:Eli-Grom-submanifolds} for symplectic polydiscs of codimension greater than $2$. In particular, $\cc^0$-symplectic geometry is not a trivial generalization of classical symplectic geometry: not all classical invariants are $\cc^0$-rigid. Theorem \ref{thm:flexcodim4} follows from the following quantitative $h$-principle result:

\begin{thm}[Quantitative $h$-principle for symplectic discs] \label{T:qh-principle-main}
Let $ \epsilon > 0 $ be a positive real, $ n \geqslant 3 $ be an integer, $ W \subset \mathbb{C}^n $ be an open set, and let $ u_1,u_2 : \overline{D} \rightarrow W $ be symplectic discs, with $ u_1^* \omega_{\st} = u_2^* \omega_{\st} = \omega_{\st} $. Assume that $ u_1 $ can be homotoped to $ u_2 $ in $ W $, such that in this homotopy, the trajectory of any point on the disc $ u_1 $ has diameter less than $ \epsilon $. Then there exists a smooth Hamiltonian flow $ \phi $ supported in $ W $, whose trajectories have diameter less than $2\eps$, and whose time-$1$ map sends $ u_1 $ to $ u_2 $. 
\end{thm}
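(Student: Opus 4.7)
The plan is to upgrade the given continuous homotopy to a Hamiltonian isotopy in three stages, with all the real work in Stage~1. Fix a small parameter $\delta \in (0, \epsilon/10)$ to be adjusted at the end.

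\textbf{Stage 1 (homotopy to isotopy of symplectic embeddings).} Smooth the given homotopy $C^0$-slightly to a smooth family $f_t : \overline D \to W$ (with $f_0 = u_1$, $f_1 = u_2$) without enlarging any trajectory by more than $\delta$. Subdivide $[0,1]$ into short intervals on which consecutive slices $f_{k/N}$ and $f_{(k+1)/N}$ are arbitrarily $C^\infty$-close on $\overline D$. Replace every intermediate $f_{k/N}$ by a nearby symplectic embedding $v_{k/N}$ at $C^0$-distance less than $\delta$: the defect 2-form $\eta_k := f_{k/N}^*\omega_{\st} - \omega_{\st}$ on $\overline D$ is killed by a Moser-type perturbation valued in the $(2n-2)$-dimensional symplectic normal directions, and since $2n-2 \geq 4$ there is room for this perturbation to be $C^0$-small and to preserve embeddedness. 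Interpolate between consecutive $v_{k/N}$ inside a Weinstein tubular neighborhood, which yields a smooth isotopy $v_t$ of symplectic embeddings from $u_1$ to $u_2$ whose trajectories stay within Hausdorff distance $\delta$ of the original ones.

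\textbf{Stages 2 and 3 (extension and cut-off).} For Stage~2, apply the standard theorem that a smooth isotopy of compact symplectic submanifolds extends to an ambient Hamiltonian flow: Weinstein's symplectic neighborhood theorem for each $v_t(\overline D)$ together with a Moser argument produces a smooth time-dependent Hamiltonian $H_t$ on $W$ whose flow $\Phi_t$ satisfies $\Phi_t \circ v_0 = v_t$, and in particular $\Phi_1 \circ u_1 = u_2$. For Stage~3, let $U$ be the open $\delta$-neighborhood of $\bigcup_{t \in [0,1]} v_t(\overline D)$, and replace $H_t$ by $\chi H_t$ where $\chi$ is a smooth function equal to $1$ on a smaller tubular neighborhood containing every $v_t(\overline D)$ and vanishing outside $U$. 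Along the discs the Hamiltonian vector field is unchanged, so the flow of $\chi H_t$ still sends $u_1$ to $u_2$; its support lies in $U \subset W$, and its trajectories are confined to $U$, hence have diameter less than $\epsilon + 3\delta < 2\epsilon$.

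\textbf{Main obstacle.} Stage~1 is the heart of the matter and the only step using $n \geq 3$. The task is a quantitative Moser correction: given a smooth map $f$ with small defect $\eta$, build an explicit $C^0$-small perturbation valued in the normal directions that kills $\eta$, depends smoothly on parameters, and preserves embeddedness. The last point is exactly what fails in codimension~$2$, where the rank-$2$ normal bundle is too narrow to absorb perturbations of the required size without creating self-intersections; in codimension $\geq 4$ the extra normal directions accommodate them comfortably. This is precisely the quantitative $h$-principle that underlies the flexibility phenomenon featured in the paper, in sharp contrast with the rigidity in codimension~$2$.
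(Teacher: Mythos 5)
Your Stage 3 contains the crucial gap, and it is exactly the subtlety the paper is designed to handle. The set $U$ you construct is a thin tubular neighborhood of $\bigcup_{t} v_t(\overline D)$, so it is narrow \emph{transversally}, but its \emph{diameter} is at least $\mathrm{diam}\, u_1(\overline D)$, which is of order $1$ (for instance, $\geq 2$ if $u_1$ is the inclusion of the unit disc) and in any case independent of $\epsilon$. ``Trajectories are confined to $U$'' therefore only bounds their diameter by $\mathrm{diam}\,U \sim 1$, not by $\epsilon+3\delta$. Concretely, the ambient Hamiltonian $H_t$ furnished by the Weinstein/Moser extension in Stage 2 will typically have a large tangential component along the disc — the constraint $\Phi_t\circ v_0 = v_t$ already forces this whenever $v_t(z)-v_0(z)$ has a tangential part — and this component can transport a point sitting slightly off the disc across the whole length of the tube. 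Cutting off outside $U$ does nothing to prevent this, since those trajectories lie entirely inside the region where $\chi\equiv 1$. What you would end up proving is the classical statement that the isotopy can be realized by \emph{some} ambient Hamiltonian flow; the quantitative content of the theorem, that \emph{all} trajectories of that flow are $C^0$-small, is precisely what is lost in your cut-off. This is why the paper's Proposition~\ref{P:qh-principle} does not proceed by a global extension-plus-cut-off: instead, the disc is covered by a fine grid and the Hamiltonian motion is decomposed into vertex, edge, and face moves, each supported in a brick $W_z$, $W_\gamma$, $W_G$ of diameter $O(\delta+\eta)$, with the flow arranged so that every ambient trajectory visits only a bounded number of adjacent bricks ($W_G^{(6)}$). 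Localization of supports, not thinness of the tube, is what bounds the trajectory diameters.

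There is a secondary gap in Stage 1: the intermediate slices $f_{k/N}$ of the smoothed homotopy are just smooth maps, with no reason to be embeddings nor to have $f_{k/N}^*\omega_{\st}$ anywhere close to $\omega_{\st}$ (the pullback can degenerate or change sign). The defect $\eta_k$ is therefore not small, and a Moser-type perturbation ``valued in the normal directions'' cannot kill it with a $C^0$-small correction. The classical Gromov $h$-principle does let one build an isotopy through symplectic embeddings $C^0$-close to the original homotopy in codimension $\geqslant 4$, but invoking it correctly already requires the kind of relative, parametric, quantitative machinery you are trying to avoid. Note that the paper's Lemma~\ref{L:homotopy-to-isotopy} sidesteps this entirely: it produces only a smooth \emph{embedded} isotopy (of the cylinder $\overline D\times[0,1]$), with no symplectic condition on the intermediate slices; the symplectic constraint is then dealt with through the grid decomposition, one cell at a time, using action matching for the edges and the local Hamiltonian lemmata.
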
 

The novelty of this quantitative $h$-principle result with respect to the classical ones \cite{gromov2,elmi} is that it ensures the $\cc^0$-smallness of the Hamiltonian flow which brings one disc to the other, and not only the $\cc^0$-smallness of the isotopy between the two discs. This point is crucial for applications to $\cc^0$-symplectic geometry.

Next, we show that the flexibility statement provided by theorem \ref{thm:flexcodim4} is optimal in the dimension: rigidity holds for codimension $2$ symplectic submanifolds. 
\begin{thm}\label{thm:nssymp}
Let $h:M \to M'$ be a symplectic homeomorphism between symplectic manifolds of dimension $2n$, that takes a codimension $2$ symplectic submanifold $N$ to a symplectic submanifold $N'$. Then the restriction of $h$ to $N$ verifies the non-squeezing property. Namely, if $B(a)\subset N$ is a symplectic ball of size $a$, and $h(B(a))\subset N'$ can be symplectically embedded into the cylinder $Z(A):=\D(A)\times \C^{n-2}$, then $A\geqslant a$. 
\end{thm}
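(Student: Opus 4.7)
I would argue by contradiction, assuming $A<a$, and deduce a violation of Gromov's non-squeezing theorem in $\C^{n-1}$. The starting point is to thicken the hypothesis to the ambient manifold: by the symplectic neighborhood theorem for the codimension-$2$ submanifold $N'\subset M'$, the given symplectic embedding of $h(B(a))$ into $Z(A)$ extends first to a symplectic embedding $\phi:U\hookrightarrow Z(A)$ of an open neighborhood $U\subset N'$ of $h(\overline{B(a)})$, and then via $\Phi:=\phi\times\id:V'\hookrightarrow Z(A)\times\D(\eps)\subset\D(A)\times\C^{n-1}$, where $V'\cong U\times\D(\eps)\subset M'$ is a tubular neighborhood of $U$ in $M'$. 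Thus an entire symplectic neighborhood of $h(B(a))$ in $M'$ embeds symplectically into the cylinder $Z(A)$ viewed inside $\C^n$.

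Next, approximate $h$ by symplectomorphisms $h_k:M\to M'$ with $h_k\to h$ uniformly. For $k$ large, $h_k(B(a))$ is a smooth symplectic $(2n-2)$-ball contained in $V'$ and $C^0$-close to the topological disc $h(B(a))\subset N'$. The heart of the proof is to produce a Hamiltonian isotopy $\psi^k$ supported in $V'$, with trajectories of diameter tending to $0$ as $k\to\infty$, whose time-$1$ map sends $h_k(B(a))$ exactly into $N'$. The product structure $V'\cong U\times\D(\eps)$ provides a natural small-trajectory homotopy of $h_k(B(a))$ onto a disc in $N'$, via the linear contraction along the fibers of the projection $\pi:V'\to U$. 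To upgrade this homotopy to a genuinely $C^0$-small Hamiltonian flow, I would apply Theorem~\ref{T:qh-principle-main}, either in a $(2n-2)$-dimensional generalization, or by foliating $B(a)$ by symplectic $2$-discs and straightening each slice coherently in the slice parameter. This is the main obstacle, and the step where the codimension-$2$ assumption is essential, in direct contrast with Theorem~\ref{thm:flexcodim4}, which shows that such a straightening is impossible in codimension $\geq 4$.

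Once the straightening is achieved, set $\tilde h_k:=\psi^k_1\circ h_k$. The restriction $\tilde h_k|_{B(a)}:B(a)\to N'$ is then a smooth symplectic embedding of the $(2n-2)$-ball $B(a)$ into $U\subset N'$, and $\phi\circ\tilde h_k|_{B(a)}:B(a)\to Z(A)\subset\C^{n-1}$ is a symplectic embedding of $B(a)$ into the $(2n-2)$-dimensional cylinder $Z(A)$. Gromov's non-squeezing theorem in $\C^{n-1}$ therefore forces $a\leq A$, contradicting $A<a$ and completing the proof. The naive alternative of trying to fit a standard $2n$-ball of capacity close to $a$ inside the symplectic neighborhood $B(a)\times\D(\delta)$ of $B(a)$ in $M$ cannot work directly, since this neighborhood only contains a standard ball of capacity $\min(a,\delta)$ and $\delta$ may be arbitrarily small; the straightening step is precisely what bypasses this difficulty.
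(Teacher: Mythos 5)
Your proposal takes a genuinely different route from the paper, and unfortunately the route has a gap at the step you yourself identify as the main obstacle. The paper does not attempt to ``straighten'' the image $h_k(B(a))$ back onto $N'$ by a $\cc^0$-small Hamiltonian isotopy. Instead, it proves a more refined statement (Theorem~\ref{thm:rigcodim2}) comparing displacement energy and $\pi_1$-sensitive Hofer--Zehnder capacity across $h$: one works in a tubular neighbourhood $N\times D(r)\to N'\times D(r')$, transfers compactly supported Hamiltonians back and forth through the approximating symplectomorphisms $h_k$, keeps their support in thin annuli $N\times A(\delta_1,\delta_2)$ near but off the zero section to retain control, and reads off a capacity inequality. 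The non-squeezing statement then follows in Corollary~\ref{cor:rigcodim2}(iii) from $c^\circ_\hz(B(a))=a$ together with an embedding of $U'\times T^*_rS^1$ into a closed product manifold where Usher's energy--capacity inequality applies. At no point is the singular image $h(B(a))$ or the nearby $h_k(B(a))$ put into normal position.

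The gap in your argument is precisely the straightening step. Theorem~\ref{T:qh-principle-main} is a quantitative $h$-principle for \emph{two-dimensional} symplectic discs in $\C^n$, $n\geqslant 3$; it gives no information about moving a $(2n-2)$-dimensional symplectic ball onto a nearby codimension-$2$ symplectic submanifold by a $\cc^0$-small Hamiltonian flow. A ``$(2n-2)$-dimensional generalization'' of that theorem is not in the paper and would be a substantial new result; if it existed in the form you need, it would essentially give an affirmative answer to Question~\ref{q:c0rigid} for codimension-$2$ symplectic submanifolds, which the paper explicitly does not claim and leaves open. The alternative of foliating $B(a)$ by symplectic $2$-discs and straightening slice by slice does not work as stated: the quantitative $h$-principle produces, for each slice, an ambient Hamiltonian flow supported in a neighbourhood of that slice, and these flows overlap, interact, and have no reason to assemble into a single flow that simultaneously corrects all slices. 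The conflict between slices is exactly the coherence problem, and it is not addressed by anything proved in the paper. Moreover, nothing guarantees that after such a slice-by-slice correction the resulting map of $B(a)$ would still be a \emph{symplectic} embedding of the full $(2n-2)$-ball rather than just disc-by-disc symplectic. So your plan is not a proof; you would need either a genuine $h$-principle for codimension-$2$ symplectic submanifolds with $\cc^0$-control, or an argument like the paper's that avoids normalization entirely.
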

This non-squeezing type theorem is a form of answer to question \ref{q:c0quantrig}. In terms of capacities, it can be expressed as an inequality between the 
Gromov's width of a subset 
and the cylindrical capacity of its image: 
$$
c^Z(h(U))\geqslant c_B(U) \hspace*{1cm} \forall \text{ open subset } U\subset N.
$$ 
Recall that given a $ 2n $-dimensional symplectic manifold $ (M,\omega) $, its Gromov width and cylindrical capacity are
\begin{align*} 
& c_B(M) = \sup \{ a \, | \, B^{2n}(a) \text{ symplectically embeds in } (M,\omega) \},\\
&c^Z(M) = \inf \{ a \, | \, (M,\omega) \text{ symplectically embeds into } \D(a) \times \mathbb{C}^{n-1} \}.
\end{align*} 

\begin{remark}
In view of \cite{hulese}, it is legitimate to ask whether the assumption that $N'$ is also symplectic is automatically satisfied as soon as $N'$ is smooth. This is however not the case, see corollary \ref{cor:symptononsymp}. 
\end{remark}

The rigidity expressed by theorem \ref{thm:nssymp} implies a corresponding rigidity for hypersurfaces. Recall that by \cite{hulese} (or \cite{opshtein} in this particular setting), when a symplectic homeomorphism $h:M\to M'$ takes a  smooth hypersurface $\Sigma^{2n-1}$ to a smooth hypersurface $\Sigma'$, it also sends the characteristic foliation of $\Sigma$ to that of $\Sigma'$. It therefore induces a map at the level of the reduction $\hat h:\red (\Sigma)\to \red (\Sigma')$, called the reduction of $h$. The next theorem asserts a $\cc^0$-rigidity for the elements of the reduction $ \red(\Sigma)$ (we refer the reader to paragraph \ref{sec:defred} for precise definitions). 

\begin{thm}\label{thm:nshyp}
Let $h:M \to M'$ be a symplectic homeomorphism between symplectic manifolds of dimension $2n$, that takes a smooth hypersurface $\Sigma$ to a smooth hypersurface $\Sigma'$. Then the reduction of $h$ verifies the non-squeezing theorem. Namely, if $[B^{2n-2}(a)]\in \red (\Sigma)$, and $\hat h([B^{2n-2}(a)])\in\red(\Sigma')$ can be symplectically embedded into $Z(A)=\D(A)\times \C^{n-2}$, then $a\leqslant A$. 
\end{thm}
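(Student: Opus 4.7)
The strategy is to reduce the statement to Theorem~\ref{thm:nssymp} via a leafwise modification of $h$. Represent the class $[B^{2n-2}(a)]\in\red(\Sigma)$ by a smooth symplectic embedding $\iota:B^{2n-2}(a)\hookrightarrow\Sigma$ transverse to the characteristic foliation, and set $N:=\iota(B^{2n-2}(a))$. Since the restriction of $\omega$ to a transverse section of the characteristic foliation is the reduced symplectic form, $N$ is a codimension-$2$ symplectic submanifold of $M$ symplectomorphic to $B^{2n-2}(a)$. By hypothesis, the class $\hat h([B^{2n-2}(a)])$ has a smooth representative $N'\subset\Sigma'$ which is a codimension-$2$ symplectic submanifold of $M'$ that symplectically embeds into $Z(A)$.

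By the Humili\`ere--Leclercq--Seyfaddini theorem, $h$ sends the characteristic foliation of $\Sigma$ to that of $\Sigma'$; hence $h(N)\subset\Sigma'$ is a (merely topological) transverse section of the characteristic foliation lying on exactly the same characteristic leaves as $N'$. The key step is then to construct a symplectic homeomorphism $g:M'\to M'$, supported in an arbitrarily small neighbourhood of $\Sigma'$, preserving $\Sigma'$ and each of its characteristic leaves, with $g(h(N))=N'$. Granting such $g$, the map $\tilde h:=g\circ h:M\to M'$ is a symplectic homeomorphism taking the codimension-$2$ symplectic submanifold $N$ to the codimension-$2$ symplectic submanifold $N'$, and Theorem~\ref{thm:nssymp} applied to $\tilde h$ with ``ball'' $B(a)=N$ yields $a\leqslant A$.

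To produce $g$, I would work in local Darboux coordinates $(s,t,z)\in\R\times\R\times\C^{n-1}$ near $\Sigma'$ with $\omega'=ds\wedge dt+\omega_{\st}^{\C^{n-1}}$, $\Sigma'=\{t=0\}$, and characteristic direction $\partial_s$. In these coordinates $N'$ is a graph $\{s=s'(z)\}$ with $s'$ smooth, and $h(N)$ is a graph $\{s=s_h(z)\}$ with $s_h$ merely continuous, the graph property being a consequence of the leaf-preservation of $h$. Hamiltonians of the form $H(s,t,z)=\chi(t)f(z)$ with $\chi(0)=0$ and $\chi'(0)=1$ have Hamiltonian vector field $X_H=\chi'(t)f(z)\partial_s+\chi(t)X^z_f$; this flow preserves $\Sigma'$ and acts on $\Sigma'$ by the leafwise shift $(s,0,z)\mapsto(s+f(z),0,z)$. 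Approximating the continuous function $\Delta(z):=s'(z)-s_h(z)$ uniformly by smooth functions $\Delta_k$, and choosing the support of $\chi$ in a $t$-slab of width $\eps_k\to 0$ small enough that the off-$\Sigma'$ contribution $\chi(t)X^z_{\Delta_k}$ stays uniformly small, the corresponding Hamiltonian diffeomorphisms $g_k$ converge in the $\cc^0$ topology to a symplectic homeomorphism $g$ with $g(h(N))=N'$.

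The main obstacle is this last construction: because $h(N)$ is only a topological submanifold the leafwise correction $\Delta$ is only continuous, and one must coordinate the smooth approximants $\Delta_k$ with the thickness $\eps_k$ of the $\chi$-cut-off carefully enough for the flows $g_k$ to converge uniformly. Everything else is formal: the HLS theorem guarantees that $\Delta$ is well-defined and continuous, and Theorem~\ref{thm:nssymp} delivers the non-squeezing conclusion once $\tilde h$ maps $N$ onto $N'$.
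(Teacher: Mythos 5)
Your overall reduction to theorem~\ref{thm:nssymp} is essentially the paper's: represent the class by a smooth symplectic disc $N\subset\Sigma$, observe that by Humili\`ere--Leclercq--Seyfaddini $h(N)$ is a continuous (merely topological) transverse section of the characteristic foliation of $\Sigma'$, straighten it by a symplectic homeomorphism supported near $\Sigma'$ that preserves $\Sigma'$ and its characteristics, and then invoke theorem~\ref{thm:nssymp} for the composite. (The paper first shrinks to a slightly smaller ball $U_\eps\Subset U$ and straightens $h(U_\eps)$ to a smooth graph in a flowbox rather than to a prescribed $N'$, but these are cosmetic.) The point at which your argument breaks is precisely the straightening step, which the paper isolates as lemma~\ref{le:straighthomeo}.

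Your construction of $g$ does not converge. Each $g_k$ is the time-$1$ map of a single Hamiltonian $H_k(s,t,z)=\chi_k(t)\Delta_k(z)$ with $\chi_k$ supported in $\{|t|<\eps_k\}$, so $g_k$ shifts $\Sigma'=\{t=0\}$ leafwise by $\Delta_k\approx\Delta$, while $g_k=\mathrm{id}$ off $\{|t|<\eps_k\}$. The pointwise limit of $(g_k)$ is therefore the $\Delta$-shift on $\Sigma'$ and the identity elsewhere, which is discontinuous whenever $\Delta\not\equiv 0$; equivalently, for $\eps_{k+1}<|t|<\eps_k$ the maps $g_k$ and $g_{k+1}$ differ by roughly $\|\Delta\|_\infty$, so the sequence is not $\cc^0$-Cauchy, no matter how the pair $(\Delta_k,\eps_k)$ is coordinated. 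The trouble is structural: $\Delta$ itself need not be small, so a one-shot correction can never be $\cc^0$-small. The fix, and this is the content of lemma~\ref{le:straighthomeo}, is a telescoping infinite composition: write $\Delta$ as a uniform limit of smooth $\tilde F_k$ with $\|\tilde F_k-\tilde F_{k-1}\|_\infty<1/2^k$, generate for each small increment $\tilde F_k-\tilde F_{k-1}$ its own Hamiltonian flow $\phi_k$ supported in a $t$-slab whose thickness is chosen \emph{inductively} (depending on $\phi_{k-1}\circ\cdots\circ\phi_1$), and take the infinite composition $\psi=\lim\phi_k\circ\cdots\circ\phi_1$. Each factor is $\cc^0$-small (of size $O(1/2^k)$), which is what makes the composition converge uniformly; one must then still check that the limit is injective, which the paper does by separating the cases of points on or off $\Sigma'$, and on the same or different characteristics.
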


\begin{remark}
In dimension $ 2n = 4 $, the rigidity of the reduction for hypersurfaces implies that $\hat h$ is an area-preserving homeomorphism: for any $ [U] \in \red(\Sigma) $ we have $\ca_{\om'}(\hat h(U))=\ca_\om(U)$. Arguing as in \cite{opshtein}, we can deduce that the boundaries of rational ellipsoids in dimension $4$ are rigid in the strongest sense (question \ref{q:c0rigid}): if a smooth hypersurface $\Sigma$ in a symplectic $4$-manifold is symplectic homeomorphic to  $\partial E(a,b)$ with $\nf ab\in \Q$, then it is also symplectic diffeomorphic to $\partial E(a,b)$. The argument makes however a crucial use of the rationality of $\nf ab$ to avoid any dynamical complication. It would be interesting to know whether this rigidity holds in richer dynamical contexts, like for instance irrational ellipsoids. 
\end{remark}

In fact we conjecture that theorem \ref{thm:nshyp} holds for general coisotropic submanifolds. 
\begin{conj}\label{conj:rigred} Let $h$ be a symplectic homeomorphism which sends a coisotropic submanifold to a smooth (hence coisotropic) submanifold, and $\hat h$ its reduction. If $[B^{2n-2k}(a)] \in \red (C)$, and $\hat h([B^{2n-2k}(a)])\overset{\om}{\hra} Z(A)$, then $A\geqslant a$. (Here $ k $ is the codimension of the coisotropic submanifold, and $ 2n $ is the dimension of the ambient symplectic manifold).
\end{conj}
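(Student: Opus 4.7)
The plan is to adapt the strategy for the hypersurface case (Theorem \ref{thm:nshyp}) to coisotropic submanifolds of arbitrary codimension. Fix the class $[B^{2n-2k}(a)] \in \red(C)$ and realize it by a symplectic embedding $\iota : B^{2n-2k}(a) \hookrightarrow C$ transverse to the characteristic foliation; this is always possible by the local normal form for coisotropic submanifolds. Then $B := \iota(B^{2n-2k}(a))$ is a symplectic submanifold of $M$ of codimension $2k$. By the Humili\`ere--Leclercq--Seyfaddini theorem, $C' := h(C)$ is coisotropic and $h$ maps the characteristic foliation of $C$ onto that of $C'$, so the topological transversal $h(B) \subset C'$ represents $\hat h([B^{2n-2k}(a)])$ in $\red(C')$.

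Next, approximate $h$ by symplectic diffeomorphisms $f_m : M \to M'$. The smooth symplectic codimension-$2k$ submanifolds $B_m := f_m(B)$ converge $\cc^0$ to $h(B) \subset C'$. Using a Weinstein-type neighbourhood theorem for coisotropic submanifolds, construct a local symplectic fibration $\pi : V' \to \red(C')$ defined on a neighbourhood $V'$ of (the relevant portion of) $C'$, with $2k$-dimensional fibres and whose restriction to $C'$ is the characteristic projection. For $m$ large, $B_m \subset V'$ and, because $B_m$ is a symplectic submanifold $\cc^0$-close to the transversal $h(B)$, the composition $\pi \circ f_m|_B : B \to \red(C')$ should be a symplectic embedding $\cc^0$-close to $\hat h \circ \iota$. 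Given a symplectic embedding $\Psi : \hat h([B^{2n-2k}(a)]) \hookrightarrow Z(A)$, the composition $\Psi \circ \pi \circ f_m$ then yields, after a small perturbation, a symplectic embedding of $f_m(B) \cong B^{2n-2k}(a)$ into (a neighbourhood of) $Z(A)$; applying Gromov's non-squeezing to the symplectic ball $f_m(B)$ and passing to the limit $m \to \infty$ gives $A \geqslant a$.

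The main obstacle is justifying that $\pi \circ f_m|_B$ is genuinely close to a symplectic embedding onto $\hat h([B^{2n-2k}(a)])$. In the hypersurface case the fibres of the analogous projection are one-dimensional (a single characteristic direction), and the perturbation theory is straightforward. For $k \geqslant 2$ the fibres of $\pi$ have dimension $2k \geqslant 4$, and $B_m$, which is only $\cc^0$-close to $h(B)$, may oscillate symplectically non-trivially inside these fibres; indeed, by Theorem \ref{thm:flexcodim4}, symplectic submanifolds of codimension $\geqslant 4$ are $\cc^0$-flexible in general, so the coisotropic structure of $C$ must be exploited essentially to prevent such oscillations. A natural route is to establish a coisotropic analogue of the quantitative $h$-principle (Theorem \ref{T:qh-principle-main}): given that $h$ preserves the characteristic foliation, modify each $f_m$ inside a small neighbourhood of $C$ so that it exactly preserves a prescribed coisotropic tubular structure of $C'$, at the cost of an additional $\cc^0$-error controlled by $\|f_m - h\|_{\cc^0}$. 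Producing such a quantitative approximation in the coisotropic category is, I expect, the central new ingredient required to turn this plan into a proof.
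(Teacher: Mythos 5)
The statement you set out to prove is Conjecture~\ref{conj:rigred} in the paper, and it is precisely that: a conjecture. The authors do not prove it; they write explicitly that they ``conjecture that theorem~\ref{thm:nshyp} holds for general coisotropic submanifolds'', and no proof is supplied anywhere in the paper. So there is no paper argument against which to compare your proposal.

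Your sketch is candid about its own incompleteness, and your diagnosis of the obstruction is accurate. The proof of theorem~\ref{thm:nshyp} (the hypersurface case, $k=1$) straightens the topological transversal $h(U)$ to a smooth symplectic transversal via lemma~\ref{le:straighthomeo} --- a construction that relies crucially on the characteristic leaves being one-dimensional --- and then invokes the codimension-$2$ rigidity of theorem~\ref{thm:nssymp}. The naive generalisation of this scheme to $k\geqslant 2$ would, after straightening, produce a symplectic homeomorphism carrying a codimension-$2k\geqslant 4$ symplectic ball to a smooth symplectic submanifold, which is exactly the regime where theorem~\ref{thm:flexcodim4} establishes \emph{flexibility}; any strategy that forgets the coisotropic ambience at that stage cannot conclude. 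You state this clearly, and the remedy you suggest --- a coisotropic-preserving quantitative $h$-principle that modifies each approximating symplectomorphism $f_m$ so that it respects a prescribed coisotropic tubular structure on $C'$, at a $\cc^0$-cost controlled by $d_{\cc^0}(f_m,h)$ --- is a plausible direction, but it is not a variant of theorem~\ref{T:qh-principle-main} (which concerns symplectic discs, not coisotropic data), and constructing such a principle is genuinely open. What you have written is an accurate account of why this is left as a conjecture, not a proof of it.
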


Our final ``quantitative" result concerns Lagrangian submanifolds. Although their area vanish, these objects have symplectic sizes, measured by their 
area homeomorphism: 
\fonction{\ca_\om^L}{H_2(M,L)}{\R}{A}{\ca_\om(\Sigma)}
where $\Sigma$ is any smooth surface representing $ A $. \label{def:areahom}
Recall that by the theorem already quoted by Laudenbach-Sikorav, a smooth image of a closed Lagrangian by a symplectic homeomorphism is Lagrangian \cite{lasi}. We conjecture that the area homomorphism of a closed Lagrangian is $\cc^0$-rigid (see conjecture \ref{conj:lagr-spec-strong}, p. \pageref{conj:lagr-spec-strong} for a precise statement). We prove it here for tori. 
 \begin{thm}\label{thm:rigspec}
 If  $L\subset M^{2n}$ is a Lagrangian torus, and if 
  $L'=h(L)$ is smooth for some symplectic homeomorphism $h$, then 
$h^*\ca_{\om'}^{L'}=\ca_\om^L$.  In other words, for any class $A\in H_2(M,L)$, we have
$\ca_\om^L(A)=\ca^{L'}_{\om'}(h_*A)$.
 \end{thm}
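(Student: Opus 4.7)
The plan is to reduce the statement to a $C^0$-continuity property of the Liouville class for Lagrangian tori, and to use the torus structure to establish this continuity. Approximate $h$ by symplectic diffeomorphisms $\phi_n : M \to M'$ converging to $h$ in $C^0$, and set $L_n := \phi_n(L)$, a Lagrangian torus in $M'$ converging in $C^0$ to $L' = h(L)$ (which is Lagrangian by Laudenbach--Sikorav). Since each $\phi_n$ is a symplectomorphism, one has the exact identity
$$\ca_\om^L(A) = \ca_{\om'}^{L_n}(\phi_{n\ast}A), \quad \forall A \in H_2(M,L),$$
so the theorem boils down to showing that the right-hand side converges to $\ca_{\om'}^{L'}(h_\ast A)$ as $n \to \infty$, under the natural identification of $H_2(M',L_n)$ with $H_2(M',L')$ available for $n$ large.

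Decompose $A$ via the long exact sequence of the pair. For classes in the image of $H_2(M) \to H_2(M,L)$, represented by closed surfaces, preservation of area follows from the (by now standard) fact that symplectic homeomorphisms preserve the symplectic cohomology class, via a Darboux-chart argument relying on Eliashberg--Gromov rigidity. The real work is for relative classes with nonzero boundary in $H_1(L) \cong \Z^n$.

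The heart of the proof is a flux estimate carried out in a Weinstein tubular neighborhood $U \cong T^*T^n$ of $L'$. For $n$ large one has $L_n \subset U$ with $\varepsilon_n := \|L_n-L'\|_{C^0} \to 0$, and the canonical Liouville primitive $\lambda = -p\,dq$ on $U$ restricts to a closed $1$-form on the Lagrangian $L_n$ of $C^0$-norm at most $\varepsilon_n$. The crucial torus-specific input is that each generator of $H_1(T^n) \cong \Z^n$ --- the coordinate circles on the zero section $L'$ --- can be matched, via the $\Z^n$-cover $\R^n \to T^n$, to loops on $L_n$ of uniformly bounded length. Integrating $\lambda|_{L_n}$ along such loops then shows that the Liouville class $[\lambda|_{L_n}] \in H^1(L_n;\R) \cong \R^n$ tends to $0$.

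To conclude, pick a surface $\Sigma \subset M$ representing $A$. Its image $\phi_n(\Sigma) \subset M'$ has area $\ca_\om^L(A)$ and boundary on $L_n$. Glue to it a thin annulus in $U$ that pushes $\phi_n(\partial\Sigma) \subset L_n$ onto a corresponding boundary on $L'$; by Stokes the symplectic area of this annulus equals the pairing $\langle [\lambda|_{L_n}], \partial A\rangle$, which is $o(1)$ by the preceding paragraph. The resulting capped surface $\tilde\Sigma_n$ represents $h_\ast A \in H_2(M',L')$ and has area $\ca_\om^L(A) + o(1)$, hence $\ca_{\om'}^{L'}(h_\ast A) = \ca_\om^L(A)$. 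The principal obstacle throughout is the flux estimate: it depends essentially on the torus structure, both for the canonical $\Z^n$-identification of $H_1$ across $C^0$-close Lagrangians and for the existence of bounded-length representatives in each class --- features absent for general closed Lagrangians, where the analogous rigidity remains only conjectural.
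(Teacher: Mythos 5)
Your reduction to a statement about the Liouville class of Lagrangian embeddings in $T^*\T^n$ that $\cc^0$-converge to the zero section is essentially the paper's lemma~\ref{le:rigspecotan}, including the thin-annulus capping via Stokes. But the core of your argument --- proving that $[i_n^*\lambda_\can]\to 0$ --- has a genuine gap. You claim that each generator of $H_1(\T^n)\cong\Z^n$ can be represented by a loop on $L_n$ of \emph{uniformly bounded length}, and then estimate $\bigl|\int_\gamma i_n^*\lambda_\can\bigr| \leqslant \|i_n^*\lambda_\can\|_{\cc^0}\cdot \ell(\gamma)$. The bounded-length claim is unjustified, and it is precisely where the difficulty of the theorem lives. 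The hypothesis is only $\cc^0$-convergence of the embeddings $i_n$; this controls the images of points but says nothing about derivatives, hence nothing about lengths of curves on $i_n(L)$. A $\cc^0$-small Lagrangian torus can oscillate wildly (already for graphs of highly oscillatory closed $1$-forms, and {\it a fortiori} for non-graphical Lagrangians that fold back), so that every loop in a given homology class has length going to infinity; the product $\|i_n^*\lambda_\can\|_{\cc^0}\cdot\ell(\gamma_n)$ then has no reason to be small. Writing $\int_{i_n\circ\gamma}\lambda_\can = -\int p_n(t)\,\dot q_n(t)\,dt$ makes the issue visible: $p_n\to 0$ uniformly, but $\dot q_n$ is unbounded, and their product can have integral bounded away from zero.

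The paper's proof of proposition~\ref{prop:torrig} avoids any such quantitative estimate. It argues by contradiction using a global input: if $a_n := [i_n^*\lambda_\can]$ stays bounded away from $0$, represent $a_n$ (after pushing to $H^1(\T^n)$) by the translation-invariant $1$-form $\vartheta_n$ --- this is the step that genuinely uses the torus structure --- and shift $L_n$ by $-\vartheta_n$ in the fibres. The shifted Lagrangian is exact by construction, yet $\cc^0$-close to the graph of $-\vartheta_n$ and hence disjoint from the zero section for $n$ large, contradicting Gromov's theorem that a closed exact Lagrangian in a cotangent bundle must meet the zero section. No length control is needed: the theorem is doing the work that your loop-length estimate cannot. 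If you want to salvage your outline, you would need to replace the bounded-length step with an argument of this kind; as written, it does not close.
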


\subsection{Variations on Eliashberg-Gromov's theorem}\label{sec:preseligr}
We now turn to question \ref{q:Eli-Grom-submanifolds}. For symplectic submanifolds, theorem \ref{thm:flexcodim4} explicitely gives a negative answer to question \ref{q:Eli-Grom-submanifolds} in codimension at least $4$. On the other hand, it is well known that Eliashberg-Gromov's theorem follows from the non-squeezing theorem \cite{hoze}. Analogously, theorem \ref{thm:nssymp} answers question \ref{q:Eli-Grom-submanifolds} by the affirmative for symplectic submanifolds of codimension $2$.

\begin{prop} \label{prop:eli-grom-symp-codim2}
Let $ N^{2n-2} \subset M $ be a symplectic submanifold of codimension $ 2 $, and let $ h : M \rightarrow M' $ be a symplectic homeomorphism whose restriction to $ N $ is a diffeomorphism onto a smooth submanifold $ N' = h(N) $. Then $ N' $ is a symplectic submanifold of $ M' $, and the restriction $ h_{|N} : N \rightarrow N' $ is a symplectomorphism.
\end{prop}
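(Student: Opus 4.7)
The proof strategy is to adapt the standard derivation of the Eliashberg--Gromov theorem from Gromov's non-squeezing, with Theorem \ref{thm:nssymp} playing the role of non-squeezing; the only additional twist is that Theorem \ref{thm:nssymp} assumes a priori that $N'$ is symplectic, so this has to be established first.

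Since $N'$ has codimension $2$, the kernel of $\omega'_{|TN'}$ at any point has even dimension at most $2$, so $N'$ is either symplectic or coisotropic pointwise. Denote by $D\subseteq N'$ the closed subset of coisotropic points. If the interior $D^\circ$ were non-empty, it would be a (possibly non-closed) coisotropic submanifold of $M'$, whose image $h^{-1}(D^\circ)\subseteq N$ is a smooth open piece of $N$. A local version of the Humili\`ere--Leclercq--Seyfaddini theorem (which should follow from the essentially local nature of the proof in \cite{hulese}) would then force $h^{-1}(D^\circ)$ to be coisotropic in $M$, contradicting the fact that it is an open subset of the codimension $2$ \emph{symplectic} submanifold $N$. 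Therefore $D^\circ=\varnothing$, so $U:=N'\setminus D$ is open and dense in $N'$, and $\omega'_{|N'}$ is symplectic on $U$.

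Fix $p\in h^{-1}(U)$ and consider the linear isomorphism $L:=d(h_{|N})_p:T_pN\to T_{h(p)}N'$ between two $(2n-2)$-dimensional symplectic vector spaces. Applying Theorem \ref{thm:nssymp} locally to $h$ and to $h^{-1}$ and letting the radius of the test balls tend to zero, every symplectic eigenvalue of the ellipsoid $L(B^{2n-2}(1))\subseteq T_{h(p)}N'$ and of $L^{-1}(B^{2n-2}(1))\subseteq T_pN$ must be $\geqslant 1$. Taking products of these eigenvalues, the symplectic volume factor of $L$ is forced to equal $1$ and each individual eigenvalue to equal $1$; combined with the orientation preservation inherited from the fact that $h$ is a $\cc^0$-limit of symplectic diffeomorphisms, this classical linear-algebra step (exactly as in the derivation of Eliashberg--Gromov from non-squeezing) forces $L$ to be a linear symplectomorphism.

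Consequently $(h_{|N})^*\omega'_{|N'}=\omega_{|N}$ on the dense open set $h^{-1}(U)$, and by continuity of the smooth pull-back $2$-form the equality holds on all of $N$. Since $\omega_{|N}$ is non-degenerate and $h_{|N}$ is a diffeomorphism, $\omega'_{|N'}$ must in fact be non-degenerate everywhere on $N'$; so a posteriori $D=\varnothing$, $N'$ is symplectic, and $h_{|N}:N\to N'$ is a symplectomorphism. The main delicate point of the plan is the invocation of the Humili\`ere--Leclercq--Seyfaddini theorem on the possibly non-closed coisotropic piece $D^\circ$; everything else is a direct import of the classical argument, with Theorem \ref{thm:nssymp} replacing Gromov non-squeezing.
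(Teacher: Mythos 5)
Your proof follows the same overall structure as the paper's: use the codimension-$2$ dichotomy (each $T_xN'$ is either symplectic or coisotropic), invoke \cite{hulese} to rule out an open coisotropic piece, restrict to the dense open symplectic part $N'_\omega$ where theorem~\ref{thm:nssymp} is available, linearise to get a statement about $L=d(h_{|N})_p$, and extend by density. But there are two places where your argument is weaker than the paper's, and the second is a genuine gap.

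First, the linear-algebra step. Applying theorem~\ref{thm:nssymp} to shrinking balls for $h$ and $h^{-1}$ and using the volume constraint does give that every symplectic eigenvalue of $L(B^{2n-2}(1))$ equals $1$, i.e.\ $L(B)=B$, i.e.\ $L\in O(2n-2)$. But $L\in O(2n-2)$ does \emph{not} imply $L^*\omega_{\st}=\pm\omega_{\st}$: for instance the orthogonal coordinate permutation $R(x_1,y_1,x_2,y_2)=(x_1,x_2,y_1,y_2)$ on $\mathbb{R}^4$ preserves the unit ball but sends $\omega_{\st}$ to $dx_1\wedge dx_2+dy_1\wedge dy_2$. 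To conclude conformality one has to test against general \emph{ellipsoids}, not just balls; this is exactly what the Hofer--Zehnder argument does, and the paper simply invokes \cite[Section 2.2, Theorem 3]{hoze} to obtain $d_xf^*\omega'=\lambda_x\,\omega_x$. Your ``shrinking balls'' recitation of the classical step is therefore not self-contained; it only gets you as far as $O(2n-2)$.

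Second, and more importantly, the sign. You claim the ambiguity $L^*\omega=\pm\omega$ is resolved by ``orientation preservation inherited from the fact that $h$ is a $\mathcal{C}^0$-limit of symplectic diffeomorphisms.'' That $h$ preserves the orientation of $M$ does \emph{not} imply that the restriction $h_{|N}$ preserves the symplectic orientation of $N$: the orientation of $M$ along $N$ is the product of the orientation of $N$ and of its symplectic normal bundle, and because $h$ is only a homeomorphism transversally, a priori nothing prevents $h$ from reversing both factors simultaneously. The paper fixes the sign (and in fact the full conformal factor) by a stabilisation trick: pass to $F=f\times\mathrm{id}\colon M\times\mathbb{C}\to M'\times\mathbb{C}$, which restricts to a diffeomorphism of the codimension-$2$ symplectic submanifold $N\times\mathbb{C}$ onto $N'\times\mathbb{C}$; applying the same Hofer--Zehnder conclusion to $F$ gives $d_{(x,z)}F^*(\omega'\oplus\omega_{\st})=\Lambda_{(x,z)}(\omega\oplus\omega_{\st})$, while the product structure forces $d_{(x,z)}F^*(\omega'\oplus\omega_{\st})=(\lambda_x\omega)\oplus\omega_{\st}$, whence $\lambda_x=\Lambda=1$. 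This cleanly pins down the sign (and the scale) where the orientation argument does not. You should replace the orientation appeal by this stabilisation argument (or by some other rigorous treatment of the sign) for the proof to be complete.
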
 

Similarly, theorem \ref{thm:nshyp} provides a positive answer to question \ref{q:Eli-Grom-submanifolds} for hypersurfaces. Conjecture \ref{conj:rigred} would also imply the same for coisotropic submanifold. We provide however another argument for general coisotropic submanifolds that only relies on \cite{hulese}.

\begin{prop} \label{prop:eli-grom-coisotrop}
Let $ C $ be a coisotropic submanifold of $ M $, and let $ h:M\to M' $ be a symplectic homeomorphism  whose restriction to $ C $ is a diffeomorphism onto a smooth submanifold $ C' = h(C) $. Then the restriction $ h_{|C} : C \rightarrow C' $ is symplectic.
\end{prop}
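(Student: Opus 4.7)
The plan is to use the Humili\`ere--Leclercq--Seyfaddini theorem to reduce the proposition to Eliashberg--Gromov's classical rigidity, applied on the local leaf spaces of the characteristic foliations.

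By that theorem, $C' = h(C)$ is coisotropic and $h$ sends the characteristic foliation $\mathcal{F}$ of $C$ to the characteristic foliation $\mathcal{F}'$ of $C'$. Since $h_{|C}$ is moreover a smooth diffeomorphism, the induced map of characteristic distributions is a smooth bundle isomorphism $T\mathcal{F} \xrightarrow{\sim} T\mathcal{F}'$. Introduce the smooth 2-form $\tau := h_{|C}^\ast(\omega'_{|C'}) - \omega_{|C}$ on $C$; the proposition amounts to $\tau=0$. For $v\in T_p\mathcal{F}$ and any $w\in T_pC$, one has $\omega_{|C}(v,w)=0$ by definition of the characteristic distribution, and likewise $\omega'_{|C'}(dh_pv, dh_pw)=0$ since $dh_pv\in T_{h(p)}\mathcal{F}'=\ker(\omega'_{|C'})$. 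Hence $\tau$ kills any vector tangent to $\mathcal{F}$ and descends to a smooth 2-form on the local leaf space $C/\mathcal{F}$. Proving the proposition thus reduces to showing that the induced smooth diffeomorphism $\bar h$ between local leaf spaces of $\mathcal{F}$ and $\mathcal{F}'$ is symplectic for the reduced (Marsden--Weinstein) forms.

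To realize $\bar h$ concretely, fix $p\in C$, set $q=h(p)\in C'$, and pick small symplectic submanifolds $\Sigma \subset C$ and $\Sigma'\subset C'$, of dimension $2n-2k$ (where $k$ is the codimension of $C$), transverse to $\mathcal{F}$ at $p$ and to $\mathcal{F}'$ at $q$ respectively. They model the local leaf spaces and carry the reduced forms $\omega_{|\Sigma}$ and $\omega'_{|\Sigma'}$. Then $\bar h$ corresponds to the smooth diffeomorphism $g:\Sigma\to\Sigma'$ mapping $x\in\Sigma$ to the unique intersection point of the $\mathcal{F}'$-leaf through $h(x)$ with $\Sigma'$. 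Since $g$ is a smooth diffeomorphism between symplectic manifolds, it is enough, by Eliashberg--Gromov's theorem, to realize $g$ as a $C^0$-limit of smooth symplectic diffeomorphisms between open subsets of $\Sigma$ and $\Sigma'$.

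To construct such an approximating sequence, write $h = \lim_n \phi_n$ with $\phi_n$ symplectic diffeomorphisms, and work in a coisotropic Darboux--Weinstein chart near $q$ identifying a neighbourhood of $q$ in $M'$ with $\Sigma'\times \R^{2k}$ equipped with its split symplectic form, in which $C'$ corresponds to $\Sigma'\times L'$ for a Lagrangian $L'\subset\R^{2k}$. Setting $g_n := \mathrm{pr}_{\Sigma'}\circ \phi_n|_\Sigma$ gives smooth maps $g_n\to g$ in $C^0$. A linear-algebra computation shows that the failure $g_n^\ast \omega'_{|\Sigma'}-\omega_{|\Sigma}$ at a point of $\Sigma$ is governed by the symplectic pairing on the $\R^{2k}$-factor of $d\phi_n$-images of $T_p\Sigma$, a quantity that \emph{pointwise} vanishes in the limit because $dh$ sends $T\Sigma\subset TC$ into $TC'=T(\Sigma'\times L')$ and $L'$ is Lagrangian. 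The main obstacle is that $C^0$-convergence $\phi_n\to h$ does not control derivatives, so making this asymptotic vanishing uniform is delicate; the way out is to use the refined form of the characteristic-preservation statement in \cite{hulese} to arrange the approximating sequence $\phi_n$ to be asymptotically compatible with the local product decomposition near $C$, and then apply Moser's trick on nested relatively compact domains to correct each $g_n$ into a genuinely symplectic diffeomorphism $\tilde g_n$ still $C^0$-close to $g$. Eliashberg--Gromov applied to $\bar h = \lim \tilde g_n$ then concludes the proof.
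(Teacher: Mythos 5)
Your overall strategy is genuinely different from the paper's: you try to reduce to a classical Eliashberg--Gromov argument on local transversals to the characteristic foliation, whereas the paper argues by contradiction using the Humili\`ere--Leclercq--Seyfaddini \emph{uniqueness} theorem for continuous Hamiltonian flows. Concretely, the paper observes that if $h_{|C}$ were not symplectic, then the reduction $\hat h$ would fail to be symplectic somewhere, and one can cook up a smooth $F$ constant along characteristics whose push-forward by $h$ generates a continuous Hamiltonian flow $\Phi_K^t$ with $K\equiv 0$ on $C$ that nonetheless moves points of $C$ across characteristics --- contradicting \cite{hulese}. Your reduction of the problem to the leaf-space map $\bar h$ and the computation that $\tau=h_{|C}^*\omega'_{|C'}-\omega_{|C}$ is basic is fine (it is closed and $\iota_X\tau=0$ for $X$ tangent to $\mathcal F$, hence $\mathcal L_X\tau=0$), and it correctly identifies the content: the reduction map should be symplectic.

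The gap is in the last paragraph, and it is not a minor ``delicate point'' but the crux. You want symplectic maps $\tilde g_n:\Sigma\to\Sigma'$ with $\tilde g_n\to g$ in $\cc^0$, and you propose to obtain them from $g_n:=\mathrm{pr}_{\Sigma'}\circ\phi_{n|\Sigma}$ followed by a Moser correction. But $\cc^0$-convergence $\phi_n\to h$ gives no control whatsoever on $d\phi_n$: there is no reason for $\phi_n(\Sigma)$ to be close to $C'$ in any tangential sense, no reason for $g_n$ to be an immersion, and no reason for $g_n^*\omega'_{|\Sigma'}$ to be even $\cc^0$-close to $\omega_{|\Sigma}$. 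Moser's trick needs precisely that kind of closeness to produce a small correction. The sentence claiming the defect ``pointwise vanishes in the limit because $dh$ sends $T\Sigma$ into $TC'$'' conflates $d\phi_n$ with $dh$; these are unrelated under $\cc^0$-convergence, and $dh$ is not even assumed to exist off $C$. Likewise, ``arranging the $\phi_n$ to be asymptotically compatible with the local product decomposition near $C$'' is exactly the statement one would need to prove and is not a corollary of \cite{hulese} in any form you have invoked. In effect you have restated the problem (exhibit a $\cc^0$-approximating sequence of genuine symplectomorphisms of the transversal) without providing a mechanism for solving it; this is essentially the unresolved content of question~\ref{q:subsympeo'} in the paper. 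The paper's route through the HLS uniqueness theorem sidesteps precisely this difficulty by working at the level of continuous Hamiltonian dynamics on $C$ rather than trying to symplectically approximate the reduction map.
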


Let us remark, that propositions \ref{prop:eli-grom-symp-codim2}, \ref{prop:eli-grom-coisotrop} imply that for {\it any} codimension $ 2 $ smooth submanifold $ N^{2n-2} \subset M $, and a symplectic homeomorphism $ h : M \rightarrow M' $ whose restriction to $ N $ is a diffeomorphism onto a smooth submanifold $ N' = h(N) $, we have that the restriction $ h_{|N} : N \rightarrow N' $ is a symplectomorphism. Indeed, in such a situation, for any point $ x \in N $, the tangent space $ T_x N $ is either symplectic or coisotropic. If we denote by $ V \subset N $ the set of points $ x \in N $ such that $ T_x N $ is symplectic, then $ V $ is a relatively open subset of $ N $, and hence by proposition \ref{prop:eli-grom-symp-codim2}, the restriction $ h_{|V} : V \rightarrow h(V) $ is symplectic. Now, if we denote by $ W \subset N $ the relative interior of $ N \setminus V $ in $ N $, then $ W $ is relatively open in $ N $ and moreover $ W $ is a coisotropic submanifold of $ M $. Hence by proposition \ref{prop:eli-grom-coisotrop}, the restriction $ h_{|W} : V \rightarrow h(W) $ is symplectic. But since the union of $ W $ and the relative closure of $ V $ in $ N $, equals the whole $ N $, by continuity we conclude that the restriction $ h_{|N} : N \rightarrow N' = h(N) $ is symplectic.

In fact, propositions \ref{prop:eli-grom-symp-codim2}, \ref{prop:eli-grom-coisotrop} and theorem \ref{thm:flexcodim4} put together  answer question \ref{q:Eli-Grom-submanifolds} for many  presymplectic submanifolds.

\begin{definition}
Let $ (M^{2n},\omega) $ be a symplectic manifold. A smooth submanifold $ X \subset M $ is presymplectic if all the spaces $ T_x X $, $ x \in X $, are symplectically isomorphic (i.e. that for any $ x,y \in X $, the restrictions of $ \omega $ to $ T_x X $ and to $ T_y X $ are isomorphic via a linear isomorphism $ T_x X \rightarrow T_y X $). This is equivalent to saying that the dimension of the kernel of $ \omega $ in $ T_x X $ does not depend on $ x \in X $. In that case we call this dimension the symplectic co-rank of $ X $.
\end{definition} 
 
As in the case of a coisotropic submanifold, presymplectic submanifolds carry  characteristic foliations (tangent to the distribution $ T_x X \cap (T_x X)^{\perp_{\omega}} $), have local symplectic models of the form $\D^p\times[0,1]^r$, where $r$ is the co-rank of $X$ and $k=2p+r$ is its dimension.  
Let us remark that considering only presymplectic submanifolds is still general enough, since 
every smooth submanifold of a symplectic manifold has a dense relatively open subset which is a union of presymplectic relatively open subsets (see lemma~\ref{L:sympl-homogen-generic}). Moreover, for a generic smooth submanifold, the complement of this dense relatively open subset is rather ``small". 

Here are all possible occurences for the pairs $(k,r)$  and the answers to question \ref{q:Eli-Grom-submanifolds}. Notice that the   case $k=1$ is trivial and not considered below. Moreover, $k+r$ must be even, and not more than $2n$.
\begin{description}
\item{$k+r=2n$:} This is the coisotropic case, and the answer to question \ref{q:Eli-Grom-submanifolds}  is affirmative by proposition \ref{prop:eli-grom-coisotrop}.  
\item{$k+r\leqslant 2n-4$, $k\neq r$:} One checks that  $X$  lies in a codimension $4$ symplectic submanifold, is not isotropic, so theorem \ref{thm:flexcodim4} provides a negative answer to question \ref{q:Eli-Grom-submanifolds}.
\item{$k+r=2n-2 $, $ r\geqslant 2$ or $k+r\leqslant 2n-4$, $k=r$ (isotropic case):}  Our work does not show anything in this situation but the following conjecture  would give a negative answer to question \ref{q:Eli-Grom-submanifolds} in all these cases, which comprise in particular the isotropic non-Lagrangian $X$. 
\begin{conj}\label{conj:flex-symp-iso} There exists a symplectic homeomorphism with compact support in $\C^3$ that diffeomorphically maps a symplectic disc to a smooth isotropic disc.  
\end{conj}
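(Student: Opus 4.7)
The plan is to imitate the construction of Theorem~\ref{thm:flexcodim4}, building the desired $h$ as the $\cc^0$-limit of a sequence of Hamiltonian diffeomorphisms $\phi_N=\psi_N\circ\psi_{N-1}\circ\cdots\circ\psi_1$, with each $\psi_k$ moving points by a summable amount $\eps_k$. Fix a symplectic disc $u_0:\overline D\to\C^3$ and a smooth isotropic disc $u_\infty:\overline D\to\C^3$; by a preliminary Hamiltonian isotopy (exploiting Gromov's classical h-principle for isotropic $2$-discs in $\C^3$, which have codimension $\geqslant 2$) one may first arrange $u_\infty$ to be $\cc^0$-close to $u_0$, so that the real task is to produce smooth symplectic discs $u_k=\phi_k\circ u_0$ whose images converge to $u_\infty(\overline D)$ in Hausdorff distance.

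The key technical ingredient is a quantitative h-principle of mixed symplectic/isotropic type, analogous to Theorem~\ref{T:qh-principle-main}. The target statement would be: for every $\delta>0$, given a small symplectic disc $v_1:\overline D\to\C^3$ and a smooth isotropic disc $v_2:\overline D\to\C^3$ whose image lies within distance $\eps$ of $v_1(\overline D)$ and is homotopic to $v_1$ through smooth discs of trajectory diameter less than $\eps$, there exists a compactly supported Hamiltonian flow $\psi$ with trajectories of diameter less than $2\eps$ such that $\psi_1(v_1(\overline D))$ is contained in the $\delta$-neighbourhood of $v_2(\overline D)$. Note that $\psi_1(v_1)$ remains a smooth symplectic disc with the same symplectic area as $v_1$; what is being approximated is only the image set, with the symplectic area concentrated in fine wrinkles spreading transversally to the isotropic plane of $v_2$. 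The codimension provided by $\C^3$ (as opposed to $\C^2$, where Lagrangian targets are conjectured $\cc^0$-rigid) is precisely what should allow this wrinkling.

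To deduce the conjecture, one iterates the mixed h-principle at finer and finer scales: at step $k$, subdivide $\overline D$ into $N_k^2$ small subdiscs, match sub-images of $u_{k-1}$ to the corresponding sub-images of $u_\infty$, and apply the local h-principle with parameters $\eps_k,\delta_k\to 0$ chosen summably. The $\cc^0$-smallness of $\psi_k$ ensures that $\phi_N$ converges in the $\cc^0$-topology to a symplectic homeomorphism $h$, and by construction $h(u_0(\overline D))=u_\infty(\overline D)$ as sets, with the restriction of $h$ to $u_0(\overline D)$ a diffeomorphism onto $u_\infty(\overline D)$ inherited from the fixed smooth parametrisation of $u_\infty$.

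The main obstacle is the quantitative mixed h-principle itself. Unlike in Theorem~\ref{T:qh-principle-main}, the image $\psi_1(v_1)$ cannot pointwise match $v_2$, since Hamiltonian flows preserve the symplectic area of discs and $v_2$ has zero such area; instead $\psi_1(v_1)$ must fold into an arbitrarily thin tube around $v_2$, combining a convex-integration-style wrinkling with the Hamiltonian and $\cc^0$-smallness constraints. Realising this folding by a compactly supported Hamiltonian flow with prescribed displacement bounds, rather than merely a formal h-principle diffeotopy, is what distinguishes the problem from Gromov's classical isotropic h-principle and appears to require new local models beyond those used in the proof of Theorem~\ref{T:qh-principle-main}.
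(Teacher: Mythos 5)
The statement you are addressing is Conjecture~\ref{conj:flex-symp-iso} of the paper. It is left open: the authors explicitly remark that in the presymplectic case with $k=r$ their ``work does not show anything'', and they offer no argument for the conjecture. There is therefore no paper proof to compare against, and your proposal --- which you yourself frame as a strategy rather than a proof --- cannot close the gap any more than the paper does. You have, however, correctly located the missing ingredient: a quantitative ``mixed'' $h$-principle realizing, by a compactly supported Hamiltonian flow of small displacement, a fine wrinkling of a symplectic disc into a thin tube around an isotropic one. This goes genuinely beyond Theorem~\ref{T:qh-principle-main}, where both source and target discs are symplectic of the same area, and the existing local models (the $f_k$-immersions and the vertex/edge/face scheme of Proposition~\ref{P:qh-principle}) do not immediately adapt, since the area defect is now the full area of the source rather than a fixed surplus carried by a small auxiliary factor.

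One point in your formulation is weaker than what is actually required. You ask only that $\psi^1(v_1(\overline D))$ lie in a $\delta$-neighbourhood of $v_2(\overline D)$, and, inductively, that $u_k(\overline D)\to u_\infty(\overline D)$ in Hausdorff distance. Hausdorff convergence of image sets is not sufficient: to obtain a limit symplectic homeomorphism $h$ with $h\circ u_0 = u_\infty$ (so that $h|_{u_0(\overline D)}$ is the prescribed diffeomorphism $u_\infty\circ u_0^{-1}$), you need uniform $\cc^0$-convergence of the parametrized maps $\phi_k\circ u_0\to u_\infty$. Demanding this pointwise control in the local statement is not obstructed by the area-preservation of Hamiltonian flows, since the symplectic area of a disc is not a $\cc^0$-continuous functional; but it is a necessary strengthening. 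Likewise, injectivity of the limit map off $u_0(\overline D)$ must be secured through a decreasing exhaustion of neighbourhoods and careful control of the supports of the $\psi_k$, exactly as in the proof of Theorem~\ref{T:C0-flexibility}; your sketch elides this bookkeeping, though it is the more routine part once the local $h$-principle is in hand.
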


\item{$k+r=2n-2$ and $r=0$:} This is the case of symplectic submanifolds of codimension $2$, and the answer to question  \ref{q:Eli-Grom-submanifolds}  is affirmative by proposition \ref{prop:eli-grom-symp-codim2}.

\item{$k+r=2n-2$, $r=1$:} This is the last remaining case, and it is quite mysterious for us since we do not even  know what to conjecture here - rigidity or flexibility.

\end{description}

\subsection{Further questions}

Let us mention a stronger, more fundamental version of questions \ref{q:c0rigid} and \ref{q:c0quantrig}: 

\begin{question}\label{q:subsympeo}
Assume that the image of a smooth submanifold $X$ by a symplectic homeomorphism $h$ is smooth. Does there exist a sequence of symplectic diffeomorphisms $f_k$, $ k =1,2, \ldots $, which $ \mathcal{C}^0 $-converges to $ h $, such that $f_k(X)=h(X)$ for each $ k $?
\end{question}
This question seems for the moment to be completely out of reach, and it might require novel ideas. 
Questions \ref{q:c0rigid}, \ref{q:subsympeo} 
admit the following weaker versions:

\begin{question'}\label{q:c0rigid'}
Assume that the image of a smooth submanifold $X$ by  a symplectic homeomorphism $h$ is smooth. Is there a diffeomorphism $f : X \rightarrow h(X) $ which is symplectic (i.e. which maps the symplectic form on the source to the symplectic form on the target)?
\end{question'}
\setcounter{varquest}{3}
\begin{question'}\label{q:subsympeo'}
Assume that the image of a smooth submanifold $X$ by  a symplectic homeomorphism $h$ is smooth. 
Does there exists a sequence of diffeomorphisms $f_k : X \rightarrow h(X) $, $ k =1,2,\ldots $, which $ \mathcal{C}^0 $-converges to $ h_{|X} : X \rightarrow h(X) $, such that $f_k$ is symplectic for each $ k $?
\end{question'} 
It is not hard to see that for a general submanifold $ X \subset M $ we get an affirmative answer to question \ref{q:Eli-Grom-submanifolds}, provided that we have a positive answer to question \ref{q:c0rigid'} for stabilisations of relatively open subsets of $ X $ (see lemma~\ref{L:quesA-implies-quesB}), or a positive answer to question \ref{q:subsympeo'} for $ X $ (see lemma~\ref{L:quesD'-implies-quesB}).

\subsection{Organization of the paper}  
Section \ref{sec:c0flexcodim4} is devoted to the flexibility of codimension $4$ symplectic submanifolds. We prove theorem \ref{thm:flexcodim4}, using the quantitative $h$-principle for discs which is formulated in theorem~\ref{T:qh-principle-main}, and several classical $h$-principle statements in symplectic geometry \cite{gromov2,elmi}. Theorem~\ref{T:qh-principle-main} is proved right after, and the proofs of the classical $h$-principle statements that we use are given in appendix \ref{sec:append-flex}. 
We establish theorem \ref{thm:nssymp} in section \ref{sec:rigcodim2}, and apply it to the $\cc^0$-rigidity of the reduction of symplectic hypersurfaces in section \ref{sec:rigred}. 
In section \ref{sec:rigspec} we prove theorem \ref{thm:rigspec} on the rigidity of the spectrum of tori, and state a precise conjecture for general Lagrangians.
Finally, we prove propositions \ref{prop:eli-grom-symp-codim2}, \ref{prop:eli-grom-coisotrop} (\`a la Eliashberg-Gromov) in section \ref{sec:elgr}.
In appendix \ref{sec:relquest}, we explain some relations between the different questions stated in the introduction.

\subsection{Notations and conventions}\label{notation}

 Let us first give a precise definition of symplectic homeomorphisms.
 \begin{definition} \label{def:sympeo} Let $M,M'$ be two symplectic manifolds. We say that a homeomorphism $h:M\to M'$ is symplectic if there exists a sequence of open subsets $U_1\subset U_2\subset \dots\subset M$ which exhaust $M$, and a sequence of symplectic embeddings $h_i:U_i\to M'$ which converge uniformly to $h$ on compact subsets of $M$. We denote by $\sympeo(M,M')$ the set of symplectic homeomorphisms between $M$ and $M'$.
 \end{definition}
 Although this definition does not concern {\it a priori} the inverse of the homeomorphism, we have the following classical proposition:
 \begin{proposition}\label{prop:defsympeo}
Symplectic homeomorphisms can be restricted to open subsets and inverted. Precisely, if $h\in \sympeo(M,M')$, then $h^{-1}\in \sympeo(M',M)$ and for an open subset $U\subset M$, $h_{|U}\in \sympeo(U,h(U))$. In addition, for any $ h \in \sympeo(M,M') $ and $ h' \in \sympeo(M',M'') $ we have $ h' \circ h \in \sympeo(M,M'') $.
\end{proposition}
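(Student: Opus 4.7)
The plan is to establish the three assertions --- restriction, inversion, and composition --- by upgrading the defining sequence $h_i : U_i \to M'$ to new sequences with the desired domains and targets, via a compact-exhaustion trick combined, in the inversion step, with an invariance-of-domain argument.

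For the restriction statement, given an open $U\subset M$, I would choose a compact exhaustion $K_1\subset K_2\subset\dots$ of $U$ with $K_j\subset \textnormal{int}(K_{j+1})$ and $\bigcup K_j=U$. For each $j$, I pick $i(j)$ so large that $K_{j+1}\subset U_{i(j)}$, the $\cc^0$-distance between $h_{i(j)}$ and $h$ on $K_{j+1}$ is smaller than both $1/j$ and the distance from the compact set $h(K_{j+1})$ to the complement of the open set $h(U)$. Then $h_{i(j)}(K_{j+1})\subset h(U)$, so $g_j:=h_{i(j)}|_{\textnormal{int}(K_{j+1})}$ is a symplectic embedding into $h(U)$. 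The open sets $\textnormal{int}(K_{j+1})$ form an increasing exhaustion of $U$, and $g_j\to h_{|U}$ uniformly on compact subsets of $U$, yielding $h_{|U}\in\sympeo(U,h(U))$.

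The inversion step is the main obstacle. Its heart is the claim that for every compact $K'\subset M'$ one has $K'\subset h_i(U_i)$ for all sufficiently large $i$. I would prove this pointwise: for $x'\in K'$ and $x=h^{-1}(x')$, choose a small closed ball $\overline D\subset M$ around $x$; then $h(\overline D)$ is a compact neighborhood of $x'$ containing some $B(x',r)$. For $i$ large, $h_i$ is defined on $\overline D$ and $\cc^0$-close to $h$ there, so $h_i(\partial D)$ avoids $B(x',r/2)$ while $h_i(x)\in B(x',r/2)$; since $h_i|_{\overline D}$ is a smooth embedding, invariance of domain (or Jordan--Brouwer separation) forces $B(x',r/2)\subset h_i(D)$. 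A finite subcover of $K'$ by such balls then proves the claim and shows that $\{h_i^{-1}(K')\}$ stays in a fixed compact subset of $M$. Using this uniform compactness, a standard contradiction argument (extracting convergent subsequences $y_k'\to y'$ in $K'$ and $x_k=h_{i_k}^{-1}(y_k')\to x$ in $M$, and combining $h_{i_k}(x_k)\to h(x)$ with $h_{i_k}(x_k)=y_k'\to y'=h(h^{-1}(y'))$) gives $h_i^{-1}\to h^{-1}$ uniformly on compact subsets of $M'$. Taking a compact exhaustion $K_j'$ of $M'$ and $g_j:=h_{i(j)}^{-1}|_{\textnormal{int}(K_{j+1}')}$ for suitable $i(j)$ then exhibits $h^{-1}\in\sympeo(M',M)$.

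The composition statement combines the previous two. Given $h_i:U_i\to M'$ defining $h$ and $h_j':V_j\to M''$ defining $h'$, I take a compact exhaustion $K_l$ of $M$; for each $l$, I first pick $j(l)$ so large that $h(K_l)$ lies in a compact subset of $V_{j(l)}$ staying at positive distance from its boundary, and then $i(l)$ so large that $h_{i(l)}(K_l)\subset V_{j(l)}$ and both $\sup_{K_l}d(h_{i(l)},h)$ and $\sup_{h(K_l)}d(h'_{j(l)},h')$ are less than $1/l$. Then $g_l:=h_{j(l)}'\circ h_{i(l)}|_{\textnormal{int}(K_{l+1})}$ (restricted further if necessary so that the composition is defined) is a symplectic embedding into $M''$, and the triangle inequality together with uniform continuity of $h'$ on $h(K_l)$ shows $g_l\to h'\circ h$ uniformly on compacta of $M$, establishing $h'\circ h\in\sympeo(M,M'')$.
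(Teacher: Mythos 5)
Your proof is correct and follows essentially the same route as the paper: the heart of the matter in both is the topological argument (you phrase it via Jordan--Brouwer separation and invariance of domain, the paper via degree theory, but these are interchangeable here) showing that every compact subset of $M'$ eventually lies in $h_i(U_i)$, after which uniform convergence of $h_i^{-1}$ to $h^{-1}$ on compacta follows and the restriction and composition parts are handled by simple exhaustion-and-restriction bookkeeping. The only cosmetic differences are your use of a pre-chosen compact exhaustion with hand-picked indices where the paper takes $V_i = \mathrm{int}\bigcap_{k\geqslant i}h_k^{-1}(\cdot)$, and your compactness/subsequence contradiction argument for uniform convergence of inverses where the paper instead writes out an explicit modulus-of-continuity estimate for $h^{-1}$.
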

\noindent {\it Proof:} Let $h\in \sympeo(M,M')$, $U_1 \subset U_2 \subset \ldots $ an exhaustion of $M$, and $h_i:U_i\to M'$ be symplectic embeddings which $\cc^0$-converge on compact sets to $h$. Let $ U \subset M $ be an open subset. To show that the restriction $h_{|U} : U \rightarrow h(U) $ is a symplectic homeomorphism, look at the sequence of open sets $ \displaystyle V_i = \text{int} ( \bigcap\limits_{k=i}^{\infty} h_k^{-1}(h(U)) ) $. It is easy to see that $ V_1 \subset V_2 \subset \ldots $ is an exhaustion of $ U $, and the sequence of restrictions $ h_{i|V_i} : V_i \rightarrow h_i(V_i) \subset h(U) $ converges uniformly to $ h_{|U} : U \rightarrow h(U) $ on compact subsets of $ U $. \\
\indent Let us now show that the inverse $h^{-1} : M' \rightarrow M $ is also a symplectic homeomorphism. We first claim that every compact subset of $ M' $ is contained in $h_i(U_i)$ for large $ i $, and that $h_i^{-1}:h_i(U_i)\to M$ $\cc^0$-converge on compact sets to $h^{-1}$. To see this, endow $M,M'$ with Riemannian metrics, and fix a compact set $K'\subset M'$. Since $h$ is a homeomorphism, $h^{-1}(K')$ is compact, so there exist an open set $V$ and a number $i_0$ such that $h^{-1}(K')\Subset V\Subset U_{i_0}$. 
By standard degree theory, since $d(K',\partial h(V))>0$, there exists $\eps>0$ such that  any continuous map $f$ whose $\cc^0$-distance from $ h $ is less than $\eps$ verifies $\deg(f,y,V)=\deg (h,y,V)$ for all $y\in K'$. Hence, there exists $i_1\geqslant i_0$ such that
$$
\forall i\geqslant i_1, \forall y\in K', \; \deg(h_i,y,V)=\deg(h,y,V)=1.
$$
There is therefore a solution to $h_i(x)=y$ in $V\subset U_{i_0}\subset U_i$, so $h_i(U_i)\supset K'$ for all $i\geqslant i_1$.  Now since $h^{-1}$ is uniformly continuous on $h(V)$, it has a modulus of continuity 
$$
\delta_{h(V)}(h^{-1},\eps):=\sup\{d(h(x), h(y)), \; x,y\in h(V), \; d(x,y)\leqslant \eps\},
$$
which tends to $0$ when $\eps$ goes to $0$. 
If $y\in K'$ and $i\geqslant i_1$,
$$
\begin{array}{lll}
d(h^{-1}(y),h_i^{-1}(y))&\leqslant d(h^{-1}(y), h^{-1}(h\circ h_i^{-1}(y))) &\\
& \leqslant \delta_{h(V)}\big(h^{-1},d(y,h\circ h_i^{-1}(y))\big), & \text{ since } y,h\circ h_i^{-1}(y)\in h(V)\\
& \leqslant \delta_{h(V)}\big(h^{-1},d(h_i(h_i^{-1}(y)),h(h_i^{-1}(y)))\big) & \\
& \leqslant \delta_{h(V)}\big(h^{-1},d_{\cc^0,V}(h_i,h)\big),& \text{ since } h_i^{-1}(y)\in V. 
\end{array}
$$ 

Therefore, the sequence of open sets $ \displaystyle V_i = \text{int} (\bigcap_{j=i}^{\infty} h_j (U_j)) $ exhausts $ M' $, and since $ V_i \subset h_i(U_i) $ for each $ i $, $h_i^{-1}$ is defined on $V_i$ and converges to $h^{-1}$ on compact subsets of $M'$. Thus $h^{-1}$ is indeed a symplectic homeomorphism.

Finally, let $ h \in \sympeo(M,M') $ and $ h' \in \sympeo(M',M'') $, such that $ h $ is the $ \cc^0 $-limit of a sequence of symplectic embeddings $ h_i : U_i \rightarrow M' $, and $ h' $ is the $ \cc^0 $-limit of a sequence of symplectic embeddings $ h_i' : U_i' \rightarrow M'' $, where $ U_1 \subset U_2 \subset \ldots $ is an exhaustion of $ M $ and $ U_1' \subset U_2' \subset \ldots $ is an exhaustion of $ M' $. Denote $ \displaystyle V_i =  \text{int} ( \bigcap_{j=i}^{\infty} h_j^{-1}(U_j') ) $ and $ W_i = U_i \cap V_i $. Then $ W_1 \subset W_2 \subset \ldots $ is an exhaustion of $ M $, and the sequence of symplectic embeddings $ h_i' \circ h_{i|W_i} : W_i \rightarrow M'' $, $ i=1,2,\ldots $ converges uniformly to the composition $ h' \circ h $ on compact subsets of $ M $, and hence $ h' \circ h \in \sympeo(M,M'') $. \cqfd

We will also make use of the following notations in the course of the paper: 
 
\noindent $\bullet$ $D = \{ z \in \mathbb{C} \; | \; |z| < 1 \} $; $ D(r) = \{ z \in \mathbb{C} \; | \; |z| < r \} $; $ \overline{D} =  \{ z \in \mathbb{C} \; | \; |z| \leqslant 1 \} $. 

\noindent $ \bullet $ $ S(r) = \partial D(r) = \{ z \in \mathbb{C} \; | \; |z| = r \} $ stands for the circle of radius $ r $.

\noindent $ \bullet $ $ A(r_1,r_2) = \{ z \in \mathbb{C} \; | \; r_1 < |z| < r_2 \} $ is an annulus.

\noindent $ \bullet $ $\D(a)=D(\sqrt{\frac{a}{\pi}})$ stands for the disc of area $a$.

\noindent $ \bullet $ $B(a)=B^{2n}(a) \subset \mathbb{R}^{2n}$ stands for the euclidean ball of capacity $a$. 

\noindent $ \bullet $ For a pair of points $ x,y \in \mathbb{R}^m $, we denote by $ d(x,y) $ the standard Euclidean distance between $ x $ and $ y $. 

\noindent $ \bullet $ For any set $ X $ and a subset $ Y \subset \mathbb{R}^m $, the $\cc^0$-topology on the space of functions $f : X \rightarrow Y $ is the topology of the $\cc^0$-distance, defined by 
$$
d_{\cc^0}(f,g)=\sup_{x\in X} d(f(x),g(x)).
$$

\noindent $ \bullet $ Let $ M $ be a symplectic manifold, and let $ \cv \Subset \cv' \subset M $ be open sets. A $ (\cv,\cv') $-cutoff of a Hamiltonian $ H : M \times [0,1] \rightarrow \mathbb{R} $ is a Hamiltonian $ \widetilde{H} : M \rightarrow \mathbb{R} $ of the form $ \widetilde{H}(x,t) = \phi(x) \cdot H(x,t) $, 
$ (x,t) \in M \times [0,1] $, where $ \phi : M \rightarrow [0,1] $ is a smooth function such that $ \supp \phi \Subset \cv' $ and $ \phi = 1 $ on a neighbourhood of $ \overline{\cv} $. The function $ \phi $ is called a $ (\cv,\cv') $-cutoff function.

\subsection*{Acknowledgements}
LB was partially supported by the Israel Science Foundation grant 1380/13, and by the Raymond and Beverly Sackler Career Development Chair. EO was supported by the grant ANR-116JS01-010-01. We thank Yakov Eliashberg, Vincent Humili\`ere, Leonid Polterovich, Sobhan Seyfaddini and Sasha Sodin for carefully listening to the proofs, and for their remarks. We thank Yaron Ostrover for providing us a reference which improved our paper. We finally thank the referees for valuable suggestions, which in particular improved the readability of the paper.

\section{Flexibility of codimension $4$ symplectic submanifolds}\label{sec:c0flexcodim4}
Let us start with a decompressed statement of theorem \ref{thm:flexcodim4}, in the particular case of discs. 
\begin{theorem} \label{T:C0-flexibility}
Let $ (M,\omega) $ be a connected symplectic manifold of dimension $ 2n \geqslant 6 $. Let $ a > 0 $, and let $ u_1,u_2 : \overline{D} \rightarrow M $ be smooth embeddings such that $ u_1^* \omega = \omega_{\st} $, $ u_2^* \omega = a^2 \omega_{\st} $. 
Then there exists a sequence $ \phi_1,\phi_2,\ldots $ of uniformly compactly supported symplectic diffeomorphisms of $ M $ (by uniformly compactly supported we mean that there exists a compact set $ K \subseteq M $ such that $ supp(\phi_i) \subseteq K $ for $ i = 1,2,\ldots $), and a homeomorphism $ \phi : M \rightarrow M $, such that $ \phi_i $, $ i=1,2,\ldots $ $\cc^0 $-converges to $ \phi $, and such that $ u_2 = \phi \circ u_1 $.
\end{theorem}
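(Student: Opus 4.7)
The plan is to construct $ \phi $ as the uniform limit of a sequence $ \phi_k $ of compactly supported symplectic diffeomorphisms of $ M $, with supports in a fixed compact set, such that $ \phi_k \circ u_1 \rightarrow u_2 $ uniformly. The key principle is that when the codimension of a symplectic disc exceeds $ 2 $ (i.e.\ $ n \geqslant 3 $), a symplectic disc of area $ \pi $ can be ``coiled'' into an arbitrarily thin tubular neighborhood of $ u_2(\overline{D}) $, even though $ u_2(\overline{D}) $ has the smaller area $ a^2 \pi $. The quantitative $ h $-principle (Theorem~\ref{T:qh-principle-main}) will transform each successive coiling into a Hamiltonian flow with uniformly $ \cc^0 $-small trajectories, and the area mismatch (when $ a \neq 1 $) is precisely what prevents $ \phi $ from being a diffeomorphism while allowing it to be a symplectic homeomorphism.

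\textbf{Reduction to a Darboux chart.} Since $ n \geqslant 3 $, Gromov's $ h $-principle \cite{gromov2} implies that any two symplectic embeddings of $ \overline{D} $ into $ M $ are isotopic through symplectic embeddings; realizing such an isotopy as an ambient Hamiltonian isotopy yields a compactly supported $ \phi_0 \in \Ham(M) $ such that $ v_0 := \phi_0 \circ u_1 $ has image inside a fixed tubular neighborhood $ W $ of $ u_2(\overline{D}) $. By the symplectic neighborhood theorem we may identify $ W $ with an open subset of $ \mathbb{C}^n $ containing both $ v_0(\overline{D}) $ and $ u_2(\overline{D}) $, so that henceforth the entire construction takes place in $ \mathbb{C}^n $.

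\textbf{Coiled approximating sequence and iteration.} Choose a summable sequence $ \epsilon_k \downarrow 0 $ and a decreasing family of open neighborhoods $ W_k \subset W $ of $ u_2(\overline{D}) $, each of diameter less than $ \epsilon_k $, with $ \bigcap_k W_k = u_2(\overline{D}) $. Using the classical relative $ h $-principle for symplectic embeddings in codimension $ \geqslant 4 $ (\cite{gromov2,elmi}, see appendix~\ref{sec:append-flex}), build inductively symplectic embeddings $ v_k : \overline{D} \rightarrow W_k $ with $ v_k^* \omega_{\st} = \omega_{\st} $, together with homotopies $ v_k \leadsto v_{k+1} $ through symplectic embeddings contained in $ W_k $, along which every point travels a distance less than $ \epsilon_k $, and with $ v_k \rightarrow u_2 $ uniformly. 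The existence of such $ v_k $ with area $ \pi $ inside the thin tube $ W_k $ is exactly where the hypothesis $ n \geqslant 3 $ is used: the area excess $ \pi - a^2 \pi $ is absorbed by coiling into the $ 2(n-1) \geqslant 4 $ transverse directions. For each $ k $, apply Theorem~\ref{T:qh-principle-main} in $ W_k \subset \mathbb{C}^n $ to the pair $ (v_k, v_{k+1}) $ with parameter $ \epsilon_k $, producing a Hamiltonian diffeomorphism $ \psi_{k+1} $ compactly supported in $ W_k $, whose trajectories all have diameter less than $ 2\epsilon_k $, and with $ \psi_{k+1} \circ v_k = v_{k+1} $. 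Extending by the identity, set $ \phi_{k+1} = \psi_{k+1} \circ \phi_k $.

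\textbf{Convergence and main obstacle.} All $ \phi_k $ are supported in the fixed compact set $ \supp(\phi_0) \cup \overline{W_0} $. Since $ d_{\cc^0}(\phi_{k+1}, \phi_k) = d_{\cc^0}(\psi_{k+1}, \Id) < 2\epsilon_k $ and $ \sum_k \epsilon_k < \infty $, the sequence $ (\phi_k) $ is $ \cc^0 $-Cauchy and converges uniformly to a continuous map $ \phi : M \rightarrow M $; the same diameter bound applied to $ \psi_{k+1}^{-1} $ shows that $ (\phi_k^{-1}) $ is also $ \cc^0 $-Cauchy, whence $ \phi $ is a homeomorphism. By construction $ \phi \circ u_1 = \lim_k \phi_k \circ u_1 = \lim_k v_k = u_2 $. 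The main technical difficulty is the inductive coiling step that produces the sequence $ (v_k) $ together with the small-diameter homotopies: this genuinely requires codimension $ \geqslant 4 $ and relies on the classical Gromov-Eliashberg-Mishachev $ h $-principle in its relative form. Once the sequence is available, Theorem~\ref{T:qh-principle-main} supplies the crucial quantitative $ \cc^0 $-control that ordinary $ h $-principle statements do not provide, and the telescoping estimate assembles the desired limiting symplectic homeomorphism.
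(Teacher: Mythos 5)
Your overall strategy is exactly that of the paper: coil the unit-area disc into ever-thinner tubes around the target, use the quantitative $h$-principle (Theorem~\ref{T:qh-principle-main}) to convert each coiling step into a $\cc^0$-small Hamiltonian diffeomorphism, and telescope. However there are two genuine gaps.

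First, the existence of the coiled sequence $v_k$ with $v_k \to u_2$ \emph{as parametrized maps}, together with small relative homotopies $v_k \rightsquigarrow v_{k+1}$, is asserted but not established. Having $v_k(\overline{D}) \subset W_k$ does not force $v_k(z) \to u_2(z)$ pointwise, and the classical (non-quantitative) $h$-principle gives isotopies of embeddings with no control on how the parametrization evolves nor on the diameter of individual trajectories. The paper instead works with explicit coiled discs $i_k(z) = (az, f_k(z), 0,\ldots,0)$, where $f_k : D(2) \to \C$ is an area-preserving immersion landing in $D(1/2^k)$, so that $i_k \to i^a$ pointwise by construction; and, to obtain small homotopies, it passes through intermediate discs $i^a_{k,l}$ and a change of coordinates $\tau$ that makes the relevant region convex, so that \emph{linear} homotopies provide the size bound needed to invoke Theorem~\ref{T:qh-principle-main}. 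Your one-step iteration would need similar explicit scaffolding; without it, the hypothesis of the quantitative $h$-principle is not verified.

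Second, your argument that $\phi$ is a homeomorphism is incorrect. You claim the inverses $\phi_k^{-1}$ are $\cc^0$-Cauchy ``by the same diameter bound,'' but since $\phi_{k+1}^{-1} = \phi_k^{-1} \circ \psi_{k+1}^{-1}$, one has
\[
d_{\cc^0}(\phi_{k+1}^{-1}, \phi_k^{-1}) = \sup_y \bigl|\phi_k^{-1}\bigl(\psi_{k+1}^{-1}(y)\bigr) - \phi_k^{-1}(y)\bigr|,
\]
which is governed by the modulus of continuity of $\phi_k^{-1}$, not merely by $d_{\cc^0}(\psi_{k+1}^{-1}, \Id)$; and the $\phi_k^{-1}$ become increasingly stretching as $k \to \infty$ (this is precisely the non-smoothness mechanism), so summability of $\epsilon_k$ does not save the argument. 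The paper instead proves injectivity of $\phi$ directly by a case analysis, relying on the additional nested structure $\supp \psi_m \subset W^a(\epsilon_m) \subset \phi_{m-1}(U_m)$ built into the induction, where $U_m$ is a predetermined exhaustion of the coiled source disc; points outside $U_m$ are eventually fixed by the later $\psi_j$, points in the disc are handled via the explicit formula $\phi \circ i_k = i^a$, and mixed pairs are separated because the image of the complement of $U_{m+1}$ avoids $W^a(\eps_m)$. Your set-up has the shrinking tubes $W_k$ but not the corresponding nested control on where the moving disc and its complement land, which is what the injectivity proof actually uses.
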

In fact, as we now explain, this particular case implies theorem \ref{thm:flexcodim4}. 

\noindent {\it Proof of theorem \ref{thm:flexcodim4}:} Consider coordinates $(z_1,z_2,z')$ on $\C^n$ with $z':=(z_3,\dots,z_n)$, $|z'|:=\max |z_i|$, and define $Q:=\{(0,0,z'),\; |z'|\leqslant 1\}$. We need to prove that there exists a symplectic homeomorphism $h$ with support in $\D(\eps)^2\times \D(1+\eps)^{n-2}$ such that $h_{|Q}=\frac 12 \id$: 
$$
h(0,0,z')=(0,0,\frac 12 z'), \hspace{,5cm} \forall |z'|\leqslant 1
$$ 
Let us fix $\eps'<\eps$. By theorem \ref{T:C0-flexibility} (applied to $ M = \D(\eps')^2 \times \D(1+\eps') $), there is a symplectic homeomorphism $\phi : \mathbb{C}^3 \rightarrow \mathbb{C}^3 $ with support in $\D(\eps')^2\times \D(1+\eps')$ such that $\phi(0,0,w)=(0,0,\frac 12 w)$ for $|w|\leqslant 1$. Define the maps $f_i:=\phi_{|\C^3(z_1,z_2,z_i)}\times \id_{\C^{n-3}(z_3,\dots,\hat z_i,\dots,z_n)}$. In other terms, $f_i$ is the symplectic homeomorphism of $\C^n$ defined by 
$$
f_i(z_1,\dots,z_n)=(\phi_{1}(z_1,z_2,z_i),\phi_{2}(z_1,z_2,z_i),z_3,\dots,z_{i-1},\phi_{3}(z_1,z_2,z_i),z_{i+1},\dots,z_n), 
$$
where $\phi_{1},\phi_{2},\phi_{3}$ are the complex coordinates of the map $\phi$ in $\C^3$.
These maps verify $f_{i|Q}=(\frac 12 \id_{z_i})\times \id_{z_3,\dots,\hat z_i,\dots,z_n}$, so their composition $\tilde  h:=f_3\circ \dots \circ f_n$ is $\frac 12 \id$ on $Q$, but they only have support in $\D(\eps')^2\times \C^{n-2}$, so the support of $\tilde h$ is too big for us. Observe however that since $f_i$ does not act on the $(z_3,\dots,\hat z_i,\dots,z_n)$ components, each $f_i$ preserves $\D(\eps)^2\times \D(1+\eps')^{n-2}$. Since $Q$ can be displaced out of $\D(\eps')^2\times \C^{n-2}\supset \supp \tilde h$ by a symplectic diffeomorphism $ g $ with support in $\D(\eps)^2\times \D(1+\eps')^{n-2}$, we conclude by the next lemma (with $S=\supp \tilde h$, $K=\supp g$).\cqfd

\begin{lemma}\label{le:restrsympeo}
Let $\tilde  h:M\to M$ be a symplectic homeomorphism with possibly non-compact support $S$. Let $Q\subset M$ be a subset which can be displaced from $S$ by a symplectomorphism of $M$ with compact support $K$. Then, there is a symplectic homeomorphism $ h$ with support in $K\cup \tilde h(K)$ and such that $h_{|Q}=\tilde  h_{|Q}$.
\end{lemma}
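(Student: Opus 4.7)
The plan is to construct $h$ as the commutator $[\tilde h, g^{-1}] = \tilde h \circ g^{-1} \circ \tilde h^{-1} \circ g$, where $g$ is the compactly supported symplectomorphism provided by the hypothesis (so that $\supp g \subset K$ and $g(Q) \cap S = \emptyset$). Since $g$ is a symplectomorphism, $\tilde h$ a symplectic homeomorphism, and the class of symplectic homeomorphisms is closed under inversion and composition by Proposition~\ref{prop:defsympeo}, the resulting map $h$ is itself a symplectic homeomorphism of $M$.

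The identity $h_{|Q} = \tilde h_{|Q}$ then falls out of the displacement assumption. For $x \in Q$, the point $g(x)$ lies in $g(Q)$, which is disjoint from $S = \supp \tilde h$. Thus $\tilde h^{-1}$ fixes $g(x)$, and
\[
h(x) \;=\; \tilde h\bigl( g^{-1}( \tilde h^{-1}( g(x) ) ) \bigr) \;=\; \tilde h\bigl( g^{-1}( g(x) ) \bigr) \;=\; \tilde h(x),
\]
which is exactly the desired equality on $Q$.

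For the support, I would regroup as $h = (\tilde h \circ g^{-1} \circ \tilde h^{-1}) \circ g$. The second factor is supported in $K$, while the first, being the $\tilde h$-conjugate of $g^{-1}$, is supported in $\tilde h(\supp g^{-1}) = \tilde h(K)$. Concretely, if $x \notin K \cup \tilde h(K)$, then $g(x) = x$ (because $x \notin K$) and $\tilde h^{-1}(x) \notin K$ (because $x \notin \tilde h(K)$), so $g^{-1}$ fixes $\tilde h^{-1}(x)$, and $h(x) = \tilde h(\tilde h^{-1}(x)) = x$. Hence $\supp h \subset K \cup \tilde h(K)$, which is compact since $\tilde h$ is a homeomorphism. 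The only real content of the argument is spotting the commutator formula: the outer $\tilde h$ produces the desired action on $Q$ via the inner $g$, while the inner $\tilde h^{-1}$ is neutralized by the displacement; after that, all verifications are bookkeeping.
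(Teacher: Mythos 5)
Your proof is correct and uses exactly the same commutator construction $h = \tilde h \circ g^{-1} \circ \tilde h^{-1} \circ g$ as the paper, with the same verifications on $Q$ and on the support. The only cosmetic difference is that you additionally phrase the support bound via the conjugation-of-support rule before redoing the pointwise check, which the paper gives directly.
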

\noindent{\it Proof:} Let $g:M\to M$ be a symplectomorphism with $g(Q)\cap S=\emptyset$ and compact support $ K \subset M $. The map $h:=\tilde  h\circ g^{-1}\circ \tilde  h^{-1}\circ g$ has the required property. It is indeed a symplectic homeomorphism, and if $x\in Q$, $g(x)\notin S$ so $\tilde  h^{-1}(g(x))=g(x)$, so $h(x)=\tilde  h\circ g^{-1}(g(x))=\tilde  h(x)$. Moreover, $\supp h\subset K\cup \tilde  h(K)\Subset M$ because if $x\notin K\cup \tilde  h(K)$, $g(x)=x$ and $g(\tilde  h^{-1}(x))=\tilde  h^{-1}(x)$, so $h(x)=x$. \cqfd 

\begin{remark}\label{rk:accurate-flexibility}
It is easy to see from proofs of theorem~\ref{thm:flexcodim4} and lemma~\ref{le:restrsympeo}, that the symplectic homeomorphism $ h : \C^n \rightarrow \C^n$ which was constructed in the proof of theorem~\ref{thm:flexcodim4} is not only supported in an arbitrary neighbourhood of $ Q = \D(1)^m\times\{0_{n-m}\}\subset \C^n$, but moreover is the $ \cc^0 $-limit of a sequence of symplectomorphisms of $ \C^n $ which are all supported in an arbitrary neighbourhood of $ Q $. Hence standard \nbd theorems imply that the statement of theorem~\ref{thm:flexcodim4} holds true when $ Q $ is the image of a symplectic embedding of a polydisc $ \D(a)^m $ of codimension at least $ 4 $ in any symplectic manifold.
\end{remark}

The rest of this section is aimed at proving theorem \ref{T:C0-flexibility}.

\subsection{Proof of theorem~\ref{T:C0-flexibility}} \label{SubS:proofs-main}
We now prove theorem \ref{T:C0-flexibility} modulo four claims (\ref{Cl:1}, \ref{Cl:2}, \ref{Cl:1-step-m}, \ref{Cl:2-step-m}), which will be proven in the next section. 
The case of $ a = 1 $ is clear, since in this case  $ u_1 $ is Hamiltonian isotopic to $ u_2 $ (see lemma~\ref{L:isotopy-closed-discs}). Therefore in the sequel we may assume that $ a \neq 1 $. For each $ k \in \mathbb{N} $ choose an immersion $ f_k : D(2) \rightarrow \mathbb{C} $, such that $ f_k^{*} \omega_{\st} = (1-a^2) \omega_{\st} $ on $ D $, and such that $ f_k(\overline{D}) \subset  D(1/2^k) $. Consider symplectic embeddings $ i_k : \overline{D} \rightarrow \mathbb{C}^n $ given by $ i_k(z) = (az, f_k(z),0,\ldots,0) $. Also consider the embedding $ i^a : \overline{D} \rightarrow \mathbb{C}^n $ given by $ i^a (z) = (az,0,\ldots,0)  $. We define a family of neighbourhoods $ W_k(\delta) $ of $ i_k(\overline{D}) $ and $ W^a(\delta) $ of $ i^a(\overline{D}) $, $ k=1,2,\ldots $, for $ 0 < \delta < 1 $, by $$ W_k(\delta) = \{ (z_1,\ldots,z_n) \in \mathbb{C}^n \; | \; |z_1| < a + a \delta, |z_2 - f_k(z_1 / a)| < \delta, |z_j| < \delta \,\, \text{for} \,\, 3 \leqslant j \leqslant n \} ,$$ $$ W^a(\delta) = \{ (z_1,\ldots,z_n) \in \mathbb{C}^n \; | \; |z_1| < a + \delta,  |z_j| < \delta \,\, \text{for} \,\, 2 \leqslant j \leqslant n \}.$$ Also introduce a family of discs $ i_{k,l}^a : \overline{D} \rightarrow \mathbb{C}^n $ by $ i_{k,l}^a(z) = (az, f_k(z), \overline{f_l(z)},0,\ldots,0) $. We have $ i_k(\overline{D}) \subset W^a(1/2^k) $, $ i_{k,l}^a(\overline{D}) \subset W_k(1/2^l) $, $ (i^a)^* \omega = (i_{k,l}^a)^* \omega = a^2 \omega_{\st} $, and $ i_k^* \omega = \omega_{\st} $. 

Since $i^a(\overline{D}), i_k(\overline{D})\subset W^a(1/2^k)$, it is enough to show that for any $k$, there exists a sequence $ \phi_1,\phi_2,\ldots $ of symplectic diffeomorphisms of $ \mathbb{C}^n $ supported inside $ W^a(1/2^k) $, and a homeomorphism $ \phi : \mathbb{C}^n \rightarrow \mathbb{C}^n $, such that the sequence $(\phi_i)_{i\geqslant 1}$ $C^0 $-converges to $ \phi $, and such that $ i^a = \phi \circ i_k $ (the sufficiency to prove the latter follows from the symplectic neighbourhood theorem, and from the fact that in any connected symplectic manifold, any two embedded closed symplectic discs of the same symplectic area are Hamiltonianly isotopic, see lemma~\ref{L:isotopy-closed-discs}). The rest of the proof is aimed at proving the latter statement. 

So, let $ k $ be any natural number. Denote $ k_1 = k $, $ \epsilon_1 = 1/ 2^{k_1} $, $ U_1 = W^a(\epsilon_1) \supset i_{k_1}(\overline{D}) $. Choose a decreasing sequence $ U_1 \supset U_2 \supset U_3 \supset \ldots $ of open sets, such that $ \cap_j U_j  = i_{k_1}(\overline{D}) $. Now choose some $ l_1 \geqslant k_1 $, such that for $ 0 <  \delta_1:= 1/2^{l_1} \leqslant \epsilon_1 < 1 $ we have $ U_2 \cap W^a(\epsilon_1) \supset W_{k_1}(\delta_1) $. In addition we have $ W^a(\epsilon_1) \supset W_{k_1}(\delta_1) \supset i_{k_1,l_1}^a(\overline{D}) $.

\begin{claim} \label{Cl:1}
There exists a Hamiltonian isotopy supported inside $ W^a(\epsilon_1) $, whose time-1 map $ \psi_1' $ satisfies $ i^a = \psi_1' \circ i_{k_1,l_1}^a $, and $ d_{\cc^0}(\id, \psi_1') < 3 \epsilon_1 $.
\end{claim}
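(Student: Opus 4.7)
The plan is to exhibit an explicit homotopy inside $W^a(\epsilon_1)$ between $i^a_{k_1,l_1}$ and $i^a$ whose point-trajectories are short, and then to invoke the quantitative $h$-principle (Theorem~\ref{T:qh-principle-main}) to upgrade this homotopy into a Hamiltonian isotopy whose time-$1$ map is $\cc^0$-close to the identity. The natural candidate is the linear homotopy
\[
u_t(z) \;=\; \bigl(az,\;(1-t)\,f_{k_1}(z),\;(1-t)\,\overline{f_{l_1}(z)},\;0,\ldots,0\bigr),\qquad t\in[0,1],\; z\in\overline{D},
\]
which satisfies $u_0 = i^a_{k_1,l_1}$ and $u_1 = i^a$.

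I would then verify the properties of $u_t$ required by the $h$-principle. Each $u_t$ is a smooth embedding, because its first coordinate $az$ already embeds $\overline{D}$ into $\mathbb{C}$. A direct calculation shows that the contributions to $u_t^*\omega_{\st}$ from the $(1-t)f_{k_1}(z)$ and $(1-t)\overline{f_{l_1}(z)}$ slots have opposite signs (due to the complex conjugate in the third slot) and the same absolute value $(1-t)^2|1-a^2|$ (since $f_{k_1}^*\omega_{\st} = f_{l_1}^*\omega_{\st} = (1-a^2)\omega_{\st}$), hence cancel, leaving $u_t^*\omega_{\st} = a^2\omega_{\st}$. The containment $u_t(\overline{D})\subset W^a(\epsilon_1)$ follows from $|az|\le a < a+\epsilon_1$, $|(1-t)f_{k_1}(z)| < 1/2^{k_1} = \epsilon_1$, and $|(1-t)\overline{f_{l_1}(z)}| < 1/2^{l_1} \le \epsilon_1$, using $l_1\ge k_1$. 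Finally, for each fixed $z\in\overline{D}$, the trajectory $t\mapsto u_t(z)$ is a straight line segment of length $\sqrt{|f_{k_1}(z)|^2 + |f_{l_1}(z)|^2} < \sqrt{2}\,\epsilon_1$.

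I would then apply Theorem~\ref{T:qh-principle-main}---in the evident version adapted to symplectic discs of area $\pi a^2$ rather than $\pi$, obtained by reparametrizing the source by $\overline{D(a)}$---with $W = W^a(\epsilon_1)$, the above homotopy, and $\epsilon = \sqrt{2}\,\epsilon_1$. This yields a smooth Hamiltonian flow supported in $W^a(\epsilon_1)$ whose time-$1$ map $\psi_1'$ satisfies $\psi_1'\circ i^a_{k_1,l_1} = i^a$ and whose trajectories have diameter less than $2\epsilon = 2\sqrt{2}\,\epsilon_1 < 3\epsilon_1$, which in particular gives $d_{\cc^0}(\id,\psi_1') < 3\epsilon_1$. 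The only technical subtlety I foresee is the passage from unit discs to discs of area $\pi a^2$ in the quantitative $h$-principle, but this is a routine source-rescaling since the statement and proof are insensitive to the overall symplectic area of the discs.
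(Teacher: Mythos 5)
Your argument is correct and follows the paper's own route exactly: both apply the quantitative $h$-principle (theorem~\ref{T:qh-principle-main}) to the linear homotopy between $i^a_{k_1,l_1}$ and $i^a$, which has size $<\sqrt{2}\,\epsilon_1$ and lies in the convex set $W^a(\epsilon_1)$, producing a Hamiltonian isotopy of size $<2\sqrt{2}\,\epsilon_1<3\epsilon_1$. One minor remark: your verification that the intermediate discs $u_t$ pull back $\omega_{\st}$ to $a^2\omega_{\st}$ is superfluous, since theorem~\ref{T:qh-principle-main} only requires the two endpoint discs to be symplectic and the homotopy to be continuous.
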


Now look at $ \psi_1'(W_{k_1}(\delta_1)) \supset i^a(\overline{D}), \psi_1' \circ i_{k_1} (\overline{D}) $. Choose some $ k_2 \in \mathbb{N} $ such that $ k_2 > k_1 $ and such that for $ 0 < \epsilon_2 := 1/2^{k_2} < 1 $ we have $ W^a(\epsilon_2) \subset \psi_1'(W_{k_1}(\delta_1)) $. We have $ \psi_1' \circ i_{k_1} (\overline{D}) \subset \psi_1'(W_{k_1}(\delta_1)) $ and $ i_{k_2} (\overline{D}) \subset W^a(\epsilon_2) \subset \psi_1'(W_{k_1}(\delta_1)) $.

\begin{claim} \label{Cl:2}
There exists a Hamiltonian isotopy supported inside $ \psi_1'(W_{k_1}(\delta_1)) $, whose time-1 map $ \psi_1'' $ satisfies $ i_{k_2} = \psi_1'' \circ \psi_1' \circ i_{k_1} $, and $ d_{\cc^0}(\id, \psi_1'') < 30 \epsilon_1 $.
\end{claim}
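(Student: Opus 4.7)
My plan is to apply the quantitative $h$-principle (Theorem~\ref{T:qh-principle-main}) with the open set $W:=\psi_1'(W_{k_1}(\delta_1))$ and the two symplectic discs $u_1:=\psi_1'\circ i_{k_1}$ and $u_2:=i_{k_2}$. That both discs lie in $W$ is immediate: a direct inspection of the definition of $W_{k_1}(\delta_1)$ shows $i_{k_1}(\overline D)\subset W_{k_1}(\delta_1)$, whence $u_1(\overline D)\subset W$; while by the choice of $k_2$ we have $i_{k_2}(\overline D)\subset W^a(\epsilon_2)\subset W$. Symplecticity of $u_1$ follows from $i_{k_1}^*\omega_{\st}=\omega_{\st}$ and symplecticity of $\psi_1'$.

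The main constructive step is to produce a homotopy in $W$ from $u_1$ to $u_2$ whose pointwise trajectories are small. I would factor it through the common ``axis'' disc $i^a$ in two stages. In the first stage, interpolate inside $W_{k_1}(\delta_1)$ via
$$ x_t(z) := (az,\,f_{k_1}(z),\,t\,\overline{f_{l_1}(z)},\,0,\ldots,0),\qquad t\in[0,1], $$
which stays in $W_{k_1}(\delta_1)$ and runs from $i_{k_1}$ to $i_{k_1,l_1}^a$; pushing it forward by $\psi_1'$ yields a homotopy in $W$ from $u_1=\psi_1'\circ i_{k_1}$ to $\psi_1'\circ i_{k_1,l_1}^a=i^a$ (the last equality is Claim~\ref{Cl:1}). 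In the second stage, interpolate in the second coordinate,
$$ y_t(z) := (az,\,t\,f_{k_2}(z),\,0,\ldots,0),\qquad t\in[0,1], $$
which stays in $W^a(\epsilon_2)\subset W$ and runs from $i^a$ to $u_2=i_{k_2}$. Concatenating and smoothing the junction at $t=\tfrac12$ produces a smooth homotopy $\overline D\times[0,1]\to W$ from $u_1$ to $u_2$.

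The key estimate is on trajectory diameters. Each set $\{x_t(z):t\in[0,1]\}$ has diameter at most $|\overline{f_{l_1}(z)}|\leqslant\delta_1\leqslant\epsilon_1$, and $d_{\cc^0}(\id,\psi_1')<3\epsilon_1$ by Claim~\ref{Cl:1}; hence $\{\psi_1'(x_t(z))\}$ lies in the $3\epsilon_1$-neighbourhood of $\{x_t(z)\}$ and has diameter at most $\delta_1+6\epsilon_1\leqslant 7\epsilon_1$. The second-stage trajectory has diameter at most $\epsilon_2\leqslant\epsilon_1$, and since the two stages share the endpoint $i^a(z)$, each combined trajectory has diameter at most $8\epsilon_1$. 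Applying Theorem~\ref{T:qh-principle-main} with $\epsilon=9\epsilon_1$ yields a smooth Hamiltonian flow supported in $W=\psi_1'(W_{k_1}(\delta_1))$ whose time-$1$ map $\psi_1''$ sends $u_1$ to $u_2$ with $d_{\cc^0}(\id,\psi_1'')<18\epsilon_1<30\epsilon_1$. The main technical subtlety is that $\psi_1'$ carries no a priori uniform Lipschitz control, so one cannot bound $\diam(\psi_1'(\{x_t(z)\}))$ in terms of $\diam(\{x_t(z)\})$ via derivatives; this is exactly what forces the use of the $\cc^0$-proximity bound from Claim~\ref{Cl:1}, and explains why the stage (a) homotopy must first be built inside $W_{k_1}(\delta_1)$ and only then pushed forward by $\psi_1'$.
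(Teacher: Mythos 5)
Your proof is correct, and it takes a genuinely different route from the paper's. The paper conjugates by a ``straightening'' map $\tau(z_1,z_2,z')=(z_1,z_2-f_{k_1}(z_1/a),z')$ that maps the curved tube $W_{k_1}(\delta_1)$ onto a Euclidean polydisc, uses the linear (segment) homotopy between $\tau\circ i_{k_1}$ and $\tau\circ(\psi_1')^{-1}\circ i_{k_2}$ inside that convex domain, and then transports back by $\psi_1'\circ\tau^{-1}$; the error bookkeeping there accumulates $d(\id,\tau)$, $d(\id,\psi_1')$ and $d(i_{k_1},i_{k_2})$ to a size bound of $15\epsilon_1$ before doubling via theorem~\ref{T:qh-principle-main}. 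You instead build the homotopy by hand as a concatenation through the reference disc $i^a$: first interpolate the third coordinate inside $W_{k_1}(\delta_1)$ from $i_{k_1}$ to $i_{k_1,l_1}^a$ and push forward by $\psi_1'$ (landing on $i^a$ by claim~\ref{Cl:1}), then interpolate the second coordinate inside $W^a(\epsilon_2)\subset\psi_1'(W_{k_1}(\delta_1))$ from $i^a$ to $i_{k_2}$. Your observation that the $\cc^0$-bound $d_{\cc^0}(\id,\psi_1')<3\epsilon_1$ alone (no Lipschitz control needed) controls $\diam\psi_1'(\{x_t(z)\})$ via the triangle inequality is exactly the right point, and it is the reason the first stage must be built in $W_{k_1}(\delta_1)$ and only afterwards pushed forward. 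Your accounting gives a concatenated homotopy of size $<8\epsilon_1$ and hence a flow of size $<16\epsilon_1$ (you conservatively wrote $<18\epsilon_1$), which is sharper than the paper's $30\epsilon_1$. What the paper's $\tau$-conjugation buys is a uniform, coordinate-free recipe (convexity $\Rightarrow$ linear homotopy) that one could apply without ever writing down the discs explicitly; what your concatenation buys is an explicit homotopy and a cleaner constant. Both are valid.
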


Letting  $ \psi_1 = \psi_1'' \circ \psi_1' $, we get a symplectomorphism with support in $ W^a(\epsilon_1) \subset U_1 $, such that $ \psi_1(U_2) \supset W^a(\epsilon_2) \supset i_{k_2}(\overline{D}) $,
 $ i_{k_2} = \psi_1 \circ i_{k_1} $, and $ d_{\cc^0}(\id,\psi_1) < 33 \epsilon_1 $. Figure \ref{fig:balagan} below summarizes this first step of the induction.

\begin{figure}[h!]
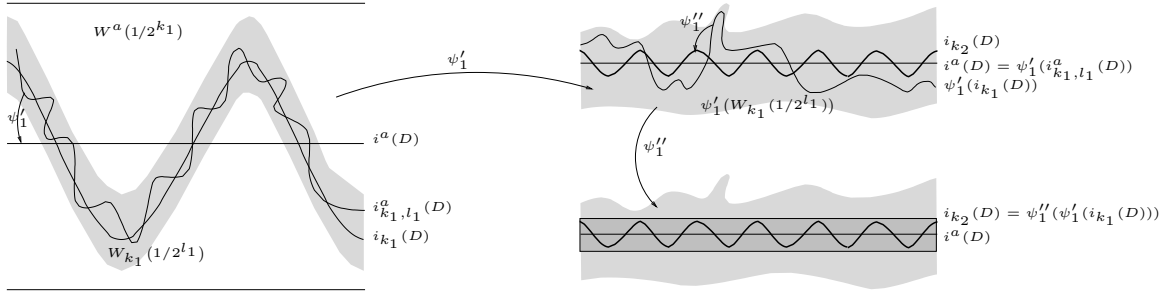

\input bazar.pstex_t
\begin{center}
\caption{A typical step of the induction. }\vspace*{,2cm}
{\footnotesize 
\begin{itemize}
\item[\sbull] $\psi_1$ has support in $W^a(\nf 1{2^{k_1}})$ and takes $U_2\subset W_{k_1}(\nf 1{2^{l_1}})$ to a \nbd of $i^a(\overline{D})$, so that  $\psi_2$ is ready to have support in $\psi_1(U_2)$.
\item[\sbull] The role of $\psi_1'$ is to achieve $ \psi_1' (U_2) \supset i^a(\overline{D})$.
\item[\sbull] The role of $\psi_1''$ is to achieve $\psi_1 \circ i_{k_1} = \psi_1'' \circ \psi_1' \circ i_{k_1} = i_{k_2} $.
\item[\sbull] Although the first isotopy ($\psi_1'$) lies in a standard \nbd of $i^a(\overline{D})$ and can be made relatively explicit, the second one ($\psi_1''$) takes an unknown disc to $i_{k_2}(D)$, in some open set on which we have few control.  
\end{itemize}}
\label{fig:balagan}
\end{center}
\end{figure}

Let us describe the step $ m $, when $ m > 1 $. At steps $ 1, \ldots , m-1 $ we have constructed Hamiltonian diffeomorphisms $ \psi_1,\ldots,\psi_{m-1} $. 
Denote $ \phi_{m-1} = \psi_{m-1} \circ \psi_{m-2} \circ \ldots \circ \psi_1 $. According to the previous step, we have $ \phi_{m-1}(U_m) \supset  
W^a(\epsilon_m) \supset i_{k_m}(\overline{D}) $ and $ i_{k_m} = \phi_{m-1} \circ i_{k_1} $. 

Choose some $ l_m \geqslant k_m $, such that for $ 0 < \delta_m:= 1/2^{l_m} \leqslant \epsilon_m < 1 $ we have $ \phi_{m-1}(U_{m+1}) \cap W^a(\epsilon_m) \supset W_{k_m}(\delta_m) $. In addition we have $ W^a(\epsilon_m) \supset W_{k_m}(\delta_m) \supset i_{k_m,l_m}^a(\overline{D}) $.

\begin{claim} \label{Cl:1-step-m}
There exists a Hamiltonian isotopy supported inside $ W^a(\epsilon_m) $, whose time-1 map $ \psi_m' $ satisfies $ i^a = \psi_m' \circ i_{k_m,l_m}^a $, and $ d_{\cc^0}(\id, \psi_m') < 3 \epsilon_m $.
\end{claim}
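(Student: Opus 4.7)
The plan is to exhibit an explicit symplectic homotopy from $i^a_{k_m,l_m}$ to $i^a$ inside $W^a(\epsilon_m)$ with small trajectory diameter, and then to feed it into the quantitative $h$-principle (Theorem~\ref{T:qh-principle-main}) to upgrade the isotopy of discs to an ambient Hamiltonian isotopy.

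The key observation is that the compensating coordinate $\overline{f_{l_m}(z)}$ built into $i^a_{k_m,l_m}$ was engineered precisely so that the linear shrinkage
$$u_t(z)\;=\;\bigl(az,\ (1-t)f_{k_m}(z),\ (1-t)\overline{f_{l_m}(z)},\ 0,\ldots,0\bigr),\qquad t\in[0,1],$$
is a symplectic homotopy connecting $u_0=i^a_{k_m,l_m}$ to $u_1=i^a$. Indeed, from $f_{k_m}^*\omega_{\st}=(1-a^2)\omega_{\st}$ together with the sign reversal $(w\mapsto\bar w)^*\omega_{\st}=-\omega_{\st}$, we get $(\overline{f_{l_m}})^*\omega_{\st}=(a^2-1)\omega_{\st}$, and therefore
$$u_t^*\omega_{\st}=a^2\omega_{\st}+(1-t)^2(1-a^2)\omega_{\st}+(1-t)^2(a^2-1)\omega_{\st}=a^2\omega_{\st}$$
for every $t$, so each $u_t$ is a symplectic embedding of $\overline{D}$ (embeddedness is clear from the first coordinate $az$).

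Next I would verify that $u_t(\overline{D})\subset W^a(\epsilon_m)$ for every $t$. This is immediate: $|az|\leqslant a<a+\epsilon_m$, and by compactness the inclusions $f_{k_m}(\overline{D})\subset D(1/2^{k_m})$ and $f_{l_m}(\overline{D})\subset D(1/2^{l_m})$ are strict, so $|(1-t)f_{k_m}(z)|<1/2^{k_m}=\epsilon_m$ and $|(1-t)\overline{f_{l_m}(z)}|<1/2^{l_m}=\delta_m\leqslant\epsilon_m$. The trajectory of any $z\in\overline{D}$ is a straight line segment of Euclidean length
$$\sqrt{|f_{k_m}(z)|^2+|f_{l_m}(z)|^2}\;<\;\sqrt{\epsilon_m^2+\delta_m^2}\;\leqslant\;\sqrt{2}\,\epsilon_m\;<\;\tfrac{3}{2}\epsilon_m.$$

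Finally, I would apply Theorem~\ref{T:qh-principle-main} with $W=W^a(\epsilon_m)$, source $u_1=i^a_{k_m,l_m}$, target $u_2=i^a$, and input parameter $\epsilon=\tfrac{3}{2}\epsilon_m$. This yields a smooth Hamiltonian flow $\psi_m'$ supported inside $W^a(\epsilon_m)$ with $i^a=\psi_m'\circ i^a_{k_m,l_m}$ and all trajectories of diameter strictly less than $2\epsilon=3\epsilon_m$, whence $d_{\cc^0}(\id,\psi_m')<3\epsilon_m$. One cosmetic caveat: Theorem~\ref{T:qh-principle-main} is formally stated for discs with $u_i^*\omega_{\st}=\omega_{\st}$ while ours satisfy $u_i^*\omega_{\st}=a^2\omega_{\st}$; this is handled by a trivial rescaling of the source disc, and the proof of the quantitative $h$-principle applies verbatim to discs of any fixed area. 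The main conceptual content of the claim is therefore the symplectic cancellation built into $i^a_{k_m,l_m}$: without the $\overline{f_{l_m}}$ coordinate, no short-trajectory symplectic homotopy from $i^a_{k_m,l_m}$ to $i^a$ could exist, and the numerical $3\epsilon_m$ is specifically the bound obtained by pairing the $\sqrt 2$ loss in trajectory length with the factor $2$ loss in Theorem~\ref{T:qh-principle-main}.
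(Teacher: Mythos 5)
Your proof is correct and takes essentially the same route as the paper's: both observe that the linear homotopy between $i^a_{k_m,l_m}$ and $i^a$ stays in the convex set $W^a(\epsilon_m)$ with size bounded by $\sqrt{2}\,\epsilon_m$, and then invoke Theorem~\ref{T:qh-principle-main} to obtain a Hamiltonian isotopy of size less than $2\sqrt{2}\,\epsilon_m < 3\epsilon_m$. Your verification that each intermediate disc $u_t$ is itself symplectic is not needed, since the hypothesis of Theorem~\ref{T:qh-principle-main} only asks for a \emph{continuous} homotopy of small size between the two symplectic end-discs.
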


Now look at $ \psi_m'(W_{k_m}(\delta_m)) \supset i^a(\overline{D}), \psi_m' \circ i_{k_m} (\overline{D}) $. Choose some $ k_{m+1} \in \mathbb{N} $ such that $ k_{m+1} > k_m $, and such that for $ 0 < \epsilon_{m+1} := 1/2^{k_{m+1}} < 1 $ we have $ W^a(\epsilon_{m+1}) \subset \psi_m'(W_{k_m}(\delta_m)) $. We have $ \psi_m' \circ i_{k_m} (\overline{D}) \subset \psi_m'(W_{k_m}(\delta_m)) $ and $ i_{k_{m+1}} (\overline{D}) \subset W^a(\epsilon_{m+1}) \subset \psi_m'(W_{k_m}(\delta_m)) $.

\begin{claim} \label{Cl:2-step-m}
There exists a Hamiltonian isotopy supported inside $ \psi_m'(W_{k_m}(\delta_m)) $, whose time-$1$ map $ \psi_m'' $ satisfies $ i_{k_{m+1}} = \psi_m'' \circ \psi_m' \circ i_{k_m} $, and $ d_{\cc^0}(\id, \psi_m'') < 30 \epsilon_m $.
\end{claim}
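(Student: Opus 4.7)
The plan is to apply the quantitative $h$-principle for symplectic discs (Theorem~\ref{T:qh-principle-main}) to the pair $u_1 := \psi_m'\circ i_{k_m}$ and $u_2 := i_{k_{m+1}}$ inside $W := \psi_m'(W_{k_m}(\delta_m))$. Both are symplectic discs satisfying $u_j^*\omega_{\st} = \omega_{\st}$, and both lie in $W$: the inclusion $u_1(\overline{D})\subset W$ is immediate from $i_{k_m}(\overline{D})\subset W_{k_m}(\delta_m)$, while $u_2(\overline{D})\subset W^a(\epsilon_{m+1})\subset W$ by the choice of $k_{m+1}$. It thus suffices to exhibit a homotopy of embedded discs in $W$ from $u_1$ to $u_2$ whose pointwise trajectories have diameter strictly less than $15\epsilon_m$; Theorem~\ref{T:qh-principle-main} then produces a Hamiltonian flow supported in $W$ whose trajectories have diameter $<2\cdot 15\epsilon_m = 30\epsilon_m$, and whose time-$1$ map $\psi_m''$ sends $u_1$ to $u_2$, i.e.\ satisfies $\psi_m''\circ\psi_m'\circ i_{k_m}=i_{k_{m+1}}$, with $d_{\cc^0}(\psi_m'',\id)<30\epsilon_m$.

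I build the homotopy by concatenating two stages passing through the reference disc $i^a$. For Stage~$1$ (from $u_1$ to $i^a$), set
\[
\gamma_s(z) = \bigl(az,\, f_{k_m}(z),\, s\,\overline{f_{l_m}(z)},\, 0,\ldots,0\bigr),\qquad s\in[0,1].
\]
Checking the defining inequalities of $W_{k_m}(\delta_m)$ shows $\gamma_s(\overline{D})\subset W_{k_m}(\delta_m)$ for every $s$, with $\gamma_0 = i_{k_m}$ and $\gamma_1 = i_{k_m,l_m}^a$; hence $u^{(1)}_s := \psi_m'\circ\gamma_s$ is a smooth homotopy of embedded discs in $W=\psi_m'(W_{k_m}(\delta_m))$ from $u_1$ to $\psi_m'\circ i_{k_m,l_m}^a = i^a$, the last equality being the content of Claim~\ref{Cl:1-step-m}. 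For Stage~$2$ (from $i^a$ to $u_2$), set $\eta_s(z) = (az,\, s\, f_{k_{m+1}}(z),\, 0,\ldots,0)$, $s\in[0,1]$; then $\eta_s(\overline{D})\subset W^a(\epsilon_{m+1})\subset W$ and $\eta_0 = i^a$, $\eta_1 = i_{k_{m+1}} = u_2$. Concatenating the two stages (and smoothing the junction at $s=1/2$) produces a smooth homotopy of embedded discs in $W$ from $u_1$ to $u_2$.

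The main subtlety is bounding the trajectory diameter during Stage~$1$, since we have no a~priori Lipschitz control on $\psi_m'$; this is handled using only the $\cc^0$-bound $d_{\cc^0}(\psi_m',\id)<3\epsilon_m$ furnished by Claim~\ref{Cl:1-step-m}. By the triangle inequality, for each $z\in\overline{D}$ and all $s_1,s_2\in[0,1]$,
\[
\bigl|\psi_m'(\gamma_{s_1}(z))-\psi_m'(\gamma_{s_2}(z))\bigr| \,\leq\, 2\cdot 3\epsilon_m + |\gamma_{s_1}(z)-\gamma_{s_2}(z)| \,\leq\, 6\epsilon_m + \delta_m \,\leq\, 7\epsilon_m,
\]
using $\delta_m\leq\epsilon_m$. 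Similarly, Stage~$2$ trajectories have diameter at most $\epsilon_{m+1}\leq\epsilon_m$. Since the two stages share the common point $i^a(z)$ in every trajectory, the concatenated trajectory of any $z$ has diameter at most $7\epsilon_m+\epsilon_m = 8\epsilon_m < 15\epsilon_m$, as required. Theorem~\ref{T:qh-principle-main} now yields the desired $\psi_m''$, in fact with $d_{\cc^0}(\psi_m'',\id) \leq 16\epsilon_m < 30\epsilon_m$.
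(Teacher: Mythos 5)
Your proof is correct, but it follows a genuinely different route from the paper's. The paper flattens $W_{k_m}(\delta_m)$ to a Euclidean polydisc via $\tau(z_1,z_2,z')=(z_1, z_2 - f_{k_m}(z_1/a),z')$, takes the \emph{linear} homotopy between $\tau\circ i_{k_m}$ and $\tau\circ(\psi_m')^{-1}\circ i_{k_{m+1}}$ (which stays in the polydisc by convexity, with size $<7\epsilon_m$), and then pushes it back with $\psi_m'\circ\tau^{-1}$, losing $2\cdot 4\epsilon_m$ to land at a $15\epsilon_m$-size homotopy. You instead build an explicit two-stage homotopy passing through the reference disc $i^a$: the path $\gamma_s=(az,f_{k_m}(z),s\overline{f_{l_m}(z)},0,\ldots)$ inside $W_{k_m}(\delta_m)$, pushed forward by $\psi_m'$ and glued to $\eta_s=(az,sf_{k_{m+1}}(z),0,\ldots)$ inside $W^a(\epsilon_{m+1})$ at the common endpoint $i^a$. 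This exploits the specific algebraic form of the auxiliary discs $i_{k_m,l_m}^a$, $i^a$, $i_{k_{m+1}}$ and the conclusion $\psi_m'\circ i_{k_m,l_m}^a=i^a$ of Claim~\ref{Cl:1-step-m}, whereas the paper's argument only needs $d_{\cc^0}(\id,\psi_m')<3\epsilon_m$ and convexity. A nice side effect of your explicit construction is a sharper final bound ($16\epsilon_m$, or even $8\epsilon_m$ for $n>3$ using the last sentence of Theorem~\ref{T:qh-principle-main}, versus the paper's $30\epsilon_m$). Two minor points worth noting: the intermediate discs $\psi_m'\circ\gamma_s$ need not satisfy $(\psi_m'\circ\gamma_s)^*\omega_\st=\omega_\st$, but this is harmless since the $h$-principle theorem requires only a \emph{continuous} homotopy (not one through symplectic discs); and the "smoothing at the junction" is unnecessary for the same reason.
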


Letting  $ \psi_m := \psi_m'' \circ \psi_m' $, we get a symplectomorphism with support in $ W^a(\epsilon_m) \subset \phi_{m-1}(U_m) \subset U_1 $, such that $ \psi_m \circ \phi_{m-1}(U_{m+1}) \supset W^a(\epsilon_{m+1}) \supset i_{k_{m+1}}(\overline{D}) $,  $ i_{k_{m+1}} = \psi_m \circ i_{k_m} $, and $ d_{\cc^0}(\id,\psi_m) < 33 \epsilon_m $.

As a result of this induction, we get a sequence $ \psi_1,\psi_2,\ldots $ of symplectomorphisms supported inside $ U_1 $ such that:
\begin{itemize}
\item[(i)] $\psi_m$ has support in $W_a(\eps_m)\subset \phi_{m-1}(U_m)$, where $\phi_{m-1}:=\psi_{m-1}\circ \dots\circ \psi_1$,
\item[(ii)]  $ d_{\cc^0}(\id,\psi_m) < 33 \epsilon_m \leqslant 33 / 2^m $,
\item[(iii)] $ i_{k_{m+1}} = \phi_{m} \circ i_{k} $.
\end{itemize}
It follows by (ii) that $\phi_m$ is a Cauchy sequence in the $ \cc^0 $ topology hence uniformly converges to some continuous map $ \phi : \mathbb{C}^n \rightarrow \mathbb{C}^n $. Next, since $ i_{k_{m+1}} = \phi_{m} \circ i_{k} $ for every $ m \geqslant 1 $, we have $ i^a = \phi \circ i_k $. Finally, we claim that $\phi$ is an injective map, hence a homeomorphism. To see this, consider two points $x\neq y\in \mathbb{C}^n$. If $x,y\in i_k(\overline{D})$, then by (iii), $\phi(x)=i^a\circ i_k^{-1}(x)\neq  i^a\circ i_k^{-1}(y)=\phi(y)$. If $x,y\notin i_k(\overline{D})$, then $x,y\in {}^cU_m$ for $m$ large enough, so by (i), $ \phi_m(x) = \phi_{m+1}(x) = \phi_{m+2}(x) = \ldots = \phi(x) $, and similarly $\phi_m(y)=\phi(y)$ (because the supports of $\psi_m,\psi_{m+1},\dots$ are contained in $\phi_{m-1}(U_m)$), so $\phi(x)=\phi_m(x)\neq \phi_m(y)=\phi(y)$. Finally, if $x\in i_k(\overline{D})$ and $y\notin  i_k(\overline{D})$, $y\in {}^cU_m$ for $m$ large enough, so $\phi(y)=\phi_m(y)\in \phi_m({}^c U_m)\subset \phi_m({}^c U_{m+1})\subset {}^c W_a(\eps_m)$ by (i). Since $\phi(x)\in \im i^a\subset W_a(\eps_m)$, we conclude that in this case also $\phi(x)\neq \phi(y)$.\cqfd

\subsection{Proofs of theorem~\ref{T:qh-principle-main} and of claims \ref{Cl:1}, \ref{Cl:2}, \ref{Cl:1-step-m}, \ref{Cl:2-step-m}} \label{SubS:proofs-auxiliary}

Claims~\ref{Cl:1},~\ref{Cl:2},~\ref{Cl:1-step-m},~\ref{Cl:2-step-m} are consequences of the quantitative $h$-principle for discs, which is summarised in theorem~\ref{T:qh-principle-main}, and which might be of  independent interest. Before we pass to the proof of the theorem, let us introduce some notation. For a given homotopy of discs $ F : \overline{D} \times [0,1] \rightarrow \mathbb{R}^m $, we define its size by $$ \size F:= \max_{z \in \overline{D}} \diam F(\{ z \} \times [0,1]) ,$$ and for a given compactly supported flow $ \phi : \mathbb{R}^m \times [0,1] \rightarrow \mathbb{R}^m $, $ \phi(x,t) = \phi^t(x) $, we define its size by $$ \size \phi := \max_{x \in \mathbb{R}^m} \diam \phi(\{ x \} \times [0,1]) .$$ 
In this terminology theorem \ref{T:qh-principle-main} can be stated as follows:

{\bf \noindent Theorem \ref{T:qh-principle-main}} \text{[Quantitative $h$-principle for symplectic discs]}{\bf .} {\it Let $ \epsilon > 0 $ be a positive real, $ n \geqslant 3 $ be an integer, $ W \subset \mathbb{C}^n $ be an open set, and let $ u_1,u_2 : \overline{D} \rightarrow W $ be symplectic discs, with $ u_1^* \omega_{\st} = u_2^* \omega_{\st} = \omega_{\st} $. Assume that there exists a (continuous) homotopy between $ u_1 $ and $ u_2 $ in $ W $, of size less than $ \epsilon $ (i.e. a continuous map $ F : \overline{D} \times [0,1] \rightarrow W $, such that $ F(z,0) = u_1(z) $, $ F(z,1) = u_2(z) $, for all $ z \in \overline{D} $, and that $ \size F < \epsilon $). Then there exists a smooth Hamiltonian flow $ \phi $ supported in $ W $ and of size less than $ 2\epsilon $, whose time-$1$ map sends $ u_1 $ to $ u_2 $ (i.e. a Hamiltonian isotopy $ \phi^t : W \rightarrow W $, $ t \in [0,1] $, which is generated by a smooth time-dependent Hamiltonian supported in $ W $, such that $ u_2 = \phi^1 \circ u_1 $, and such that $ \size \phi < 2 \epsilon $). Moreover, when $ n > 3 $ there exists such a Hamiltonian flow $ \phi $ of $ \size \phi < \epsilon $.} \\ 

The first step of the proof, implemented in  lemma \ref{L:homotopy-to-isotopy}, consists in constructing, from our homotopy, an {\it isotopy} between $u_1$ and $u_2$ ({\it i.e.} a smooth embedding $F:\overline D\times[0,1]\to W$ such that $F(z,0)=u_1(z)$ and $F(z,1)=u_2(z)$ $\forall z\in \overline{D}$), of size less than $2\eps$ ($\eps$ for $n\geqslant 4$). Theorem \ref{T:qh-principle-main} then follows from the following central statement:

\begin{proposition} \label{P:qh-principle}
Let $ \epsilon > 0 $ be a positive real, $ n \geqslant 3 $ be an integer, $ W \subset \mathbb{C}^n $ be an open set, and let $ u_1,u_2 : \overline{D} \rightarrow W $ be symplectic discs, with $ u_1^* \omega_{\st} = u_2^* \omega_{\st} = \omega_{\st} $. Assume that there exists a smooth embedded isotopy between $ u_1 $ and $ u_2 $ in $ W $, of size less than $ \epsilon $ (i.e. a smooth embedding $ F : \overline{D} \times [0,1] \rightarrow W $, such that $ F(z,0) = u_1(z) $, $ F(z,1) = u_2(z) $, for all $ z \in \overline{D} $, and $ \size F < \epsilon $). Then there exists a smooth Hamiltonian flow supported in $ W $ and of size less than $ \epsilon $, whose time-$1$ map sends $ u_1 $ to $ u_2 $ (i.e. a Hamiltonian flow $ \phi^t : W \rightarrow W $, $ t \in [0,1] $ which is generated by a smooth time-dependent Hamiltonian supported in $ W $, such that $ u_2 = \phi^1 \circ u_1 $, and such that $ \size \phi < \epsilon $). 
\end{proposition}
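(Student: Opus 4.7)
The plan is to realise the isotopy as a concatenation of many small Hamiltonian flows, each supported in a thin symplectic tubular neighbourhood of an intermediate disc, and then verify that the total size does not exceed that of $F$. First I would subdivide the time interval into a fine partition $0=t_0<t_1<\dots<t_N=1$ so that each slab $F(\overline{D}\times[t_{i-1},t_i])$ has $\cc^0$-diameter less than $\epsilon/N+\eta$ for some small auxiliary $\eta$. Only $u_1$ and $u_2$ are assumed symplectic, so next I would perturb $F$, rel endpoints, to make every intermediate disc $v_i:=F(\cdot,t_i)$ symplectic. This is where an $h$-principle enters at an elementary level: being symplectic is an open condition on the tangent $2$-plane, and for $2$-planes in $\C^n$ with $n\geqslant 3$ the isotropic Grassmannian is lower dimensional, so an arbitrarily $\cc^0$-small perturbation of the smooth isotopy makes every $v_i$ symplectic while keeping the total size below $\epsilon$ and the slabs fine.

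For the local step, fix $i$ and apply Weinstein's symplectic neighbourhood theorem to the symplectic disc $v_i$ to identify a thin tubular neighbourhood $U_i$ of $v_i(\overline{D})$ with a standard model $\overline{D}_+\times B^{2n-2}(\rho_i)$, in which $v_i$ corresponds to the zero section. For $N$ large, the next disc $v_{i+1}$ lies inside $U_i$ as the symplectic graph of a $\cc^0$-small map $\overline{D}\to B^{2n-2}(\rho_i)$. In this standard model I would construct an explicit cut-off Hamiltonian $H_i$ by interpolating linearly between the zero section and the graph in the normal $\C^{n-1}$-directions and symplectically correcting (a Moser-type argument, unobstructed because the symplectic form on the model splits as $\omega_{\st}\oplus\omega_{\st}$ in the base and fibre). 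The resulting $H_i$ has compact support in $U_i$, its time-$1$ flow sends $v_i$ to $v_{i+1}$, and all of its trajectories are confined to a ball of radius $O(\mathrm{diam}(U_i))$. The fact that the normal bundle has real rank $\geqslant 4$ is what leaves enough room to make this correction while keeping the Hamiltonian vector field pointwise small.

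The final flow is obtained by concatenating the local Hamiltonian flows $\phi_{H_1},\dots,\phi_{H_N}$ on consecutive subintervals of $[0,1]$, giving a Hamiltonian isotopy $\phi^t$ supported in $W$ with $u_2=\phi^1\circ u_1$. The main obstacle is to control the size of this concatenation: \emph{a priori} a point could be dragged by several successive $\phi_{H_i}$ and accumulate a trajectory much larger than $\epsilon$. The key observation is that by choosing the tubes $U_i$ both thin in the normal direction and tightly hugging $v_i(\overline{D})$, each ambient point $x$ is moved only by those $\phi_{H_i}$ whose tubes $U_i$ contain $x$; the union of such tubes is a slight thickening of a single fibre $F(\{z\}\times[0,1])$ (the fibre over the base point $z\in\overline{D}$ closest to $x$), so the trajectory of $x$ is trapped in that thickening and has diameter bounded by $\mathrm{size}(F)+O(\max_i\rho_i)$. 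By first fixing $N$ large, then shrinking $\eta$ and the tube radii $\rho_i$ before finalising the construction, one secures $\mathrm{size}(\phi)<\epsilon$. The delicate technical point — again requiring codimension $\geqslant 4$ — is that the symplectic-perturbation step and the thin-tube step can be performed simultaneously and compatibly, which is exactly what fails in codimension $2$ and explains why the quantitative $h$-principle is specific to $n\geqslant 3$.
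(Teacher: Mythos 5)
Your plan — discretize, move one small step at a time, track trajectory sizes — is the natural first attempt, and it does not coincide with the paper's strategy; unfortunately, as stated it has a genuine gap in the size control, which is precisely the hard point. You support each local Hamiltonian $H_i$ in a thin tube $U_i$ around the entire $2$-disc $v_i(\overline D)$. However thin in the normal directions, such a tube has diameter comparable to $\diam u_1(\overline D)$, a fixed constant independent of $\epsilon$. The assertion that "the union of such tubes is a slight thickening of a single fibre $F(\{z\}\times[0,1])$" is false: that union is a thickened $3$-dimensional set $\bigcup_i v_i(\overline D)$, again of large diameter. For a trajectory under $\phi_{H_i}$ to be short you would need the \emph{tangential} (disc-direction) component of the Hamiltonian vector field $X_{H_i}$ to be pointwise small, which is a statement about the way $H_i$ is built, not about its support being thin. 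The "linear interpolation plus Moser correction" you propose provides no such control: the Moser step is a global argument on the disc and the size of the correction (and hence the tangential part of $X_{H_i}$) is governed by $\cc^1$-norms of $F$, which the proposition does not constrain; moreover, shrinking $\rho_i$ and $\eta$ at the end tames the normal directions only and does nothing against tangential drift. There is a second, smaller issue: making each slice $v_i$ "symplectic" by openness of nondegeneracy is not enough — to produce a Hamiltonian flow connecting $v_i$ to $v_{i+1}$ you need the whole family $v_s$, $s\in[t_i,t_{i+1}]$, to satisfy the \emph{pointwise} equation $v_s^*\omega_{\st}=\omega_{\st}$, and the reparametrizations needed to arrange this can in principle destroy the $\cc^0$-closeness you started with.

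The paper avoids both problems by discretizing in \emph{space} rather than in \emph{time}: it subdivides $\overline D$ by a fine grid $\Gamma$ and matches $u_1$ to $u_2$ first on vertices, then on edges, then on faces, with every local Hamiltonian supported in a ball $W_z$, $W_\gamma$ or $W_G$ of genuinely small diameter $O(\epsilon)$. Because every support is small, any trajectory through one of these local flows is automatically short, and the estimate reduces to the combinatorial count $\phi^t(W_G)\subset W_G^{(6)}$. The sub-step you have no analogue of, and which is essential in any local scheme, is the "adjusting actions of the edges" step: two symplectic discs that already agree near the grid vertices generically give edge curves with different $\lambda$-actions, and by Stokes this obstructs the existence of \emph{any} compactly supported Hamiltonian isotopy moving one edge curve to the other inside a small ball; the paper explicitly equalizes these actions before doing the edge moves. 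Finally, the hypothesis $n\geqslant 3$ is used in the paper for general-position perturbations that disjoin the images of edges and faces under the two discs; the reason you give for it (controlling a Moser correction via a large normal bundle) is not where the dimensional restriction actually bites.
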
 

\begin{proof}[Proof of proposition~\ref{P:qh-principle}]

Choose $ \delta > 0 $ small enough. Consider the grid on $ \mathbb{R}^2 $ of lines parallel to the axes, with step $ \delta $, or in other words the set $ \{ (x,y) \in \mathbb{R}^2 \; | \; x \in \delta \mathbb{Z} \,\, \text{or} \,\, y \in \delta \mathbb{Z} \} $. Look at the intersection of this grid with the open unit disc $ D $, and add to it the unit circle $ S^1 = \partial D $. The set that we get represents a smooth graph which belongs to $ \overline{D} $, and which divides $ \overline{D} $ into small regions (which mostly are squares with side $ \epsilon $). The vertices of this graph are the points $ (x,y) \in D $ such that $ x,y \in \delta \mathbb{Z} $, and also points $ (x,y) \in S^1 = \partial D $ such that either $ x \in \delta \mathbb{Z} $ or $ y \in \delta \mathbb{Z} $. The edges of this graph are either sides, or parts of sides, of squares that the grid divides, or arcs on $ S^1 $. We denote this graph by $ \Gamma $. The vertices of $ \Gamma $ will be represented by points $ z \in \overline{D} $. For each edge of $ \Gamma $, pick a path $ \gamma : [0,1] \rightarrow \overline{D} $ which parametrises it, and which will represent it during the proof. The faces will be represented by open subsets $ G \subset D $.

The images of $ \Gamma $ by $ u_1 $ and $ u_2 $ provide us with a graph on $ u_1(\overline{D}) $ and a graph on $ u_2(\overline{D}) $. We construct the isotopy between $ u_1 $ and $ u_2 $ in three steps. At the first step, we isotope the vertices of the corresponding graph on $ u_1(\overline{D}) $ to the vertices of the corresponding graph on $ u_2(\overline{D}) $. At the second step, we isotope edges to edges, and at the third step we isotope faces to faces. After performing these three steps, the discs coincide.  All the isotopies of the vertices, edges and faces are combined in such a way that the resulting isotopy is of size less than $ \epsilon $.

Now let us turn to the actual proof. Before we start performing the three steps described above, let us extend the maps $ u_1,u_2 : \overline{D} \rightarrow W $ to a slightly larger open disc, so that we get smooth embeddings $ u_1,u_2 : \overline{D}(1+\mu) \rightarrow W $ with $ u_1^* \omega_{\st} = u_2^*\omega_{\st} = \omega_{\st} $ on $ D(1+\mu) $. Moreover, after decreasing $ \mu $ if necessary, we can extend the embedding $ F : \overline{D} \times [0,1] \rightarrow W $ to a smooth embedding $ F : \overline{D}(1+\mu) \times [-\mu,1+\mu] \times [-\mu,\mu]^{2n-3} \rightarrow W $ (where we identify $ \overline{D} \times [0,1] $ with $ \overline{D} \times [0,1] \times 0_{2n-3} \subset \overline{D}(1+\mu) \times [-\mu,1+\mu] \times [-\mu,\mu]^{2n-3} $), such that $ F(z,0,0_{2n-3}) = u_1(z) $ and $ F(z,1,0_{2n-3}) = u_2(z) $ for every $ z \in D(1+\mu) $. 

Choose a small enough $ \eta > 0 $. For any vertex $ z $, edge $ \gamma $, and face $ G $ of $ \Gamma $, denote by $ U_z $, $ U_\gamma $, and $ U_G $, the $ \eta $-neighbourhoods of $ z $, $ \gamma([0,1]) $, and $ G $ in $ D(1+\mu) $, respectively. $ U_z $, $ U_\gamma $ and $ U_G $ are (topological) discs. Denote $ W_z = F(U_z \times (-\eta,1+\eta) \times (-\eta,\eta)^{2n-3}) $, $ W_\gamma = F(U_\gamma \times (-\eta,1+\eta) \times (-\eta,\eta)^{2n-3}) $ and  $ W_G = F(U_G \times (-\eta,1+\eta) \times (-\eta,\eta)^{2n-3}) $. Then $ W_z $, $ W_\gamma $ and $ W_G $ are (topological) balls in $ \mathbb{C}^n $, and moreover if $ z_1 $ and $ z_2 $ are two different vertices of $ \Gamma $ then $ W_{z_1} $ and $ W_{z_2} $ are disjoint, and if $ G_1 $ and $ G_2 $ are two non-neighbouring faces of $ \Gamma $ (that is, $ \overline{G_1} \cap \overline{G_2} = \emptyset $), then $ W_{G_1} $ and $ W_{G_2} $ are disjoint (provided that $ \eta $ is small enough). For any pair of faces $ G_1 $, $ G_2 $ of $ \Gamma $, we define $ l(G_1,G_2) $ as the minimal integer $ m \geqslant 0 $, such that there exists a sequence of $ m+1 $ faces of $ \Gamma $, which starts with $ G_1 $ and ends with $ G_2 $, and such that any two consequent elements of the sequence are neighbouring faces (and so in particular, according to this definition, we have $ l(G,G) = 0 $ for every face $ G $ of $ \Gamma $). For any face $ G $ of $ \Gamma $ and for every integer $ l \geqslant 1 $, we denote by $ W_G^{(l)} $ the union of all $ W_{G_1} $ where $ G_1 $ is a face of $ \Gamma $ and $ l(G,G_1) \leqslant l $. We also denote $ W_G' := W_G^{(1)} $.
 \\ \\
{\bf Step I} \\ \\
Consider any vertex $ z $ of $ \Gamma $. We have $ u_1(z),u_2(z) \in W_z $. Therefore we can find a Hamiltonian isotopy $ \psi_z^t $, $ t \in [0,1] $, supported in $ W_z $, such that $ \psi_z^1 \circ u_1 = u_2 $ on a neighbourhood of $ z $ (this follows from lemma~\ref{L:isotopy-closed-discs}). Denote by $ \Psi_{\cv}^t $, $ t \in [0,1] $ a (reparametrised) concatenation of all the isotopies $ \psi_z $ when $ z $ runs over the vertices of $ \Gamma $. Denote  $ u_1' = \Psi_{\cv}^1 \circ u_1 $.  We have $ u_1'=u_2 $ on a \nbd of each vertex of $ \Gamma $. Note that the flow $ \Psi_{\cv} $ is supported inside the disjoint union $ \cup_z W_z $. Moreover, for each vertex $ z $, edge $ \gamma $, and face $ G $ of $ \Gamma $, if $ W_z \cap W_\gamma \neq \emptyset $ then $ W_z \subset W_\gamma $, and if $ W_z \cap W_G \neq \emptyset $ then $ W_z \subset W_G $. Hence $ \Psi_{\cv}^t(W_\gamma) = W_\gamma $ and $ \Psi_{\cv}^t(W_G) = W_G $ for any edge $ \gamma $ and face $ G $ of $ \Gamma $, and any $ t \in [0,1] $. In particular, $ u_1' \circ \gamma([0,1]) \subset W_\gamma $ for any edge $ \gamma $ of $ \Gamma $, and $ u_1'(\overline{G}) \subset W_G $ for any face $ G $ of $ \Gamma $. Let us summarise the properties which will be important in the sequel: \\
\begin{align}
& u_1'=u_2 \text{ on a \nbd of each vertex of }\Gamma, \numv\label{A:Vachieve}\\
& \Psi_{\cv}^t(W_G) = W_G, \text{ for any face } G \text{ of }  \Gamma \text{ and each } t \in [0,1], \numv\label{A:Vcontrolfnbd}\\
& u_1' \circ \gamma([0,1]) \subset W_\gamma \text{ for any edge } \gamma  \text{ of }  \Gamma, \numv\label{A:Vcontrole}\\
& u_1'(\overline{G}) \subset W_G \text{ for any face } G \text{ of }  \Gamma. \numv\label{A:Vcontrolf}
\end{align}
\\ \\
{\bf Step II} \\ \\
Choose some 1-form $ \lambda $ on $ \mathbb{C}^n $ such that $ d \lambda = \omega_{\st} $. We divide the second step into two sub-steps: \\ \\
{\it \underline{Adjusting actions of the edges:}} \\ 
Look at the discs $ u_1' $ and $ u_2 $. For any edge $ \gamma $ of $ \Gamma $, the integrals $\ca(u_1' \circ \gamma):= \int_{u_1' \circ \gamma} \lambda $ and $ \ca(u_2 \circ \gamma):=\int_{u_2 \circ \gamma} \lambda $  are well-defined, and do not necessarily coincide (we call these integrals ``actions of $ u_1' \circ \gamma $ and $ u_2 \circ \gamma $"). The purpose of this sub-step is to slightly perturb $ u_1' $ so that the actions of $ u_1' \circ \gamma $ and $ u_2 \circ \gamma $ coincide.
To do this, fix a vertex $z_0$ of $ \Gamma $, and for any other vertex $z$ of $ \Gamma $, choose a path $\gamma_z$ made of successive edges of $ \Gamma $ which joins $z_0$ to $z$. Define 
$$
a_z:=\int_{u_2 \circ \gamma_z}\lambda-\int_{u_1' \circ \gamma_z}\lambda.
$$
Notice that these numbers depend on the choice of $z_0$ but not of $\gamma_z$ because $u_2^*\om_\st={u_1'}^*\om_\st$. Then, for each edge $\gamma$ of $ \Gamma $, 
$$
\ca(u_1' \circ \gamma)+a_{\gamma(1)}-a_{\gamma(0)}=\ca(u_2 \circ \gamma)
$$
(because $a_{\gamma(1)}$ can be obtained by integrating $\lambda$ along a path that joins $z_0$ to $\gamma(0)$, concatenated with $\gamma$).
Choose now disjoint annuli $ A_z = \{ w \in \mathbb{C} \; | \; \rho_z  < |w-z| < \rho_z' \} \subset U_z \subset D(1+\mu) $, for all vertices $ z $ of $ \Gamma$. Consider a Hamiltonian function $ H_\mathcal{D} $ on $D(1+\mu)$ with support in $\cup D(z,\rho_z')$, and which is equal to $ -a_z $ on $ D(z,\rho_z) $. The induced Hamiltonian isotopy is supported inside $\cup_z A_z$, and its time-$1$ map $ f : D(1+\mu) \rightarrow D(1+\mu) $ is such that for each edge $ \gamma $ of $ \Gamma $, the area between $ \gamma$ and $ f \circ \gamma$ equals $ a_{\gamma(1)}-a_{\gamma(0)} $, so the actions of $u_2$ and $u_1' \circ f$ coincide on each edge. Extending the push-forward $ (u_1')_* H_\cd $ to a smooth function $ H_\ca$ with support in $\cup_z W_z$, and whose derivatives vanish in the directions symplectic orthogonal to $Tu_1'(D)$, we get a Hamiltonian function whose Hamiltonian flow $ \Psi_{\ca}^t $, $ t \in [0,1] $ is supported inside $\cup_z W_z$, and the time-$1$ map of its flow verifies $ \Psi_{\ca}^1 \circ u_1'=u_1' \circ f  $. Denote $ v:=\Psi_{\ca}^1 \circ u_1' $. 
We have:
\begin{align}
& \ca(v \circ \gamma) = \ca(u_2 \circ \gamma) \text{ for each edge } \gamma \text{ of } \Gamma, \numa\label{A:Aactions-coincide} \\
& v=u_2 \text{ on a \nbd of each vertex of }\Gamma. \numa\label{A:Aachieve}
\end{align}
The flow $ \Psi_{\ca} $ is supported inside $ \cup_z W_z $. Moreover, recall that for each vertex $ z $, edge $ \gamma $, and face $ G $ of $ \Gamma $, if $ W_z \cap W_\gamma \neq \emptyset $ then $ W_z \subset W_\gamma $, and if $ W_z \cap W_G \neq \emptyset $ then $ W_z \subset W_G $. Hence $ \Psi_{\ca}^t(W_\gamma) = W_\gamma $ and $ \Psi_{\ca}^t(W_G) = W_G $ for every edge $ \gamma $ and face $ G $ of $ \Gamma $, and any $ t \in [0,1] $, so in particular, by \eqref{A:Vcontrole} and \eqref{A:Vcontrolf}, we have
\begin{align}
& v \circ \gamma([0,1]) \subset W_\gamma, \text{ for any edge } \gamma \text{ of } \Gamma, \numa\label{A:Acontrole} \\
& v(\overline{G}) \subset W_G, \text{ for any face } G \text{ of } \Gamma. \numa\label{A:Acontrolf}
\end{align}

Given two different edges $ \gamma_1 $, $ \gamma_2 $ of $ \Gamma $, the curves $ v \circ \gamma_1 $ and $ u_2 \circ \gamma_2 $ might intersect away from their endpoints. However, since we are in dimension $ 2n > 2 $, we claim that a small perturbation allows to get rid of these intersections.
\begin{claim}
We can slightly perturb the flow $ \Psi_\ca $, such that the correspondingly perturbed disc $ v $ satisfies 
\begin{align}
& v \circ \gamma_1((0,1)) \cap u_2 \circ \gamma_2((0,1)) = \emptyset, \text{ for any pair of different edges } \gamma_1 , \, \gamma_2 \text{ of } \Gamma, \numa\label{A:Aedges-perturb}
\end{align}
and such that we still have \eqref{A:Aactions-coincide}, \eqref{A:Aachieve}, \eqref{A:Acontrole}, \eqref{A:Acontrolf}. 
\end{claim}

\noindent {\it Proof:}
 By \eqref{A:Aachieve}, for each edge $ \gamma $ of $ \Gamma $, we can find a closed interval $ I_\gamma \subset (0,1) $ such that $ v \circ \gamma = u_2 \circ \gamma $ on $ [0,1] \setminus I_\gamma $. Then, pick open sets 
 $$ 
 v \circ \gamma (I_\gamma) \subset \cv_\gamma \Subset \cv_\gamma' \Subset W \setminus \{ v \circ \gamma (0), v \circ \gamma (1) \} ,
 $$ 
 such that the $ \cv_\gamma' $ are pairwise disjoint. For each edge $ \gamma $ of $ \Gamma $ choose a $ (\cv_\gamma,\cv_\gamma') $-cutoff function.

Since we are in dimension $ 2n > 2 $, for each edge $ \gamma $ of $ \Gamma $ we can find an arbitrarily small vector $ r_\gamma \in \mathbb{C}^n $, such that $ (v \circ \gamma_1 ((0,1)) + r_{\gamma_1}) \cap u_2 \circ \gamma_2((0,1)) = \emptyset $ for any pair $ \gamma_1 $, $ \gamma_2 $ of different edges of $ \Gamma $. Then consider the small linear (autonomous) Hamiltonian function whose time-$1$ map is the affine shift by $ r_\gamma $, and consider its $ (\cv_\gamma,\cv_\gamma') $-cutoff (via the initially chosen cutoff function). This gives autonomous Hamiltonian functions, indexed by the edges of $ \Gamma $, with pairwise disjoint supports. Their sum is a Hamiltonian function $H$ which generates a $ \cc^0 $ small Hamiltonian flow $ \theta^t $, $ t \in [0,1] $, such that we have $$ \theta^1 \circ v \circ \gamma_1((0,1)) \cap u_2 \circ \gamma_2((0,1)) = \emptyset $$ for any two different edges $ \gamma_1 $, $ \gamma_2 $ of $ \Gamma $. Now replace the flow $ \Psi_\ca^t $, $ t \in [0,1] $ by the composition of flows $ \theta^t \circ \Psi_\ca^t $, $ t \in [0,1] $, and correspondingly replace $ v $ by $ \theta^1 \circ v $. \eqref{A:Aactions-coincide}, \eqref{A:Aachieve} still hold because $H$ vanishes near $v(z)=u_2(z)$ for each vertex $z$ of $\Gamma$, and \eqref{A:Acontrole}, \eqref{A:Acontrolf} are not destroyed if the flow $(\theta^t)$ is $\cc^0$-small enough.
\cqfd

If an edge $ \gamma $ of $ \Gamma $ is not incidental to a face $ G $ of $ \Gamma $ (i.e. $ \gamma([0,1]) \not\subset \overline{G} $), then by \eqref{A:Aedges-perturb}, we have 
$ v(\partial G) \cap u_2 \circ \gamma((0,1)) = \emptyset $. However, we might still have $ v(G) \cap u_2 \circ \gamma((0,1)) \neq \emptyset $. Nevertheless, since we are in dimension $ 2n \geqslant 4 $, we claim, similarly as before, that a small perturbation allows to get rid of these intersections

\begin{claim}
We can slightly perturb the flow $ \Psi_\ca $, such that correspondingly perturbed disc $ v $ satisfies 
\begin{align}
& v (G) \cap u_2 \circ \gamma((0,1)) = \emptyset, \text{ for any edge } \gamma \text{ and face } G \text{ of } \Gamma \text{ with } \gamma([0,1]) \not\subset \overline{G}, \numa\label{A:Afaces-perturb}
\end{align}
and such that  \eqref{A:Aactions-coincide}, \eqref{A:Aachieve}, \eqref{A:Acontrole}, \eqref{A:Acontrolf}, and \eqref{A:Aedges-perturb} still hold. 
\end{claim}
\noindent{\it Proof:} By \eqref{A:Aachieve} and \eqref{A:Aedges-perturb}, for each face $ G $ of $ \Gamma $, we can find a compact subset $ K_G \subset G $, such that $$ v(G \setminus K_G) \cap u_2 \circ\gamma((0,1)) = \emptyset ,$$ for any 
face $G$ and non-incidental edge $\gamma$ of $\Gamma$.
Thus, we can pick open sets $\cv_G, \cv_G'$ such that $$ v ( K_G ) \subset \cv_G \Subset \cv_G' \Subset W \setminus (v ( \partial G) \cup \{ v(z) \, | \, z \text{ is a vertex of } \Gamma \}),$$ and such that the $\cv_G'$ are pairwise disjoint. For each face $ G $ of $ \Gamma $, choose a $ (\cv_G,\cv_G') $-cutoff function.

Since we are in dimension $ 2n \geqslant 4 $, for each face $ G $ of $ \Gamma $ we can find an arbitrarily small vector $ r_G \in \mathbb{C}^n $, such that $ (v (G) + r_{G}) \cap u_2 \circ \gamma((0,1)) = \emptyset $ for any face $G$ and edge $\gamma$ non-incidental to $ G $.
Then for each face $ G $ of $ \Gamma $, consider the small linear (autonomous) Hamiltonian function whose time-$1$ map is the affine shift by $ r_G $, and consider its $ (\cv_G,\cv_G') $-cutoff (via the initially chosen cutoff function). This gives a collection of autonomous Hamiltonian functions (one for each face $ G $ of $ \Gamma $), whose supports are pairwise disjoint and avoid a \nbd of the image by $v$ and $u_2$ of each vertex  $\Gamma$. Their sum 
is a Hamiltonian function generating a $ \cc^0 $ small Hamiltonian flow $ \theta^t $, $ t \in [0,1] $, which verifies $ \theta^1 \circ v (G) \cap u_2 \circ \gamma((0,1)) = \emptyset $ for any non-incidental face $ G $ and edge $ \gamma $ of $ \Gamma $. Now replace the flow $ \Psi_\ca^t $, $ t \in [0,1] $ by the composition of flows $ \theta^t \circ \Psi_\ca^t $, $ t \in [0,1] $, and correspondingly replace $ v $ by $ \theta^1 \circ v $. It is again immediate to verify that \eqref{A:Aactions-coincide}-\eqref{A:Aedges-perturb} still hold if $(\theta^t)$ is $\cc^0$-small enough. \cqfd

Notice that the proof above uses only \eqref{A:Aachieve} and \eqref{A:Aedges-perturb}, which are symetric in $u_2$ and $v$. Thus a further perturbation, obtained by the same procedure, allows to ensure that 
\begin{align}
& u_2 (G) \cap v\circ \gamma((0,1)) = \emptyset, \text{ for any edge } \gamma \text{ and face } G \text{ of } \Gamma \text{ with } \gamma([0,1]) \not\subset \overline{G}. \numa\label{A:Afaces-perturb'}
\end{align}   
Precisely, the above procedure provides us with a $ \cc^0 $-small Hamiltonian flow $ \theta^t $, $ t \in [0,1] $, supported away from $ \{ u_2(z) \, | \, z 
\text{ is a vertex of } \Gamma \} $, such that $ \theta^1 \circ u_2 (G) \cap v \circ \gamma((0,1)) = \emptyset $
for any face $G$ of $ \Gamma $, and edge $\gamma$ of $ \Gamma $ non-incidental to $G$. Then we replace the flow $ \Psi_\ca^t $, $ t \in [0,1] $ by the composition of flows $ (\theta^t)^{-1} \circ \Psi_\ca^t $, $ t \in [0,1] $, and correspondingly replace $ v $ by $ (\theta^1)^{-1} \circ v $. Notice that the properties \eqref{A:Aactions-coincide}-\eqref{A:Afaces-perturb} still hold after the perturbation, provided that the flow $ \theta $ is $ \cc^0 $-small enough.

After the  above three perturbations we may no longer have $ \Psi_{\ca}^t(W_G) = W_G $ for any face $ G $ of $ \Gamma $ and any $ t \in [0,1] $. Nevertheless, we can assume that these perturbations of $ \Psi_\ca $ are sufficiently $ \cc^0 $-small, so that we still have:
\begin{align}
& \Psi_{\ca}^t(W_G) \subset W_G', \text{ for any face } G \text{ of } \Gamma, \text{ and any } t \in [0,1]. \numa\label{A:Acontrolfnbd} 
\end{align}

\medskip
\noindent {\it \underline{Moving edges to edges:}} \\
Look at the disc $ v = \Psi_{\ca}^1 \circ u_1' $. Since  by  \eqref{A:Aachieve}, $v$ and $u_2$ coincide on a \nbd of  each vertex  of $\Gamma$, there exists $\nu>0$ such that 
$v\circ \gamma(s)=u_2\circ \gamma(s)$ $\forall s\in [0,2\nu]\cup [ 1-2\nu,1]$ and for all edge $\gamma$ of $\Gamma$. By \eqref{A:Acontrole}, \eqref{A:Aedges-perturb}, \eqref{A:Afaces-perturb}, \eqref{A:Afaces-perturb'} if we denote by $G_\gamma$ the union of the two faces that contain $\gamma([0,1])$ in their boundaries (or one face when $\gamma([0,1])$ lies in $\partial D(1+\mu)$), 
\begin{align}
& u_2\circ\gamma([\nu,1-\nu]),v\circ \gamma([\nu,1-\nu])\Subset W_\gamma\priv\big(u_2(D\priv G_\gamma)\cup v(D\priv G_\gamma)\big). \nume\label{A:Eu2vlocalize}
\end{align}
Choose now relative homotopies $F_\gamma$ between 
 $v\circ \gamma_{|[\nu,1-\nu]}$ and $u_2\circ \gamma_{|[\nu,1-\nu]}$ in $W_\gamma\priv \big(u_2(D\priv G_\gamma)\cup v(D\priv G_\gamma) \big)$, {\it i.e.} smooth maps
$F_\gamma: [\nu,1-\nu]\times[0,1]\to W_\gamma\priv \big(u_2(D\priv G_\gamma)\cup v(D\priv G_\gamma) \big)$  such that $F_\gamma(s,0)=v\circ \gamma(s) $, $F_\gamma(s,1)=u_2\circ \gamma(s)$ and $F(s,t)=u_2\circ \gamma(s)=v\circ \gamma(s)$ for all $t\in [0,1] $ and $s\in [\nu ,2\nu]\cup [1-2\nu,1-\nu]$ (this can be done by considering  a relative homotopy in $W_\gamma$, and  perturb it by general position arguments). Notice now that \eqref{A:Aedges-perturb} implies in particular that $u_2\circ \gamma_1([\nu,1-\nu])\cap v\circ \gamma_2([\nu,1-\nu])=\emptyset$ for all pair of distinct edges $\gamma_1,\gamma_2$, so a similar general position argument allows to choose the isotopies $F_\gamma$ with disjoint images. Finally, taking small enough \nbds with smooth boundary $\cv_\gamma$ of each image of $F_\gamma$, we get open sets with the following properties:
 \begin{align}
 & \cv_\gamma\Subset W_\gamma\subset \C^n \nume\label{A:EVgamma-in-Wgamma}\\
 & \cv_{\gamma_1}\cap \cv_{\gamma_2}=\emptyset \text{ for all pair of distinct edges of } \Gamma \nume\label{A:EVgamma-pair-disjoint}\\
  & \cv_\gamma \cap \big(u_2(D\priv G_\gamma)\cup v(D\priv G_\gamma) \big)=\emptyset\nume \label{A:EVgamma-disjoint-f}\\
  & 
  \left\{\begin{array}{l}
  \cv_\gamma\cap u_2\circ \gamma((0,1))=u_2\circ \gamma((\nu',1-\nu')) \\
    \cv_\gamma\cap v\circ \gamma((0,1))=v\circ \gamma((\nu',1-\nu')), 
    \end{array}\right. \text{ for some }\nu'\in ( \nu ,2\nu). \nume\label{A:EVgamma-proper}\\
    & v\circ \gamma_{|(\nu',1-\nu')} \text{ and } u_2\circ \gamma_{|(\nu',1-\nu')} \text{ are relative homotopic in } \cv_\gamma \nume\label{A:Evu2homotopic}
 \end{align}
By  \eqref{A:Aactions-coincide}, we have the equality of actions $ \int_{v \circ \gamma} \lambda = \int_{u_2 \circ \gamma} \lambda $. Since moreover $v\circ \gamma(s)=u_2\circ \gamma(s)$ for $s\in[0,\nu']\cup [1-\nu',1]$, the actions of $v\circ \gamma_{|[\nu',1-\nu']}$ and $u_2\circ \gamma_{|[\nu',1-\nu']}$ also coincide. By \eqref{A:Evu2homotopic}, lemma \ref{L:isotopy-nbd-edge} (b) (completed by remark \ref{rk:balltononball}) therefore gives Hamiltonian functions $H_\gamma:W\times [0,1]\to \R$ with compact supports in $\cv_\gamma\times [0,1]$, whose Hamiltonian flows verify $\psi^1_{H_\gamma}\circ v=u_2$ on a \nbd of $\gamma((\nu',1-\nu')) $ in $ D(1+\mu) $. Since $v$ already coincides with $u_2$ near the part of $v(\gamma)$ outside of $\cv_\gamma$, we conclude that $\psi^1_{H_\gamma}\circ v=u_2$ on a \nbd of $\gamma([0,1])$. The functions $H_\gamma$ have disjoint supports by \eqref{A:EVgamma-pair-disjoint}, so their sum $H:=\sum_\gamma H_\gamma$ is a Hamiltonian function whose flow $(\Psi_\ce)$ verifies
\begin{align}
& u_1'' = \Psi_{\ce}^1 \circ v = u_2 \text{ on a neighbourhood of each edge of } \Gamma. \nume\label{A:Eachieve-2} 
\end{align}
Moreover, by \eqref{A:EVgamma-pair-disjoint} and \eqref{A:EVgamma-in-Wgamma}, a point of $W_G$ is moved along the flow of $H$ by at most one flow $(\psi_{H_\gamma})$, which have support in $W_\gamma\subset W_G'$. Hence, 
\begin{align}
& \Psi_{\ce}^t (W_G) \setminus W_G \Subset W_G' \text{ for any face } G \text{ of } \Gamma \text{ and any } t \in [0,1]\;\footnotemark. \nume\label{A:Econtrolfnbd-prelim} 
\end{align}
\footnotetext{Of course, $W_G\Subset W_G'$ as soon as $G\Subset D$, but not when $G$ is adjacent to $\partial D$.}
In addition, we claim that we have:
\begin{align}
& u_1'' (\overline{G}) = \Psi_{\ce}^1 \circ v  (\overline{G})  \subset W_G \text{ for any face } G \text{ of } \Gamma. \nume\label{A:Econtrolf} 
\end{align}
Indeed, let $ G $ be a face of $ \Gamma $ and let $ z \in \overline{G} $. If $ z \in \partial G $, then by \eqref{A:Eachieve-2}, we have $ u_1''(z) = u_2(z) \in W_G $. If $ z \in G $ and $ v(z) \in \cv_\gamma$ for some edge $ \gamma $ of $ \Gamma $, then by \eqref{A:EVgamma-disjoint-f} we conclude that $ \overline{G} \supset \gamma([0,1]) $, hence $ W_\gamma \subset W_G $, and therefore by \eqref{A:EVgamma-pair-disjoint} and \eqref{A:EVgamma-in-Wgamma}, we get $$ u_1''(z) = \Psi_{\ce}^1 \circ v(z) = \Psi_{H_\gamma}^1 \circ v(z) \in \cv_\gamma\subset W_\gamma \subset W_G.$$ Finally, if $ v(z) \notin\cv_\gamma $ for any edge $ \gamma $ of $ \Gamma $, then $ v(z) \notin \supp H $ and therefore $ u_1''(z) =  \Psi_{\ce}^1 \circ v(z) = v(z) \in W_G $.

\begin{claim}
Since we are in dimension $ 2n > 4 $, we can slightly perturb the flow $ \Psi_{\ce} $, such that \eqref{A:Eachieve-2}, \eqref{A:Econtrolfnbd-prelim} and \eqref{A:Econtrolf} still hold, and such that 
\begin{align}
& u_1''(G_1) \cap u_2(G_2) = \emptyset, \text{ for any two different faces } G_1, G_2 \text{ of } \Gamma. \nume\label{A:Efaces-perturb} 
\end{align}
\end{claim}
\noindent{\it Proof:} 
By \eqref{A:Eachieve-2}, for each face $ G $ of $ \Gamma $, we can find a compact subset $ K_G \subset G $ such that $ u_1'' = u_2  $ on $ G \setminus K_G $. Then, by \eqref{A:Econtrolf} we can pick open sets $\cv_G,\cv_G'$ such that
$$ 
u_1'' ( K_G ) \subset \cv_G \Subset \cv_G' \Subset W_G \setminus ( \cup_\gamma u_1'' \circ \gamma([0,1]) ) ,
$$ 
and such that the $\cv_G'$ are pairwise disjoint. Note that $ \cv_G' \Subset W_G \setminus u_1''(\partial G) $ since $ \partial G \subset \cup_\gamma \gamma([0,1]) $. For each face $ G $ of $ \Gamma $, choose a $ (\cv_G,\cv_G') $-cutoff function.

Since we are in dimension $ 2n > 4 $, for each face $ G $ of $ \Gamma $ we can find an arbitrarily small vector $ r_G \in \mathbb{C}^n $, such that $ (u_1'' (G_1) + r_{G_1}) \cap u_2 (G_2) = \emptyset $ for any pair of different faces $ G_1 $, $ G_2 $ of $ \Gamma $. Then for each face $ G $ of $ \Gamma $, take the small linear (autonomous) Hamiltonian function whose time-$1$ map is the affine shift by $ r_G $, and consider  its $ (\cv_G,\cv_G') $-cutoff (via the initially chosen cutoff function). This gives us a collection of autonomous Hamiltonian functions (one for each face $ G $ of $ \Gamma $), with pairwise disjoint supports. Their sum (over all faces $ G $ of $ \Gamma $) is a Hamiltonian function with $ \cc^0 $ small Hamiltonian flow $ \theta^t $, $ t \in [0,1] $, which verifies $$ \theta^1 \circ u_1'' (G_1) \cap u_2 (G_2) = \emptyset $$ for any pair of different faces $ G_1 $, $ G_2 $ of $ \Gamma $. Now replace the flow $ \Psi_{\ce}^t $, $ t \in [0,1] $ by the composition of flows $ \theta^t \circ \Psi_{\ce}^t $, $ t \in [0,1] $, and correspondingly replace $ u_1'' $ by $ \theta^1 \circ u_1'' $. \eqref{A:Eachieve-2} still holds after the perturbation because the flow $ \theta $ has support in $W_G \setminus \cup_\gamma u_1'' \circ \gamma([0,1])$. \eqref{A:Econtrolf} still holds provided the perturbation is chosen $\cc^0$-small enough. The same is true for \eqref{A:Econtrolfnbd-prelim}, for those faces $G$ of $ \Gamma $ which are compactly contained in $D$. For the other faces (whose closure intersects $ \partial D $), since we had \eqref{A:Econtrolfnbd-prelim} before the perturbation, since the flow $ \theta $ is $ \cc^0 $-small, and since the support of the flow $ \theta $ lies inside $ \cup_G \cv_G' $, where all $ \cv_G' $ are pairwise disjoint and $ \cv_G' \Subset W_G $ for every face $ G $ of $ \Gamma $, we conclude that \eqref{A:Econtrolfnbd-prelim} still holds after the perturbation.\cqfd

Note that \eqref{A:Econtrolfnbd-prelim} implies 
\begin{align}
& \Psi_{\ce}^t(W_G) \subset W_G' \text{ for any face } G \text{ of } \Gamma \text{ and each } t \in [0,1].  \nume\label{A:Econtrolfnbd}
\end{align}
The properties of the Hamiltonian flow $(\Psi^t_\ce)$ that will be important in the sequel are \eqref{A:Eachieve-2}, \eqref{A:Econtrolf}, \eqref{A:Efaces-perturb} and \eqref{A:Econtrolfnbd}.

\medskip
\noindent {\bf Step III} \\ \\
Let $ G $ be a face of $ \Gamma $. We have $ u_1''(\overline{G}), u_2(\overline{G}) \subset W_G $ by \eqref{A:Econtrolf}, and the restrictions of $ u_1'' $ and $ u_2 $ to $ \overline{G} $ coincide near the boundary of $ \overline{G} $ by \eqref{A:Eachieve-2}. The first point is to find local Hamiltonian isotopies in $W_G$ that bring $u_1''(\overline{G})$ to $u_2(\overline{G})$, relative to their boundaries, with the additional requirement that their supports do not intersect $u_2(D\priv \overline{G})\cup u_1''(D\priv \overline{G})$ (pairwise disjoint support would be even better, but now the dimension obstructs). This is doable directly, but not completely transparent from our proof of lemma \ref{L:isotopy-discs-fixed-bdry}, so we proceed slightly differently. By lemma~\ref{L:isotopy-discs-fixed-bdry}, for every face $ G $ of $ \Gamma $ there exists a Hamiltonian isotopy $ \psi_G^t $ of $ W $, generated by a Hamiltonian $ H_G : W \times [0,1] \rightarrow \mathbb{R} $, such that 
\begin{align}
& \supp H_G \Subset W_G \setminus u_2(\partial G), 
 \numf\label{A:Fsupport-H} \\
&  \psi_G^1 \circ u_1'' = u_2 \text{ on } \overline{G}. 
\numf\label{A:Fachieve} 
\end{align}
\begin{claim}
Since we are in dimension $ 2n \geqslant 6 $, after slightly perturbing the flow $ \psi_G $ for each face $ G $ of $ \Gamma $, if necessary, we may assume that \eqref{A:Fsupport-H}, \eqref{A:Fachieve} still hold, and that we moreover have 
\begin{align}
& \psi_G^t \circ u_1''(G) \cap \big(u_1''(\overline{D} \setminus G) \cup u_2(\overline{D} \setminus G)\big) = \emptyset \text{ for any face } G \text{ of } \Gamma \text{ and any } t \in [0,1].  \numf\label{A:Ffaces-perturb} 
\end{align}
\end{claim}
\noindent{\it Proof:} 
 By \eqref{A:Fsupport-H} and \eqref{A:Fachieve}, for each face $ G $ of $ \Gamma $, we can find a compact subset $ K_G \subset G $ such that $ u_1''(z) = \psi_G^t \circ u_1''(z) = u_2(z)  $ for any $ z \in G \setminus K_G $ and $ t \in [0,1] $. Then, by \eqref{A:Eachieve-2}, \eqref{A:Econtrolf} and \eqref{A:Ffaces-perturb}, we can pick open sets $$  \cup_{t \in [0,1]} \psi_G^t \circ u_1''(K_G) \subset \cv_G \Subset \cv_G' \Subset W_G \setminus u_1''(\partial G) = W_G \setminus u_2(\partial G) .$$ For each face $ G $ of $ \Gamma $, choose a $ (\cv_G,\cv_G') $-cutoff function.

Since we are in dimension $ 2n \geqslant 6 $, for each face $ G $ of $ \Gamma $, we can find an arbitrarily small vector $ r_G \in \mathbb{C}^n $, such that $$ ( (\cup_{t \in [0,1]} \psi_G^t \circ u_1''(G)) + r_G) \cap (u_1''(\overline{D} \setminus G) \cup u_2(\overline{D} \setminus G)) = \emptyset .$$ Next, take the small linear (autonomous) Hamiltonian function whose time-$1$ map is the affine shift by $ r_G $, and consider its $ (\cv_G,\cv_G') $-cutoff (via the initially chosen cutoff function). It is a Hamiltonian function which generates a $ \mathcal{C}^0 $-small autonomous Hamiltonian flow $ \theta_G^t $, $ t \in [0,1] $ on $ W $. Finally, by \eqref{A:Efaces-perturb} and \eqref{A:Fachieve}, we have $$ ( u_1''(G) \cup \psi_G^1 \circ u_1''(G) ) \cap (u_1''(\overline{D} \setminus G) \cup u_2(\overline{D} \setminus G)) = \emptyset .$$
Hence if we choose a small enough $ \nu > 0 $, and then pick a smooth function $ c : [0,1] \rightarrow [0,1] $, which equals $ 1 $ on $ [\nu,1-\nu] $ and equals $ 0 $ on a neighbourhood of $ \{ 0,1 \} $, then after replacing the Hamiltonian flows $ \psi_G^t $, $ t \in [0,1] $ by $ \theta_{G}^{c(t)} \circ \psi_G^t $, $ t \in [0,1] $, we obtain the desired situation. \cqfd 

Now, by \eqref{A:Fsupport-H}, for each face $ G $ of $ \Gamma $ we can find a compact subset $ \widetilde{K}_G \subset G $ such that $ H_G = 0 $ on a neighbourhood of $ u_1''(\overline{G} \setminus \widetilde{K}_G) $, and in particular, by  \eqref{A:Fachieve} we have $ u_1''(z) = \psi_G^t \circ u_1''(z) = u_2(z)  $ for any $ z \in G \setminus \widetilde{K}_G $ and $ t \in [0,1] $. Then, by \eqref{A:Ffaces-perturb}, we can pick open sets 
$$ \cup_{t \in [0,1]} \psi_G^t \circ u_1''(\widetilde{K}_G) \subset \widetilde{\cv}_G \Subset \widetilde{\cv}_G' \Subset W_G \setminus ( u_1''(\overline{D} \setminus G) \cup u_2 (\overline{D} \setminus G) ) .$$ 
Denote by $ \widetilde{H}_G $ a $ (\widetilde{\cv}_G, \widetilde{\cv}_G') $-cutoff of $ H_G $, and by $ \tilde{\psi}_G^t $, $ t \in [0,1] $ the Hamiltonian flow of $ \widetilde{H}_G $. Then we get that $ \tilde{\psi}_G^1 \circ u_1'' = u_2 $ on $ \overline{G} $ for every face $ G $ of $ \Gamma $, and that for any two different faces $ G_1, G_2 $ of $ \Gamma $ we have $ \tilde{\psi}_{G_1}^1 \circ u_1'' = u_1'' $ and $ \tilde{\psi}_{G_1}^1 \circ u_2 = u_2 $ on $ \overline{G_2} $. Moreover, for any face $ G $ of $ \Gamma $, the support of $ \tilde{\psi}_{G}^t $, $ t \in [0,1] $ lies inside $ W_G $. 

Now, partition the set of all faces of $ \Gamma $ into four sets $ \mathcal{F}_{00}, \mathcal{F}_{01}, \mathcal{F}_{10}, \mathcal{F}_{11} $, according to the parity. More precisely, for each $ i,j \in \{ 0 ,1 \} $, we denote by $ \mathcal{F}_{ij} $ the set of faces $ G $ of $ \Gamma $ such that for some $ k,l \in \mathbb{Z} $ with $ k \equiv i , l \equiv j (\text{mod} \,\, 2) $, we have $ G \subset [k\delta,(k+1)\delta] \times [l\delta,(l+1)\delta] $. According to our choice of $ \eta $ (which enters the definition of the neighbourhoods $ W_G $) at the beginning of the proof, for any $ i,j \in \{ 0 , 1 \} $ and any two faces in $ G_1,G_2 \in \mathcal{F}_{ij} $, we have $ W_{G_1} \cap W_{G_2} = \emptyset $. For each $ i,j \in \{ 0,1 \} $ denote by $ \Psi_{ij}^t $, $ t \in [0,1] $ a (reparametrised) concatenation of all the flows $ \tilde{\psi}_{G} $, where $ G \in \mathcal{F}_{ij} $. Further, denote by $ \Psi_{\cf}^t $, $ t \in [0,1] $ a (reparametrised) concatenation of the flows $ \Psi_{00} , \Psi_{01}, \Psi_{10}, \Psi_{11} $. We have $ \Psi_{\cf}^1 \circ u_1'' = u_2 $ on $ \overline{D} $. Moreover, for any $ t \in [0,1] $, any face $ G $ of $ \Gamma $, and any $ i,j \in \{ 0,1 \} $ we have $ \Psi_{ij}^t(W_G) \subset W_G' $, and hence for any $ t \in [0,1] $ and any face $ G $ of $ \Gamma $ we have $ \Psi_{\cf}^t(W_G) \subset W_{G}^{(4)} $. \\

Summarising the three steps above, we have constructed Hamiltonian flows $ \Psi_{\cv} $, $ \Psi_{\ca} $, $ \Psi_{\ce} $ and $ \Psi_{\cf} $, compactly supported in $ W $, and smooth symplectic discs $ u_1',u_1'' : D(1+\mu) \rightarrow W $, such that $ u_1' = \Psi_{\cv}^1 \circ u_1 $ on $ D(1+\mu) $, $ u_1'' = \Psi_{\ce}^1 \circ \Psi_{\ca}^1 \circ u_1' $ on $ D(1+\mu) $, and $ u_2 = \Psi_{\cf}^1 \circ u_1'' $ on $ \overline{D} $. Moreover, for any $ t \in [0,1] $ and any face $ G $ of $ \Gamma $, we have $ \Psi_{\cv}^t(W_G) = W_G $, $ \Psi_{\ca}^t(W_G) \subset W_G' $, $ \Psi_{\ce}^t(W_G) \subset W_G' $ and $ \Psi_{\cf}^t (W_G) \subset W_G^{(4)} $. Hence, if we denote by $ \phi^t $, $ t \in [0,1] $, a (reparametrised) concatenation of the flows $ \Psi_{\cf} $, $ \Psi_{\ce} $, $ \Psi_{\ca} $ and $ \Psi_{\cv} $, then the flow $ \phi $ is supported in $ \cup_G W_G \subset W $ and we have $ u_2 = \phi^1 \circ u_1 $. Moreover, for any $ t \in [0,1] $ and any face $ G $ of $ \Gamma $ we have $ \phi^t(W_G) \subset W_G^{(6)} $. Since the parameters $ \delta $ and $ \eta $, chosen at the beginning of the proof, are small enough, we conclude that $ \size \phi < \epsilon $.
\end{proof}

\begin{proof}[Proof of theorem~\ref{T:qh-principle-main}]
The theorem immediately follows from lemma~\ref{L:homotopy-to-isotopy} and proposition~\ref{P:qh-principle}.
\end{proof}

\begin{proof}[Proofs of claims~\ref{Cl:1} and~\ref{Cl:1-step-m}]
Let us present the proof of claim~\ref{Cl:1-step-m} (the proof of claim~\ref{Cl:1} is similar). Since $ W^a(\epsilon_m) $ is convex, and $ d(i^a,i^a_{k_m,l_m}) < \sqrt{2}\epsilon_{m} $, the linear homotopy between $ i^a $ and $ i^a_{k_m,l_m} $ lies in $ W^a(\epsilon_m) $, and has size less than $ \sqrt{2}\epsilon_m $. Hence by theorem~\ref{T:qh-principle-main}, there exists a smooth Hamiltonian isotopy supported in $ W^a(\epsilon_m) $ of size less than $ 2\sqrt{2} \epsilon_m < 3 \epsilon_m $, that sends $ i^a_{k_m,l_m} $ to $ i^a $. Denote by $ \psi_m' $ its time-$1$ map.
\end{proof}

\begin{proof}[Proofs of claims~\ref{Cl:2} and~\ref{Cl:2-step-m}]
Let us present the proof of claim~\ref{Cl:2-step-m} (the proof of claim~\ref{Cl:2} is similar). Consider a smooth embedding $ \tau : W_{k_m}(\delta_m) \rightarrow \mathbb{C}^n $ given by  
$ \tau (z_1,z_2,z') = (z_1, z_2 - f_{k_m}(z_1/a),z') $ (here $ z' = (z_3, \ldots, z_n) $). Then $ d(\id,\tau) < \epsilon_m $. In addition, by claim~\ref{Cl:1} we have $ d(\id,\psi_m') < 3 \epsilon_m $. The domain $ \tau(W_{k_m}(\delta_m)) $ is an euclidean polydisc, and hence is convex. Look at the discs $ \tau \circ i_{k_m} $, $ \tau \circ (\psi_m')^{-1} \circ i_{k_{m+1}} $. We have $ \tau \circ i_{k_m} (\overline{D}), \tau \circ (\psi_m')^{-1} \circ i_{k_{m+1}} (\overline{D}) \subset \tau(W_{k_m}(\delta_m)) $. Moreover, $$ d( \tau \circ i_{k_m} , \tau \circ (\psi_m')^{-1} \circ i_{k_{m+1}}) < 2\epsilon_m +  
d( i_{k_m} , (\psi_m')^{-1} \circ i_{k_{m+1}}) < 5\epsilon_m + d(i_{k_m}, i_{k_{m+1}}) < 7\epsilon_m .$$ Having in mind that $ \tau(W_{k_m}(\delta_m)) $ is convex, this implies that the linear homotopy between $ \tau \circ i_{k_m} $, $ \tau \circ (\psi_m')^{-1} \circ i_{k_{m+1}} $ lies inside $ \tau(W_{k_m}(\delta)) $, and has size less than $ 7 \epsilon_m $. Therefore, in view of $ d(\id, \psi_m' \circ \tau^{-1}) < d(\id,\psi_m') + d(\id, \tau) < 3\epsilon_m + \epsilon_m = 4 \epsilon_m $, applying the map $ \psi_m' \circ \tau^{-1} $, we conclude that there exists a homotopy between $ \psi_m' \circ i_{k_m} $ and $ i_{k_{m+1}} $ inside $ \psi_m'(W_{k_m}(\delta_m)) $, of size less than $ 7\epsilon_m + 2\cdot 4\epsilon_m = 15\epsilon_m $. And finally, by theorem~\ref{T:qh-principle-main}, there exists a smooth Hamiltonian isotopy supported inside $ \tau(W_{k_m}(\delta)) $, of size less than $ 2 \cdot 15\epsilon_m = 30\epsilon_m $, whose time-$1$ map sends $ \psi_m' \circ i_{k_m} $ to $ i_{k_{m+1}} $. Denote by $ \psi_m'' $ the time-$1$ map of this isotopy. 
\end{proof}

\section{$\cc^0$-rigidity of codimension $2$ symplectic submanifolds}\label{sec:rigcodim2}
Theorem \ref{thm:nssymp} will follow from a more precise statement involving Hofer-Zehnder capacities.  
Before stating it, let us recall the definition of $ \pi_1 $-sensitive Hofer-Zehnder capacity~\cite{lu,schwarz,gingur,macarini,schlenk}. 
\begin{defn*}\label{def:pi1HZ} Given an open set $U$ in a symplectic manifold $M$, an autonomous Hamiltonian $H:U\to \R^+$ is admissible if it has compact support and attains its maximum on an open subset of $U$. It is said slow if it has no non-constant periodic orbit of period $T\leqslant 1$, and $c$-slow if it has no non-constant periodic orbit with period $T\leqslant 1$ which is contractible in $M$. The Hofer-Zehnder and $\pi_1$-sensitive Hofer-Zehnder capacities of $U$ are
\begin{align*}
& c_\hz (U):=\sup\{\max H \; | \; H \text{ is admissible and slow}\},\\
& c_{\hz}^\circ(U,M):=\sup\{\max H \; | \; H \text{ is admissible and $c$-slow}\}.
\end{align*}
\end{defn*}
Also, for a subset $ A \subset M $, we use the notation of $ e_d(A,M) $ for the symplectic displacement energy of $ A $ in $ M $.
\begin{theorem}\label{thm:rigcodim2}
Let $h:M\to M'$ be a symplectic homeomorphism that takes a smooth codimension $2$ contractible symplectic submanifold $N^{2n-2}\subset M^{2n}$ to a smooth symplectic submanifold $N'$. Then 
\begin{itemize}
\item[(i)] Given $ K \subset V \Subset N $, where $ K $ is closed and $ V $ is relatively open in $ N $, for $ K' = h(K) $, $ V' = h(V) $, we have $$ e_d(K \times S^1,V \times T^* S^1) \leqslant e_d(K',V') .$$
\item[(ii)] Given $ U \subset V \Subset N $, where $ U $, $ V $ are relatively open in $ N $, for $ U' = h(U) $, $ h(V) \Subset V' \Subset N' $ (where $ V' $ is a relatively open subset of $ N' $), and for any $ r > 0 $, we have $$ c_\hz^\circ(U',V') \leqslant c_\hz^\circ(U \times T_r^*S^1 ,V \times T_r^* S^1) .$$ 
\end{itemize}
Here $ S^1 \subset T^* S^1 $ is the zero section of the cotangent bundle of the circle, and by $ T_r^* S^1 = \{ (q,p) \in T^* S^1 \; | \; |p| < r \} $ we denote the $r$-tubular neighbourhood of $ S^1 \subset T^* S^1$.
\end{theorem}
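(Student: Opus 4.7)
The strategy for both parts is to transport a suitable Hamiltonian from the $N'$-side to the $M$-side, using the symplectic homeomorphism $h$ approximated by smooth symplectomorphisms $h_i \to h$. Since $N$ is contractible, its symplectic normal bundle is trivial, so by Weinstein's symplectic neighbourhood theorem we fix open neighbourhoods $\cn \supset N$ in $M$ and $\cn' \supset N'$ in $M'$ together with symplectic identifications $\cn \cong N \times D^2(r_1)$ and $\cn' \cong N' \times D^2(r_1')$, with $N, N'$ corresponding to the zero sections. A crucial point is that $h^{-1}(N') = N$ (as $h$ is a homeomorphism with $h(N)=N'$), so compact subsets of $N' \times (D^2 \setminus \{0\})$ have preimages bounded away from $N$. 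This is what will allow us to confine the supports of transported Hamiltonians to annular regions, which in turn embed into $V \times T^*S^1$.

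For (i), fix $E > e_d(K',V')$ and choose a compactly supported Hamiltonian $H' : V' \times [0,1] \to \R$ of Hofer norm less than $E$ displacing a relatively open neighbourhood $\cu'$ of $K'$ in $V'$ (obtained by slightly enlarging a displacing Hamiltonian). Pick a cutoff $\chi : D^2(r_1') \to [0,1]$ supported in the annulus $A(\delta/2, 3\delta)$ and equal to $1$ on $A(\delta, 2\delta)$, and set $\tilde H'(v,z,t) := H'(v,t)\,\chi(z)$, compactly supported in $V' \times A(\delta/2, 3\delta)$. Its Hofer norm is at most that of $H'$ since $H'$ vanishes outside a compact set (multiplication by a $[0,1]$-valued $\chi$ does not increase the oscillation). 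On $V' \times A(\delta, 2\delta)$ its Hamiltonian flow is the product $\phi_{H'}^t \times \id$, which displaces $\cu' \times A(\delta, 2\delta)$. Define $H_i := \tilde H' \circ h_i$ on the domain of $h_i$; this is a Hamiltonian with the same Hofer norm. For $\rho > 0$ fixed and small, compactness of $\overline{V'} \times \overline{A(\delta/2,3\delta)}$ together with continuity of $h^{-1}$ and uniform convergence $h_i \to h$ ensure that, for $i$ large, $\supp H_i$ lies in an annular slab $V \times A(\rho_1, \rho_2) \subset \cn$ bounded away from $V$, while $h_i(K \times S^1_\rho) \subset \cu' \times A(\delta, 2\delta)$, where $S^1_\rho \subset D^2(r_1)$ is the circle of radius $\rho$. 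Any symplectomorphism of $A(\rho_1,\rho_2)$ onto an open subset of $T^*S^1$ sending $S^1_\rho$ to the zero section identifies $V \times A(\rho_1,\rho_2)$ with an open subset of $V \times T^*S^1$ carrying $K \times S^1_\rho$ to $K \times S^1$. Thus $H_i$ yields a compactly supported Hamiltonian on $V \times T^*S^1$ of Hofer norm $< E$ displacing $K \times S^1$, and letting $E \searrow e_d(K',V')$ gives (i).

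For (ii), given $E < c_\hz^\circ(U',V')$, pick an admissible $H'$ on $U'$ with $\max H' > E$ having no non-constant periodic orbit of period $\leq 1$ contractible in $V'$. Forming $\tilde H'$ and $H_i$ as above, for appropriate parameters $H_i$ is admissible on $U \times T_r^*S^1$ with $\max H_i = \max H'$; admissibility comes from argmax$(H') \cap h_i(U \times T_r^*S^1)$ being a nonempty open set for $i$ large. For the small-orbit condition, tighten the parameters so that $h_i(V \times A(\rho_1,\rho_2)) \subset V' \times A(\delta, 2\delta)$, the region where the flow of $\tilde H'$ equals $\phi_{H'}^t \times \id$. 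A non-constant periodic orbit $\gamma$ of $H_i$ of period $\leq 1$, contractible in $V \times T_r^*S^1$, maps via $h_i$ to an orbit $h_i(\gamma) = (\gamma', z_0)$ of $\tilde H'$ with $\gamma'$ a non-constant periodic orbit of $H'$ of the same period. Since $h_i$ is a diffeomorphism onto its image, $h_i(\gamma)$ is contractible in $h_i(V \times T_r^*S^1) \subset V' \times A(\delta, 2\delta)$, hence contractible in the bigger $V' \times A(\delta, 2\delta)$; projecting to $V'$ shows $\gamma'$ is contractible in $V'$, contradicting the choice of $H'$. So $\max H_i > E$ is realised within the $c_\hz^\circ$ definition for $U \times T_r^*S^1 \subset V \times T_r^*S^1$, yielding (ii).

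The main obstacles are (a) arranging the simultaneous support and displacement/admissibility conditions during the transport, which forces us to pick the parameters $\rho$, $\delta$ and the index $i$ in the right order and to exploit $\overline V \Subset N$, so that continuity of $h^{-1}$ and uniform convergence $h_i \to h$ produce uniform estimates; and (b) in (ii), tracking the $\pi_1$-class of a closed orbit through $h_i$ into $\cn'$, which forces the above tightening of parameters so that the product structure of $\tilde H'$ becomes available along the entire orbit.
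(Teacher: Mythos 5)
Your overall strategy is the right one and matches the paper: use the Weinstein product normal form near $N$, cut off a Hamiltonian by a function of $|z|$, and transport it by the approximating symplectomorphisms $h_i$, exploiting the fact that $h$ preserves the zero section. However, both parts of your argument rest on a geometric assertion that is not available, and fixing it requires a different cutoff scheme (in (i)) or a genuinely different orbit analysis (in (ii)).

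The problematic assertions are, in (i), that one can arrange $h_i(K\times S^1_\rho)\subset \cu'\times A(\delta,2\delta)$ and, in (ii), that one can arrange $h_i(V\times A(\rho_1,\rho_2))\subset V'\times A(\delta,2\delta)$, i.e.\ that the image under $h$ (or $h_i$) of a product set whose $D$-factor is a circle or a thin annulus lies inside a product set whose $D$-factor is a thin annulus. A symplectic homeomorphism with $h(N\times\{0\})\subset N'\times\{0\}$ gives no control on the radial coordinate off the zero section: the set $h(K\times S^1_\rho)$ approaches $K'\times\{0\}$ as $\rho\to 0$ (so for small $\rho$ its $|z|$-range lies below your fixed $\delta$), and for any fixed $\rho$ the ratio $\max|z|/\min|z|$ over $h(K\times S^1_\rho)$ can be arbitrarily large (take $h=\id_N\times f$ with $f$ a wild compactly supported area-preserving homeomorphism of the disc fixing $0$; such $f$ is a $\cc^0$-limit of area-preserving diffeomorphisms). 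Since you fix $\delta$ when defining $\chi$ and only then pick $\rho$ and $i$, the claim fails; but even allowing $\delta$ to depend on $\rho$ does not save the stronger version in (ii), because $h(V\times A(\rho_1,\rho_2))$ always has a wide $|z|$-spread.

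For (i), the gap can be closed either by reversing the order of choices (pick $\rho$ small first, then choose an annulus $A(\delta_1,\delta_2)$ wide enough to contain the $|z|$-coordinates of $h(K\times S^1_\rho)$, put $\chi\equiv 1$ there), or, as the paper does, by a two-stage cutoff: first take $\chi$ equal to $1$ on a full disc $\{|z|<r''\}$ around $z=0$. Then the flow of $\tilde H'=H'\chi$ restricted to $V'\times\{0\}$ is exactly $\phi_{H'}^t$, so it displaces $K'$, and by continuity it displaces $h(K\times S^1_\rho)$ for all sufficiently small $\rho$ without any constraint on the $|z|$-spread. Since the flow preserves the zero section, the whole trajectory of $h(K\times S^1_\rho)$ stays in a compact subset of $V'\times(D\setminus\{0\})$, which then permits a further radial cutoff vanishing near $z=0$; only this second cutoff gives the Hamiltonian compactly supported away from $N'$, ready to be pulled back by $h_i$.

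For (ii), the inclusion $h_i(V\times A(\rho_1,\rho_2))\subset V'\times A(\delta,2\delta)$ cannot be obtained, and you should not try to push the orbit of $\tilde H'$ into the region where $\chi\equiv 1$. Instead, argue directly about all non-constant periodic orbits of $\tilde H'=H'(y)\chi(|z|)$ inside $V'\times A(\delta_1',\delta_2')$. Both $H'(y)$ and $|z|$ are first integrals of $\tilde H'$ (the Poisson bracket of $H'(y)$ and $\chi(|z|)$ with respect to the product form vanishes), so along any orbit the $y$-component is a time-reparametrized $H'$-orbit (with constant speed factor $\chi(|z_0|)\le 1$) and the $z$-component is a rotation at constant angular speed. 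Hence any non-constant periodic orbit of period $T\le 1$ that is contractible in $V'\times A(\delta_1',\delta_2')$ would give, after projecting to $V'$, a non-constant $H'$-orbit of period $\le 1$ contractible in $V'$, or, if the $V'$-projection is constant, a rotation in $z$ with non-zero winding number in the annulus — both impossible. All that is needed about $h_k$ is that $h_k^{-1}(\supp\tilde H')$ is compact in $U\times(D\setminus\{0\})$ (hence in some $U\times A(\delta_1,\delta_2)$), and that $h_k(V\times A(\delta_1,\delta_2))\Subset V'\times(D(r')\setminus\{0\})$ (automatic since $h$ preserves the zero section); contractibility of a $\tilde H_k$-orbit in $V\times A(\delta_1,\delta_2)$ then transports to contractibility of the corresponding $\tilde H'$-orbit in $V'\times(D(r')\setminus\{0\})$, hence in $V'\times A(\delta_1',\delta_2')$, giving the contradiction. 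This is the argument the paper carries out, and it cannot be short-circuited by restricting to the $\{\chi=1\}$ region.
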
 
The $\pi_1$-sensitive Hofer-Zehnder capacity and the stable displacement energy can be estimated by several means. Theorem \ref{thm:rigcodim2} allows therefore various more intrinsic rigidity statements (among which theorem \ref{thm:nssymp}), and we provide some examples in section \ref{sec:corrigcodim2}.

\subsection{Proof of theorem \ref{thm:rigcodim2}}
 Let $h:M\to M'$ be a symplectic homeomorphism that takes $N$ to $N'$, where $N,N'$ are codimension $2$  contractible symplectic submanifolds.

\noindent {\bf Proof of (i):} 
We consider $K\Subset V\Subset N$ and denote by $K':=h(K)$ and $V':=h(V)$ their images. 
By the symplectic \nbd theorem, since $N$ and $N'$ are contractible, and $V\Subset N$, $V'\Subset N'$, we can slightly reduce $N$ and $N'$ if necessary, so that $V$ and $V'$ are still compactly contained in $N,N'$, and some \nbds $\cv,\cv'$ of $N,N'$ can be presented as  symplectic products  $\cv\simeq N\times D(r)$, $\cv'\simeq N'\times D(r')$. Considering smaller $r$ if necessary, and restricting $h$ to $\cv$, we can therefore assume that 
$$
h:N\times D(r)\to  N'\times D(r'), \hspace{1cm} h(N\times \{0\})\subset N'\times \{0\}.
$$
We need to show that 
$$
e_d(K',V')\geqslant e_d(K\times S^1,V\times T^*S^1).
$$
This inequality is not trivial only when $K'$ is dispaceable in $V'$, which we assume henceforth.  
Fix $\eps>0$ and let $H_1':V'\times [0,1]\to \R$ be a smooth, compactly  supported function, whose Hamiltonian flow displaces $K'$ ({\it i.e.} $\Phi^1_{H_1'}(K')\cap K'=\emptyset$),  with energy 
$$
e:=\int_0^1 \osc H_1'(\cdot,t)dt<e_d(K',V')+\eps.
$$
Let $\chi:[0,r']\to [0,1]$ be a smooth function which equals $1$ near $0$, and vanishes identically for $t>r''$, where $r''<r'$. If $r''$ is small enough, the function \fonction{H_2'}{V'\times D(r')\times [0,1]}{\R}{(x,z,t)}{\chi(|z|)H_1'(x,t)} 
has compact support in $h(V\times D(r))$, same energy as $H_1'$, and its flow preserves $V'\times \{0\}$ and induces the same flow as $H_1'$ on it. The flow of $H_2'$ therefore displaces $K'=h(K)$, and by continuity, it also displaces $h(K\times S_\delta)$ for $ 0 < \delta\ll 1$ (here $ S(\delta) = \{ z \in \mathbb{C} \; | \; |z| = \delta \} $). Moreover, since the flow of $H_2'$ preserves $V'\times \{0\}$, the set $\ds \bigcup_{t\in [0,1]} \Phi^t_{H_2'}(K\times S_\delta)$ lies in a compact set of $V'\times (D(r')\priv \{0\})$, hence in some subset of $V'\times D(r')$ of the form $\{|z|>\delta'\}$.  Considering a further cut-off $H_3'(x,z,t):=H_2'(x,z,t)\rho(|z|)$ where $\rho(t)$ vanishes near $0$ and equals $1$ for $t>\delta'$, we therefore get a Hamiltonian, with still the same energy $e$, with compact support in $h(V\times D(r))\priv (V'\times \{0\})=h(V\times (D(r)\priv \{0\}))$, whose flow displaces $h(K\times S_\delta)$ (because it coincides with the flow of $H_2'$ on this set). \\
\indent Let  $h_k:V\times D(r)\to N'\times D(r')$ be symplectic embeddings which $\cc^0$-converge to $h$ (recall that we have already shrinked $N$ and $r$, so we can assume without loss of generality that $h_k$ is defined on the whole of $V\times D(r)$). For $k$ large enough,  $\supp (H_3')\subset \im h_k$ (see the proof of proposition \ref{prop:defsympeo}), so the function defined by $H_k:=H_3'\circ h_k$ whenever defined, and $H_k\equiv 0$ elsewhere is smooth. Moreover, since $H_3'$ has compact support in $h\big(V\times (D(r)\priv \{0\})\big)$ and $(h_k^{-1})$ $\cc^0$-converges to $h^{-1}$, $H_k$ has compact support in $V\times \big(D(r)\priv \{0\}\big)$ for $k$ large enough. It obviously still has energy $e$. Finally, since the flow of $H_3'$ displaces $h(K\times S_\delta)$, it also displaces $h_k(K\times S_\delta)$ - and so the flow of $H_k$ displaces $K\times S_\delta$ - for $k$ large enough. We conclude that  $e_d\big(K\times S_\delta,V\times (D(r)\priv \{0\})\big)\leqslant e<e_d(K',V')+\eps$. Since  $D(r)\priv \{0\}$ can be embedded into $ T^* S^1 $ in such a way that $ S(\delta) $ is mapped to the zero-section, this finishes the proof.\cqfd

\noindent  
{\bf Proof of (ii):} As in the proof of (i), since $N,N'$ are contractible, we can reduce slightly $N$ and apply the symplectic \nbd theorem, so that we can assume without loss of generality that $h:N\times D(r)\to \cv'\subset N'\times D(r')$, where $\cv'\supset N'\times \{0\}$ and $h(N\times\{0\})\subset N'\times \{0\}$.
We consider $U\subset V\Subset N$,  $U'=h(U)$, $V'$ an open set in $N'$ with $h(V)\Subset V'$. 
We need to show that for all positive $r$, 
$$
c_\hz^\circ(U',V')\leqslant c_\hz^\circ(U\times T_r^*S^1,V\times T_r^*S^1). 
$$
Since the function $r\mapsto c_\hz^\circ(U\times T_r^*S^1,V\times T_r^*S^1)$ is easily seen to be non-decreasing, we may assume, without loss of generality, that $r>0$ is very small. 

 Choose any $ \epsilon > 0 $ and pick a time-independent Hamiltonian $ H' : U' \rightarrow \mathbb{R} $, which is admissible, whose flow has no non-constant periodic orbit of period $ T \leqslant 1 $ which are contractible in $ V' $, and such that $ \max H' > c_\hz^\circ(U',V') - \epsilon $. Denote  $ K' := \supp (H') \Subset U' $ the support of $H'$. Since $ h(V) \Subset V' $, we can find small $ \delta > 0 $, such that $ h(V \times D(\delta)) \Subset V' \times D(r') $. Then, we can find some small $ \delta' > 0 $ such that $ h^{-1}(K' \times D(\delta')) \Subset U \times D(\delta) $. Since $h(N\times \{0\})\subset N'\times \{0\}$, for any given  $ 0 < \delta_1' < \delta_2' \leqslant \delta' $, we can find $ 0 < \delta_1 < \delta_2 \leqslant \delta $ such that $ h^{-1}(K' \times A(\delta_1',\delta_2')) \Subset U \times A(\delta_1,\delta_2) $. Choose a smooth compactly supported radial function $ \chi : A(\delta_1',\delta_2') \rightarrow [0,1] $, which equals $ 1 $ on some open subset of $ A(\delta_1',\delta_2') $. The function \fonction{\tilde H'}{ U' \times A(\delta_1',\delta_2')}{\R}{(y,z)}{H'(y)\chi(z).} is then an admissible function on $ U' \times A(\delta_1',\delta_2') $, and it has no non-constant periodic orbits of period $ T \leqslant 1 $ which are contractible in $ V' \times A(\delta_1',\delta_2') $. Let $h_k:N\times D(r)\hra N'\times D(r')$ be symplectic embeddings which $\cc^0$-converge to $h$. For large $ k $, we have $ h_k^{-1}(K' \times A(\delta_1',\delta_2')) \Subset U \times A(\delta_1,\delta_2) $ and $ h_k(V \times A(\delta_1',\delta_2')) \Subset V' \times A(0,r') $. Now choose sufficiently large $ k $, and define a Hamiltonian function $ \tilde{H}_k : U \times A(\delta_1,\delta_2) \rightarrow \mathbb{R} $ by $ \tilde{H}_k (x) = \tilde{H}' ( h_k (x)) $ for $ x \in  h_k^{-1} (K' \times A(\delta_1',\delta_2')) $, and $ \tilde{H}_k(x) = 0 $ otherwise.  Then $ \tilde{H}_k $ is an admissible function on $ U \times A(\delta_1,\delta_2) $. We claim that $ \tilde{H}_k $ has no non-constant periodic orbits of period $ T \leqslant 1 $ which are contractible in $ V \times A(\delta_1,\delta_2) $. Indeed, since $ h_k(V \times A(\delta_1',\delta_2')) \Subset V' \times A(0,r') $, a non-constant periodic orbit for $\tilde H_k$ contractible in $V\times A(\delta_1,\delta_2)$ would give a non-constant periodic orbit of $\tilde H'$ contractible in $V'\times A(0,r')$, hence in $ V' \times A(\delta_1',\delta_2') $. 
 We conclude that $ c_\hz^\circ(U',V') - \epsilon \leqslant \max H' = \max \tilde{H}_k \leqslant c_\hz^\circ(U \times A(\delta_1,\delta_2) ,V \times A(\delta_1,\delta_2)) $. Now, $ A(\delta_1,\delta_2) $ is symplectomorphic to $ T_{\rho}^* S^1 $, where $ \rho = \pi(\delta_2^2 - \delta_1^2)/2 < \pi r^2 /2 < r $. Thus, $$ c_\hz^\circ(U',V') - \epsilon \leqslant c_\hz^\circ(U \times T_{\rho}^*S^1 ,V \times T_{\rho}^* S^1) \leqslant c_\hz^\circ(U \times T_r^*S^1 ,V \times T_r^* S^1). $$ Since this holds for every $ \epsilon > 0 $, this finishes the proof. \cqfd

  \begin{remark}
    The proof of the point (i) above shows that if $h$ sends a smooth symplectic submanifold $ N $ of dimension $ 2(n-r) $ to a smooth submanifold $N'$, and if $K\subset V\Subset N$, then for any $ \epsilon > 0 $ we have $$ e_d\big(K\times S_\delta^{2r-1},V\times (\C^r\priv\{0\})\big) < e_d(h(K),h(V)) + \epsilon, $$ when $ \delta > 0 $ is small enough (here $ S_\delta^{2r-1} = \{ x \in \mathbb{R}^{2r} \, | \,  |x| = \delta \} $ is the sphere of radius $ \delta $). 
On the other hand, theorem \ref{thm:flexcodim4} implies that $ e_d(h(K),h(V)) $ can be made arbitrarily small (for a suitable choice of $ h $), when the codimension $2r\geqslant 4$. We therefore see that 
  \begin{equation}\label{eq:vanished}
  \lim_{\delta \rightarrow 0} e_d\big(K\times S_\delta^{2r-1},V\times (\C^r\priv\{0\})\big)=0,\hspace{1cm} \forall r\geqslant 2
  \end{equation}
  which contrasts with the situation when $r=1$. The equality \textnormal{(\ref{eq:vanished}\hspace*{-,15cm})} can be also proved by a more direct alternative argument, which we do not provide here.
  \end{remark}

\subsection{Rigidity with respect to more capacities, proof of theorem \ref{thm:nssymp}.}\label{sec:corrigcodim2}
The $\cc^0$-rigidity described by theorem \ref{thm:rigcodim2} might look unclear at first sight. For instance, statement (i) tells that provided $K\subset N$ is not too small (for instance a symplectic ball of size $1$), the displacement energy of $h(K)$ is bounded from below. But it might either be because $h(K)$ is large, or because it lies in $N'$ in such an intricate position that it is hard to displace. However, the displacement energy, as well as the $\pi_1$-sensitive Hofer-Zehnder capacity, can be estimated by several means, and we get from theorem \ref{thm:rigcodim2} different possible versions of rigidity statements. 
We now illustrate some of them, with particular attention to intrinsic ones. We prove in particular theorem \ref{thm:nssymp} (see corollary \ref{cor:rigcodim2} (iii) below). We first need to introduce some definitions or notations. 

  A symplectic embedding $U\subset \C^n\overset{f}{\hra} M^{2n}$ will be said {\it undistorted}  if  $e_d(f(U),M)\leqslant e_d(U,\C^n)$. We will say that $U\subset N$ is {\it well-embedded} if it is the image of an undistorted embedding of an open set in $\C^n$.
  \begin{ex*} If $f:B(a)\hra M$ extends to a symplectic embedding of $B(2a)$ then $f$ is an undistorted embedding.
On the other hand, it is not clear, except in dimension 4 by \cite{mcduff}, that a ball $B(a)$, even contained in a larger ball $B(Ka)\subset M$, $K\gg 1$ 
is always well-embedded. In particular, for $n\geqslant 3$ it is not known whether all symplectic embeddings of $B^{2n}(\eps)$ into $B^{2n}(1)$ can be extended to symplectic embeddings of $B^{2n}(2\eps)$, even if $\eps\ll 1$. 
\end{ex*}

Recall also that a symplectic manifold with boundary $M$ is said to be {\it $\om$-convex} (in the sense of \cite{elgr}) if there is a Liouville vector field defined near $\partial M$ which points outwards at the boundary. The boundary then satisfies the so-called contact-type condition, and, what is important for us, $M$ can be endowed with compatible almost-complex structures which make $M$ pseudo-convex: no holomorphic curves with boundary in some compact subset of $M$ can approach $\partial M$. 

We finally introduce the following Lagrangian capacity. Recall that for a closed Lagrangian $L$ in a symplectic manifold $M$, the Liouville class of $L$ is $\lambda(L,M):=\inf \spec(L)\cap \R_+^*$. We define
$$
c_{\lag}(U,M):=\sup\{\lambda(L,M), L\subset U\},
$$
and denote $c_\lag(U):=c_\lag(U,U)$.

\begin{corollary}\label{cor:rigcodim2} Let $h:M\to M'$ be a symplectic homeomorphism that takes a smooth contractible symplectic hypersurface $N^{2n-2}\subset M^{2n}$ to a smooth symplectic hypersurface $N'$, and let $U \Subset N $ be an 
open set of $N$. Then,
\begin{itemize}
\item[(i)] If $N'$ is $\om$-convex, 
$
c_\hz^\circ(h(U),N')\leqslant 4e_d(U,N) \; \cite{schlenk}.
$ 
(recall that $c_\hz^\circ$ denotes the $\pi_1$-sensitive Hofer-Zehnder capacity, defined in the introduction). If $U$ is moreover well-embedded, $c_\hz(h(U))\leqslant 4e_d(U)$.
\item[(ii)] $c_\lag(h(U),N')\leqslant e_d(U,N)\; \cite{chekanov}.$
When $U$ is well-embedded and $V'\subset h(U)$ is $\om$-convex, $c_\lag(V')\leqslant e_d(U)$. 
\item[(iii)] If  $W \subset N $ is symplectomorphic to the standard ball $ B(a)  \subset \mathbb{C}^{n-1} $ of capacity $ a $, and if $h(W) \subset N' $ can be symplectically embedded into the cylinder $  \D(A) \times \mathbb{C}^{n-2} $, then $ a \leqslant A $ (theorem \ref{thm:nssymp}).   
\end{itemize}
\end{corollary}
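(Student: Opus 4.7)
The plan is to derive the three statements from Theorem \ref{thm:rigcodim2} by combining its two reduction-to-product inequalities with classical capacity estimates (Schlenk's $c_\hz^\circ\leqslant 4e_d$ and Chekanov's $c_\lag\leqslant e_d$), plus, for (iii), an embedding argument into the cylinder.

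\emph{For (i)}, I apply Theorem \ref{thm:rigcodim2}(ii) to $h$: for $U\Subset V\Subset N$ and $h(V)\Subset V'\Subset N'$, one has $c_\hz^\circ(h(U),V')\leqslant c_\hz^\circ(U\times T_r^*S^1,V\times T_r^*S^1)$. The $\omega$-convexity of $N'$ lets me take $V\times T_r^*S^1$ geometrically bounded, so Schlenk's inequality bounds the right-hand side by $4\,e_d(U\times T_r^*S^1,V\times T_r^*S^1)$. A Hamiltonian on $V$ displacing $U$ pulls back via the projection, with a cut-off in the cotangent factor, to a displacement of $U\times T_r^*S^1$ in $V\times T_r^*S^1$ of the same oscillation, so this product energy is at most $e_d(U,V)$. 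Letting $V\nearrow N$ and $V'\nearrow N'$ yields $c_\hz^\circ(h(U),N')\leqslant 4\,e_d(U,N)$. The well-embedded version follows from $c_\hz\leqslant c_\hz^\circ$ and $e_d(U,N)\leqslant e_d(U)$.

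\emph{For (ii)}, I use Theorem \ref{thm:rigcodim2}(i) applied to $h^{-1}$ together with Chekanov's inequality. Given a closed Lagrangian $L\subset h(U)$, take a relatively compact open neighbourhood $V\subset N'$ of $L$ with $h(U)\Subset V$, and stabilise to $L\times S^1\subset V\times T^*S^1$. Since $\pi_2(V\times T^*S^1,L\times S^1)=\pi_2(V,L)$, the stabilised Liouville class equals $\lambda(L,V)\geqslant\lambda(L,N')$. Chekanov gives $\lambda(L\times S^1,V\times T^*S^1)\leqslant e_d(L\times S^1,V\times T^*S^1)$, and Theorem \ref{thm:rigcodim2}(i) applied to $h^{-1}$ bounds this by $e_d(h^{-1}(L),h^{-1}(V))\leqslant e_d(U,N)$. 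Taking the supremum over $L$ yields $c_\lag(h(U),N')\leqslant e_d(U,N)$; the well-embedded variant is obtained analogously with $V'$ as the ambient.

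\emph{For (iii)}, I apply Theorem \ref{thm:rigcodim2}(ii) to $h^{-1}$. Choosing $W_\alpha\Subset W$ symplectomorphic to $B(\alpha)$ for some $\alpha<a$, and taking $U=h(W_\alpha)$, $V=h(W)$ in that statement, I get $c_\hz^\circ(W_\alpha,V'')\leqslant c_\hz^\circ(h(W_\alpha)\times T_r^*S^1,h(W)\times T_r^*S^1)$ for some $V''\Supset W_\alpha$ in $N$. The left-hand side equals $\alpha$: since $W_\alpha\cong B(\alpha)$ is simply connected, every loop in $W_\alpha$ is contractible in $V''$ and the $\pi_1$-sensitive capacity reduces to $c_\hz(B(\alpha))=\alpha$. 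For the right-hand side, the embedding $\psi:h(W)\hookrightarrow\D(A)\times\C^{n-2}$ extends by the symplectic neighbourhood theorem (using contractibility of $h(W)$ to trivialise normal bundles on both sides) to a symplectic embedding $\tilde\psi$ of the Weinstein tube $h(W)\times D_\delta\subset M'$ into a neighbourhood of $\psi(h(W))$ in the cylinder. Restricting to $T_r^*S^1\subset D_\delta$ for $r<\delta$, symplectic invariance identifies the right-hand side with the $\pi_1$-sensitive HZ capacity of an open subset of the simply connected $\D(A+\epsilon)\times\C^{n-1}$, which is bounded by $c_\hz(\D(A+\epsilon)\times\C^{n-1})=A+\epsilon$. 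Letting $\alpha\to a$ and $\epsilon\to 0$ gives $a\leqslant A$.

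The main obstacle lies in the last step of (iii): passing from the $\pi_1$-sensitive HZ capacity of the product $h(W_\alpha)\times T_r^*S^1$ inside $h(W)\times T_r^*S^1$ to a bound by $A+\epsilon$ is delicate, since the intrinsic $\pi_1$ of the product is non-trivial and naive ambient monotonicity under the embedding into the simply connected cylinder runs in the wrong direction. The resolution requires a careful use of the embedding $\tilde\psi$, exploiting that the non-contractible $T_r^*S^1$-loops become contractible after push-forward into the cylinder, so that any periodic orbit produced by the HZ condition in the cylinder must also be contractible in the outer piece of the inner pair.
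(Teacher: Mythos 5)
Your route for part (ii) is essentially the paper's (Chekanov plus Theorem \ref{thm:rigcodim2}(i) applied to $h^{-1}$, with the stabilised Liouville class matching the unstabilised one), though the step $e_d(h^{-1}(L),h^{-1}(V))\leqslant e_d(U,N)$ requires exhausting $N'$ by the $V$'s, and the ``well-embedded'' variant needs the pseudoconvexity argument confining Chekanov's holomorphic disc in $V'$, which you gloss over. Parts (i) and (iii), however, have genuine gaps.

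For (i), your plan bounds $c_\hz^\circ(U\times T_r^*S^1,V\times T_r^*S^1)$ by a displacement energy of $U\times T_r^*S^1$ inside $V\times T_r^*S^1$. That quantity is infinite: $U\times T_r^*S^1$ is not relatively compact in $V\times T_r^*S^1$ (the open $T_r^*S^1$ factor has no compact closure in itself), so no compactly supported Hamiltonian can displace it. Your cut-off only displaces $U\times T_{r'}^*S^1$ with $r'<r$, which is a different set from the one appearing in Theorem \ref{thm:rigcodim2}(ii). You also invoke the $\omega$-convexity of $N'$ to conclude a boundedness property of $V\times T_r^*S^1$ with $V\subset N$, but the hypothesis concerns the target manifold, not the source. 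The paper applies Schlenk's stable-energy inequality $c_\hz^\circ(h(U),N')\leqslant 4\,e_d(h(U)\times S^1,\,N'\times T^*S^1)$ on the \emph{target} side (where $N'$ is $\omega$-convex, hence tame), and then uses Theorem \ref{thm:rigcodim2}(i) applied to $h^{-1}$ to bound that stable energy by $e_d(U,N)$; this avoids the non-compactness problem entirely because what gets displaced is the compact $h(U)\times S^1$, not a full $T_r^*S^1$ slab.

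For (iii), you correctly identify the obstacle, but the proposed ``resolution'' does not close the gap. Pushing forward into the simply connected cylinder turns every loop contractible, so $c_\hz^\circ$ of the image relative to the cylinder equals the ordinary $c_\hz$, and the inequality $c_\hz\leqslant c_\hz^\circ$ runs in the \emph{unhelpful} direction. Your final sentence asserts that orbits contractible in the cylinder ``must also be contractible in $h(W)\times T_r^*S^1$'' -- this is false for loops wrapping the $S^1$ factor, which is precisely the source of the extra periodic orbits that $c_\hz^\circ$ is designed to tolerate. The paper sidesteps this by embedding $h(W)\times T_r^*S^1$ not into a simply connected cylinder but into a \emph{closed} manifold $Y=S^2_{3A}\times(S^2_{mA})^{n-2}\times\T^2_r$ which retains the $S^1$ fundamental class (the $\T^2_r$ factor), so that loops of $X$ contractible in $Y$ are already contractible in $X$ and hence $c_\hz^\circ(X,Y)=c_\hz^\circ(X)$; then Usher's energy-capacity inequality $c_\hz^\circ(X,Y)\leqslant e_d(X,Y)\leqslant A$ gives the bound. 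Without some analogous device that preserves the $\pi_1$ condition, your approach cannot conclude.
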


\noindent{\it Proof:} {\bf  (i)} We invoke here a result by Schlenk \cite{schlenk}. Provided that $N$ is tame (this is the case when $N$ is $\om$-convex), we have $c^\circ_\hz(U,N)\leqslant 4 e_d(U \times S^1,N \times T^* S^1)$. In view of theorem \ref{thm:rigcodim2} (applied to the inverse $ h^{-1} : N' \rightarrow N $), we get the desired inequality
$$
c^\circ_\hz(h(U),N')\leqslant 4e_d(U,N).
$$
Now $c_\hz(h(U))\leqslant c^\circ_\hz(h(U),N')$ by definition, while for well-embedded $U$, the right hand side is bounded by $4e_d(U)$. We therefore get $c_\hz(h(U))\leqslant 4e_d(U)$.

\noindent{\bf (ii)} Denote $e:=e_d(h(U)\times S^1,N'\times T^*S^1)$, and consider a Hamiltonian $\{H_t\}$  with compact support in $N'\times T^*S^1$ with energy $e+\eps$ and which displaces $h(U)\times S^1$. Then $\{H_t\}$ displaces $L\times S^1$ for any Lagrangian $L\subset h(U)$. Chekanov's theorem, applied to the Lagrangian $L\times S^1$, implies that $\lambda(L\times S^1,N'\times T^*S^1)\leqslant e+\eps$ \cite{chekanov}. Now notice that a disc $u:D\to N'\times T^*S^1$ with boundary on $L\times S^1$ splits as $u=(u_1,u_2)$ with $u_1:(\D,\partial \D)\to (N',L)$ and $u_2:\D\to T^*S^1$. Since $T^*S^1$ is an exact symplectic manifold, the disc $\ca_{\om_0}(u_2)$ has zero area, so $\ca_{\om\oplus\om_0}(u)=\ca_{\om}(u_1)$ and we conclude that $\lambda(L\times S^1,N'\times T^*S^1)=\lambda(L,N')$. We therefore get $\lambda(L,N')\leqslant e+\eps$, for all Lagrangian $L\subset U$ and all $\eps$. This exactly means that $c_\lag(U,N')\leqslant e_d(h(U) \times S^1,N' \times T^* S^1)$. By theorem \ref{thm:rigcodim2} (applied to the inverse $ h^{-1} : N' \rightarrow N $), $ e_d (h(U) \times S^1,N' \times T^* S^1)\leqslant e_d(U,N)$, so we get the announced inequality 
$$
c_\lag(h(U),N')\leqslant e_d(U,N).
$$
When $U$ is well-embedded, the right-hand side of the inequality is bounded by $e_d(U)$. Moreover, Chekanov's result provides a {\it holomorphic} disc of area $e+\eps$, with boundary in $L$, for any tame almost complex structure on $N'$. When  $V'\subset h(U)$  is $\om$-convex, there are such almost complex structures that make $V'$ a pseudoconvex domain, so the discs with boundary inside $V'$ do not escape from $V'$. Therefore, the proof above shows that $c_\lag(V',V')=c_\lag(V')\leqslant e_d(U)$.

\noindent{\bf  (iii)} Let $ i : B(a) \hra N $ be a symplectic embedding such that $ W = i(B(a)) $. Take $ \epsilon > 0 $, and define $ U = i(B(a-\epsilon)) \Subset N $, $ V = i(B(a-\epsilon/3)) \Subset N $, $ U' = h(U) \Subset N' $ and $ V' = h(i(B(a - 2 \epsilon /3))) \Subset N' $. Take any $ r > 0 $. Applying theorem~\ref{thm:rigcodim2} (ii) to $ h^{-1} $, we conclude that $ c_\hz^\circ(U,V) \leqslant c_\hz^\circ(U' \times T_r^*S^1 ,V' \times T_r^* S^1) $. Since $U$, $U'\times T^*_rS^1$ are deformation retracts of $V$, $V'\times T_r^*S^1$ respectively, we have in fact 
$$
c^\circ_\hz(U)\leqslant c^\circ_\hz(U'\times T^*_rS^1).
$$
Now, on one hand, $c^\circ_\hz(U)=a-\eps$, because $U=i(B(a))$. On the other hand, since $U'\Subset h(W)$ and $h(W)$ can be symplectically embedded into $D(A)\times \C^{n-2}$ by assumption, $U'\times T_r^*S^1$ can be symplectically embedded into $D(A)\times D(mA)^{n-2}\times T_r^*S^1$, which embeds in turn into $X:=D(A)\times (S^2_{mA})^{n-2} \times \mathbb{T}^2_r $ (here $ S^2_A $ is the $2$-sphere of area $ A $, $ S^2_{mA} $ is the $2$-sphere of area $ mA $, and $ \mathbb{T}_r^2 $ is the $2$-torus of area $ r $). Finally, $X$ embeds into $Y:=S^2_{3A}\times (S^2_{mA})^{n-2} \times \mathbb{T}^2_r $ as a dispaceable subset whose displacement energy is at most $A$. By \cite{usher}, we therefore get that $c^\circ_\hz(X,Y)\leqslant e_d(X,Y)\leqslant A$. But again, a loop of $X$ which is contractible in $Y$ is obviously contactible in $X$, so $c^\circ_\hz(X,Y)=c^\circ_\hz(X)$. Finally,
$$
a-\eps=c^\circ_\hz(U)\leqslant c^\circ_\hz(U'\times T_r^*S^1)\leqslant c^\circ_\hz(X)=c_\hz^\circ(X,Y)\leqslant A.
$$
 Since this holds for any $ \epsilon > 0 $, we get $ a \leqslant A $.
  \cqfd

\section{$\cc^0$-rigidity of the reduction of a hypersurface}\label{sec:rigred}
\subsection{Reduction of a hypersurface}\label{sec:defred}
We first define the symplectic structure transverse to the characteristic foliation, also called the reduction of the hypersurface. 
It is a rather classical notion, but let us give a precise definition which we will use here. Our definition will be more suitable for the $ \cc^0 $ symplectic geometry, however, as we will see soon, it is equivalent to a classical one.

\begin{defn} \label{D:reduction}

Let $ \Sigma $ be a hypersurface.
\begin{enumerate}

\item
We say that an open topological submanifold$\,$\footnote{Recall that a topological submanifold of a topological manifold $ X $ is a subset $ Y \subset X $, such that there exists a topological manifold $ Z $ and a map $ i : Z \rightarrow X $ which is a homeomorphism onto the image $ i(Z) = Y $.} $ \; U^{2n-2}\subset \Sigma$ is (topologically) transverse to the characteristic foliation of $ \Sigma $ if $ U $ has a \nbd $ V\subset \Sigma$ such that $ U$ intersects exactly once each characteristics of $ V$. 

\item 
Let $ U, U' \subset \Sigma$ be $ (2n-2) $-dimensional topological submanifolds $ U, U' \subset \Sigma$, that are transverse to the characteristic foliation of $ \Sigma $. We say that $ U $ and $ U' $ are equivalent (denoting $ U\sim U'$) if there exists a (continuous) homotopy $ F : W \times [0,1] \rightarrow \Sigma $, $ t \in [0,1] $, of an open topological manifold $ W^{2n-2} $, such that $ F_{|W \times \{ 0 \}} $ is a homeomorphism onto $ U $, $ F_{|W \times \{ 1 \}} $ is a homeomorphism onto $ U' $, and  
such that for each $ x \in W $, the trajectory $ t \mapsto F(x,t) $ goes along a characteristic of $ \Sigma $.

\item
The reduction of a smooth hypersurface $\Sigma$, denoted by $\red(\Sigma)$, is defined as the set of open topological submanifolds $U^{2n-2} \subset \Sigma $ which are transverse to the characteristic foliation of $ \Sigma $, considered modulo the above equivalence relation.

\end{enumerate}
\end{defn}

\medskip

\noindent Now let us address several points:
\begin{itemize}
\item On a topological submanifold $ U \subset \Sigma $ which is transverse to the characteristic foliation, we have a natural structure of a smooth symplectic manifold. Indeed,
let $ V $ be a neighbourhood of $ U $ in $ \Sigma $ such that $ U$ intersects exactly once each characteristics of $ V$, as in definition~\ref{D:reduction}. Then any point $ z \in U $ has a neighbourhood $ U_1 \subset U $ such that $ U_1 $ lies inside a (smooth) flow-box $ \Phi : W_1 \times (0,1) \rightarrow \Sigma $, where $ \im \Phi \subset V $. Then the map $ \phi := \pi \circ \Phi^{-1} : U_1 \rightarrow W_1 $ is injective and hence, by the Invariance of Domain theorem, is a homeomorphism onto the open image $ \phi(U_1) \subset W_1 $ (here $ \pi : W_1 \times (0,1) \rightarrow W_1 $ is the natural projection). The map $ \phi $ induces natural smooth and symplectic structures on $ U_1 $.

\item If two topological submanifolds $ U, U' \subset \Sigma$ are equivalent ($ U\sim U'$), then they are symplectomorphic via a homotopy $ F : W \times [0,1] \rightarrow \Sigma $, as in definition \ref{D:reduction}. Let us describe explicitly the symplectomorphism between $ U $ and $ U' $. By continuity of $ F $ and since $ U $ is topologically transverse to the characteristic foliation of $ \Sigma $, for any point $ z \in W $ and any $ t \in [0,1] $ there exists a neighbourhood $ W_1 \Subset W $ of $ z $ such that the closure of the image $ F(W_1 \times \{ t \}) $ lies inside a (smooth) flow-box $ \Phi : W_2 \times (0,1) \rightarrow \Sigma $, and moreover the map $ \phi_t := \pi \circ \Phi^{-1} \circ F : W_1 \times \{ t \}  \rightarrow W_2 $ is injective (here $ \pi : W_1 \times (0,1) \rightarrow W_1 $ is the natural projection, as before). Then, by the Invariance of Domain theorem, $ \phi_t $ is a homeomorphism onto the open image $ W_3 := \phi_t (W_1) \subset W_2 $. This induces a symplectic structure on $ W_1 \times \{ t \} $. Moreover, since $ W_1 \Subset W $ and $ F(W_1 \times \{ t \}) \Subset \Phi (W_2 \times (0,1)) $, it follows that we also have $ F(W_1 \times \{ t' \}) \Subset \Phi (W_2 \times (0,1)) $ as well and moreover $ \phi_{t'}(z,t') = \phi_t(z,t) $ for every $ z \in W_1 $, whenever $ t' \in [0,1] $ is sufficiently close to $ t $ (here $ \phi_{t'} = \pi \circ \Phi^{-1} \circ F : W_1 \times \{ t' \}  \rightarrow W_2 $). Hence the induced symplectic structures on $ W_1 \times \{ t \} $ and on $ W_1 \times \{ t' \} $ coincide, when $ t' $ is sufficiently close to $ t $.

\item Let $ h: \Sigma \rightarrow \Sigma' $ be a homeomorphism between hypersurfaces, that preserves the characteristic foliation. Then $ h $ defines a natural map $ \hat{h} : \red(\Sigma) \rightarrow \red(\Sigma') $ by $ \hat{h}([U]) := [h(U)] \subset \Sigma' $. Clearly, the definition does not depend on the representative $ U $ of $ [U] $. Recall that by a theorem of Opshtein~\cite{opshtein}, if $ h : M \rightarrow M' $ is a symplectic homeomorphism and if $ h $ maps a smooth hypersurface $ \Sigma $ onto a smooth hypersurface $ \Sigma' $, then the restriction $ h|_\Sigma : \Sigma \rightarrow \Sigma' $ preserves the characteristic foliation, and hence we get a natural induced map $ \hat{h} : \red(\Sigma) \rightarrow \red(\Sigma') $.

\end{itemize}

\noindent The next proposition will be useful in the sequel. It implies, in particular, that any element of the reduction of a hypersurface admits a globally smooth representative. 

\begin{proposition}\label{prop:topred}
Let $U^{2n-2}\subset \Sigma$ be a topological submanifold transverse to the characteristic foliation of $\Sigma$. There exists a (smooth) flow-box 
$\Phi: W \times (-1,1)\to \Sigma$ of the characteristic foliation such that :
\begin{enumerate}
\item $U\subset \im \Phi$,
\item $\Phi^{-1}(U)=\{(z,g(z)) \, | \, z\in W\}$ for some continuous function $g:W\to (-1,1)$.
\end{enumerate}
\end{proposition}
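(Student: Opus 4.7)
The plan is to realize the desired flow-box in three stages: (i) form the leaf space of the characteristic foliation in a small neighborhood $V$ of $U$ to obtain a smooth base manifold $W$; (ii) approximate the continuous section of the leaf-space fibration corresponding to $U$ by a smooth section $\sigma$; and (iii) flow $\sigma$ along the characteristic foliation to obtain the flow-box $\Phi$, then verify that $U$ is a continuous graph inside it.

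For step (i), I would take $V$ to be the neighborhood supplied by the transversality hypothesis, and after shrinking it transversally so that $V$ is covered by flow-boxes of the (smooth) characteristic foliation and every characteristic of $V$ is an embedded arc, define $W := V/\!\sim$, where $\sim$ identifies points lying on a common characteristic. Local flow-box charts descend to smooth charts on $W$ and make it a smooth $(2n-2)$-manifold, with the projection $\pi : V \to W$ a smooth fiber bundle with fiber $(-1,1)$. The transversality hypothesis says that $\pi|_U$ is a continuous bijection between two $(2n-2)$-dimensional topological manifolds, hence a homeomorphism by Invariance of Domain. Set $\tau := (\pi|_U)^{-1} : W \to V$; this is a continuous section of $\pi$ whose image is $U$.

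For step (ii), I would approximate $\tau$ by a smooth section $\sigma : W \to V$ that is $\cc^0$-close to $\tau$, by the standard smooth-approximation-of-continuous-sections procedure: cover $W$ by countably many flow-box charts $W_\alpha$ in which $\tau$ reads as a continuous graph $z \mapsto (z, g_\alpha(z))$; smooth each $g_\alpha$ by convolution; and glue the local smooth sections using a smooth partition of unity on $W$ together with the smooth fiber trivializations. Provided the approximation is close enough in $\cc^0$, every characteristic of $V$ continues to meet $\sigma(W)$ in exactly one point. This is the step I expect to need the most care, since the approximation must be good enough to preserve the unique-intersection property and uniform enough across $W$ for step (iii) to yield a flow-box thick enough to contain $U$.

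For step (iii), I would choose a smooth nowhere-vanishing vector field $X$ on a neighborhood of $\sigma(W)$ tangent to the characteristic foliation. The foliation is orientable there because $\sigma(W)$ is two-sided in $V$: each characteristic of $V$ is an arc meeting $\sigma(W)$ once, so has a well-defined ``positive side'' that is consistent across flow-box charts, and the local vector fields $\partial_t$ given by flow-boxes can be combined by a partition of unity to produce such an $X$. The flow $\phi_t^X$ then gives a smooth map $\Phi(w,t) := \phi_t^X(\sigma(w))$, a local diffeomorphism for small $|t|$; enlarging the interval of definition so that the image contains $U$ (possible by the $\cc^0$-closeness of $\sigma$ to $\tau$) and then reparametrizing to $(-1,1)$ produces the flow-box $\Phi : W \times (-1,1) \to \Sigma$. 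By construction each characteristic of $\im \Phi$ is a single flow-line $\{z\} \times (-1,1)$ meeting $U$ exactly once, so $U = \{(z, g(z)) : z \in W\}$ for a unique function $g$, which is continuous because $\Phi^{-1}|_U$ is.
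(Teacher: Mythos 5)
Your overall strategy matches the paper's: both constructions put a smooth atlas on the abstract leaf space (your $W = V/\!\sim$ is canonically identified, as a smooth manifold, with the paper's $W$, which is the set $U$ charted by the homeomorphisms $\pi_i : U\cap V_i \to B(\eps_i)$), both produce a smooth cross-section near $U$ by patching chart-local smoothings of the continuous $t$-coordinate of $U$ via a locally finite, equivariant partition of unity, and both thicken the cross-section into a flow-box. Your step (ii) corresponds precisely to the paper's construction of the equivariant smooth bounds $G_i^\pm$. The gap is in step (iii).

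Flowing $\sigma$ by a fixed vector field $X$ for a \emph{fixed} time interval $(-\eps,\eps)$ is in general not even well-defined over all of $W$, let alone guaranteed to have image containing $U$. The manifold $W$ is non-compact, and the flow-box charts $\Phi_i : V_i \to B(\eps_i)\times(-\delta_i,\delta_i)$ from which everything is built typically have $\delta_i \to 0$ towards the ``edge'' of $V$; the time for which $\phi^X_t(\sigma(w))$ remains in $V$ is of order $\delta_i$ near $V_i$ and has no uniform positive lower bound. The phrase ``enlarging the interval of definition, possible by the $\cc^0$-closeness of $\sigma$ to $\tau$'' does not repair this, because no single $\eps>0$ works. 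What is actually required is a smooth, strictly positive, $w$-dependent flow time $\eps(w)$, decaying towards $\partial W$ yet large enough that $\tau(w)$ lies on the segment $\{\phi^X_t(\sigma(w)) : |t|<\eps(w)\}$, followed by the reparametrisation $\Phi(w,t) := \phi^X_{\eps(w)\,t}(\sigma(w))$. Constructing such an $\eps$ requires exactly the same equivariant, locally finite bookkeeping you already invoked to build $\sigma$ in step (ii). The paper folds both issues into a single construction: it produces two smooth equivariant sections with $G_i^- < g_i < G_i^+$ and chart-local gap $\delta_i' = \tfrac12 \min_{V_j \cap V_i \neq \emptyset} \delta_j$, and sets $\Phi(z,t) = \Phi_i^{-1}\bigl(z, \tfrac12\bigl((1-t)G_i^-(z) + (1+t)G_i^+(z)\bigr)\bigr)$. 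The image of $\Phi$ is then the tube between $G^-$ and $G^+$, whose variable thickness is built in automatically, which contains $U$ because the sandwich $G^- < g < G^+$ holds everywhere, and from which the continuity of $g$ is immediate. Once you make the thickness $w$-dependent in the manner above, your argument closes and is essentially a rephrasing of the paper's.
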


\noindent {\it Proof:}
Let $ V \subset \Sigma$ be a \nbd of $ U$ in $\Sigma$ such that $ U$ intersects exactly once each characteristic of $ V$. After shrinking $ V $, if necessary, we may further assume that $ V $ does not contain closed characteristics. Fix a vector field $\vec R$ tangent to the characteristic distribution in $ V$. Around each $x\in U$, there exists an open neighbourhood $ V_x \subset V $ of $ x $ and adapted coordinates $\Phi_x:V_x\subset  V\to B(\eps_x)_z\times (-\delta_x,\delta_x)_t$ where  $\Phi_x$ is a diffeomorphism such that $\phi_x(0)=x$ and $\nf{\partial \Phi_x}{\partial t}=\vec R\circ \Phi_x$. For each $ x \in U $ and $ y \in V_x $, denote by $ \pi_x(y) $ the $ z $-coordinate of $ \Phi_x(y) $. Then for each $ x \in U $, the map $ \pi_x : U \cap V_x \rightarrow B(\eps_x) $ is continuous and bijective by assumption, hence it is a homeomorphism by the Invariance of Domain theorem. After shrinking each $ V_x $ if necessary (via decreasing $ \eps_x $),  we can assume that
\begin{itemize}
 \item For each $ x \in U $,  and each $y \in U \cap V_x $ we have $\Phi_x(y)=\big(\pi_x(y),g_x(\pi_x(y))\big)$,  where $g_x:B(\eps_x)\to (-\delta_x/2,\delta_x/2)$ is a continuous function. 
 \item For some sequence of points $ x_1,x_2, \ldots \in U $, the union $ \cup_{i=1}^\infty V_{x_i} $ is a locally finite covering of $ U $. 
\end{itemize}

Call $\Phi_i:=\Phi_{x_i}$, $V_i:=V_{x_i}$, $ \eps_i := \eps_{x_i} $, $\delta_i:=\delta_{x_i}$, $ \pi_i :=\pi_{x_i} $, and $ g_i := g_{x_i} $. The vector field $\vec R$ gives the $V_i$ the structure on an affine bundle, and the change of coordinates can be written: \fonction{\Phi_{ij}:=\Phi_j\circ \Phi_i^{-1}}{\Phi_i(V_i\cap V_j)}{\Phi_j(V_i\cap V_j)}{(z,t)}{(\phi_{ij}(z),t+\chi_{ij}(z)),} where $\phi_{ij}$ is a smooth diffeomorphism and $\chi_{ij}$ is smooth. Note that $\chi_{ij}$ can be expressed in terms of the $g_i,g_j$: since for $x\in U$,
$$
\begin{array}{ll}
\Phi_j(x)&=\big(\pi_j(x),g_j(\pi_j(x))\big)\\
 & =\Phi_{ij}\big(\pi_i(x),g_i(\pi_i(x))\big)=(\phi_{ij}(\pi_i(x)),g_i(\pi_i(x))+\chi_{ij}(\pi_i(x))),
\end{array}
$$
we have $\phi_{ij}=\pi_j\circ \pi_i^{-1}$ and $\chi_{ij}(\pi_i(x))=g_j(\pi_j(x))-g_i(\pi_i(x))$. The topological submanifold $ U $ inherits a smooth structure from the family of homeomorphisms $\pi_i:U_i\to B(\eps_i)$, where $ U_i := U \cap V_i $. By $ W $ we denote the corresponding smooth manifold ($ W = U $ as a set).

Choose a smooth partition of unity $ \rho_i : U \rightarrow [0,1] $, $ i = 1,2, \ldots $ on $ W $ with  $\supp \rho_i\subset U_i$. Now, for each $ i $, denote $ \delta_i' := \frac{1}{2} \min_{V_j \cap V_i \neq \emptyset} \delta_j $ and pick smooth functions $ g_i^- ,g_i^+ : B(\eps_i) \rightarrow (-\delta_i, \delta_i) $ such that $$ g_i - \delta_i' < g_i^- < g_i < g_i^+ < g_i+ \delta_i' $$ on $ B(\epsilon_i) $. Define for each $i$ the functions $ G_i^-, G_i^+ : B(\eps_i) \rightarrow (-\delta_i,\delta_i) $ by 
\begin{align*}
G_i^-(z) = \sum_j \rho_j(\pi_i^{-1}(z)) \left(g_j^-(\phi_{ij}(z)) + \chi_{ji}(\phi_{ij}(z))\right) , \\
 G_i^+(z) = \sum_j \rho_j(\pi_i^{-1}(z)) \left(g_j^+(\phi_{ij}(z)) + \chi_{ji}(\phi_{ij}(z))\right) .
 \end{align*}
Note that these sums are in fact finite, and that although $\phi_{ij}(z)$ is not defined when $\pi_i^{-1}(z)\notin U_i\cap U_j$, the sum itself is indeed well-defined because the term $\rho_j(\pi_i^{-1}(z))$ vanishes in this case. This family of functions is equivariant in the sense that $ \Phi_{ik} (z, G_i^-(z)) = (\phi_{ik}(z), G_k^-(\phi_{ik}(z))) $ 
and $ \Phi_{ik} (z, G_i^+(z)) = (\phi_{ik}(z), G_k^+(\phi_{ik}(z))) $ for each $ z \in U_i \cap U_k $ (because  $\chi_{jk}\circ \phi_{ij}=\chi_{ji}\circ\phi_{ij}+\chi_{ik}$). Because of that property, the map \fonction{\Phi}{W\times(-1,1)}{\Sigma}{(z,t)}{ \Phi_i^{-1}(z,((1-t)G_i^-(z) + (1+t)G_i^+(z))/2) \text{ if }(z,t) \in U_i \times (-1,1)} is a well defined smooth embedding. The map $ \Phi $ is the desired flow-box.\cqfd

\subsection{Proof of theorem \ref{thm:nshyp}}

Let $h\in \sympeo(M,M')$ take a smooth hypersurface $\Sigma$ to a smooth hypersurface $\Sigma'$. Let $e\simeq(B(a),\om_\st)\in \red (\Sigma)$, and fix a smooth representative $U\subset \Sigma$ of $e$. Then $h(U)$ is a topological submanifold which is transverse to the characteristic foliation, that represents $e' := \hat{h}(e) \in \red (\Sigma')$. Take $ \epsilon > 0 $ small enough, and denote by $ U_\epsilon \Subset U $ the representative of $ B(a-\epsilon) \subset B(a) $. By proposition \ref{prop:topred}, $ h(U) $ is a continuous section of a smooth flowbox. Then, by lemma \ref{le:straighthomeo} below, there exists a symplectic homeomorphism $f$ of $M'$ which fixes $\Sigma'$, such that $f \circ h(U_\epsilon)$ is a smooth symplectic submanifold of $\Sigma' $, such that $ f_{|h(U_\epsilon)} : h(U_\epsilon) \rightarrow f \circ h(U_\epsilon) $ is a symplectomorphism, and such that $[f\circ h(U_\epsilon)]= [f(U_\epsilon)] \in \red (\Sigma')$. The symplectic homeomorphism $f\circ h$ therefore takes the smooth, codimension $2$, symplectic submanifold $U_\epsilon \simeq (B(a-\epsilon),\om_\st)$ to the smooth symplectic submanifold $f\circ h(U_\epsilon)\subset M'$, which symplectically embeds into $ Z(A) $. Hence, by theorem \ref{thm:nssymp} we have $ A \geqslant a-\epsilon $. Since $ \epsilon > 0 $ is arbitrarily small, we conclude $A\geqslant a$. \cqfd

\begin{remark}
This proof shows that all the $\cc^0$-rigidity statements that hold for codimension $2$ symplectic submanifolds - in particular theorem \ref{thm:rigcodim2} and corollary \ref{cor:rigcodim2} - also hold for the reduction of a hypersurface. 
\end{remark}

\begin{lemma}\label{le:straighthomeo}
Let $\Sigma$ be a hypersurface, let $ U \Subset \tilde{U} $ be open $ (2n-2) $-dimensional topological submanifolds of $ \Sigma $, such that $ \tilde{U} $ (and hence $ U $ as well) is topologically transverse to the characteristic foliation of $ \Sigma $. Then, there exists a symplectic homeomorphism $f$, compactly supported inside an arbitrarily small \nbd $ \cv $ of the closure of $ U $ in $M$, which preserves  $ \Sigma $, such that $ f (U) $ is a smoothly embedded submanifold, and such that the restriction $ f_{|U} : U \rightarrow f(U) $ is a symplectomorphism. 
\end{lemma}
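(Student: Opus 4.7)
The plan is to realize $f$ as the inverse of a $\cc^0$-limit of an infinite composition of Hamiltonian diffeomorphisms, each supported in $\cv$, preserving $\Sigma$, and acting on $\Sigma$ by translation along the characteristic foliation. The construction closely mirrors that of theorem~\ref{T:C0-flexibility}.

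By Proposition~\ref{prop:topred} applied to $\tilde U$, I first fix a smooth flow-box $\Phi : W \times (-1,1) \to \Sigma$ with $\Phi^{-1}(\tilde U) = \{(z,g(z))\}$ for some continuous $g : W \to (-1,1)$. Let $\pi_W : W \times (-1,1) \to W$ be the projection and $K := \overline{\Phi^{-1}(U)} \subset W \times (-1,1)$ the compact piece corresponding to $\overline U$. By Weinstein's coisotropic neighbourhood theorem applied to $\Sigma$, after restricting to a relatively open $W' \subset W$ containing $\pi_W(K)$, one can extend $\Phi$ to a symplectic chart
$$
\tilde\Phi : W' \times (-1,1) \times (-\delta_0,\delta_0) \hookrightarrow M,
$$
where the source carries $\omega_W + ds\wedge dt$ and $\Sigma$ corresponds to $\{s=0\}$; by taking $W'$ and $\delta_0$ small, the image lies in the prescribed neighbourhood $\cv$.

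Now choose smooth functions $g_k : W \to (-1,1)$, $k\geqslant 0$, compactly supported in $W'$, with $g_k \to g$ uniformly on a compact neighbourhood of $\pi_W(K)$ and $\|g_{k+1}-g_k\|_\infty \leqslant 2^{-k}$. For each $k$, pick a smooth function $\chi_k : W' \times (-1,1) \to \R$, compactly supported in a shrinking narrow neighbourhood of $\mathrm{graph}(g_k)$, such that for every $z$ the time-$1$ flow of $\dot t = \chi_k(z,t)$ sends $g_k(z)$ to $g_{k+1}(z)$ (and so $\|\chi_k\|_\infty = O(2^{-k})$). Let $\rho_k : (-\delta_0,\delta_0) \to [0,1]$ be a smooth cutoff equal to $1$ near $0$ and supported in $(-\delta_k,\delta_k)$ with $0 < \delta_k \ll 2^{-k}/\|\chi_k\|_{C^1}$, and set $H_k := s\rho_k(s)\chi_k(z,t)$, pulled back to $M$ via $\tilde\Phi$ and extended by zero. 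A direct computation shows that on $\{s=0\}$ one has $X_{H_k} = \chi_k\partial_t$, so its time-$1$ map $\phi_k$ preserves $\Sigma$ and maps $\Phi(\mathrm{graph}(g_k))$ onto $\Phi(\mathrm{graph}(g_{k+1}))$. One checks that $d_{\cc^0}(\phi_k,\id) \leqslant C\cdot 2^{-k}$ (the $t$-displacement is controlled by $\|\chi_k\|_\infty$, while the $z$- and $s$-displacements are controlled by $\delta_k\|\chi_k\|_{C^1}$), and the $\supp \phi_k$ lie inside shrinking compact subsets of $\cv$.

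Set $\Psi_n := \phi_n \circ \cdots \circ \phi_0$. The crucial estimate
$$
d_{\cc^0}(\Psi_{n+1},\Psi_n) = \sup_x d\bigl(\phi_{n+1}(\Psi_n(x)),\Psi_n(x)\bigr) \leqslant d_{\cc^0}(\phi_{n+1},\id) = O(2^{-n})
$$
(valid because $\phi_{n+1}$ is composed on the \emph{left}) shows that $(\Psi_n)$ is Cauchy in the $\cc^0$-topology and converges uniformly to a continuous map $\Psi : M \to M$. Each $\Psi_n$ takes the smooth symplectic submanifold $V_0 := \Phi(\mathrm{graph}(g_0))$ diffeomorphically onto $V_{n+1} := \Phi(\mathrm{graph}(g_{n+1}))$; since $g_{n+1} \to g$ uniformly, $\Psi$ maps $V_0$ homeomorphically onto $U$. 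The principal technical point, which I view as the main obstacle, is to verify that $\Psi$ is itself a homeomorphism. This is handled as in the injectivity argument concluding subsection~\ref{SubS:proofs-main}: one arranges the supports so that $\supp\phi_k \subset \Psi_{k-1}(\hat U_k)$ for a nested sequence $\hat U_0 \supset \hat U_1 \supset \cdots$ of neighbourhoods of $V_0$ in $M$ with $\cap_k \hat U_k = V_0$. Then for any $y \notin V_0$, picking $k_0$ with $y \notin \hat U_{k_0}$, the trajectory stabilises ($\Psi_n(y) = \Psi_{k_0-1}(y)$ for $n \geqslant k_0-1$), and the resulting three-case check (both points in $V_0$, both outside, or mixed) yields injectivity exactly as there. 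Hence $\Psi \in \sympeo(M,M)$, and by Proposition~\ref{prop:defsympeo} so is $f := \Psi^{-1}$. By construction $f$ is supported in $\cv$, preserves $\Sigma$ setwise, and maps $U$ onto the smooth submanifold $V_0$. Finally, since $f$ preserves the $z$-coordinate in the flow-box, $f|_U$ corresponds under the projection $(z,t)\mapsto z$ to the identity on $W$; as the reduction symplectic forms on $U$ and $V_0$ are both pullbacks of $\omega_W$ via this projection, $f|_U$ is a symplectomorphism.
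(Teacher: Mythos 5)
The overall strategy coincides with the paper's: use Proposition~\ref{prop:topred} to put $\tilde U$ into a flow-box as a continuous graph, build an infinite composition $\Psi = \lim \phi_k\circ\cdots\circ\phi_0$ of Hamiltonian diffeomorphisms that vanish on $\Sigma$, hence preserve $\Sigma$ and each characteristic, and push a smooth graph onto the given continuous one; then set $f=\Psi^{-1}$. The difference lies in the form of the Hamiltonian, and this is where the gap sits.

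The paper takes $H_{kl}(z,x_n,y_n)=(\tilde F_k(z)-\tilde F_{k-1}(z))\,u(x_n)\,v(ly_n)/l$, so that on a characteristic $\{z=z_0,\ y_n=0\}$ the vector field is $\tau_k(z_0)\,u(x_n)\,\partial_{x_n}$: every $\phi_k$ restricted to a characteristic is the time-$\tau_k(z_0)$ map of the \emph{same} time-independent vector field $u(x_n)\partial_{x_n}$. These maps all commute, so the full composition restricted to a characteristic is the time-$(\sum_i\tau_i(z_0))$ flow of $u(x_n)\partial_{x_n}$, which is injective because it is the time-$T$ map of a fixed $C^1$ vector field. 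This is exactly how the paper handles injectivity on $\Sigma$.

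Your Hamiltonian $H_k=s\rho_k(s)\chi_k(z,t)$ also vanishes on $\Sigma$ and produces characteristic-preserving flows, but on a characteristic the vector fields are $\chi_k(z_0,\cdot)\partial_t$ for varying $k$ — \emph{different} vector fields that do not commute. Their infinite composition is a non-decreasing map of the $t$-interval, and a uniform limit of increasing diffeomorphisms can very well develop flat spots. Concretely, since you ask $\chi_k$ to be supported in a \emph{shrinking} $t$-neighbourhood of $\mathrm{graph}(g_k)$ while $\|\chi_k\|_\infty = O(2^{-k})$, one has $\|\partial_t\chi_k\|_\infty\sim 2^{-k}/\eta_k$ with $\eta_k$ the support width; if $\eta_k$ shrinks too fast (nothing in the proposal prevents it), the derivatives of $\phi_k$ along a characteristic are $\exp(O(2^{-k}/\eta_k))$, whose infinite product can tend to $0$, collapsing an interval of $t$-values to a point (e.g.\ at a $z_0$ where $g_k(z_0)$ is eventually constant and $\chi_k(z_0,\cdot)$ is odd). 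The appeal to the nested-support argument from subsection~\ref{SubS:proofs-main} does not rescue this: making $\mathrm{supp}\,\phi_k\subset\Psi_{k-1}(\hat U_k)$ with $\bigcap\hat U_k=V_0$ requires precisely that $\Psi_{k-1}^{-1}(\mathrm{supp}\,\phi_k)$ shrink to $V_0$, and this again depends on the uncontrolled derivatives of the $\phi_j^{-1}$; moreover, in the paper's Lemma~\ref{le:straighthomeo} proof the nested supports shrink only to $\Sigma$ (in the transverse $y_n$-direction), which yields injectivity \emph{off} $\Sigma$ but leaves the behaviour on $\Sigma$ to the commuting-flows computation. So the injectivity of $\Psi$ restricted to a characteristic is a genuine gap in your argument. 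It can be fixed by replacing $\chi_k(z,t)$ by $\tau_k(z)\,u(t)$ with $u$ a fixed cutoff equal to $1$ on a fixed interval containing all the graphs — i.e.\ by adopting the paper's Hamiltonian.
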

\noindent{\it Proof:} 
Before starting the proof, let us remark, that for the symplectic homeomorphism $ f $ which we will construct, not only $ f $ will preserve $ \Sigma $ and the restriction $ f_{|U} : U \rightarrow f(U) $ will be a symplectomorphism, but also $ f $ will preserve each characteristic of $ \Sigma $. 

By proposition \ref{prop:topred}, after shrinking $ \tilde{U} $ and $ \cv $, we can endow $ \cv $ with symplectic coordinates $(z,x_n,y_n)$, $ z \in \tilde{W} $, $ x_n \in (-\kappa,\kappa) $, $ y_n \in (-\delta,\delta) $, where $ \kappa > 1 $, such that $ \Sigma \cap \cv = \{(z,x_n,0) \; | \; (z,x_n)\in \tilde{W} \times (-\kappa,\kappa) \} $, and such that $ \tilde{U} = \{ (z, \tilde{F}(z), 0) \; | \; z \in \tilde{W} \} $ is the graph of a continuous function $ \tilde{F} : \tilde{W} \rightarrow (-1,1) $. Moreover, for some open $ W \Subset \tilde{W} $ we have $ U = \{ (z, \tilde{F}(z), 0) \; | \; z \in W \} $. Choose $ \epsilon > 0 $ such that $ \tilde{F}(\overline{W}) \subset (-1+ \epsilon, 1 - \epsilon) $. Using convolution and cutoff we can find a sequence of smooth functions $ \tilde{F}_k : \tilde{W} \rightarrow (-1+\epsilon,1-\epsilon) $, $ k =  0, 1,2, \ldots $, such that $ \tilde F_0 \equiv 0 $, such that all $ \tilde{F}_k $ vanish on the complement of a compact subset of $ \tilde{W} $, such that $ | \tilde{F}_{k} - \tilde{F}_{k-1} | < 1/2^k $ on $ \tilde{W} $, and such that $ \tilde{F}_k $ converges to $ \tilde{F} $ on $ W $, when $ k \rightarrow \infty $. Choose a smooth function $ u : \mathbb{R}  \rightarrow \mathbb{R} $ such that $ \supp (u) \subset (-1,1) $ and $ u = 1 $ on $ [-1+\epsilon,1-\epsilon] $. Moreover, choose a smooth function $ v : \mathbb{R} \rightarrow \mathbb{R} $ such that $ \supp (v) \subset (-\delta,\delta) $, $v(0)=0$ and $ v'(0) = 1 $. Define a family of Hamiltonian functions $ H_{kl} : M \rightarrow \mathbb{R} $, where $ k,l = 1,2,\ldots $, such that $ H_{kl} = 0 $ outside $ \cv $, and such that $ H_{kl}(z,x_n,y_n) = (\tilde{F}_k(z) - \tilde{F}_{k-1}(z) ) u(x_n) v(l y_n) / l $. The Hamiltonian vector field generated by $H_{kl}$ is given by 
$$
\begin{array}{ll}
X_{H_{kl}}(z,x_n,y_n)=\ds \big(\tilde F_k(z)-\tilde F_{k-1}(z)\big)u(x_n)v'(ly_n)\frac\partial{\partial {x_n}} & \hspace*{-,2cm}\ds - \big(\tilde F_k(z)-\tilde F_{k-1}(z)\big)u'(x_n)\frac{v(ly_n)}{l}\frac\partial{\partial {y_n}}\\
 &\hspace{-,2cm}\ds +
u(x_n)\frac{v(ly_n)}{l}X_{\tilde F_k-\tilde F_{k-1}},
\end{array}
$$
where $X_{\tilde F_k-\tilde F_{k-1}}$ is the Hamiltonian vector field on $ \tilde{W} $ generated by the function $\tilde F_k-\tilde F_{k-1}$. Note that $\Phi^t_{H_{kl}}$ has compact support in $\tilde W\times (-1,1)\times (-\nf \delta l,\nf \delta l) $, preserves $\Sigma$, and that 
$$
\Phi^t_{H_{kl}}(z,\tilde F_k(z),0)=(z,\tilde F_{k+1}(z),0) \hspace{,5cm} \forall z\in \tilde W. 
$$
Note also that for a fixed $k$, there exists $l_k^0$ such that for $l \geqslant l_k^0$, the $ \cc^0 $ norm of $X_{H_{kl}}$ is bounded from above by $ c / 2^k $, so the $\cc^0$ distance from the time-$ 1 $ map $\Phi^1_{H_{kl}}$ to the identity is bounded by $ C / 2^k $. Choose finally $l_k\geq l_k^0$ inductively, so that $\phi_k:=\Phi^1_{H_{kl_k}}$ has support in $\phi_{k-1}\circ \dots\circ \phi_1(\tilde W\times(-1,1)\times(-\nf 1k,\nf 1k ))\supset \Sigma$, and define $\psi_k:=\phi_k\circ\dots\circ \phi_1$. Then, $(\psi_k)$ $\cc^0$-converges to a map $\psi$ which is the identity outside $\tilde W\times (-1,1)\times(-\eps,\eps)$, fixes $\Sigma$, and verifies $\psi(z,0,0)=(z,\tilde F(z),0)$ on $W$. It remains to show that $\psi$ is injective to get that $\psi$ is a symplectic homeomorphism, and to put $f=\psi^{-1}$. Observe that for two points $p\neq q$ not in $\Sigma$, we have $p,q\notin \{x_n\in (-\nf 1k,\nf 1k)\}$ for $k$ large enough, so $\psi(p)=\psi_k(p)\neq \psi_k(q)=\psi(q)$.  Similarly, if $p\notin \Sigma$, and $q\in \Sigma$, we have $\psi(p)=\psi_k(p)\notin \Sigma$ and $\psi(q)\in \Sigma$, so $\psi(p)\neq \psi(q)$. Also, since $H_{kl}\equiv 0$ on $\Sigma$, the $\psi_k$ preserve each characteristic of $\Sigma$, so this still holds for $\psi$. Therefore, if $p$ and $q$ belong to different characteristics (in coordinates, this means $z(p)\neq z(q)$), we have $\psi(p)\neq \psi(q)$. Finally, we have to prove that when restricted to each characteristics, $\psi$ is injective. Putting $\tau_k(z):=\tilde F_k(z)-\tilde F_{k-1}(z)$, we can write 
$$
X_{H_{kl}}(z,x_n,0)=\tau_k(z)u(x_n)\frac \partial{\partial x_n}. 
$$
Thus fixing $z$, we have 
$$
\begin{array}{ll}
\psi(z,x_n,0)& =\lim \psi_k(z,x_n,0)\\
& \ds=\lim \Phi^1_{\tau_k(z)u(x_n)\frac\partial{\partial {x_n}}}\circ\dots\circ  \Phi^1_{\tau_1(z)u(x_n)\frac\partial{\partial {x_n}}}(z,x_n,0)\\
 &\ds= \lim \Phi^{\tau_k(z)}_{u(x_n)\frac\partial{\partial {x_n}}}\circ\dots\circ  \Phi^{\tau_1(z)}_{u(x_n)\frac\partial{\partial {x_n}}} (z,x_n,0)\\
  & \ds=\lim \Phi^{\tau_k(z)+\dots +\tau_1(z)}_{u(x_n)\frac\partial{\partial {x_n}}} (z,x_n,0)\\
  & \ds= \Phi^{\sum \tau_i(z)}_{u(x_n)\frac\partial{\partial {x_n}}}(z,x_n,0).\\
  \end{array}
$$
The third equality above holds because $z$ is constant along the vector fields $\tau_k(z)u(x_n)\frac\partial{\partial {x_n}}$. Since the series $\sum \tau_k(z) $ converges, we see that the restriction of $\psi$ to each characteristic is the value of the flow of the time-independent vector field $u(x_n)\frac\partial{\partial {x_n}}$ for some finite time.  The restrictions of $\psi$ to the characteristics of $\Sigma$ are therefore injective, which conclude our proof.
\cqfd

This lemma has another noteworthy corollary:
\begin{corollary}\label{cor:symptononsymp}
There exists a symplectic homeomorphism $h$ with support in an arbitrary \nbd of $D:=\D^{n-1}\times \{0\}\subset \C^n$ such that $h(D)$ is a smooth, non-symplectic, submanifold. 
\end{corollary}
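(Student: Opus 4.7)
Work in coordinates $(z,x_n,y_n)\in\C^{n-1}\times\C$ with $D=\{(z,0,0):z\in\D^{n-1}\}$, and fix $\eps>0$ arbitrarily small. The plan is to produce a symplectic homeomorphism $h$ supported in $\D^{n-1}\times D(\eps)$ such that $h(D)$ is the smooth graph $\{(z,F_1(z)+iF_2(z)):z\in W\}$ for well-chosen smooth compactly-supported functions $F_1,F_2:\D^{n-1}\to\R$ with $\|F_i\|_{\cc^0}\leqslant\eps/3$. A direct computation of the pull-back of $\om$ by the parametrization $z\mapsto(z,F_1(z)+iF_2(z))$ yields $\om_z+dF_1\wedge dF_2$, where $\om_z$ is the standard symplectic form on $\C^{n-1}$; hence $h(D)$ fails to be symplectic at any point where this $2$-form is degenerate. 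Suitable $F_i$ are easy to exhibit: take a smooth bump $\rho:\D^{n-1}\to[0,1]$ equal to $1$ near some $z_0$ in its interior and supported in $\{|x_1|,|y_1|<\eps/3\}$, and set $F_1:=-y_1\rho$, $F_2:=-x_1\rho$. At $z_0$ one has $dF_1\wedge dF_2=-dx_1\wedge dy_1$, so $\om_z+dF_1\wedge dF_2=\sum_{j\geqslant 2}dx_j\wedge dy_j$ there, of rank $2n-4<2n-2$.

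The construction of $h$ closely parallels the proof of lemma~\ref{le:straighthomeo}. One builds $h$ as the $\cc^0$-limit of compositions $\psi_k=\phi_k\circ\cdots\circ\phi_1$ of Hamiltonian diffeomorphisms supported in shrinking tubes of transverse radius $1/l_k$ around $D$ in the $(x_n,y_n)$-plane. Each $\phi_k$ is generated by a Hamiltonian supported near the current orbit of $(z,0,0)$, designed so that its time-$1$ flow advances this orbit a fraction of the way from the current position towards the target $(F_1(z),F_2(z))$. Concretely $\phi_k$ is assembled by alternating Hamiltonians of the form $G(z)u(x_n-a)v(l(y_n-b))/l$ of lemma~\ref{le:straighthomeo} (which produce motion in the $x_n$-direction along tubes parallel to $\{y_n=b\}$) with their symmetric analogues obtained by swapping the roles of $x_n$ and $y_n$ (producing motion in $y_n$), so that both components of the desired displacement can be realised. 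The cutoff scale $l_k$ is chosen inductively so that $d_{\cc^0}(\id,\phi_k)<C/2^k$, exactly as in the proof of lemma~\ref{le:straighthomeo}; this makes $\psi_k$ Cauchy in $\cc^0$ and forces convergence to a continuous map $h$ supported in $\D^{n-1}\times D(\eps)$. Injectivity of the limit is obtained by the same argument used in lemma~\ref{le:straighthomeo}: points off $D$ are eventually fixed by the sequence, while points on $D$ are sent to distinct positions by the explicit limiting formula.

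The principal new difficulty, compared with lemma~\ref{le:straighthomeo}, is that activating the two transverse motions simultaneously introduces an $O(\eps)$ drift in the $z$-coordinate along orbits on $D$, since the Hamiltonian vector field acquires a non-trivial $z$-component away from the locus $\{x_n=y_n=0\}$ (where the cutoff in the transverse direction begins to differentiate in $z$). Consequently the true limit takes the form $(z,0,0)\mapsto(\widetilde z(z),\widetilde F_1(z)+i\widetilde F_2(z))$ for a diffeomorphism $\widetilde z$ of $\D^{n-1}$ which is $\cc^1$-close to the identity and $\widetilde F_i$ which are $\cc^1$-close to $F_i$. However the degeneracy of $\om_z+dF_1\wedge dF_2$ at $z_0$ is robust: the $2$-form $\om_z+d\widetilde F_1\wedge d\widetilde F_2$ retains a non-trivial kernel at some nearby point as soon as the perturbation is small enough. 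Thus $h(D)$ is smooth, supported in the prescribed neighbourhood of $D$, and fails to be symplectic near the image of $z_0$, which yields the corollary.
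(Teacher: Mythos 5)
Your construction, if it worked as stated, would contradict Proposition~\ref{prop:eli-grom-symp-codim2} of this very paper, so something in the argument must fail. You claim to produce a symplectic homeomorphism $h$ whose restriction to $D$ is $(z,0,0)\mapsto(\widetilde z(z),\widetilde F_1(z)+i\widetilde F_2(z))$ with $\widetilde z$ a diffeomorphism $\cc^1$-close to the identity and $\widetilde F_1, \widetilde F_2$ smooth. This restriction would then be a diffeomorphism of the codimension-$2$ symplectic submanifold $D$ onto the smooth submanifold $h(D)$, and Proposition~\ref{prop:eli-grom-symp-codim2} would force $h(D)$ to be symplectic and $h_{|D}$ to be a symplectomorphism --- incompatible with the degeneracy of $\om_z + d\widetilde F_1\wedge d\widetilde F_2$ near $z_0$ that your own computation (which is correct) exhibits. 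In short: a graph with \emph{smooth} graphing functions over $z$ simply cannot be the image of a symplectic disc under a symplectic homeomorphism that restricts to a diffeomorphism on $D$. Your secondary concern about the $z$-drift is a symptom of this. Once the flow moves the disc off $\{y_n=0\}$, the $z$-component of the Hamiltonian vector field, evaluated on the current image of $D$, is proportional to $v'(0)\,\partial_z F_1(z)=\partial_z F_1(z)$ rather than to $v(0)/l=0$; it does \emph{not} decay as $l_k\to\infty$, so there is no $\cc^1$ control, and the assertion that $\widetilde z$ is a diffeomorphism and $\widetilde F_i$ are $\cc^1$-close to $F_i$ is unsubstantiated (and, by the preceding remark, cannot be correct).

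The paper's proof is designed precisely to evade this obstruction. It moves only in the $x_n$-direction inside the hypersurface $\Sigma=\{y_n=0\}$, targeting $\Graph(f)$ for a compactly supported $f:\D^{n-1}\to\R$ that is merely continuous --- with infinite partial derivatives at some points --- while $\Graph(f)$ is nonetheless a smooth submanifold (having vertical tangent planes at those points). Lemma~\ref{le:straighthomeo} produces $h$ with $h(z,0,0)=(z,f(z),0)$; this restriction is a homeomorphism but \emph{not} a diffeomorphism, indeed not differentiable where $f$ is not, so Proposition~\ref{prop:eli-grom-symp-codim2} does not apply. The non-symplecticity of $h(D)$ is then read off from tangency of $\Graph(f)$ to the characteristic direction $\partial/\partial x_n$ of $\Sigma$ at the vertical-tangent points --- a mechanism available only because $h_{|D}$ fails to be an immersion there. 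The failure of smoothness of $h_{|D}$ is not an artifact of the paper's method; it is forced, and targeting a smooth graph in two transverse directions is exactly what the codimension-$2$ rigidity theorem rules out.
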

\noindent {\it Proof:} Consider a \nbd $\cv$ of $D$ in $\C^n$, and let $\Sigma:=\cv\cap(\D^{n-1}\times \R)\subset \C^n$. Choose a compactly supported continuous function $f:\D^{n-1}\to \R$ with smooth graph, infinite partial derivatives at some points, but $\cc^0$-small enough so that its graph lies in $\Sigma$. By lemma \ref{le:straighthomeo}, there exists a symplectic homeomorphism $h$ with support in $\cv$, such that $f(D)=\Graph(f)\subset \Sigma$. Now at each point where $f$ has infinite derivative, $\Graph(f)$ is tangent to the characteristic direction $\nf \partial{\partial x_n}$, so is not symplectic.\cqfd

\section{$\cc^0$-rigidity of the spectrum of a Lagrangian}\label{sec:rigspec}
This section is devoted to the proof of theorem \ref{thm:rigspec}. We begin with a lemma that turns the problem into a question on the Liouville class of a Lagrangian submanifold in a cotangent. Recall that for a Lagrangian submanifold $L\subset (M,\om)$, we denote by $\ca^L_\om:H_2(M,L)\to \R$ the area homomorphism (see p. \pageref{def:areahom}).
\begin{lemma}\label{le:rigspecotan}
If $h:M\to M'$ is a symplectic homeomorphism with $h(L)=L'$, where $L,L'$ are two smooth Lagrangians, and if $\ca_\om^L\neq h^*\ca_{\om'}^{L'}$, then there is a sequence of Lagrangian embeddings $i_k:L\hra T^* L'$ with the following properties:
\begin{itemize}
\item[\sbull] $i_k\overset{\cc^0}{\lra}\iota$, where $\iota$ is a topological embedding of $L$ whose image is the zero-section in $T^* L'$,
\item[\sbull] There is a class $[\gamma]\in H_1(L)$ such $i_k^*\lambda_\can[\gamma]\nrightarrow 0$.
\end{itemize}
\end{lemma}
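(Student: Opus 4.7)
The idea is to take the defining sequence of symplectic embeddings for $h$, push it into a Weinstein neighborhood of $L'$, restrict to $L$, and argue that the resulting Lagrangian embeddings into $T^*L'$ already have the desired non-convergent Liouville periods.

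\textbf{Step 1: Constructing the $i_k$.} By the Weinstein neighborhood theorem, fix a symplectic identification $\Phi : V' \to \Phi(V') \subset T^*L'$ of a neighborhood $V'$ of $L'$ in $M'$ with a neighborhood of the zero section, sending $L'$ to the zero section. Set $\alpha' := \Phi^* \lambda_\can$, a primitive of $\omega'$ on $V'$ which vanishes on $L'$. Let $U_1 \subset U_2 \subset \dots$ and $h_k : U_k \to M'$ be as in Definition~\ref{def:sympeo}. Since $L$ is closed and $h(L)=L'$, for $k$ large we have $L \subset U_k$ and $h_k(L) \subset V'$. Define
$$i_k := \Phi \circ h_{k|L} : L \to T^*L',$$
which is a Lagrangian embedding since $h_k$ is symplectic. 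Because $h_{k|L} \to h_{|L}$ uniformly and $h_{|L}: L \to L'$ is a homeomorphism, $i_k$ converges uniformly to $\iota := \Phi \circ h_{|L}$, a topological embedding with image the zero section.

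\textbf{Step 2: Reducing the assumption to a 1-cycle.} I first claim that the homomorphism $\ca^L_\omega - h^*\ca^{L'}_{\omega'}: H_2(M,L) \to \R$ vanishes on the image of $H_2(M) \to H_2(M,L)$. Given a closed surface $\Sigma \subset M$, take $k$ so large that $\Sigma \subset U_k$; then $\int_{h_k(\Sigma)} \omega' = \int_\Sigma h_k^* \omega' = \int_\Sigma \omega$, while $h_k(\Sigma)$ and a smooth representative of $h_*[\Sigma]$ are homologous in $M'$ (since $h_k|_\Sigma$ is $\cc^0$-close to $h|_\Sigma$ and therefore homotopic to it), giving the claim. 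By the long exact sequence of $(M,L)$, the difference therefore factors as $\beta \circ \partial$ for some homomorphism $\beta$ defined on the image of $\partial : H_2(M,L) \to H_1(L)$. The hypothesis $\ca^L_\omega \neq h^*\ca^{L'}_{\omega'}$ means $\beta \not\equiv 0$, so we may fix $A \in H_2(M,L)$ with $\beta([\gamma]) \neq 0$, where $[\gamma] := \partial A$.

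\textbf{Step 3: Computing the Liouville period.} Represent $A$ by a smooth surface $\Sigma \subset M$ with $\partial \Sigma = \gamma$. For $k$ large, $h_k(\Sigma)$ is a surface in $M'$ with boundary $h_k(\gamma) \subset V'$ close to $h(\gamma) \subset L'$. Close up by a thin cylinder $D_k \subset V'$ from $h_k(\gamma)$ to $h(\gamma)$ (using the straight-line homotopy in the $T^*L'$-coordinates provided by $\Phi$). A homotopy argument shows that $h_k(\Sigma) \cup D_k$ represents $h_*A \in H_2(M',L')$. Then, using Stokes' theorem and the vanishing of $\alpha'$ along $L'$,
\begin{align*}
h^*\ca^{L'}_{\omega'}(A) &= \int_{h_k(\Sigma)} \omega' + \int_{D_k} \omega' = \int_\Sigma h_k^*\omega' + \int_{\partial D_k} \alpha' \\
&= \ca^L_\omega(A) + \int_{h(\gamma)} \alpha' - \int_{h_k(\gamma)} \alpha' = \ca^L_\omega(A) - \int_{h_k(\gamma)} \alpha'.
\end{align*}
Therefore
$$i_k^*\lambda_\can[\gamma] = \int_{h_k(\gamma)} \Phi^*\lambda_\can = \int_{h_k(\gamma)} \alpha' = \ca^L_\omega(A) - h^*\ca^{L'}_{\omega'}(A) = \beta([\gamma]),$$
a nonzero constant independent of $k$. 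In particular $i_k^*\lambda_\can[\gamma] \not\to 0$, completing the proof.

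\textbf{Main obstacle.} The computation itself is essentially formal; the only delicate point is justifying that $h_k(\Sigma) \cup D_k$ represents $h_*A$ in $H_2(M',L')$. This requires noting that $h_k|_{(\Sigma,\gamma)}$ and $h|_{(\Sigma,\gamma)}$ are $\cc^0$-close as maps of pairs $(\Sigma,\partial \Sigma) \to (M',L'_{\rm nbhd})$ for $k$ large, and that the thin cylinder $D_k$ precisely plays the role of the homotopy between their boundaries $h_k(\gamma)$ and $h(\gamma)$. Once this is set up carefully, everything else is a direct application of Stokes and the fact that the approximating diffeomorphisms $h_k$ are exactly symplectic (which is what pins $\int_{h_k(\Sigma)} \omega'$ to $\ca^L_\omega(A)$).
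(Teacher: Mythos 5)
Your proof is correct and follows essentially the same route as the paper: both construct $i_k = \Phi\circ h_{k|L}$ via a Weinstein neighbourhood, close up $h_k(\Sigma)$ by thin annuli inside that neighbourhood connecting $h_k(\partial\Sigma)$ to (a smooth perturbation of) $h(\partial\Sigma)\subset L'$, and apply Stokes with a primitive $\alpha'=\Phi^*\lambda_{\can}$ that vanishes on $L'$ to identify $\int_{i_k(\gamma)}\lambda_{\can}$ with the nonzero constant $\ca^L_\omega(A)-h^*\ca^{L'}_{\omega'}(A)$. Your Step~2 (verifying that the difference of area homomorphisms kills the image of $H_2(M)$ and hence descends through $\partial$) is a clean conceptual reformulation, but it is not strictly needed: the paper simply picks any relative class on which the two area homomorphisms disagree and runs the same Stokes computation directly.
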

\noindent {\it Proof:} By Weinstein's \nbd theorem, there is a symplectomorphism $\Phi$ between a \nbd $\cu$ of $L'$ in $M'$ and a \nbd $\cv$ of the zero section in $T^*L'$ with $\Phi(L')=O_{L'}$ (the zero section). If $h_k:U_k\to M'$ is a sequence of symplectic embeddings which $\cc^0$-converges to $h$, then 
$h_k(L)\subset \cu$ for $k$ large enough, so $i_k:=\Phi\circ {h_k}_{|L}$ is a Lagrangian embedding of $L$ into $T^*L'$ and the sequnce  $(i_k)$ $ \cc^0 $-converges to $\iota:=\Phi\circ h$.

Let $(\Sigma,\partial \Sigma)\to (M,L)$ be a surface with boundary on $L$, and let $ k \in \mathbb{N} $ be a large enough index. The image $h_k(\Sigma)\subset M'$ is a smooth surface with boundary $h_k(\partial \Sigma)\subset h_k(L)$. The image $h_k(\partial \Sigma)$ is $\cc^0$-close to $h(\partial \Sigma)$, which is a finite union of continuous curves in $L'$. We can therefore find a union of thin annuli $A_k\subset \cu$ with smooth boundaries, connecting $h_k(\partial \Sigma)$ and a smooth perturbation of $h(\partial \Sigma)$ in $L'$. Then, $\Sigma'_k:=h_k(\Sigma)\cup A_k$ is a surface in $M'$ with boundary  in $L'$, which represents $[h(\Sigma)]\in H_2(M',L')$. Moreover, 
$$
\ca_{\om'}(A_k)=\ca_{\om'}(\Sigma_k')-\ca_{\om'}(h_k(\Sigma))=\ca_{\om'}(\Sigma'_k)-\ca_\om(\Sigma)=[\om']([h(\Sigma)])-[\om]([\Sigma]).
$$
On the other hand, since $A_k\subset \cu$, $\Phi(A_k)$ provides a finite union of smooth annuli with one side on $\Phi\circ h_k(\partial \Sigma)=i_k(\partial \Sigma)$ and another side on $0_{L'}$. Applying Stokes theorem, we therefore finally get that if $\ca_\om^L\neq h^*\ca_{\om'}^{L'}$, and $[\Sigma]\in H_2(M,L)$ is such that $[\om]\cdot[\Sigma]\neq[\om']\cdot [h(\Sigma)]$ then for large $ k $, $i_k(\partial \Sigma)$ verifies:

$$ \int_{i_k(\partial \Sigma)}\lambda_\can=\ca_{\om'}(A_k)
\nrightarrow 0. $$ Now put $ \gamma := \partial \Sigma $. \cqfd 

Notice that in the previous lemma, the manifolds $L$ and $L'$ are homeomorphic, but not necessarily diffeomorphic.  In a view towards theorem \ref{thm:rigspec}, we focus now on tori, which might admit different smooth structures (see {\it e.g.} \cite{hssh}). 

\begin{proposition}\label{prop:torrig}
Let $L$ be a smooth submanifold homeomorphic to a torus $\T^n$, and  
$i_k: L\to T^*\T^n$ be a sequence of Lagrangian embeddings which $\cc^0$-converge to  
a topological embedding $\iota$ of $L$ whose image is 
the zero-section in $T^* \T^n$. Then, $[i_k^*\lambda_\can]\to 0$.
\end{proposition}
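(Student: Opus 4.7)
It suffices to prove that for every smooth loop $\gamma:S^1\to L$ representing a class in $H_1(L;\Z)\cong\Z^n$, we have $\int_{i_k(\gamma)}\lambda_{\can}\to 0$. Since $\pi\circ i_k\to\pi\circ\iota$ in $C^0$ and $\pi\circ\iota$ is a homeomorphism of $L$ onto the zero section $\T^n\subset T^*\T^n$, for $k$ large the class $m:=[\pi\circ i_k\circ\gamma]\in H_1(\T^n)=\Z^n$ stabilises at $(\pi\circ\iota)_*[\gamma]$ (here $\pi:T^*\T^n\to\T^n$ denotes the base projection). Fix a smooth reference loop $\gamma_0$ on the zero section representing $m$; since $\lambda_{\can}$ vanishes on the zero section, $\int_{\gamma_0}\lambda_{\can}=0$, and Stokes' theorem gives
\[
\int_{i_k(\gamma)}\lambda_{\can} \;=\; \int_{\Sigma_k}\omega_{\can}
\]
for any $2$-chain $\Sigma_k\subset T^*\T^n$ with $\partial\Sigma_k=i_k(\gamma)-\gamma_0$.

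I would lift the picture to the universal cover $\mathbb{C}^n=\R^n_q\times\R^n_p\to T^*\T^n$: the lifts of $i_k(\gamma)$ and $\gamma_0$ are paths in $\mathbb{C}^n$ whose endpoints differ by the same vector $m\in\Z^n$, and the lift of $i_k(\gamma)$ lies in a slab $\R^n_q\times B_{\delta_k}(0)$ with width $\delta_k:=\|i_k-\iota\|_{C^0}\to 0$. The plan is to construct a $2$-chain $\Sigma_k$ inside this slab whose symplectic area tends to zero with $\delta_k$. The Lagrangian hypothesis on $i_k$ is essential here: although the $C^0$-bound alone does not control the $C^1$-norm of $i_k$, the vanishing of $i_k^*\omega_{\can}$ constrains how the fibre coordinate oscillates along $i_k$, in the same way that a closed $1$-form on $\T^n$ is determined (up to an exact part) by its cohomology class.

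The main obstacle is that the obvious linear homotopy in $\mathbb{C}^n$ between the two lifted paths produces a $2$-chain whose symplectic area is precisely $\int P_k\,dQ_k = \int_{i_k(\gamma)}\lambda_{\can}$, yielding only a tautology: $C^0$-closeness in the fibre does not cancel $C^1$-unboundedness in the base. To obtain a genuine estimate I would first apply a $C^0$-small Hamiltonian isotopy of $T^*\T^n$ (of size controlled by $\delta_k$) bringing $i_k(L)$ onto the graph of a closed $1$-form $\alpha_k$ on $\T^n$. Since the Liouville class is invariant under Hamiltonian isotopy, it equals $[\alpha_k]\in H^1(\T^n;\R)$; and since the isotopy is $C^0$-small and $i_k\to\iota$ in $C^0$, the form $\alpha_k$ is small in sup norm, whence $[\alpha_k]\to 0$. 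The technically delicate step is realising this graph reduction from only $C^0$-data --- a smooth Lagrangian $C^0$-close to the zero section need not be graphical --- and I expect this to require either a quantitative $h$-principle for Lagrangian tori analogous to Theorem~\ref{T:qh-principle-main}, or an $\omega$-area estimate for $2$-chains inside the slab $\T^n\times B_{\delta_k}(0)$ with boundary in the homology class $m$. The torus hypothesis on $L$ is decisive here, since $\T^n$ carries a flat metric making the minimal-length representative in each homology class explicit, which is what turns the slab-width bound into a quantitative bound on the Liouville class.
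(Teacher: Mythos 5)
Your proposal correctly identifies what needs to be shown (smallness of $\int_{i_k(\gamma)}\lambda_{\can}$) and correctly diagnoses that a naive linear-homotopy Stokes argument is tautological, but it then stalls on the ``graph reduction'' step, which is precisely where the real difficulty lives. You propose to find a $\mathcal{C}^0$-small Hamiltonian isotopy of $T^*\T^n$ carrying $i_k(L)$ onto the graph of a closed $1$-form; but a smooth Lagrangian that is $\mathcal{C}^0$-close to the zero section need not be a graph, and producing such an isotopy is a hard problem in the vicinity of the nearby Lagrangian conjecture --- it is not something you can simply invoke, and you correctly flag that you do not know how to do it. As written, the proof does not close: there is no quantitative $h$-principle in the paper (or in the literature you can point to) that straightens a non-graphical Lagrangian torus near the zero section by a $\mathcal{C}^0$-controlled Hamiltonian flow, and your fallback ($\omega$-area estimates in a slab) is exactly the estimate you earlier observed to be tautological.

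The paper avoids this entirely by a shift-and-contradiction argument. Suppose $a_k := [i_k^*\lambda_{\can}]$ stays bounded away from $0$ along a subsequence. Represent the image class $b_k := (\pi\circ\iota)_*a_k$ by a \emph{constant} $1$-form $\vartheta_k$ on $\T^n$, with $\lVert\vartheta_k\rVert \geqslant \eps_0 > 0$, and translate in the fibre: $L_k := i_k(L) - \vartheta_k$. One checks $[(i_k - \vartheta_k)^*\lambda_{\can}] = a_k - a_k = 0$, so $L_k$ is an \emph{exact} Lagrangian torus in $T^*\T^n$. But since $i_k \to \iota$ in $\mathcal{C}^0$ and $\lVert\vartheta_k\rVert \geqslant \eps_0$, the shifted torus $L_k$ is $\mathcal{C}^0$-close to the graph of $\vartheta_k$, hence disjoint from the zero section for large $k$. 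This contradicts Gromov's theorem that a closed exact Lagrangian in a cotangent bundle must intersect the zero section. The hard-analysis input is thus Gromov's intersection theorem, not a graph-straightening $h$-principle; and the torus hypothesis enters not through a flat metric, as you suggest, but because on $\T^n$ every class in $H^1$ admits a fibrewise-translation-invariant (constant) representative, which is what makes the shift $p \mapsto p - \vartheta_k$ a symplectomorphism of $T^*\T^n$. You should look at Lemma~\ref{le:rigspecotan} and this shift argument to see how the pieces fit together.
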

\noindent{\it Proof:} We argue by contradiction. Let 
$a_k:= [i_k^* \lambda_\can] \in H^1(L)$ and assume that $a_k\nrightarrow 0$. After passing to a subsequence if necessary, we can assume that the distance from each $a_k$ to $0\in H^1(\T^n)$ is bounded from below. If $ \pi : T^* \T^n \rightarrow \T^n $ denotes the standard projection onto the base, $\pi\circ\iota:L\to \T^n$ is a homeomorphism, which induces a linear isomorphism $(\pi\circ \iota)^*$ in cohomology. Thus, $ b_k := (\pi \circ \iota)_* a_k \in H^1(L') $ also remains at bounded distance away from $0$. Represent now the class $b_k$ by a constant $1$-form $\vartheta_k$ on $\T^n$. This is a closed form, whose norm $\Vert \vartheta_k(q)\Vert\geqslant \eps_0$ (for any fixed norm on the space of constant $ 1 $-forms). Define also the $1$-form $ \hat{\vartheta}_k := \pi^* \vartheta_k $ on $ T^* \T^n $. We have $ [i_k^* \hat{\vartheta}_k] = [i_k^* \pi^* \vartheta_k] = [(\pi \circ i_k)^* \vartheta_k] =  a_k $ in the cohomology $ H^1(\T^n) $, since the map $ \pi \circ i_k : L \rightarrow \T^n $ is $ \cc^0 $-close,  hence homotopic,  to $\iota$ for large $ k $. The shifted Lagrangian 
$$
L_k:=i_k(L)-\vartheta_k:=\{(q,p-\vartheta_k), \; (q,p)\in i_k(\T^n)\}
$$  
is 
$\cc^0$-close to 
the graph of $\vartheta_k$, hence disjoint from $0_{\T^n}$ for $k$ large enough. In addition, 
 $L_k$ is an exact Lagrangian embedding of $L$ into $T^*\T^n$, because $[(i_k-\vartheta_k)^*\lambda_\can]=[i_k^*(\lambda_\can - \hat{\vartheta}_k)] = a_k - a_k =0$ (the first equality holds by definition of $\lambda_\can$). However, by Gromov's theorem \cite{gromov}, a closed exact Lagrangian submanifold of a cotangent bundle must intersect the zero section.\cqfd

\noindent{\it Proof of theorem \ref{thm:rigspec}:} Let $L\subset M^{2n}$ be a Lagrangian torus (hence diffeomorphic to a standard torus $\T^n$), $h:M\to M'$ be a symplectic homeomorphism such that $L':=h(L)$ is smooth. By \cite{lasi}, $L'$ is a Lagrangian submanifold in $M'$, homeomorphic to $\T^n$. Assume by contradiction that $\ca_\om^L\neq h^*\ca_{\om'}^{L'}$, and consider $h^{-1}$ instead of $h$. By lemma \ref{le:rigspecotan}, we get a sequence of Lagrangian embeddings $i_k:L'\hra T^*L = T^*\T^n$ which $\cc^0$-converge to a topological embedding $\iota:L'\hra T^*\T^n$ with $\iota(L')=O_{\T^n}$, and such that $[i_k^*\lambda_\can]\nrightarrow 0$. But this contradicts proposition \ref{prop:torrig}. \cqfd

We conjecture in fact :
\begin{conj} \label{conj:lagr-spec-strong}
If $ i_k : L \hookrightarrow T^* L' $, $ k = 1,2,\ldots $ is a sequence of smooth Lagrangian embeddings which $ \mathcal{C}^0 $-converges to a topological embedding  $ \iota :L \hookrightarrow T^* L' $ with $\iota(L)=0_{L'}$, then $ [i_k^* \lambda_{\can}] \rightarrow 0 $ in $ H^1(L,\mathbb{R}) $. 

\end{conj}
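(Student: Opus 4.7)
The natural plan is to try to extend the argument of Proposition~\ref{prop:torrig} from the flat torus to a general closed manifold $L'$. I would argue by contradiction: assume $a_k := [i_k^* \lambda_{\can}] \in H^1(L,\R)$ does not converge to zero. Since $\pi \circ \iota$ is a homeomorphism and induces an isomorphism in singular cohomology, after passing to a subsequence $b_k := (\pi \circ \iota)_* a_k \in H^1(L',\R)$ stays bounded away from zero. The goal, as before, is to translate $i_k$ fibrewise by a closed $1$-form on $L'$ so as to produce an exact Lagrangian disjoint from the zero section, then invoke Gromov's theorem \cite{gromov} for a contradiction.

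Concretely, for a closed $1$-form $\alpha$ on $L'$, the fibrewise translation $\tau_\alpha:(q,p)\mapsto(q,p+\alpha(q))$ is a symplectomorphism of $T^*L'$ satisfying $\tau_\alpha^*\lambda_{\can}=\lambda_{\can}+\pi^*\alpha$. Choosing closed $1$-forms $\alpha_k$ with $[\alpha_k]=b_k$, for large $k$ the map $\pi\circ i_k$ is $\cc^0$-close, hence homotopic, to $\pi\circ\iota$, so $(\pi\circ i_k)^*b_k=(\pi\circ\iota)^*b_k=a_k$, which implies that $j_k:=\tau_{-\alpha_k}\circ i_k$ satisfies $[j_k^*\lambda_{\can}]=a_k-a_k=0$, i.e.\ $j_k$ is exact. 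Its image is $i_k(L)-\alpha_k$ along the fibres. Since $i_k$ converges $\cc^0$ to the zero section, a sufficient condition for $j_k(L)$ to avoid the zero section for large $k$ is that the representatives $\alpha_k$ be nowhere vanishing on $L'$ with $|\alpha_k(q)|$ uniformly bounded below by some $\delta_0>0$. Given such representatives, Gromov's theorem on closed exact Lagrangians in cotangent bundles would yield the desired contradiction.

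The main obstacle lies in the construction of such representatives. On a torus, harmonic representatives are parallel, hence automatically nowhere vanishing as soon as they are nonzero, which is what makes Proposition~\ref{prop:torrig} go through. On a general closed manifold, however, not every class in $H^1(L',\R)$ admits a nowhere-vanishing closed representative: the obstruction is measured by the Thurston norm, and the existence of such a representative is the strong topological condition that $b_k$ lie in the closure of a fibred cone (by Tischler's theorem, nowhere-vanishing closed $1$-forms are arbitrarily close to fibrations over $S^1$). Hence the direct approach cannot work for arbitrary $L'$, and this is where I expect the genuine difficulty of the conjecture to lie.

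A possible way forward is to localize. After passing to a subsequence, there exists a fixed class $[\gamma]\in H_1(L)$ with $\langle a_k,[\gamma]\rangle\not\to 0$, represented by an embedded loop $\gamma_0\subset L$. One would then attempt to apply the shift argument only on a tubular neighbourhood $V$ of $\iota(\gamma_0)\subset L'$, since $V$ retracts onto a circle and any cohomology class on $V$ admits a nowhere-vanishing representative, and to cut off the shift near $\partial V$ by a Hamiltonian isotopy supported away from $i_k(L)$. The hard step — and the main obstruction I foresee — is that the cutoff destroys exactness precisely in the region where $i_k(L)$ sits, and controlling the resulting action drift seems to require a $\cc^0$-rigidity statement essentially equivalent to the conjecture itself. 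For this reason I suspect that a full proof of Conjecture~\ref{conj:lagr-spec-strong} will demand a genuinely new $\cc^0$-symplectic invariant, perhaps a spectral or Floer-theoretic invariant for Lagrangians that is continuous under Hausdorff or $\cc^0$ convergence, rather than a direct generalisation of the argument presented for tori.
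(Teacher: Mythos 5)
This statement is a \emph{conjecture}, not a theorem: the paper itself does not prove it, and explicitly flags it as open (only the torus case, Proposition~\ref{prop:torrig}, is established). So there is no paper proof to compare against, and you are right not to claim one. Your discussion correctly locates where the torus argument breaks down: the translation-by-a-closed-one-form trick produces an exact Lagrangian only if the chosen closed one-form representing $b_k$ can be taken nowhere vanishing (with $|\alpha_k|$ bounded below, uniformly in $q$), which on the flat torus is automatic for harmonic representatives but on a general closed $L'$ is a strong topological restriction. One small caution on the way you phrase the obstruction: the Thurston norm and fibred-cone picture is specific to $3$-manifolds, and Tischler's theorem gives a necessary condition (that $L'$ fibre over $S^1$) for existence of \emph{some} nowhere-vanishing closed one-form, not a characterization of which classes admit one; the correct general statement is due to work in the circle of ideas around Novikov theory and Latour/Farber, but your essential point — that many classes have no nowhere-vanishing closed representative — is right. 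Your localization idea and the diagnosis that the cutoff spoils exactness exactly where control is needed is also a fair assessment; nothing in the paper contradicts it, and the authors indeed leave the general case open. In short, there is no gap to report because there is no claimed proof: you have given an honest and essentially accurate account of why the known method does not extend and of the kind of new input (a $\cc^0$-continuous Lagrangian invariant) that seems to be required.
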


\section{Relative Eliashberg-Gromov's theorem for coisotropic and symplectic submanifolds}\label{sec:elgr}

 \subsection{Proof of proposition~\ref{prop:eli-grom-symp-codim2}}
 Let $N\subset M$ be a symplectic submanifold of codimension $ 2 $, and $h:M\to M'$ be a symplectic homeomorphism whose restriction to $N$ is a smooth diffeomorphism onto a smooth submanifold $N'=h(N)$. Notice that since $N'$ has codimension $2$, given a point $x\in N'$, $T_xN'$ is either symplectic or coisotropic. Indeed, $(T_xN')^{\perp \om}$ has dimension $2$, and since $2n-2$ is even, $\ker \om'_{|T_xN'}$ is even-dimensional. We therefore conclude that $T_xN'\cap (T_xN')^{\perp \om}$ has dimension $0$ or $2$, which correspond to the symplectic or coisotropic case, respectively. By \cite{hulese}, the set of coisotropic points must have empty interior. Thus $N'$ is a disjoint union of an open dense set $N'_\om$ which is a symplectic submanifold, and a closed set with empty interior $N'_0$. Call $N_\om:=h^{-1}(N_\om')$ the preimage of the nice part of $N'$.

We first look at the restriction of $h$ to the symplectic submanifold $N_\om$. The symplectic homeomorphism $h$ restricts to a diffeomorphism between symplectic codimension $2$ submanifolds. By theorem \ref{thm:nssymp}, if $B\subset N_\om$ is a symplectic ball of size $a$ and $h(B)\subset N_\om'$ can be embedded into $Z(A):=\D(A)\times\C^{n-2}$, then $A\geqslant a$. By the proof given in \cite[Section 2.2, Theorem 3]{hoze}, such a diffeomorphism must satisfy $d_xf^*\om'_{f(x)}=\lambda_x\om_x$ for all $x\in N_\om$. Arguing as in this proof, we consider $F:=f\times \id:M\times \C\to M'\times \C$. This is still a symplectic homeomorphism, whose restriction to $N\times \C$ is a diffeomorphism. Thus on one hand, $d_{(x,z)}F^*(\om'\oplus\om_\st)=\Lambda_{(x,z)}(\om\oplus\om_\st)$ by the previous analysis, and $d_{(x,z)}F^*(\om'\oplus\om_\st)=(\lambda_x\om)\oplus\om_\st$  on the other hand. We conclude therefore that $\lambda_x=1$, so $df^*\om'=\om$ on $N_\om$. 
But since $N_\om$ is dense in $N$ and $f$ is assumed to be smooth, the differential equality $df^*\om'=\om$ extends through $N_0'$, so $f_{|N}$ is symplectic. \cqfd

 \begin{remark} 
 We have worked under the assumption that $f_{|N}$ is a diffeomorphism by convenience. Following the proof in \cite{hoze}, the same conclusion can be obtained under the weaker assumption that $f_{|N}$ is smooth. However, it is not clear whether the smoothness assumption can be replaced by differentiability only.
 \end{remark}

\subsection{Proof of proposition~\ref{prop:eli-grom-coisotrop}}
The proof of the rigidity of coisotropic submanifolds by Humili\`ere-Leclercq-Seyfaddini appeals to the $ \cc^0 $-dynamical properties of coisotropic submanifolds. More concretely, the following uniqueness theorem of Humili\`ere-Leclercq-Seyfaddini  plays an important role in the proof (we have slightly changed the presentation of the statement):

\begin{thm'}[\cite{hulese}] \label{thm:HLS-uniqueness}
Let $ C $ be a connected coisotropic submanifold of $M$, and let $ \phi^t $ be a continuous Hamiltonian flow, generated by a continuous Hamiltonian $ H $. Then the restriction of $ H $ to $ C $ is a function of time if and only if the flow $ \phi^t $ locally preserves $ C $ and locally flows along the leaves of its characteristic foliation. The meaning of the second condition is that for any $ x \in C $ and $ t_0 $ there exists $ \epsilon > 0 $ such that for 
$ t_0 < t < t_0 + \epsilon $ we have $ \phi^t \circ (\phi^{t_0})^{-1} (x) \in C $, and moreover, $ \phi^t \circ (\phi^{t_0})^{-1} (x) $ lies in the same leaf of the characteristic foliation of $ C $, as the point $ x $.
\end{thm'}

As we prove now, this result implies proposition \ref{prop:eli-grom-coisotrop}. Let $h:M\to M'$ be a symplectic homeomorphism that takes some coisotropic submanifold $C$ to a smooth submanifold $C'$, and such that $h_{|C}$ is a smooth diffeomorphism. By \cite{hulese} we know that $C'$ is coisotropic, and we aim at proving that $h_{|C}$ is symplectic, {\it i.e.} $h^*\om'_{|C'}=\om_{|C}$. We argue by contradiction and assume that $h^*\om'_{|C'}(h(x))\neq \om_{|C}(x)$. Under this assumption, we claim the following: 
\begin{claim}\label{cl:eli-grom-coiso}
There exists a smooth function $F$ defined on a \nbd of $x$, and a smooth extension $G$ of $h_*F_{|C'}$ defined on a \nbd of $h(x)$ with the following properties:
\begin{itemize}
\item[(i)] $F_{|C}$ is constant along the characteristic leaves. Hence $\Phi_F^t$ preserves $C$.
\item[(ii)] $\Phi^t_G$ preserves $C'$, and its action on the reduction of $C$ differs from the action of $h_*\Phi^t_F$. 
\end{itemize}
\end{claim}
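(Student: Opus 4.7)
The plan is to reduce the claim to a pointwise linear-algebraic discrepancy at the level of the local reductions of $C$ and $C'$, and then realize this infinitesimal mismatch by flows of smooth Hamiltonians.

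First, I set up the reductions locally. Near $x$ and $h(x)$, choose smooth characteristic projections $\pi : U \to N$ and $\pi' : U' \to N'$, where $U \subset C$ and $U' \subset C'$ are small open neighbourhoods, and $(N,\omega_N)$, $(N',\omega_{N'})$ are the corresponding reduction charts with $\omega|_C = \pi^*\omega_N$ and $\omega'|_{C'} = \pi'^*\omega_{N'}$. By the Humili\`ere--Leclercq--Seyfaddini theorem, $h|_C$ carries characteristic leaves to characteristic leaves, hence induces a smooth diffeomorphism $\hat h : N \to N'$ with $\pi' \circ h|_U = \hat h \circ \pi$. Writing $\bar x = \pi(x)$, the hypothesis $h^*\omega'|_{C'}(h(x)) \neq \omega|_C(x)$ translates, after passage through $\pi$ and $\pi'$, to $\hat h^*\omega_{N'}(\bar x) \neq \omega_N(\bar x)$ as bilinear forms on $T_{\bar x}N$.

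Next I choose the Hamiltonian. Since these two forms differ, pick $v \in T_{\bar x}N$ with $\omega_N(v,\cdot) \neq \hat h^*\omega_{N'}(v,\cdot)$ in $T^*_{\bar x}N$, and let $\bar F : N \to \mathbb{R}$ be any smooth germ near $\bar x$ with $d\bar F(\bar x) = \omega_N(v,\cdot)$, so that $X_{\bar F}^{\omega_N}(\bar x) = v$. Define $F$ on a neighbourhood of $x$ in $M$ by putting $F|_C := \pi^*\bar F$ locally and extending smoothly off $C$. Then $F|_C$ is constant along characteristic leaves, which means $dF$ annihilates $TC^{\perp\omega}$ along $C$, so $X_F$ lies in $(TC^{\perp\omega})^{\perp\omega} = TC$ along $C$, and (i) follows: $\Phi_F^t$ preserves $C$.

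Now produce $G$ and witness the discrepancy. Let $G$ be any smooth extension near $h(x)$ of the smooth function $h_*F|_{C'} := F \circ h^{-1}|_{C'}$ on $C'$. Then $G|_{C'}$ descends to $\hat h_*\bar F$ on $N'$, is constant along characteristic leaves of $C'$, and hence $\Phi_G^t$ preserves $C'$. On the reductions, $\Phi_F^t|_C$ descends to the Hamiltonian flow of $\bar F$ with respect to $\omega_N$, so the conjugated flow $h_*\Phi_F^t|_{C'}$ descends to $\hat h \circ \Phi_{\bar F,\omega_N}^t \circ \hat h^{-1}$, whose initial velocity at $\hat h(\bar x)$ is $d\hat h(v)$. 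On the other side, $\Phi_G^t|_{C'}$ descends to the Hamiltonian flow of $\hat h_*\bar F$ with respect to $\omega_{N'}$, whose initial velocity at $\hat h(\bar x)$ is the unique $w$ with $\omega_{N'}(w,\cdot) = d(\hat h_*\bar F)(\hat h(\bar x)) = \omega_N(v,\cdot)\circ d\hat h^{-1}$. Equating $w = d\hat h(v)$ would force $\hat h^*\omega_{N'}(v,\cdot) = \omega_N(v,\cdot)$, contradicting the choice of $v$. So the two reduced flows differ, giving (ii).

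The main obstacle I anticipate is bookkeeping the pullbacks cleanly: one must verify that the \emph{ambient} infinitesimal mismatch $h^*\omega'|_{C'}(h(x)) \neq \omega|_C(x)$ really does descend to a mismatch of the reduced forms via $\omega|_C = \pi^*\omega_N$ and the HLS theorem, and that arbitrary smooth extensions of $F$ and $G$ off $C$ and $C'$ neither destroy the property that the restrictions are constant on characteristics nor affect the descended flows. Once this translation is in hand, the remainder is the elementary observation that two distinct symplectic forms on the same manifold yield distinct Hamiltonian flows for the same Hamiltonian germ.
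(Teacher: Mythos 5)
Your proposal is correct and follows essentially the same route as the paper: both descend the problem to the reductions via the HLS theorem, take $F$ to be a pullback $\pi^*\bar F$ of a reduced Hamiltonian germ so that (i) is automatic, define $G$ as the corresponding pullback of $\hat h_*\bar F$ on the other side, and conclude that the two reduced flows differ because the induced map $\hat h$ between the reductions is not symplectic. The only difference is presentational: the paper works in adapted Darboux coordinates and closes by citing the lemma that a diffeomorphism commutes with all Hamiltonian flows iff it is symplectic, whereas you phrase the construction intrinsically and verify the discrepancy directly at the infinitesimal level for a single well-chosen $v$ and $\bar F$.
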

The path $ \Phi_K^t := (\Phi^t_G)^{-1}\circ h_*\Phi_F^t=(\Phi^t_G)^{-1}\circ \Phi^t_{h_*F}$ is then a continuous Hamiltonian path in the sense of M\"uller and Oh, generated by the function $K=(h_*F-G)(t,\Phi_G^t)$ \cite{muoh}. Since $\Phi^t_G$ preserves $C'$ (locally around $h(x)$) and $G_{|C'}\equiv h_*F_{|C'}$, $K$ vanishes on $C$. On the other hand, $\Phi_K^t $ does not act trivially on the reduction by claim \ref{cl:eli-grom-coiso}, which means that there is $y$ close to $x$ whose trajectory along $\Phi_K^t$ does not remain in a fixed characteristics. But this is a contradiction with theorem' \cite{hulese} cited above. \cqfd

\noindent {\it Proof of claim \ref{cl:eli-grom-coiso}:} Considering the symplectic homeomorphism $h$ in local symplectic charts near $x,h(x)$ adapted to the coisotropic submanifolds $C,C'$, we can assume that $x=0\in \C^n$, $h$ is defined in a \nbd $U$ of $0$, $h(0)=0$, $h(C_0\cap U)\subset C_0$, where 
$$
C_0:=\{(y_{m+1},\dots,y_n)=0\}\subset \C^n
$$
(here $\C^n$ is endowed with coordinates $z_i=x_i+iy_i$).
Since $h$ preserves $C_0$, \cite{hulese} ensures that it also preserves its characteristic foliation, which is tangent to $\ker \om_{|C_0}=\langle \nf{\partial}{\partial x_{m+1}}\dots,\nf{\partial}{\partial x_n}\rangle$. Thus, $h_{|C_0}$ can be written
$$
h_{|C_0}(z_1,\dots,z_m,x_{m+1},\dots,x_n)=(\hat h(z_1,\dots,z_m),\phi(z_1,\dots,z_m,x_{m+1},\dots,x_n)), 
$$
where $\hat h:\C^m\to \C^m$ and $\phi:C_0\to \R^{n-m}$. Since $h$ is a smooth diffeomorphism on $C_0$, $\hat h$ is a smooth diffeomorphism on a \nbd of $0$  in $\C^m$, and the additional assumption that $h^*\om_{|C_0}(0)\neq \om_{|C_0}(0)$   exactly means that $\hat h$ is not symplectic. 

Consider now a smooth function $f:\C^m\to \R$ and define $F:\C^n\to \R$ by $F(z_1,\dots,z_n)=f(z_1,\dots,z_m)$ (thus $F$ automatically verifies (i)). The push-forward $h_*F$, defined on a \nbd of $0$, is smooth on $C_0$ and its restriction to $C_0$ verifies 
$$
h_*F_{|C_0}(z_1,\dots,z_m,x_{m+1},\dots,x_n)=\hat h_*f(z_1,\dots,z_m). 
$$
Finally, define $G$ on a \nbd of $0$ in $\C^n$ by $G(z_1,\dots,z_n)=\hat h_*f(z_1,\dots,z_m)$. Then $G$ is smooth, extends $h_*F_{|C_0}$, and its flow obviously preserves $C_0$. All this construction depends on the choice of the function $f$ only, and it remains to show that for some $f$, the actions of $\Phi_G^t$ and $h^*\Phi_F^t=\Phi_{h_*F}^t$ on the characteristics of $C_0$ differ. Since $G(z_1,\dots,z_m)=\hat h_*f(z_1,\dots,z_m)$, we have 
\begin{equation}\label{eq:eligrom1}
\Phi_G^t(z_1,\dots,z_n)=(\Phi^t_{\hat h_*f}(z_1,\dots,z_m),z_{m+1},\dots,z_n).
\end{equation}
Similarly $\Phi_F^t(z_1,\dots,z_n)=(\Phi^t_f(z_1,\dots,z_m),z_{m+1},\dots,z_n)$, and 
\begin{equation}\label{eq:eligrom2}
\hspace*{-,3cm}h_*\Phi_F^t(z_1,\dots,z_m,x_{m+1},\dots,x_n)=(\hat h_*\Phi^t_f(z_1,\dots,z_m),\phi(\Phi_f^t(z_1,\dots,z_m),x_{m+1},\dots,x_n)).
\end{equation}
Since the characteristics of $C_0$ are parametrized by the first $m$ complex coordinates $(z_1,\dots,z_m)$, we see by $(\ref{eq:eligrom1})$ and $(\ref{eq:eligrom2})$ that the action of $G$ and $h_*F$ on the characteristics differ if and only if 
$\Phi_{\hat h_*f}^t\neq \hat h_*\Phi^t_f$. Since $\hat h$ is not symplectic, the following well-known lemma concludes the proof.\cqfd
\begin{lemma}
A smooth diffeomorphism $\psi:M\to M'$ is symplectic if and only if $\psi_*\Phi_H^t=\Phi^t_{\psi_*H}$ for all autonomous Hamiltonian  $H:M\to \R$. 
\end{lemma}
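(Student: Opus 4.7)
The forward direction is essentially naturality of the Hamiltonian correspondence. The plan is to show that if $\psi^*\omega' = \omega$, then for any smooth $H : M \to \mathbb{R}$ we have $\psi_* X_H = X_{\psi_* H}$, from which integration gives $\psi_* \Phi^t_H = \Phi^t_{\psi_* H}$. Indeed, $X_H$ is characterized by $\iota_{X_H}\omega = -dH$; pushing forward by $\psi$ and using $\psi^*\omega' = \omega$ together with $d(\psi_*H) = (\psi^{-1})^* dH$ yields $\iota_{\psi_* X_H}\omega' = -d(\psi_* H)$, which identifies $\psi_* X_H$ with $X_{\psi_* H}$.

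For the converse, the strategy is to differentiate the identity $\psi_*\Phi^t_H = \Phi^t_{\psi_*H}$ at $t=0$, which gives $\psi_* X_H = X_{\psi_* H}$ for every autonomous Hamiltonian $H$, and then to test this against an arbitrary pair of tangent vectors at an arbitrary point. Fix $x \in M$ and two tangent vectors $v, w \in T_x M$. In a Darboux chart near $x$ I would choose a compactly supported $H$ whose Hamiltonian vector field at $x$ equals $v$ (this is possible: picking $H$ linear in the local coordinates realizes any prescribed value of $X_H(x)$, and one multiplies by a cutoff). Then by hypothesis $d\psi_x(v) = X_{\psi_*H}(\psi(x))$, so using the defining relation of Hamiltonian vector fields on both sides and the identity $d(\psi_*H)_{\psi(x)} \circ d\psi_x = dH_x$ I get
\[
(\psi^*\omega')_x(v, w) \;=\; \omega'_{\psi(x)}(d\psi_x(v), d\psi_x(w)) \;=\; -d(\psi_*H)_{\psi(x)}(d\psi_x(w)) \;=\; -dH_x(w) \;=\; \omega_x(v, w).
\]
Since $v, w$ and $x$ were arbitrary, this gives $\psi^*\omega' = \omega$.

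No step here is truly hard; the only point that needs a moment of care is the realizability claim that every tangent vector at $x$ arises as $X_H(x)$ for some globally defined smooth $H$, which is immediate by the Darboux-plus-cutoff argument. The whole proof fits comfortably in a short paragraph and requires no deeper input than the definition of Hamiltonian vector fields and the change-of-variables formula for the differential of a pushed-forward function.
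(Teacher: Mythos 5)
Your proof is correct. Both directions are handled properly: the forward direction is the standard naturality computation $\iota_{\psi_*X_H}\omega' = -d(\psi_*H)$ via $\psi^*\omega'=\omega$ and the chain rule $d(\psi_*H)\circ d\psi = dH$, and the converse differentiates the flow identity at $t=0$, exploits the pointwise surjectivity of $H\mapsto X_H(x)$ (your Darboux-plus-cutoff argument is exactly right), and recovers $\psi^*\omega'=\omega$ by testing against arbitrary tangent vectors. The paper itself gives no proof of this lemma -- it is invoked as ``well-known'' to close the proof of proposition~\ref{prop:eli-grom-coisotrop} -- so there is no argument of the authors to compare against; yours is the standard one, complete, and fills the gap the paper leaves to the reader.
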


\appendix
\section{Main lemmata for theorem \ref{T:C0-flexibility}} \label{sec:append-flex}

The appendix is dedicated for the proof of the following three lemmata which were used in the proofs of theorem \ref{T:C0-flexibility}:

\begin{lemma} \label{L:homotopy-to-isotopy}
Let $ \epsilon > 0 $ be a positive real, $ m \geqslant 6 $ be an integer, $ W \subset \mathbb{R}^m $ be an open set, $ u_1, u_2 : \overline{D} \rightarrow W $ be disjoint smoothly embedded discs, and 
assume that there exists a (continuous) homotopy between $ u_1 $ and $ u_2 $ in $ W $, of size less than $ \epsilon $ (i.e. a continuous map $ F : \overline{D} \times [0,1] \rightarrow W $ such that 
$ F(z,0) = u_1(z) $, $ F(z,1) = u_2(z) $, for all $ z \in \overline{D} $, and that $ \size F < \epsilon $). Then there exists a  smooth embedded isotopy $ \widetilde{F} $ between $ u_1 $ and $ u_2 $ in $ W $, of size less than $ 2 \epsilon $ (i.e. a smooth embedding $ \widetilde{F} : \overline{D} \times [0,1] \rightarrow W $, such that $ \widetilde{F}(z,0) = u_1(z) $, $ \widetilde{F}(z,1) = u_2(z) $, for all $ z \in \overline{D} $, and $ \size \widetilde{F} < 2 \epsilon $). Moreover, if $ m > 6 $, then the estimate on the size of the isotopy can be improved to $ \size \widetilde{F} < \epsilon $.
\end{lemma}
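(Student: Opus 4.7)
The plan is to construct $\widetilde F$ in three stages: first smooth $F$ while preserving the boundary discs $u_1,u_2$; second, perturb to an immersion; third, perturb to an embedding. All perturbations will be compactly supported in $\overline D\times (0,1)$ so that the boundary conditions are preserved. Each stage is a $C^0$-small perturbation, so the size bound $<\epsilon$ propagates up to an arbitrarily small extra slack.

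For the first stage, I extend $F$ to $\overline D\times[-\delta_0,1+\delta_0]$ by setting $F(\cdot,t)=u_1$ for $t\leqslant 0$ and $F(\cdot,t)=u_2$ for $t\geqslant 1$, and mollify in the $t$-direction by convolution with a smooth kernel of support $\delta_0$. The result is smooth, but its values at $t=0,1$ are only approximately $u_1,u_2$. To enforce the exact boundary values I interpolate via a cutoff $\chi(t)$ vanishing at $\{0,1\}$, blending the mollified map with the constant boundary extensions $u_1,u_2$. Uniform continuity of $F$ controls the interpolation error by the modulus of continuity of $F$, yielding a smooth map $F_1$ with matching boundary values and $\size F_1<\epsilon+\sigma$ for any prescribed $\sigma>0$.

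For the second stage, Thom transversality applied to the $1$-jet $j^1 F_1$ yields an arbitrarily $C^1$-small perturbation $F_2$ of $F_1$, supported in $\{t\in(0,1)\}$ (hence still matching $u_1,u_2$ at the boundary), that is an immersion: the non-immersive jets have codimension $m-3+1\geqslant 4$ in the jet bundle over the $3$-manifold $\overline D\times[0,1]$, so they are generically avoided. For the third stage, the self-intersection set of a generic $3$-dimensional immersion into $\mathbb{R}^m$ has dimension $6-m$. When $m>6$ this is negative, so a further small generic perturbation produces an embedded isotopy $\widetilde F$ with $\size\widetilde F<\epsilon$. When $m=6$, the self-intersection set is a finite collection of transverse double points. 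At each such point $p=F_2(z_1,t_1)=F_2(z_2,t_2)$, the tangent planes $V_1=dF_2(T_{(z_1,t_1)})$ and $V_2=dF_2(T_{(z_2,t_2)})$ are transverse, i.e. $V_1\cap V_2=\{0\}$. I choose a small vector $v\notin V_2$ and a cutoff $\chi(z,t)$ supported near $(z_1,t_1)$ and vanishing on $\{t=0,1\}$, and replace $F_2$ by $F_2+\chi v$ there. For $|v|$ small enough, transversality guarantees that no double point remains near $p$, and by $C^0$-smallness and disjoint supports of the local modifications no new double points are created. Each trajectory's diameter increases by at most $2|v|$, so choosing $|v|$ sufficiently small yields $\size\widetilde F<2\epsilon$.

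The main obstacle is the third stage in the critical dimension $m=6$: one must resolve each transverse double point by a push that separates the two sheets without generating new self-intersections elsewhere. The transversality of the tangent planes provides a push direction $v\notin V_2$ that moves the first sheet locally off the second, and the smallness of $v$ prevents new intersections with the rest of the image or with the already-resolved double points. The factor of $2$ in the bound $<2\epsilon$ is a safety margin for the outward-and-back motion that the push imparts to each trajectory near the double point; for $m>6$ the push is not needed and the sharper bound $<\epsilon$ is retained.
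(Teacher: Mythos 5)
Your stages 1 and 2 (mollification and Thom transversality to obtain a smooth immersion), as well as the $m>6$ case, match the paper's argument and are fine. The gap is in the critical case $m=6$. You claim that at a transverse double point $p=F_2(z_1,t_1)=F_2(z_2,t_2)$, adding $\chi\,v$ with $v\notin V_2$ small and $\chi$ compactly supported near $(z_1,t_1)$ removes the double point. This is exactly backwards: transversality of $V_1$ and $V_2$ in $\mathbb{R}^6$ (with $\dim V_1+\dim V_2=6$) means the intersection is \emph{stable} under small perturbations. The local intersection number $\pm 1$ is a homotopy invariant of compactly supported perturbations; a $C^0$-small, compactly supported push of one sheet merely moves the double point, it cannot annihilate it. (Geometrically: the perturbed sheet $F_2+\chi v$ agrees with $F_2$ outside a small set and is translated by $v$ near $p$; an intermediate-value argument forces it to re-cross the other sheet, since $v\in V_1+V_2=\mathbb{R}^6$.) This also explains why your size bound cannot be right: if arbitrarily small $|v|$ sufficed, you would get $\size\widetilde F<\epsilon+\sigma$ for all $\sigma>0$, contradicting the fact that the statement singles out $m=6$ with the weaker bound $2\epsilon$.

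The paper resolves each double point by a move that is small in the domain but \emph{not} small in the target. After first perturbing so that $z_1\neq z_2$ (so the two preimages lie on distinct time-trajectories), one chooses coordinates $(z,t,y)\in D(\delta)\times(-\delta,1+\delta)\times\bigl(D(\delta)\times(-\delta,\delta)\bigr)$ near the entire trajectory $F(\{z_1\}\times[0,1])$, in which the first sheet is $(z-z_1,t,0,0,0)$ for $(z,t)\in(D(\delta)+z_1)\times[0,1]$ and the second is $(0,0,t_1,z-z_2,t-t_2)$. One then modifies the second sheet's $T$-coordinate from $t_1$ up to $1+\eta$ near its center, via a cutoff, so that it is pushed past the end of the first trajectory (where $t=1$), where the first sheet no longer exists. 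This kills the double point. The ambient displacement is bounded by the diameter of the trajectory $F(\{z_1\}\times[0,1])$, which is $<\epsilon$, and this is precisely what produces the factor $2$ in $\size\widetilde F<2\epsilon$. Without such a global, trajectory-length-scale push, the double point cannot be removed.
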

\noindent{\it Proof:} First, we can slightly perturb our homotopy $ F $ between $ u_1 $ homotopy and $ u_2 $, to obtain a smooth homotopy between $ u_1 $ and $ u_2 $ in $ W $, of size less than $ \epsilon $. Hence without loss of generality we may assume that the homotopy $ F $ is smooth.

In case of $ m > 6 $, $ F $ can be further perturbed to a smooth embedding, which is an isotopy between $ u_1 $ and $ u_2 $ of size less than $ \epsilon $. This shows the case $ m > 6 $.

Now consider the case $ m = 6 $. Here we can slightly perturb $ F $ to obtain a smooth immersion $ F : \overline{D} \times [0,1] \rightarrow W $, having only a finite number of double-points which appear at transverse self-intersections, of size less than $ \epsilon $, such that the double-points do not lie on the image of the boundary of $ \overline{D} \times [0,1] $. Applying one more $ \mathcal{C}^0 $-small perturbation to $ F $, we may further assume that for any double-point $ x = F(z_1,t_1) = F(z_2,t_2) $ (here $ z_1,z_2 \in D $, $ t_1,t_2 \in (0,1) $), we have $ z_1 \neq z_2 $. Fix such a double point. Then there exist smooth coordinates $ (z,t,y) $ in a neighbourhood $ V $ of $ F(\{ z_1 \} \times [0,1]) $, where $$ z \in D(\delta) = \{ z \in \mathbb{R}^2 \, | \, | z | < \delta \}, \, t \in (-\delta,1+\delta) , \, y \in D(\delta) \times (-\delta,\delta) \subset \mathbb{R}^3 ,$$ and $ \delta < 1 - |z_1|, 1 - |z_2 | $, such that $$ V \cap F(\overline{D} \times [0,1]) = F((D(\delta) + z_1) \times [0,1]) \cup F((D(\delta) + z_2) \times (t_2-\delta,t_2 + \delta)) , $$ and such that in these coordinates we have $ F(z,t) = (z-z_1,t,0,0,0) $ for $ (z,t) \in (D(\delta) + z_1) \times [0,1] $ and $ F(z,t) = (0,0,t_1,z-z_2,t-t_2) $ for $ (z,t) \in (D(\delta) + z_2) \times (t_2-\delta,t_2+\delta) $. Now pick a smooth function $ c : [0, \infty) \rightarrow [0,1] $ such that $ c(t) = 1 $ for small $ t $, and such that $ c(t) = 0 $ for $ t \geqslant 1 $. Then choose a small enough $ \eta > 0 $, and 
modify the immersion $ F $ on $ (D(\delta) + z_2) \times (t_2-\delta,t_2+\delta) $ according to: 
$$ F(z,t) := (0,0,t_1 + (1+\eta - t_1)c(|z-z_2|/\eta)c(|t-t_2|/\eta),z-z_2,t-t_2), $$ for $ (z,t) \in (D(\delta) + z_2) \times (t_2-\delta,t_2+\delta) $. After changing $ F $ as described above, for each double-point of $ F $, we get a smooth embedded isotopy between $ u_1 $ and $ u_2 $, of size less than $ 2 \epsilon $.\cqfd

\begin{lemma} \label{L:isotopy-closed-discs}
Let $ (M,\omega) $ be a connected symplectic manifold, let $ r > 0 $, $ G = D(r) \subset \mathbb{C} $, and let $ v_1,v_2 : \overline{G} \rightarrow M $ be smoothly embedded symplectic discs, $ v_1^* \omega = v_2^* \omega = \omega_{\st} $. Then there exists a compactly supported Hamiltonian isotopy of $ M $, whose time-1 map $ \psi $ satisfies $ \psi \circ v_1 = v_2 $.
\end{lemma}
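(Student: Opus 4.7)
The strategy is two-step: first construct a smooth isotopy $v_t : \overline{G} \to M$, $t \in [0,1]$, through symplectic embeddings with $v_0 = v_1$ and $v_1 = v_2$; then upgrade it to a compactly supported Hamiltonian flow of $M$.

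For the construction of $v_t$, I would first extend both $v_i$ to symplectic embeddings of a slightly larger closed disc $\overline{D(r+\eps)}$. Using the connectedness of $M$, a chain of Hamiltonian shifts in small Darboux charts along a smooth path from $v_1(0)$ to $v_2(0)$ produces a compactly supported Hamiltonian diffeomorphism of $M$ which, applied to $v_1$, arranges $v_1(0) = v_2(0) =: p$. Next, in a Darboux chart centered at $p$, both differentials $dv_i(0)$ are linear symplectic injections $(\R^2,\omega_{\st}) \hookrightarrow (\R^{2n},\omega_{\st})$. The group $\mathrm{Sp}(2n,\R)$ is connected and acts transitively on the set of such injections (any symplectic $2$-plane can be sent to any other, together with any choice of symplectic basis), so a smooth path in $\mathrm{Sp}(2n,\R)$ from the identity to an intertwiner can be realized by a time-dependent quadratic Hamiltonian in the chart cut off outside a small ball; its time-$1$ flow fixes $p$ and matches $dv_1(0) = dv_2(0)$. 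With the $1$-jets aligned, Weinstein's symplectic neighborhood theorem—applied to the discs whose symplectic normal bundles are trivial by contractibility of $\overline{G}$—produces a symplectomorphism $\Psi$ between neighborhoods of $v_1(\overline{G})$ and $v_2(\overline{G})$ satisfying $\Psi \circ v_1 = v_2$ and $d\Psi(p) = \id$. A Moser-type rescaling argument then connects $\Psi$ to the identity through a smooth family of symplectomorphisms of shrinking neighborhoods of $v_1(\overline{G})$, yielding the desired isotopy $v_t$ of symplectic embeddings of $\overline{G}$.

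To upgrade $v_t$ to a Hamiltonian flow, I would extend it to an isotopy of symplectic embeddings $\tilde v_t : \overline{D(r+\eps')} \to M$ and set $X_t := \partial_t \tilde v_t \circ \tilde v_t^{-1}$ on $\tilde v_t(D(r+\eps'))$. Since $\tilde v_t^*\omega = \omega_{\st}$ is $t$-independent, one has $\mathcal{L}_{X_t}\omega = 0$ on the image, so $\iota_{X_t}\omega$ is closed there; as each image is simply connected it is exact, $\iota_{X_t}\omega = -dH_t$ for smooth functions $H_t$. A smooth time-dependent cutoff equal to $1$ on a neighborhood of $v_t(\overline{G})$ and compactly supported inside $\tilde v_t(D(r+\eps'))$ produces a compactly supported time-dependent Hamiltonian $\tilde H_t$ on $M$ whose Hamiltonian flow $\psi^t$ satisfies $\psi^t \circ v_1 = v_t$ by uniqueness of flows; setting $\psi := \psi^1$ proves the lemma.

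The delicate point is the Weinstein--Moser interpolation in the first step: once the $1$-jets are matched, the ambient Weinstein tubular model and the trivialization of the symplectic normal bundle must be chosen coherently enough for the rescaling argument to produce a genuinely smooth family $v_t$ of symplectic embeddings of the closed disc. Once such an isotopy is in hand, the second step is essentially mechanical, resting only on the simple connectivity of $\overline{G}$.
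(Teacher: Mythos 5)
Your second step (upgrading a smooth isotopy of symplectic disc embeddings to a compactly supported Hamiltonian flow, using simple connectivity of the disc and a cutoff) is correct and is essentially lemma~\ref{L:from-isotopy-to-flow-discs} of the appendix. Your preliminary reductions --- extending to a slightly larger disc, Darboux-chaining the centres together using connectedness of $M$, and matching $1$-jets via connectedness of $\mathrm{Sp}(2n,\mathbb{R})$ --- are also fine, and are subsumed in the paper's use of the symplectic neighbourhood theorem together with the local Moser argument.

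The gap is in the step you yourself flag as delicate: the ``Moser-type rescaling argument'' that is supposed to connect the Weinstein-neighbourhood symplectomorphism $\Psi$ to the identity. Once you pass to the Weinstein models, $\Psi$ is, by construction, the \emph{identity} of the abstract model $D(r+\epsilon)\times D(\epsilon)^{n-1}$; the discs $v_1(\overline{G})$ and $v_2(\overline{G})$ differ only by the way the two model charts are embedded in $M$. There is therefore nothing to rescale: ``connecting $\Psi$ to the identity'' is exactly the statement that the two chart embeddings are isotopic through symplectic embeddings, i.e.\ the assertion you were trying to prove. Shrinking the normal radius of the tubular neighbourhood does not help, because the embedded disc $v_1(\overline{G})$ stays macroscopic, and rescaling around the single matched point $p$ only controls $\Psi$ on a small ball around $p$, not along the whole disc. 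Matching $1$-jets at one point plus Weinstein equivalence is genuinely weaker than what you need here.

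The paper fills this gap with a concrete shrinking device, which is the real content of the lemma. After the symplectic neighbourhood theorem gives coordinates on tubes $U_1, U_2$ around the two discs, a Moser-type argument produces a Hamiltonian diffeomorphism $\psi'$ of $M$ matching the two sets of coordinates on a small Darboux ball $\{\,x_j^2+y_j^2<\delta^2,\ j=1,\dots,n\,\}$ around the centres. One then chooses an \emph{area-preserving immersion} $f:\overline{G}\to D(\delta)$, $f^*\omega_{\st}=\omega_{\st}$, and considers the family of symplectic embeddings
$$
v_{i,t}(z) = \bigl(\cos(t)\,z,\ \sin(t)\,f(z),\ 0,\dots,0\bigr)\in U_i ,\qquad t\in[0,\tfrac{\pi}{2}-\tfrac{\delta}{r}],
$$
each of which is a symplectic embedding since $\cos^2 t + \sin^2 t = 1$. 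At $t=\frac{\pi}{2}-\frac{\delta}{r}$ both images lie inside the small ball where $\psi'$ matches the two charts, so $\psi'\circ v_{1,\pi/2-\delta/r}=v_{2,\pi/2-\delta/r}$. Concatenating the three isotopies (shrink $v_1$, apply the isotopy from $\psi'$, unshrink to $v_2$) and invoking the isotopy-to-Hamiltonian-flow lemma gives the result. This rotation-with-a-contracting-immersion trick, which shrinks the \emph{disc itself} into a small Darboux ball while keeping it symplectically embedded, is the key idea missing from your proposal.
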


\begin{lemma} \label{L:isotopy-nbd-edge}
Let $ W \subset \mathbb{C}^n $ be an open subset, diffeomorphic to a ball, endowed with the standard symplectic structure $ \omega_{\st} $.
\begin{description}
\item[(a) The absolute case:] Let $ \gamma : [0,1] \rightarrow \mathbb{C} $ be a smooth embedded curve, let $ z_1 = \gamma(0) $, $ z_2 = \gamma(1) $, and let $ G \supset \gamma([0,1]) $ be an open neighbourhood, $ G \subset \mathbb{C} $. Assume that $ v_1,v_2 : G \rightarrow W $ are smooth symplectic embeddings, $ v_1^* \omega_{\st} = v_2^* \omega_{\st} = \omega_{\st} $, such that $ v_1 $ and $ v_2 $ coincide on a neighbourhood of $ \{ z_1,z_2 \} \subset G $. Assume moreover that for 
a $1$-form $\lambda$ which is a primitive of $\om_\st$, i.e. $ d\lambda = \omega_{\st} $, we have $ \int_{v_1 \circ \gamma} \lambda = \int_{v_2 \circ \gamma} \lambda $. Then there exists a compactly supported Hamiltonian function $ H : W \times [0,1] \rightarrow \mathbb{R} $, such that on a neighbourhood of $ \{ v_1(z_1),v_1(z_2) \} $ we have that $ H(\cdot,t) = 0 $ for every $ t $, and such that for the time-1 map $ \psi $ of the flow of $ H $, we have that $ \psi \circ v_1 = v_2 $ on a neighbourhood of $ \gamma([0,1]) $. 
 \item[(b) The proper case:] Let now $\gamma:[0,1]\to \C$ be a smooth embedded curve, $G$ be an open \nbd of $\gamma((0,1))$, and $v_1,v_2:G\to W$ be two smooth symplectic embeddings which coincide on the intersection of $ G $ with a \nbd of $\{\gamma(0),\gamma(1)\}$ in $\C$, such that moreover the curve $ (0,1) \rightarrow W $, $ s \mapsto v_1 \circ \gamma (s) $ is properly embedded. We also assume that the actions of $v_1\circ \gamma$ and $v_2\circ \gamma$ are equal:
 $$
 \int_\delta^{1-\delta}\lambda(\dot{\overline{v_1\circ \gamma}}(s))ds= \int_\delta^{1-\delta}\lambda(\dot{\overline{v_2\circ \gamma}}(s))ds, 
 $$
 for all $ 0 < \delta\ll 1$. Then there exists a compactly supported Hamiltonian $H:W\times[0,1]\to \R$ whose flow verifies $\psi^1_H\circ v_1=v_2$ on a \nbd of $\gamma((0,1))$ in $\C$. 

\end{description}

\end{lemma}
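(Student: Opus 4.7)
The strategy is to produce a smooth family of symplectic embeddings $\{v_t:G\to W\}_{t\in[0,1]}$ interpolating between $v_1$ and $v_2$, each agreeing with $v_1=v_2$ on a neighborhood of $\{z_1,z_2\}$, and with the further property that the action $A(t):=\int_{v_t\circ\gamma}\lambda$ is constant in $t$; then to realize this isotopy as the flow of an ambient Hamiltonian $H$ vanishing near $\{v_1(z_1),v_1(z_2)\}$. Since $W$ is a topological ball and $v_1,v_2$ coincide near the endpoints, the smoothly embedded strips $v_i(\overline{G})$ are ambient-isotopic rel those neighborhoods; applying the symplectic neighborhood theorem to a $2$-dimensional symplectic submanifold (whose symplectic normal bundle is trivial over the contractible curve $\gamma$) promotes this to a smooth family of symplectic embeddings $\{v_t\}$ with the desired endpoint constraints. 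By hypothesis $A(0)=A(1)$, and constancy of $A$ throughout is arranged by a small correction: pre-composing $v_t$ with a suitable time-dependent family of compactly supported symplectomorphisms of $G$ that are the identity near $\{z_1,z_2\}$ can change the action at any prescribed rate, so a smooth choice renders $A$ constant.

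Given such an isotopy, the infinitesimal generator $X_t:=(\partial_t v_t)\circ v_t^{-1}$ is a time-dependent vector field on $v_t(G)\subset W$, vanishing near $\{v_1(z_1),v_1(z_2)\}$. Differentiating $v_t^*\omega_{\st}=\omega_{\st}$ and using Cartan's formula shows that the pulled-back $1$-form $\alpha_t:=v_t^*(\iota_{X_t}\omega_{\st})$ on $G$ is closed and vanishes near $\{z_1,z_2\}$; moreover
\[
\int_\gamma \alpha_t \;=\; \frac{d}{dt}\int_{v_t\circ\gamma}\lambda \;=\; A'(t)\;=\;0.
\]
Since $G$ is simply connected, $\alpha_t=dh_t$ for some smooth $h_t:G\to\mathbb{R}$. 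The vanishing of $\alpha_t$ near $\{z_1,z_2\}$ together with $\int_\gamma\alpha_t=0$ then allows us to normalize $h_t$ to be zero on a neighborhood of each of $z_1$ and $z_2$.

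It remains to extend $h_t$ to a compactly supported Hamiltonian $H:W\times[0,1]\to\mathbb{R}$, vanishing near $\{v_1(z_1),v_1(z_2)\}$, whose Hamiltonian vector field restricts to $X_t$ along $v_t(G)$. Applying the symplectic tubular neighborhood theorem to the $2$-dimensional symplectic submanifold $v_t(G)\subset W$, one obtains Weinstein normal coordinates $(z,w)$ in which $\omega_{\st}$ splits as the direct sum of the induced forms on $v_t(G)$ and on the symplectic normal bundle. Define $H_t(z,w)=h_t(z)+\langle\eta_t(z),w\rangle$ modulo terms of order $|w|^2$, with the linear-in-$w$ term $\eta_t$ chosen so that the normal component of the Hamiltonian vector field of $H_t$ at $w=0$ reproduces the normal component of $X_t$, and cut off by a bump function in $w$. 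A direct computation gives $X_{H_t}=X_t$ along $v_t(G)$, and the time-$1$ map $\psi$ of the flow of $H$ then satisfies $\psi\circ v_1=v_2$ on a neighborhood of $\gamma([0,1])$. Part (b) follows by the same argument, the properness of $s\mapsto v_1\circ\gamma(s)$ in $W$ ensuring that the supports remain compact and the action hypothesis on $[\delta,1-\delta]$ for all small $\delta$ yielding $\int_\gamma\alpha_t=0$ via a limit argument. The main technical obstacle is the action-correction step in the construction of $\{v_t\}$, which requires a deliberate modification of the symplectic isotopy beyond a mere reparametrization; the Weinstein-coordinate extension, though standard, demands care to reproduce both the tangential and normal components of $X_t$ simultaneously.
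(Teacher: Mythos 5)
Your overall plan coincides with the paper's: build a smooth family of symplectic embeddings $v_t$ interpolating between $v_1$ and $v_2$, fixed near $\{z_1,z_2\}$ and of constant action, and then realize its generator as an ambient compactly supported Hamiltonian vanishing near the endpoints. The second half of your argument is correct: the closedness of $\alpha_t = v_t^*(\iota_{X_t}\omega_{\st})$, the identity $\int_\gamma\alpha_t = A'(t)$, the normalization making $h_t$ vanish near both endpoints, and the Weinstein-coordinate extension reproducing both tangential and normal components of $X_t$ all go through, and are precisely what the paper's auxiliary lemmata (e.g.\ lemma~\ref{L:ham-isotopy-curves-fixed-end} and lemma~\ref{L:from-isotopy-to-flow-discs}) carry out.

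The gap is in the first half. You assert that, since the strips $v_1(\overline G)$ and $v_2(\overline G)$ are ambient isotopic rel end neighborhoods inside the ball $W$, ``applying the symplectic neighbourhood theorem \dots\ promotes this to a smooth family of symplectic embeddings $\{v_t\}$.'' The symplectic neighbourhood theorem does not do this: an ambient smooth isotopy moves $v_1(\overline G)$ through surfaces that need not be symplectic at intermediate times, and Weinstein's theorem only yields a normal form near a \emph{fixed} symplectic submanifold, not a path between two of them. Producing the interpolating family of symplectic embeddings is in fact the crux of the lemma, and the paper's proof is organized around it: in Step~I one first isotopes the core curve $v_1\circ\gamma$ to $v_2\circ\gamma$ through \emph{embedded curves of constant action} (using general position in dimension $2n\geq 4$ plus the local ``twist'' of Remark~\ref{R:perturbation} to normalize the actions, then lemma~\ref{L:ham-isotopy-curves-fixed-end}), arriving at $v_1'$ with $v_1'\circ\gamma = v_2\circ\gamma$; then in Step~II, with the core curves now coinciding, it linearly interpolates the transverse $1$-jets $X_s' = (1-s)X_0' + sX_1'$ along the common curve and invokes a Moser argument to get a genuine family of symplectic embeddings of a (possibly smaller) neighbourhood $G'$ of $\gamma$. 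That two-step passage---first curves, then germs---is exactly the content you skip. Your action-correction device (pre-composing $v_t$ with compactly supported area-preserving maps of $G$ that are the identity near $z_1,z_2$, chosen so that the total action is constant) is valid in principle, but it presupposes the family $\{v_t\}$ whose existence remains unestablished.
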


\begin{remark}\label{rk:balltononball} As will be apparent from the proof of this lemma, the hypothesis that $W$ is diffeomorphic to a ball can be replaced by the assumption that there exists a regular relative homotopy between $u_1\circ \gamma$ and $u_2\circ \gamma$ inside $W$, {\it i.e.} a smooth map $F:(0,1)\times [0,1]\to W$ such that $F(s,0)=u_1\circ \gamma(s)$, $F(s,1)=u_2\circ \gamma(s)$, $F(s,t)=u_1\circ \gamma(s)=u_2\circ\gamma(s)$ for all $t\in[0,1]$ and $s\approx 0,1$, and $s\mapsto F(s,t)$ is embedded for all $t$. In fact, if $n\geqslant 2$ (in dimension at least $4$), this last condition  is not a restriction since it can be achieved by perturbing any relative homotopy between $u_1\circ \gamma$ and $u_2\circ \gamma$. 
\end{remark}

\begin{lemma} \label{L:isotopy-discs-fixed-bdry}
Let $ n \geqslant 3 $, let $ W \subset \mathbb{C}^n $ be an open subset, diffeomorphic to a ball, endowed with the standard symplectic structure $ \omega_{\st} $, let $  r > 0 $ and $ G = D(r) \subset \mathbb{C} $, and let $ v_1,v_2 : \overline{G} \rightarrow W $ be smoothly embedded symplectic discs, $ v_1^* \omega_{\st} = v_2^* \omega_{\st} = \omega_{\st} $, which coincide on a neighbourhood of the boundary $ \partial G \subset \overline{G} $. Then there exists a compactly supported Hamiltonian function $ H : W \times [0,1] \rightarrow \mathbb{R} $, such that on a neighbourhood of $ v_1(\partial G) $ we have $ H(\cdot,t) = 0 $ for every $ t $, and such that for the time-1 map $ \psi $ of the flow of $ H $, we have $ \psi \circ v_1 = v_2 $ on $ \overline{G} $. In other words, $ v_1 $ can be Hamiltonianly isotopped to $ v_2 $ inside $ W $, while being kept fixed near the boundary.   
\end{lemma}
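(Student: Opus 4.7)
The strategy is to first produce a smooth isotopy of \emph{symplectic} embeddings $\tilde v_t:\overline{G}\to W$ from $v_1$ to $v_2$ that is stationary on a neighbourhood of $\partial G$, and then convert this into a compactly supported Hamiltonian isotopy of $W$ whose time-$1$ map realizes the required transformation. This decomposes into three main steps plus a final cutoff.

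\textbf{Step 1 (smooth embedded isotopy rel boundary).} Since $v_1$ and $v_2$ agree on a neighbourhood $U$ of $\partial G$ and $W$ is diffeomorphic to an open ball, any continuous homotopy in $W$ between $v_1$ and $v_2$ can be chosen stationary on $U$. Arguing exactly as in Lemma~\ref{L:homotopy-to-isotopy} (using $\dim W=2n\geq 6$ and, when $n=3$, a Whitney-trick resolution of the finitely many transverse self-intersections of a generic homotopy), one obtains a smooth embedded isotopy $v_t:\overline{G}\to W$ with $v_0=v_1$, $v_1=v_2$, and $v_t\equiv v_1$ on $U$ for every $t\in[0,1]$.

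\textbf{Step 2 (Moser on the source).} Set $\omega_t:=v_t^{*}\omega_{\st}$, so $\omega_0=\omega_1=\omega_{\st}$ and $\omega_t\equiv\omega_{\st}$ on $U$. Since $W$ is diffeomorphic to a ball, $\omega_{\st}|_W=d\lambda$ for some $1$-form $\lambda$, and Stokes' theorem gives
$$\int_{\overline{G}}\omega_t \;=\; \int_{\partial G}v_t^{*}\lambda \;=\; \int_{\partial G}v_1^{*}\lambda,$$
independent of $t$. Thus $\omega_t-\omega_{\st}$ has zero integral on $\overline{G}$ and vanishes on $U$, so a relative Poincar\'e lemma on the disc produces a smooth family of $1$-forms $\alpha_t$ on $\overline{G}$, supported away from $\partial G$, with $d\alpha_t=\omega_t-\omega_{\st}$ and $\alpha_0=0$. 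The Moser argument with $\iota_{Y_t}\omega_t=-\partial_t\alpha_t$ yields diffeomorphisms $\phi_t$ of $\overline{G}$, equal to $\id$ on a neighbourhood of $\partial G$, with $\phi_0=\id$ and $\phi_t^{*}\omega_t=\omega_{\st}$. At $t=1$, $\phi_1$ is a compactly supported area-preserving diffeomorphism of the $2$-disc, hence Hamiltonian isotopic to $\id$ rel $U$. Concatenating $v_t\circ\phi_t$ (from $v_1$ to $v_2\circ\phi_1$) with $v_2$ composed with an isotopy from $\phi_1$ to $\id$ produces a smooth symplectic isotopy $\tilde v_t:\overline{G}\to W$ from $v_1$ to $v_2$, with $\tilde v_t\equiv v_1$ on $U$.

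\textbf{Step 3 (symplectic isotopy to ambient Hamiltonian flow).} Let $X_t$ be the vector field along $\tilde v_t(\overline{G})$ defined by $X_t\circ\tilde v_t=\partial_t\tilde v_t$; it vanishes on $v_1(U)$. Since $\tilde v_t$ is symplectic, $\tilde v_t^{*}(\iota_{X_t}\omega_{\st})$ is a closed $1$-form on the disc $\overline{G}$, hence exact: write it as $df_t$ for a smooth family $f_t:\overline{G}\to\R$ vanishing near $\partial G$. Using a tubular neighbourhood of $\tilde v_t(\overline{G})$ in $W$, construct a smooth (in $t$) family $H_t:W\to\R$ satisfying the $1$-jet conditions
$$H_t\circ\tilde v_t=f_t \quad\text{and}\quad dH_t|_{TW|_{\tilde v_t(\overline{G})}}=(\iota_{X_t}\omega_{\st})|_{TW|_{\tilde v_t(\overline{G})}}.$$
These conditions are compatible: their tangential parts coincide by the very definition of $f_t$, and the normal parts are prescribed independently by $\iota_{X_t}\omega_{\st}$. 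Such $H_t$ exists by a standard Whitney-type extension. By construction $X_{H_t}=X_t$ along $\tilde v_t(\overline{G})$, and $H_t$ vanishes on a neighbourhood of $v_1(\partial G)$. Multiplying by a smooth cutoff $\chi:W\to[0,1]$ equal to $1$ on a neighbourhood of the compact set $\bigcup_{t\in[0,1]}\tilde v_t(\overline{G})$, with compact support in $W$, produces the desired compactly supported Hamiltonian $\chi\cdot H_t$, which still vanishes near $v_1(\partial G)$ and whose time-$1$ flow $\psi$ satisfies $\psi\circ v_1=\tilde v_1=v_2$.

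\textbf{Main obstacle.} The crux is the construction in Step 3 of a Hamiltonian $H_t$ with simultaneously prescribed values \emph{and} prescribed $1$-jet along the $2$-dimensional symplectic submanifold $\tilde v_t(\overline{G})$: the tangential component of the differential is forced by the values via $df_t=\tilde v_t^{*}(\iota_{X_t}\omega_{\st})$, while the normal component is prescribed independently. Compatibility of these two data is precisely the content of $\tilde v_t$ being symplectic, and the $C^\infty$-smooth-in-$t$ extension to a tubular neighbourhood is routine but bookkeeping-intensive. The other steps are comparatively standard: Step 1 is parallel to Lemma~\ref{L:homotopy-to-isotopy}, and Step 2 is a textbook relative Moser argument tailored to the two-dimensional source.
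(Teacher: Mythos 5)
Your argument is correct in substance, but it follows a genuinely different route than the paper's proof. The paper first applies lemma~\ref{L:isotopy-closed-discs} to obtain an ambient Hamiltonian isotopy $\phi^u$ with $\phi^1\circ v_1=v_2$ that does \emph{not} fix the boundary; the crux is then the ``closing-up'' argument: the loop of embedded circles $u\mapsto\phi^u(v_1(\partial G))$ is contracted via lemma~\ref{L:isotopy-curves-2} (a homotopy of homotopies, which needs $2n\geqslant 5$), with an action adjustment, and lemma~\ref{L:ham-isotopy-circles} produces a family $\Phi^u$ with $\Phi^0=\Phi^1=\id$ tracking the boundary circle; a second correction via lemma~\ref{L:from-isotopy-to-flow-annuli} then fixes a full annular neighbourhood. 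You instead build the isotopy rel boundary directly: a $1$-parameter family of embeddings stationary near $\partial G$ (which for $n\geqslant 3$ exists by pure genericity since $5-2n<0$; the Whitney-trick appeal you make for $n=3$ is actually unnecessary because you only need each slice embedded, not the whole cylinder, so your Step~1 is even easier than Lemma~\ref{L:homotopy-to-isotopy}), then Moser on the source to make it an isotopy through symplectic embeddings, then the standard $1$-jet extension to an ambient Hamiltonian isotopy; that last step is essentially the content of the paper's lemma~\ref{L:from-isotopy-to-flow-discs}, augmented with the rel-boundary condition. Both approaches are valid; yours is arguably more direct and self-contained, whereas the paper's reuses the modular lemmata already developed for the edge and vertex steps of the quantitative $h$-principle. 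One small point to spell out in Step~3: the Whitney-type extension $H_t$ must be explicitly \emph{chosen} to vanish identically on a $W$-open neighbourhood of $v_1(\partial G)$, not merely to have vanishing $1$-jet along $v_1(U)$; this is possible because the prescribed values and $1$-jet on $\tilde v_t(\overline{G})$ both vanish near $v_1(\partial G)$ and the extension is unconstrained off the disc, but the sentence ``By construction $H_t$ vanishes on a neighbourhood'' asserts it without saying this. Once this is noted, the multiplicative cutoff argument works as you wrote.
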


Although the lemmata \ref{L:isotopy-closed-discs}, \ref{L:isotopy-nbd-edge}, \ref{L:isotopy-discs-fixed-bdry} can be found in \cite{gromov2,elmi}, we present in this appendix their proofs, for the sake of completeness. The proofs of these lemmata are based on a number of auxiliary lemmata which we describe in section~\ref{SubS:Auxiliary-lemmata}. Let us remark that the proofs in the appendix are not always fully complete, and some of the technical details are left to the reader.

\subsection{Auxiliary lemmata and their proofs} \label{SubS:Auxiliary-lemmata}

The proofs of lemmata~\ref{L:isotopy-closed-discs},~\ref{L:isotopy-nbd-edge} and~\ref{L:isotopy-discs-fixed-bdry} use the following lemmata:

\begin{lemma} \label{L:isotopy-curves-1}
Let $ d \geqslant 4 $, let $ W \subset \mathbb{R}^d $ be an open subset, diffeomorphic to a ball, and let $ \gamma_0, \gamma_1 : [0,1] \rightarrow W $ be smooth embedded curves which coincide on a neighbourhood of the endpoints of $ [0,1] $. Then there exists a smooth homotopy $ F : [0,1] \times [0,1] \rightarrow W $ such that $ F(0,t) = \gamma_0(t) $, $ F(1,t) = \gamma_1(t) $ for $ t \in [0,1] $, such that for some $ 0 < \epsilon < 1/2 $ we have $ F(s,t) = \gamma_0(t) = \gamma_1(t) $ for every $ s \in [0,1] $ and $ t \in [0,\epsilon] \cup [1-\epsilon,1] $, and such that for every $ s \in [0,1] $, the curve $ [0,1] \rightarrow W $, $ t \mapsto F(s,t) $ is smoothly embedded.
\end{lemma}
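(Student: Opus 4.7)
The plan has three steps: first, construct \emph{some} smooth rel-endpoints homotopy $F_0 : [0,1]^2 \to W$ between $\gamma_0$ and $\gamma_1$; second, arrange every slice to be an immersion; third, and this is where the hypothesis $d \geqslant 4$ is used, remove self-intersections slice by slice by a transversality argument. The dimension hypothesis enters only in the last step.

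For the first step, fix a diffeomorphism $\phi : W \to \R^d$ (which exists since $W$ is diffeomorphic to a ball) and define
\[ F_0(s,t) := \phi^{-1}\bigl((1-s)\phi(\gamma_0(t)) + s\phi(\gamma_1(t))\bigr). \]
This is a smooth homotopy into $W$ and automatically coincides with the common value of $\gamma_0$ and $\gamma_1$ on $[0,\epsilon_0] \cup [1-\epsilon_0, 1]$, where $\epsilon_0 > 0$ is chosen so that the two curves agree there. For the second step, immersivity of the slice $t \mapsto F_0(s,t)$ amounts to $\partial_t F_0(s,t) \neq 0$, a codimension-$d$ condition on the $2$-dimensional parameter space $(s,t) \in [0,1]^2$; since $d \geqslant 2$, an arbitrarily small $C^\infty$ perturbation of $F_0$, cut off in $t$ so as to be supported in $(\epsilon_0/2, 1 - \epsilon_0/2)$ and thereby preserve the boundary conditions, makes every slice an immersion (parametric transversality).

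The decisive step is the third, and the main difficulty is organising the transversality argument so that the endpoint condition is not destroyed. Consider the double-point map
\[ \Phi : \{(s,t_1,t_2) \in [0,1]^3 : t_1 < t_2\} \longrightarrow W \times W,\qquad \Phi(s,t_1,t_2) = (F(s,t_1),F(s,t_2)). \]
Failure of $\gamma_s := F(s,\cdot)$ to be embedded is exactly the condition that $\Phi(s,t_1,t_2) \in \Delta_W$ at some $t_1 < t_2$, where $\Delta_W \subset W \times W$ denotes the diagonal and has codimension $d$. Because each $\gamma_s$ is already an immersion, a tubular neighbourhood of the face $\{t_1 = t_2\}$ is automatically free of such hits, so the problem is confined to a closed subset of the domain bounded away from this face. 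There, parametric transversality yields an arbitrarily $C^\infty$-small perturbation of $F$, again cut off in $t$ to preserve the endpoint constraint, that makes $\Phi$ transverse to $\Delta_W$. For $d \geqslant 4$ the expected dimension $3 - d$ of $\Phi^{-1}(\Delta_W)$ is negative, so transversality forces $\Phi^{-1}(\Delta_W) = \emptyset$; every slice of the perturbed $F$ is then a smooth embedding, and since the perturbations throughout were small and compactly supported in the $t$-interior, $F$ still takes values in the open set $W$ and satisfies $F(s,t) = \gamma_0(t) = \gamma_1(t)$ on a neighbourhood of $\{0,1\}$ in $t$, as required.
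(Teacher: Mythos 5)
Your proposal takes essentially the same route as the paper: a convex-combination homotopy in a chart where $W$ becomes convex, followed by a general-position perturbation of the slices, which the paper dismisses as ``standard arguments of general position'' and you spell out via parametric transversality of the double-point map. One small slip: in the immersion step the zero set of $(s,t)\mapsto \partial_t F_0(s,t)$ has expected dimension $2-d$, so you need $d\geqslant 3$ (not $d\geqslant 2$) to conclude it is generically empty; this is harmless here since $d\geqslant 4$ is assumed throughout.
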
 
\begin{proof}
Without loss of generality we may assume that $ W $ is an open ball in $ \mathbb{R}^d $. The map $ F' : [0,1] \times [0,1] \rightarrow W $ defined by $ F'(s,t) = (1-s) \gamma_0(t) + s \gamma_1(t) $, $ (s,t) \in [0,1] \times [0,1] $, fulfils all the needed requirements, except may be for the requirement that for every $ s \in [0,1] $, the curve $ [0,1] \rightarrow W $, $ t \mapsto F'(s,t) $ is smoothly embedded. However, using standard arguments of general position, one can slightly perturb $ F' $ and obtain the needed smooth homotopy $ F $.
\end{proof}

\begin{lemma} \label{L:isotopy-curves-2} 
Let $ d \geqslant 5 $, let $ W \subset \mathbb{R}^d $ be an open subset, diffeomorphic to a ball, let $ \gamma : S^1 \rightarrow W $ be a smooth closed embedded curve, and let $ F : [0,1] \times S^1 \rightarrow W $ be a smooth map such that for every $ u \in [0,1] $, the curve $ S^1 \rightarrow W $, $ t \mapsto F(u,t) $ is smoothly embedded, and such that $ F(0,t) = F(1,t) = \gamma(t) $. Then there exists a smooth map $ \hat{F} : [0,1] \times [0,1] \times S^1 \rightarrow W $, such that we have $ \hat{F}(u,0,t) = \hat{F}(0,s,t) = \hat{F}(1,s,t) = \gamma(t) $, $ \hat{F}(u,1,t) = F(u,t) $ for every $ u,s \in [0,1] $ and $ t \in S^1 $, and such that for any $ u, s \in [0,1] $, the curve $ S^1 \rightarrow W $, $ t \mapsto F(u,s,t) $ is smoothly embedded.    
\end{lemma}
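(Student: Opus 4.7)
The plan is to construct $\hat F$ in two stages: first produce a naive homotopy $\hat F_0$ realising the prescribed boundary behaviour, then perturb it in the interior to make every slice embedded via a general-position argument. For the first step, I would fix a diffeomorphism from $W$ onto a convex open ball $B \subset \R^d$ and, using it to identify $W$ with $B$, define
$$ \hat F_0(u,s,t) \;:=\; (1-s)\gamma(t) + sF(u,t). $$
Convexity guarantees that $\hat F_0$ takes values in $W$, and one checks directly the boundary identities $\hat F_0(u,0,t)=\gamma(t)$, $\hat F_0(u,1,t)=F(u,t)$, and $\hat F_0(0,s,t) = \hat F_0(1,s,t)=\gamma(t)$ (the last two using that $F(0,t)=F(1,t)=\gamma(t)$). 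What is not yet guaranteed is that each slice $t \mapsto \hat F_0(u,s,t)$ is an embedding.

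The second step will be a parametric transversality argument. To a smooth $f : [0,1]^2 \times S^1 \to W$, one attaches the double-evaluation
$$ \mathrm{ev}_f : [0,1]^2 \times \bigl(S^1 \times S^1 \setminus \Delta_{S^1}\bigr) \longrightarrow W \times W, \qquad (u,s,t_1,t_2) \mapsto \bigl(f(u,s,t_1),\, f(u,s,t_2)\bigr); $$
a slice of $f$ fails to be embedded precisely when $\mathrm{ev}_f^{-1}(\Delta_W) \neq \emptyset$. If $f$ is allowed to vary among smooth maps that coincide with $\hat F_0$ on a neighbourhood of $\partial[0,1]^2 \times S^1$, then the universal evaluation is submersive onto $W\times W$ at every parameter with $(u,s)$ in the open square $(0,1)^2$, because one may freely and independently perturb $f$ near the two distinct points $(u,s,t_1)$ and $(u,s,t_2)$. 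Thom's parametric transversality theorem then provides arbitrarily small perturbations $\hat F$ of $\hat F_0$, fixing the boundary data, for which $\mathrm{ev}_{\hat F}$ is transverse to $\Delta_W$. Since the parameter space has dimension $2+2=4$ and $\Delta_W$ has codimension $d \geq 5$, the transverse preimage has negative expected dimension and so is empty, which is exactly the statement that every slice is embedded.

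The main subtlety will be to arrange that the perturbation preserves the three boundary identities. This is made possible by the fact that the slices over $\partial[0,1]^2$ are already embedded (either constantly equal to the embedded loop $\gamma$, or equal to $F(u,\cdot)$, which is embedded by hypothesis), so the bad locus $\mathrm{ev}_{\hat F_0}^{-1}(\Delta_W)$ is disjoint from $\partial[0,1]^2 \times (S^1\times S^1 \setminus \Delta_{S^1})$, and a perturbation with compact support in the interior of $[0,1]^2$ is enough. Choosing such a perturbation $C^0$-small enough to keep the image in $W$ (using compactness of $\hat F_0([0,1]^2 \times S^1)$ and openness of $W$) yields the desired $\hat F$.
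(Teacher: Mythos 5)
Your proposal takes exactly the same route as the paper: identify $W$ with a ball, take the linear interpolation $\hat F_0(u,s,t) = (1-s)\gamma(t) + sF(u,t)$ (which the paper also uses), and then perturb by general position to make every $t$-slice embedded. The paper dismisses the second step with ``using standard arguments of general position, one can slightly perturb $\hat F'$,'' whereas you attempt to spell it out, which is worthwhile.

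The one thing to repair in your fleshed-out version: the double-evaluation argument only guarantees that each slice $t\mapsto\hat F(u,s,t)$ is \emph{injective}. A smoothly embedded loop must also be an \emph{immersion}, and the linear interpolation need not preserve this: at some $(u,s,t)$ the vector $(1-s)\gamma'(t) + s\,\partial_t F(u,t)$ could vanish. You need a second, parallel general-position step before (or together with) the injectivity one, arranging that the map $(u,s,t)\mapsto \partial_t \hat F(u,s,t)$ avoids $0\in\R^d$; since the source has dimension $3$ and the target codimension is $d\ge 5 > 3$, the dimension count again gives an empty bad locus, and the boundary slices are already immersions so the perturbation can again be supported in the interior. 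Arranging immersivity first also repairs a subtlety in your injectivity step: the parameter space $[0,1]^2\times(S^1\times S^1\setminus\Delta_{S^1})$ is non-compact, and transversality alone does not rule out double points accumulating on the diagonal; but once each slice is an immersion, compactness of $S^1$ gives a uniform lower bound $|t_1-t_2|\ge\varepsilon_0$ on double points, and one may run the transversality argument on the compact set $\{|t_1-t_2|\ge\varepsilon_0\}$.
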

\begin{proof}
Without loss of generality we may assume that $ W $ is an open ball in $ \mathbb{R}^d $. The map $ \hat{F}' : [0,1] \times [0,1] {\times S^1}\rightarrow W $ defined by $ \hat{F}'(u,s,t) = s F(u,t) + (1-s) \gamma(t) $, $ (s,t) \in [0,1] \times [0,1] $, fulfils all the needed requirements, except may be for the requirement that for every $ u,s \in [0,1] $, the curve $ [0,1] \rightarrow W $, $ t \mapsto \hat{F}'(u,s,t) $ is smoothly embedded. However, using standard arguments of general position, one can slightly perturb $ \hat{F}' $ and obtain the needed smooth map $ \hat{F} $.
\end{proof}

\begin{lemma} \label{L:ham-isotopy-curves-fixed-end}
Let $ (M,\omega) $ be a symplectic manifold, and let $ F : [0,1] \times [0,1] \rightarrow M $ be a smooth map, such that for every $ s  \in [0,1] $, the curve $ [0,1] \rightarrow M $, $ t \mapsto F(s,t) $ is smoothly embedded, such that for some $ 0 < \epsilon < 1/2 $, we have $ F(s,t) = F(0,t) $ for $ (s,t) \in [0,1] \times ( [0,\epsilon] \cup [1-\epsilon,1]) $, and moreover such that for every $ s_0 \in [0,1] $ the symplectic area of the rectangle $ [0,s_0] \times [0,1] \rightarrow M $, $ (s,t) \mapsto F(s,t) $ is zero. Then there exists a Hamiltonian function $ H : M \times [0,1] \rightarrow \mathbb{R} $, $ H = H(x,s) $, with a Hamiltonian flow $ \psi_{H}^s $, $ s \in [0,1] $, such that we have $ \psi_{H}^s (F(0,t)) = F(s,t) $, and moreover on some neighbourhood of $ \{ F(0,0), F(0,1) \} $ we have $ H(\cdot,s) = 0 $ for every $ s \in [0,1] $.  
\end{lemma}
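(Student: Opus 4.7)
\textbf{Proof plan for Lemma~\ref{L:ham-isotopy-curves-fixed-end}.} The plan is to prescribe the time-dependent Hamiltonian $H(x,s)$ first on the two-dimensional family traced out by the curves $\gamma_s(t):=F(s,t)$, then extend it to a neighbourhood by specifying appropriate normal derivatives, and finally cut it off to obtain compact support.

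First I would define, for $(s,t)\in[0,1]^2$, the candidate value
\[
h(s,t) \; := \; \int_0^t \omega\bigl(\partial_s F(s,\tau),\partial_t F(s,\tau)\bigr)\,d\tau.
\]
By hypothesis $\partial_s F \equiv 0$ for $t\in[0,\eps]\cup[1-\eps,1]$, so $h(s,t)$ vanishes for $t\in[0,\eps]$. Differentiating the zero-symplectic-area assumption in $s_0$ gives $\int_0^1 \omega(\partial_s F,\partial_t F)\,dt=0$, so $h(s,1)=0$ and therefore $h(s,t)=0$ for $t\in[1-\eps,1]$ as well. Next I would observe that the map
\[
\tilde F:[0,1]^2\longrightarrow M\times[0,1],\qquad (s,t)\longmapsto \bigl(F(s,t),s\bigr)
\]
is a smooth embedding: injectivity follows from the embeddedness of each $\gamma_s$ (the second component fixes $s$, then the first fixes $t$), and $d\tilde F$ has the linearly independent columns $(\partial_s F,1)$ and $(\partial_t F,0)$. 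Denote $\Sigma:=\im\tilde F$; it is a smoothly embedded surface with boundary in $M\times[0,1]$.

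Now I want to produce $H:M\times[0,1]\to\R$ with the following properties on $\Sigma$: at each point $\bigl(F(s,t),s\bigr)\in\Sigma$,
\begin{equation*}
H\bigl(F(s,t),s\bigr)=h(s,t),\qquad d_x H(\cdot,s)\big|_{F(s,t)} \;=\; \iota_{\partial_s F(s,t)}\omega.
\end{equation*}
These two requirements are consistent along the tangential $\partial_t$ direction, since differentiating $t\mapsto H(F(s,t),s)=h(s,t)$ gives $\omega(\partial_s F,\partial_t F)=\partial_t h$, matching $(\iota_{\partial_s F}\omega)(\partial_t F)$. Choose a tubular neighbourhood $\pi:U\to\Sigma$ of $\Sigma$ in $M\times[0,1]$, together with a smooth normal framing $\{\nu_1,\dots,\nu_{2n-1}\}$ at each point of $\Sigma$ that spans the orthogonal complement of $T\gamma_s$ inside $T_{F(s,t)}M$ (there is one remaining normal direction which involves $\partial_s$ in the $[0,1]$ factor and is irrelevant to $d_x H$). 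Using the standard ``jet extension'' in a tubular chart, I would define $H$ in $U$ by
\[
H\bigl(\exp(\sum_j y_j \nu_j),\,s\bigr)=h(s,t)+\sum_j y_j\,\bigl(\iota_{\partial_s F(s,t)}\omega\bigr)(\nu_j)+O(|y|^2),
\]
where the higher-order term is an arbitrary smooth function vanishing to second order; by construction, both the value of $H$ and the prescribed one-form $\iota_{\partial_s F}\omega$ are matched along $\Sigma$. Multiplying by a cutoff function supported in $U$ and equal to one on a smaller neighbourhood of $\Sigma$ does not change values or first derivatives on $\Sigma$, so we obtain a smooth, compactly supported $H:M\times[0,1]\to\R$ satisfying the two boxed conditions.

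Finally I would verify the conclusion. At each point $F(s,t)\in\gamma_s$ the Hamiltonian vector field of $H(\cdot,s)$ is, by the second boxed condition, exactly $\partial_s F(s,t)$. Hence the curve $s\mapsto F(s,t)$ is an integral curve of the time-dependent vector field $X_H$, and uniqueness of ODE solutions gives $\psi_H^s(F(0,t))=F(s,t)$. For the vanishing assertion near $\{F(0,0),F(0,1)\}$: because $\partial_s F\equiv 0$ in a $t$-neighbourhood of $\{0,1\}$, both $h$ and $\iota_{\partial_s F}\omega$ vanish identically on the portion of $\Sigma$ sitting over these endpoints, so I may choose the extension (and the higher-order term above) to vanish identically on an $M$-neighbourhood of $\{F(0,0),F(0,1)\}$ uniformly in $s$. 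The main technical obstacle is producing the extension smoothly in $s$ along the entire surface $\Sigma$, rather than fibrewise; this is exactly what the tubular-neighbourhood construction above takes care of, since $\Sigma$ itself is a single embedded submanifold of $M\times[0,1]$.
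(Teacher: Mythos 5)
Your proof is correct, but it follows a genuinely different route from the paper's. The paper localises in the parameter $s$: near each $s_0$ it produces a compactly supported $H_I(x,s)$ with $d_x H_I(F(s,t),s)=\alpha_s$ (where $\alpha_s=\iota_{\partial_s F}\omega$) and vanishing near the endpoints, then glues these local Hamiltonians with a partition of unity in $s$; since all local pieces prescribe the same $d_x H$ along the curves $\gamma_s$, the convex combination still does. You instead build a single Hamiltonian in one shot by viewing $\Sigma=\{(F(s,t),s)\}$ as an embedded surface in $M\times[0,1]$ and extending a first-order jet (value $h(s,t)$ and transversal one-form $\iota_{\partial_s F}\omega$) off $\Sigma$ in a tubular neighbourhood, then cutting off. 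Your route is more conceptually global and avoids the gluing step, but it does require explicitly computing the compatible value $h(s,t)=\int_0^t\omega(\partial_s F,\partial_t F)\,d\tau$ and verifying the consistency relation along $T\Sigma$ --- something the paper's argument silently sidesteps, since for the Hamiltonian vector field only $d_x H$ matters, not the value of $H$. Both approaches are legitimate. One small slip: you describe the normal framing of $\Sigma$ in $M\times[0,1]$ as consisting of $2n-1$ vectors (the $M$-orthogonal complement of $\partial_t F$) \emph{plus} one remaining $\partial_s$-direction, but the normal bundle of the two-dimensional $\Sigma$ in the $(2n+1)$-dimensional ambient manifold has rank exactly $2n-1$; the $M$-orthogonal complement of $\partial_t F$ is already a complete complement to $T_p\Sigma$ (since $(\partial_s F,1)$ is tangent, not normal, and the projection of $T_p\Sigma$ onto $T_xM$ is one-dimensional). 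This does not affect your construction, since you anyway discard the extra direction, but it should be corrected.
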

\begin{proof}
For every $ s \in [0,1] $, look at the vector field $ X_s(F(s,t)) = \frac{\partial}{\partial s} F(s,t) $, $ t \in [0,1] $, along the curve $ [0,1] \rightarrow M $, $ t \mapsto F(s,t) $. Denote by $ \alpha_s(F(s,t)) \in T^*_{F(s,t)} M $ by $ \alpha_s|_{F(s,t)}(\cdot) = \omega(X_s(F(s,t)), \cdot) $. Then $ \alpha_s $ is a section of $ T^*M $ above the curve $  [0,1] \rightarrow W $, $ t \mapsto F(s,t) $, we have that $ \alpha_s $ vanishes near the endpoints of the curve, and that $ \int_{0}^1 \alpha_s(\frac{\partial}{\partial t}F(s,t)) \, dt = 0 $. Hence it is easy to see that at least locally, near any given point $ s_0 \in [0,1] $, for some neighbourhood $ s_0 \in I \subset [0,1] $ of $ s_0 $ in $ [0,1] $, we can find a compactly supported  function $ H_I : W \times \overline{I} \rightarrow \mathbb{R} $, $ H_I = H_I(x,s) $, such that $ d_x H_I (F(s,t),s) = \alpha_s |_{F(s,t)} $ for every $ (s,t) \in \overline{I} \times [0,1] $, and such that $ H_I = 0 $ on a neighbourhood of $ \{ F(0,0), F(0,1) \} $. But then we can produce a ``global" Hamiltonian function $ H : M \times [0,1] \rightarrow \mathbb{R} $ via a partition of unity. That is, choose a covering of $ [0,1] $ by sufficiently short intervals $ \{ I_j \}_{j=1,\ldots,m} $ which are open in $ [0,1] $, and compactly supported  functions $ H_{I_j} : M \times \overline{I} \rightarrow \mathbb{R} $, $ H_{I_j} = H_{I_j}(x,s) $, such that $ d_{x}H_{I_j} (F(s,t),s) = \alpha_s |_{F(s,t)} $ for every $ (s,t) \in \overline{I_j} \times [0,1] $, and such that $ H_{I_j} = 0 $ on a neighbourhood of $ \{ F(0,0), F(0,1) \} $. Then choose a smooth partition of unity $ \kappa_j : [0,1] \rightarrow \mathbb{R} $, $ j = 1,\ldots,m $, such that $ \kappa_j $ is compactly supported in $ I_j $ and such that $ \sum_j \kappa_j = 1 $ on $ [0,1] $, and define $ H : M \times [0,1] \rightarrow \mathbb{R} $ by $ H(x,s) = \sum_j \kappa_j(s) H_{I_j} (x,s) $. Then $ d_x H (F(s,t),s) = \alpha_s |_{F(s,t)} $ for every $ (s,t) \in [0,1] \times [0,1] $, and $ H = 0 $ on a neighbourhood of $ F([0,1] \times \{ 0,1 \} ) $. This means that the Hamiltonian vector field of $ H $ at time $ s $, restricted to the curve $ [0,1] \rightarrow W $, $ t \mapsto F(s,t) $, coincides with $ X_s $, and therefore the Hamiltonian flow of $ H $ takes the curve $ [0,1] \rightarrow W $, $ t \mapsto F(0,t) $ exactly through the homotopy $ F $. 
\end{proof}

\begin{lemma} \label{L:ham-isotopy-circles}
Let $ (M,\omega) $ be a symplectic manifold, and let $ \hat{F} : [0,1] \times [0,1] \times S^1 \rightarrow M $ be a smooth map, such that for every $ u_0,s_0 \in [0,1] $, the curve $ S^1 \rightarrow M $, $ t \mapsto \hat{F}(u_0,s_0,t) $ is smoothly embedded, and moreover the symplectic area of the cylinder $ [0,s_0] \times S^1 \rightarrow M $, $ (s,t) \mapsto \hat{F}(u_0,s,t) $ is zero. Then there exists a smooth family of compactly supported Hamiltonian functions $ H_u : M \times [0,1] \rightarrow \mathbb{R} $, $ H_u = H_u(x,s) $, $ u \in [0,1] $ with Hamiltonian flows $ \psi_{H_u}^s $, $ s \in [0,1] $, such that we have $ \psi_{H_u}^s ( \hat{F}(u,0,t)) = \hat{F}(u,s,t) $, and moreover the following holds: if for some $ u_0,s_0 \in [0,1] $ we have $ \frac{\partial}{\partial s} \hat{F}(u_0,s_0,t) = 0 $ for any $ t \in S^1 $, then $ H_{u_0}(x,s_0) = 0 $ for every $ x \in M $.  
\end{lemma}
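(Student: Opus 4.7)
The plan is to mimic the argument of Lemma~\ref{L:ham-isotopy-curves-fixed-end}, with the loop-closing condition on $1$-forms replaced by the vanishing-area hypothesis, and then to manage the family parameter $u$ and the vanishing condition in a careful bookkeeping step. For each $(u,s) \in [0,1]\times[0,1]$ define the vector field $X_{u,s}$ along the embedded circle $t\mapsto \hat F(u,s,t)$ by $X_{u,s}(\hat F(u,s,t))=\partial_s\hat F(u,s,t)$, and let $\alpha_{u,s}$ be the section of $T^*M$ along this circle given by $\alpha_{u,s}=\omega(X_{u,s},\,\cdot\,)$. The key identity is
\[
\int_{S^1} \alpha_{u,s}\Big(\partial_t\hat F(u,s,t)\Big)\,dt \;=\; \int_{S^1}\omega\big(\partial_s\hat F,\partial_t\hat F\big)\,dt \;=\; \frac{\partial}{\partial s}\,\om\text{-area}\big(\hat F([0,s]\times S^1)\big),
\]
which vanishes by the hypothesis that the cylindrical area is identically zero in $s$. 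Thus the restriction of $\alpha_{u,s}$ to the tangent direction of the circle has zero total integral, so it admits a smooth primitive $g_{u,s}:S^1\to \R$ depending smoothly on $(u,s)$, and moreover $g_{u_0,s_0}\equiv 0$ whenever $X_{u_0,s_0}\equiv 0$.

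Next I would produce, locally in $t$, a Hamiltonian on $M$ realizing $\alpha_{u,s}$ along the curve. Cover $S^1$ by finitely many open arcs $J_1,\dots,J_N$ equipped with Darboux charts $\Phi_j:V_j\subset M \to \R^{2n}$ adapted to the family of curves, in which $\hat F(u,s,\cdot)\cap V_j$ appears as a small deformation of a coordinate segment (this is possible because the family $\hat F$ is smooth and the circles are embedded, after shrinking $V_j$). In each such chart, using $g_{u,s}$ together with linear terms that account for the normal components of $\alpha_{u,s}$, write down an explicit smooth function $h_{j,u,s}:V_j\to\R$ with compact support in $V_j$ such that $dh_{j,u,s}$ agrees with $\alpha_{u,s}$ at every point of $\hat F(u,s,J_j)\cap V_j$, and such that $h_{j,u_0,s_0}\equiv 0$ whenever $X_{u_0,s_0}\equiv 0$. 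Choose a partition of unity $\{\kappa_j(t)\}$ on $S^1$ subordinate to the cover $\{J_j\}$ and set
\[
H_u(x,s)\;=\;\sum_{j=1}^N \kappa_j\big(\hat\tau_{j,u,s}(x)\big)\,h_{j,u,s}(x),
\]
where $\hat\tau_{j,u,s}$ is a smooth function on a tubular neighbourhood of the arc that extends the arclength-like parameter along $\hat F(u,s,\cdot)$. By construction $H_u(\cdot,s)$ is compactly supported and smooth jointly in all variables, its Hamiltonian vector field at time $s$ restricted to the circle $\hat F(u,s,\cdot)$ coincides with $X_{u,s}=\partial_s\hat F(u,s,\cdot)$, and the vanishing clause is inherited from that of each $h_{j,u,s}$.

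Finally, the uniqueness of integral curves of a time-dependent vector field shows that $\psi_{H_u}^s(\hat F(u,0,t))=\hat F(u,s,t)$ for all $(u,s,t)$. I expect the main technical obstacle to be the construction in the previous paragraph: one needs the local primitives $h_{j,u,s}$ to (i) glue together across overlapping arcs so that the periodicity obstruction really is the integral identity above, (ii) depend smoothly on the parameters $(u,s)$, and (iii) vanish identically (not merely near the circle) when $X_{u_0,s_0}\equiv 0$. Item (i) is handled by the vanishing of $\int_{S^1}\alpha_{u,s}(\partial_t\hat F)\,dt$ and the choice of a global primitive $g_{u,s}$ on $S^1$; item (ii) follows from the fact that every ingredient depends smoothly on $(u,s)$; item (iii) is arranged by taking the $h_{j,u,s}$ to be built linearly out of the components of $\alpha_{u,s}$ and of $g_{u,s}$, all of which vanish with $X_{u,s}$. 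Once these three points are verified, the lemma follows.
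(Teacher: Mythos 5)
Your proposal follows the same strategy the paper itself sketches for this lemma: it is the circle analogue of the proof of Lemma~\ref{L:ham-isotopy-curves-fixed-end}, where one first realises the $1$-form $\alpha_{u,s}=\omega(X_{u,s},\cdot)$ as $d_xH$ along the curve in local charts and then patches via a partition of unity, with the vanishing cylinder-area hypothesis supplying the exactness obstruction $\oint\alpha_{u,s}(\partial_t\hat F)\,dt=0$ needed to close up the primitive on $S^1$. The one point you pass over lightly is the gluing in the parameter square $[0,1]^2_{(u,s)}$ (the Darboux atlas adapted to the family works only on an open subset of $(u,s)$, so a further partition of unity in $(u,s)$, as in the template lemma, is needed for joint smoothness), but this is exactly the same routine step the paper's template carries out in $s$, and once that is added the argument is complete.
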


\begin{lemma} \label{L:ham-isotopy-curves}
Let $ (M,\omega) $ be a symplectic manifold, and let $ \hat{F} : [0,1] \times [0,1] \times [0,1] \rightarrow M $ be a smooth map, such that for every $ u,s \in [0,1] $, the curve $ [0,1] \rightarrow M $, $ t \mapsto \hat{F}(u,s,t) $ is smoothly embedded. Then there exists a smooth family $ \psi_{u,s} \in \Ham(M,\omega) $, $ u,s \in [0,1] $ of compactly supported Hamiltonian diffeomorphisms of $ M $, such that we have $ \psi_{u,s} (\hat{F}(u,0,t)) = \hat{F}(u,s,t) $.
\end{lemma}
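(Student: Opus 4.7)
The plan is to construct, for each $u \in [0,1]$, a compactly supported time-dependent Hamiltonian $H_u : M \times [0,1] \to \mathbb{R}$, depending smoothly on $u$, whose Hamiltonian flow $\phi^s_{H_u}$ satisfies $\phi^s_{H_u}(\hat F(u,0,t)) = \hat F(u,s,t)$ for all $s,t \in [0,1]$. Setting $\psi_{u,s} := \phi^s_{H_u}$ will then produce the required smooth family.

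For fixed $(u,s)$, write $\gamma_{u,s}(t) := \hat F(u,s,t)$ for the embedded arc, define the velocity vector field along it by $X_{u,s}(\gamma_{u,s}(t)) := \partial_s \hat F(u,s,t)$, and let $\alpha_{u,s} := \iota_{X_{u,s}} \omega$ be the associated $1$-form along $\gamma_{u,s}([0,1])$. It will suffice to produce $H_u(\cdot, s) : M \to \mathbb{R}$, smooth in $(u,s,x)$, such that $dH_u(\cdot,s)|_{\gamma_{u,s}(t)} = \alpha_{u,s}|_{\gamma_{u,s}(t)}$ for every $t \in [0,1]$. Indeed, this identity will force the Hamiltonian vector field of $H_u(\cdot,s)$ to agree with $X_{u,s}$ along $\gamma_{u,s}$, and uniqueness of ODE solutions will then guarantee that the flow of $H_u$ transports $\gamma_{u,0}(t)$ exactly to $\gamma_{u,s}(t)$.

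The construction of $H_u$ will proceed locally and then be globalised. Near any point $p = \gamma_{u_0,s_0}(t_0) \in M$, the smooth embeddedness of $\gamma_{u_0,s_0}$ produces a chart $\Phi : V \to \mathbb{R} \times \mathbb{R}^{2n-1}$, $\Phi(q) = (\tau(q), y(q))$, in which $\gamma_{u_0,s_0}$ corresponds to the axis $\{y = 0\}$. For $(u,s)$ in a small neighbourhood of $(u_0,s_0)$, the nearby arc $\gamma_{u,s}$ will also be a graph in these coordinates, say $\gamma_{u,s}(\tau) = \Phi^{-1}(\tau, \eta(u,s,\tau))$ after a smooth reparametrization provided by the implicit function theorem. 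Writing the pullback of $\alpha_{u,s}$ in these coordinates as $A(\tau; u, s) \, d\tau + \sum_i B_i(\tau; u, s) \, dy_i$ along the arc, one writes an explicit local Hamiltonian $H^{\mathrm{loc}}_u$ which is affine in $y - \eta(u,s,\tau)$ and whose tangential part integrates $A$; a direct computation shows $dH^{\mathrm{loc}}_u(\cdot,s)|_{\gamma_{u,s}} = \alpha_{u,s}$, with smooth dependence on $(u,s)$. Since $\hat F([0,1]^3)$ is compact, a finite subcover of such local charts, a fixed subordinate partition of unity, and a compactly supported global cutoff will patch these local Hamiltonians into a globally defined $H_u(x,s)$ on $M \times [0,1]$, still satisfying the differential identity along every $\gamma_{u,s}$ and depending smoothly on $(u,s,x)$.

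The main obstacle is purely bookkeeping: the chart cover, the reparametrizations $\eta(u,s,\tau)$, and the partition of unity must be chosen once and for all in such a way that the assembled Hamiltonian is jointly smooth in $(u,s,x)$, which requires that the chart-dependent data vary smoothly with the parameters. In sharp contrast to Lemma~\ref{L:ham-isotopy-circles}, where vanishing of the swept symplectic area was indispensable, no cohomological hypothesis is needed here: an embedded arc is contractible and every smooth $1$-form defined along it is the restriction of an exact form on a tubular neighbourhood, so the only obstruction to the integration procedure is the technical one above.
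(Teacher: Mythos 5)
Your overall strategy matches the paper's: reduce to finding, for each $(u,s)$, a compactly supported $H_u(\cdot,s)$ whose full differential along the embedded arc $\gamma_{u,s}(t) = \hat F(u,s,t)$ equals $\alpha_{u,s} := \iota_{X_{u,s}}\omega$ (with $X_{u,s}(\gamma_{u,s}(t)) = \partial_s \hat F(u,s,t)$), and glue local solutions by a partition of unity; your remark that no action constraint is needed for arcs, unlike for the closed curves of Lemma~\ref{L:ham-isotopy-circles}, is also correct. However, your local-to-global step has a gap. Patching with a partition $\{\rho_j\}$ subordinate to a chart cover of $M$ gives
$$ d_x H_u(\cdot,s) = \sum_j \rho_j \, d_x H^{\mathrm{loc}}_j + \sum_j H^{\mathrm{loc}}_j \, d\rho_j ,$$
and along $\gamma_{u,s}$ the first sum equals $\alpha_{u,s}$ as wanted, but the second sum does not vanish unless the values $H^{\mathrm{loc}}_j(\gamma_{u,s}(t))$ are independent of $j$ (one is using $\sum_j d\rho_j = 0$). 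Your construction pins down only the \emph{differential} of each $H^{\mathrm{loc}}_j$ along the arc, so its \emph{value} there is a primitive of the tangential part of $\alpha_{u,s}$, determined only up to an additive constant per chart, and those constants will generically disagree; the spurious term $\sum_j H^{\mathrm{loc}}_j \, d\rho_j$ then pollutes the normal components of $d_x H_u$ and the Hamiltonian vector field will no longer equal $X_{u,s}$ on the arc. To repair this you must normalise each $H^{\mathrm{loc}}_j$ so that its restriction to $\gamma_{u,s}$ agrees with the globally defined primitive $G_{u,s}(\gamma_{u,s}(t)) = \int_0^t \alpha_{u,s}\bigl(\dot\gamma_{u,s}(t')\bigr)\,dt'$ — a quantity that depends on the arc all the way back to $t=0$, hence is not local to a chart — and this step is missing.

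The paper avoids the issue altogether (see the proof of Lemma~\ref{L:ham-isotopy-curves-fixed-end}, after which the present lemma is modelled): the local solution there is local only in the \emph{parameter}, while being defined on a tubular neighbourhood of the \emph{whole} embedded arc, i.e.\ global in $t$ and $x$. The partition of unity is then taken over the parameter domain, $\kappa_j = \kappa_j(u,s)$, so $d_x H_u(\cdot,s) = \sum_j \kappa_j(u,s)\, d_x H^{\mathrm{loc}}_j(\cdot,u,s)$ has no cross terms and no normalisation of constants is required. Your route can be made to work with the extra normalisation, but as written the partition-of-unity argument over $M$ does not close.
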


\begin{lemma} \label{L:from-isotopy-to-flow-discs}
Let $ (M,\omega) $ be a symplectic manifold, let $ r > 0 $ and $ G = D(r) \subset \mathbb{C} $, and let $ v : [0,1] \times \overline{G} \rightarrow M $, $ v(t,z) = v_t(z) $ be a smooth isotopy of symplectic discs, i.e. $ v_t^* \omega = \omega_{\st} $ on $ G $ for every $ t \in [0,1] $. Then there exists a compactly supported smooth Hamiltonian flow $ \psi^t $, $ t \in [0,1] $ on $ M $ such that $ v_t = \psi^t \circ v_0 $ for every $ t \in [0,1] $.
\end{lemma}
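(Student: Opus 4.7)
The plan is to realise the isotopy $v_t$ as the restriction to $v_0(\overline G)$ of a smooth family of symplectic diffeomorphisms of a tubular neighbourhood of $v([0,1]\times \overline G)$, and then to extract a time-dependent Hamiltonian generating that family. The first step is a parametric version of Weinstein's symplectic neighbourhood theorem: I construct a smooth family of symplectic embeddings
\[
  \Phi_t : \overline G \times U \hookrightarrow M, \qquad t \in [0,1],
\]
for a fixed open neighbourhood $U \subset \mathbb{C}^{n-1}$ of $0$, satisfying $\Phi_t^{*}\omega = \omega_{\st}$ and $\Phi_t(z,0) = v_t(z)$. This is produced by choosing a smooth family of symplectic complements to $T v_t(\overline G)$ inside $T_{v_t(\overline G)} M$ (for instance via a smooth family of $\omega$-compatible almost complex structures), exponentiating along an auxiliary Riemannian metric to identify a neighbourhood of $v_t(\overline G)$ with a neighbourhood of the zero section of the symplectic normal bundle, and correcting via the Moser trick applied to the resulting smooth $t$-family of symplectic forms on $\overline G \times U$. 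Compactness of $\overline G$ and of $[0,1]$ ensures that $U$ can be chosen uniform in $t$.

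Set $W_t := \Phi_t(\overline G \times U)$ and define the smooth family of symplectic diffeomorphisms $\Psi_t := \Phi_t \circ \Phi_0^{-1} : W_0 \to W_t$, which by construction satisfies $\Psi_t \circ v_0 = v_t$. The corresponding time-dependent vector field on $W_t$,
\[
  Y_t(x) := \left. \tfrac{\partial}{\partial s} \Psi_s(\Psi_t^{-1}(x)) \right|_{s=t},
\]
is symplectic (differentiating $\Psi_s^{*}\omega = \omega$ at $s=t$ gives $L_{Y_t}\omega = 0$), hence $\iota_{Y_t}\omega$ is a closed $1$-form on $W_t$. Since $W_t$ is diffeomorphic to $\overline G \times U$ and is therefore simply connected, this form is exact: there is a smooth family of functions $\tilde H_t : W_t \to \mathbb{R}$ with $d\tilde H_t = \iota_{Y_t}\omega$, so $Y_t = X_{\tilde H_t}$. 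By uniqueness of flows of time-dependent vector fields, the Hamiltonian flow of $\tilde H_t$ on $W_0$ coincides with $\Psi_t$ and in particular transports $v_0$ onto $v_t$.

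It remains to promote $\tilde H_t$ to a compactly supported Hamiltonian on $M$. After slightly enlarging $\overline G$ (possible since $v_t$ is smoothly defined on a neighbourhood of $\overline G$ by standard extension), choose a smooth bump $\chi$ on this enlarged $\overline G \times U$ with compact support there and equal to $1$ on a neighbourhood of $\overline G \times \{0\}$. Set $\chi_t := \chi \circ \Phi_t^{-1}$ on $W_t$ and $H_t := \chi_t \cdot \tilde H_t$, extended by zero outside $W_t$; this produces a smooth, compactly supported, time-dependent Hamiltonian on $M$, with smoothness across $\partial W_t$ following from $\chi$ having compact support in the interior of the enlarged product. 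Since $\chi_t \equiv 1$ on a neighbourhood of $v_t(\overline G)$, the Hamiltonian vector field $X_{H_t}$ agrees with $Y_t$ there, so the resulting flow $\psi^t$ still satisfies $v_t = \psi^t \circ v_0$ for all $t\in[0,1]$. The main obstacle is the parametric Weinstein neighbourhood theorem of the first paragraph, which requires some care to set up smoothly in $t$ and with a transverse size $U$ uniform in $t$; once it is in hand, the remainder is essentially the symplectic-to-Hamiltonian dictionary on simply connected domains, together with a cut-off.
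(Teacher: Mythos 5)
Your argument is correct, but it follows a somewhat different route from the one the paper indicates. The paper says this lemma is proved like Lemma~\ref{L:ham-isotopy-curves-fixed-end}: look at the vector field $Y_t = \partial_t v_t$ along $v_t(\overline G)$, note that the $1$-form $\iota_{Y_t}\omega$ pulled back to $\overline G$ is closed (because the $v_t$ are all symplectic), hence exact by simple connectedness of the disc; for $t$ in a short interval one then promotes the primitive on the disc to a compactly supported Hamiltonian $H_I$ on $M$ whose differential along the disc is $\iota_{Y_t}\omega$, and finally glues these local-in-time Hamiltonians by a partition of unity in the $t$-variable. You instead first establish a parametric Weinstein neighbourhood theorem giving a smooth family $\Phi_t:\overline G\times U\hookrightarrow M$, pass to the global ambient isotopy $\Psi_t=\Phi_t\circ\Phi_0^{-1}$ of tubular neighbourhoods, use simple connectedness of $W_t\cong\overline G\times U$ to get a Hamiltonian primitive $\tilde H_t$ for its generator, and cut off. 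The two proofs use the same exactness mechanism (contractibility of a disc-like region), but the paper's does its patching in time and only needs to extend functions off a single disc at a time, whereas yours does its patching in space and needs the full parametric Weinstein statement, which — as you correctly flag — is the technical crux of your approach and is precisely what the partition-of-unity-in-$t$ method is designed to avoid.

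Two points where your write-up is looser than it should be, even if not fatal. First, the enlargement of $\overline G$ needs a little care: the smooth extension $\tilde v_t$ to $\overline D(r+\mu)$ will not in general satisfy $\tilde v_t^*\omega=\omega_{\st}$ on the annulus, so you cannot directly demand $\Phi_t(z,0)=\tilde v_t(z)$ there; the Moser correction in the parametric Weinstein construction must be set up so that $\Phi_t(z,0)=v_t(z)$ holds on $\overline G$ (that is all that is needed) while $\Phi_t$ remains symplectic on the enlarged product. The enlargement is genuinely necessary in your scheme, since a cut-off supported in the interior of $\overline G\times U$ cannot equal $1$ near $\partial G\times\{0\}$. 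Second, to make $(t,x)\mapsto H_t(x)$ smooth you should normalise the primitives (e.g.\ $\tilde H_t(v_t(0))=0$) so they depend smoothly on $t$; this is implicit but worth saying. With these points made precise, the proof is complete.
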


\begin{lemma} \label{L:from-isotopy-to-flow-annuli}
Let $ (M,\omega) $ be a symplectic manifold, let $ a > 0 $ and let $ \Sigma = S^1 \times [0,a] $ be an annulus with the standard symplectic form $ \omega_{\st} $. Let $ \hat{v} : [0,1] \times [0,1] \times \Sigma \rightarrow M $, $ \hat{v}(u,s,z) = \hat{v}_{u,s}(z) $ be a smooth map such that for each $ u,s \in [0,1] $, the map $ \Sigma \rightarrow M $, $ z \mapsto \hat{v}_{u,s}(z) $ is a smooth symplectic embedding, $ \hat{v}_{u,s}^* \omega = \omega_{\st} $. Then there exists a smooth family of compactly supported Hamiltonian functions $ H_u : M \times [0,1] \rightarrow \mathbb{R} $, $ H_u = H_u(x,s) $, $ u \in [0,1] $ with Hamiltonian flows $ \psi_{H_u}^s $, $ s \in [0,1] $, such that we have $ \psi_{H_u}^s ( \hat{v}(u,0,z)) = \hat{v}(u,s,z) $, and moreover the following holds: if for some $ u_0,s_0 \in [0,1] $ we have $ \frac{\partial}{\partial s} \hat{v}(u_0,s_0,z) = 0 $ for any $ z \in \Sigma $, then $ H_{u_0}(x,s_0) = 0 $ for every $ x \in M $.  
\end{lemma}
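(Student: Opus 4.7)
The plan is to realize the isotopy $s\mapsto \hat{v}_{u,s}$ as generated by a Hamiltonian that lives first on the image $\hat{v}_{u,s}(\Sigma)$ and is then extended to $M$ via a parametric Weinstein neighbourhood construction. For each $(u,s)\in[0,1]^2$ set $X_{u,s}(z) := \partial_s \hat{v}(u,s,z)$, a vector field along $\hat{v}_{u,s}(\Sigma)\subset M$, and let $\alpha_{u,s} := \iota_{X_{u,s}}\omega$ be the associated 1-form on $\hat{v}_{u,s}(\Sigma)$. I would look for a smooth family of compactly supported Hamiltonians $H_u(\cdot,s)$ with $dH_u(\cdot,s)|_{\hat{v}_{u,s}(\Sigma)} = \alpha_{u,s}$; then the Hamiltonian vector field of $H_u(\cdot,s)$ equals $X_{u,s}$ on the image, and by uniqueness of the flow $\psi^s_{H_u}(\hat{v}(u,0,z)) = \hat{v}(u,s,z)$, which is the required conclusion.

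The first key step is to find a primitive on $\Sigma$. Writing $\hat{v}_u^*\omega = \omega_{\st} + ds\wedge \beta_{u,s}$ for a 1-form $\beta_{u,s}$ on $\Sigma$ (the $\omega_{\st}$ term is forced because each slice $\hat{v}_{u,s}$ is symplectic, and one checks $\beta_{u,s} = \hat{v}_{u,s}^*\alpha_{u,s}$), the identity $d\hat{v}_u^*\omega = 0$ then gives $d_\Sigma \beta_{u,s} = 0$. Since $H^1(\Sigma)\cong\mathbb{R}$, a smoothly $(u,s)$-parametrized primitive $g$ with $d_\Sigma g(u,s,\cdot) = \beta_{u,s}$ exists if and only if the flux $\int_{S^1\times\{0\}}\beta_{u,s}$ vanishes for every $(u,s)$. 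This is the annular counterpart of the zero-symplectic-area hypothesis in Lemma~\ref{L:ham-isotopy-circles}; granting it, $g$ is obtained by integration from a fixed basepoint, normalised so that $g(u_0,s_0,\cdot)\equiv 0$ whenever $X_{u_0,s_0}\equiv 0$, which will yield the required stationary condition $H_{u_0}(\cdot,s_0)\equiv 0$.

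To push $g$ off the image, I would invoke a parametric Weinstein theorem: near any $(u_0,s_0)$ one has a smooth family of symplectic embeddings $\Phi_{u,s}:\Sigma\times B^{2n-2}(r)\hookrightarrow M$ extending $\hat{v}_{u,s}$ on $\Sigma\times\{0\}$, with the product form $\omega_{\st}\oplus\omega_0$ on the source. Define $\widetilde{H}_u(\Phi_{u,s}(z,w),s) := g(u,s,z) + \omega_0(\eta_{u,s}(z),w)$ where $\eta_{u,s}(z)$ is the normal component of $(\Phi_{u,s})_*^{-1}X_{u,s}(z)$ in the $B^{2n-2}$-factor; a direct computation shows that $d\widetilde{H}_u(\cdot,s)|_{\hat{v}_{u,s}(z)} = \alpha_{u,s}$ at each point of the image. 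Cutting off in the $B^{2n-2}$-direction and patching local constructions with a partition of unity in $(u,s)$ produces a compactly supported $H_u(x,s)$ with the desired property; the patching is consistent because on the image the Hamiltonian is forced to equal $g$ (up to an additive locally constant), independently of the trivialisation chosen. The main obstacle is the flux condition $\int_{S^1\times\{0\}}\beta_{u,s}=0$: once this is in place, every subsequent step (smoothness of the primitive in $(u,s)$, Weinstein extension, cutoff, and partition-of-unity gluing) is standard, and the stationary condition follows automatically from the construction of $g$ and $\eta$.
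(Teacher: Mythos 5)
Your overall strategy — compute $\alpha_{u,s}=\iota_{X_{u,s}}\omega$ along the image, reduce to finding a primitive of $\beta_{u,s}=\hat v_{u,s}^*\alpha_{u,s}$ on $\Sigma$, extend to a full neighbourhood via a parametric Weinstein chart, and patch in $(u,s)$ with a partition of unity — is exactly the scheme the paper indicates (it omits the proof, saying only that it runs ``as in Lemma~\ref{L:ham-isotopy-curves-fixed-end}: find the Hamiltonian locally, then unify by a partition of unity''). Your Weinstein step makes precise what ``find the Hamiltonian locally'' means here, the jet computation $d\widetilde H = \beta_{u,s}+\iota_{\eta_{u,s}}\omega_0=\Phi_{u,s}^*\alpha_{u,s}$ along $w=0$ is correct, and the partition-of-unity patching works because the constraint $d_xH|_{\mathrm{image}}=\alpha_{u,s}$ is affine in $H$ (so a convex combination in $(u,s)$ of local solutions is again a solution). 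Your normalisation $g(u_0,s_0,\cdot)\equiv 0$ when $X_{u_0,s_0}\equiv 0$ is the right way to get the stationarity clause, and must be carried through the cutoff so that the local $H^{(j)}$'s vanish at such parameters.

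You have, however, put your finger on a genuine problem with the statement, and you are right to flag the period $\int_{S^1\times\{0\}}\beta_{u,s}$ as the crux: since $H^1(\Sigma)\cong\mathbb R$, the closed form $\beta_{u,s}$ need not be exact, and this period is a true obstruction to realising the isotopy by a Hamiltonian flow (it is the infinitesimal symplectic area swept by the loop $\hat v_{u,s}(S^1\times\{c\})$, which necessarily vanishes along any Hamiltonian isotopy). Unlike its counterpart Lemma~\ref{L:ham-isotopy-circles}, where the swept-area condition \emph{is} assumed, the present statement omits it — and without it the conclusion is false. For example, in $M=T^*S^1\times\mathbb R^2$ with $\hat v_{u,s}(\theta,t)=(\theta,t+s,0,0)$ one computes $\beta_{u,s}=-d\theta$, period $-2\pi$, and the translation cannot be effected by a compactly supported Hamiltonian isotopy. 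So ``granting it'' is not cosmetic: the zero-period hypothesis needs to be added to the lemma (or derived from additional structure in the given situation). In the one place the lemma is actually used — Step~II of the proof of Lemma~\ref{L:isotopy-discs-fixed-bdry} — the family is built so that $\hat v_{u,s}(0,t)=\gamma(t)$ for all $u,s$; hence $X_{u,s}$ vanishes along the boundary circle $\{\rho=0\}$, so $\beta_{u,s}$ restricts to zero there and the period vanishes. With that hypothesis in hand, your proof is complete and correct.
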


The proofs of lemmata~\ref{L:ham-isotopy-circles},~\ref{L:ham-isotopy-curves},~\ref{L:from-isotopy-to-flow-discs} and~\ref{L:from-isotopy-to-flow-annuli} are quite similar to the proof of lemma~\ref{L:ham-isotopy-curves-fixed-end}: first one can find the corresponding Hamiltonian function locally and then use partition of unity to unify these ``local" Hamiltonian functions. Therefore we omit the proofs of lemmata~\ref{L:ham-isotopy-circles},~\ref{L:ham-isotopy-curves},~\ref{L:from-isotopy-to-flow-discs} and~\ref{L:from-isotopy-to-flow-annuli}. 

\subsection{Proofs of main lemmata}

Let us turn to the proofs of lemmata~\ref{L:isotopy-closed-discs},~\ref{L:isotopy-nbd-edge} and~\ref{L:isotopy-discs-fixed-bdry}.

\noindent{\it Proof of lemma~\ref{L:isotopy-closed-discs}:}
By the symplectic neighbourhood theorem, on a neighbourhood $ U_1 $ of $ v_1(\overline{G}) $ there exist local symplectic coordinates $ (x_1,y_1,\ldots,x_n,y_n) = (z_1,\ldots,z_n) $ such that in these coordinates we have $$ U_1 = \{ (x_1,y_1,\ldots,x_n,y_n) \; | \; x_1^2 + y_1^2 < (r + \epsilon)^2, x_j^2 + y_j^2 < \epsilon^2 \,\, \text{for} \,\, 2 \leqslant j \leqslant n \} ,$$ $$ v_1(x_1,y_1) = (x_1,y_1,0,0,\ldots,0,0) ,$$ and on a neighbourhood $ U_2 $ of $ v_2(\overline{G}) $ there exist local symplectic coordinates \\ $ (x_1,y_1,\ldots,x_n,y_n) $ (we loosely use the same notation for these coordinates as well) such that in these coordinates we have $$ U_2 = \{ (x_1,y_1,\ldots,x_n,y_n) \; | \; x_1^2 + y_1^2 < (r + \epsilon)^2, x_j^2 + y_j^2 < \epsilon^2 \,\, \text{for} \,\, 2 \leqslant j \leqslant n \} ,$$ $$ v_2(x_1,y_1) = (x_1,y_1,0,0,\ldots,0,0) ,$$ where $ \epsilon > 0 $ is small. In the sequel by $ p = (x_1,y_1,\ldots,x_n,y_n) \in U_1 $ (respectively, by $ p = (x_1,y_1,\ldots,x_n,y_n) \in U_2 $) we will always mean that $ p $ is written in the local coordinates of $ U_1 $ (respectively, that $ p $ is written in the local coordinates of $ U_2 $). It easily follows from the Moser's argument, that for some small $ 0 < \delta < \epsilon $, there exists a Hamiltonian diffeomorphism $ \psi' $ of $ M $, such that $ \psi'(v_1(0,0)) = v_2(0,0) $, and such that for any $ p = (x_1,y_1,\ldots,x_n,y_n) \in U_1 $ with $ x_j^2+y_j^2 < \delta^2 $, $ j=1,\ldots,n $, we have $ \psi'(p) = (x_1,y_1,\ldots,x_n,y_n) \in U_2 $. Now choose a smooth immersion $ f : \overline{G} \rightarrow D(\delta) $ with $ f^*\omega = \omega_{\st} $, and define two smooth families of discs $ v_{1,t}, v_{2,t} : \overline{G} \rightarrow M $, $ t \in [0,\pi / 2) $, by $ v_{1,t}(z) = (\cos(t) z, \sin(t) f(z),0,\ldots,0) \in U_1 $ and $ v_{2,t}(z) = (\cos(t) z, \sin(t) f(z),0,\ldots,0) \in U_2 $. We have $ v_{1,t}^* \omega = v_{2,t}^* \omega = \omega_{\st} $, $ t \in [0,\pi/2) $. Moreover we have $ v_{1,\pi/2 - \delta /r}(\overline{G}) \subset  \{ (x_1,y_1,\ldots,x_n,y_n) \in U_1 \; | \; x_j^2+y_j^2 < \delta^2 \,\, \text{for} \,\, 1 \leqslant j \leqslant n \} $, $ v_{2,\pi/2 - \delta /r}(\overline{G}) \subset \{ (x_1,y_1,\ldots,x_n,y_n) \in U_1 \; | \; x_j^2+y_j^2 < \delta^2 \,\, \text{for} \,\, 1 \leqslant j \leqslant n \} $, and hence $ \psi' \circ v_{1,\pi/2 - \delta /r} = v_{2,\pi/2 - \delta /r} $, so there exists a compactly supported Hamiltonian isotopy of $ M $ which takes $ v_{1,\pi/2-\delta /r} $ to $ v_{2,\pi/2-\delta /r} $. By lemma~\ref{L:from-isotopy-to-flow-discs}, there exists a compactly supported Hamiltonian isotopy of $ M $ which takes $ v_{1,0} = v_1 $ to $ v_{1,\pi/2 - \delta /r} $, and there exists another compactly supported Hamiltonian isotopy of $ M $ which takes $ v_{2,0} = v_2 $ to $ v_{2,\pi/2 - \delta /r} $. So, finally, the discs $ v_1 $ and $ v_2 $ are isotopic via a compactly supported Hamiltonian isotopy of $ M $.\cqfd

\noindent {\it Proof of lemma~\ref{L:isotopy-nbd-edge}:}
First of all, without loss of generality we may assume that $ \gamma(t) = (t,0) $ for $ t \in [0,1] $. We first take care of  the absolute situation (a).
We divide the construction of the Hamiltonian $ H $ into two steps. At the first step we find a Hamiltonian isotopy 
that brings the map $v_1$ to a map $v_1'$ which coincides with $v_2$ on $\gamma([0,1])$. The second step provides a Hamiltonian isotopy between $v_1$ and a map $v_1''$ which coincides with $v_2$ on a \nbd of $\gamma([0,1])$. We then explain how the proof of (a) readily implies the proper case (b). \\ \\
{\bf (a) Step I} \\
By lemma~\ref{L:isotopy-curves-1}, we can find a smooth homotopy $ F : [0,1] \times [0,1] \rightarrow W $ such that $ F(0,t) = v_1 \circ \gamma(t) $, $ F(1,t) = v_2 \circ \gamma(t) $ for $ t \in [0,1] $, such that for some $ 0 < \epsilon < 1/2 $ we have $ F(s,t) = v_1 \circ \gamma(t) = v_2 \circ \gamma(t) $ for every $ s \in [0,1] $ and $ t \in [0,\epsilon] \cup [1-\epsilon,1] $, and such that for every $ s \in [0,1] $, the curve $ [0,1] \rightarrow W $, $ t \mapsto F(s,t) $ is smoothly embedded. After making a $ C^0 $ small perturbation of $ F $ on $ [0,1]\times(\epsilon/3,\epsilon/2)$, we may assume that for all $ s \in [0,1] $, the $ \lambda $-actions of the curves $ [0,1] \rightarrow W $, $ t \mapsto F(s,t) $ are equal, and moreover that we still have that $ F(0,t) = v_1 \circ \gamma(t) $, $ F(1,t) = v_2 \circ \gamma(t) $ for $ t \in [0,1] $, and that the curve $ [0,1] \rightarrow W $, $ t \mapsto F(s,t) $ is smoothly embedded.

\begin{remark} \label{R:perturbation}
One possible way to do this is as follows. We have $ F(s,t) = v_1 \circ \gamma(t) = v_2 \circ \gamma(t) $ for $ t \in [0,\epsilon] $. Since $ v_1 \circ \gamma $ is smoothly embedded, one can find local symplectic coordinates $ (x_1,y_1, \ldots, x_n,y_n) $ on a neighbourhood $ U $ of $ v_1\circ \gamma ((0,\epsilon)) $, such that in these coordinates $ U = (0,\epsilon) \times (-\delta , \delta) \times D(\delta)^{\times n-1} $, and $ v_1 \circ \gamma (t) = (t,0,0,\ldots,0) $ for $ t \in (0,\epsilon) $, where $ \delta > 0 $ is a small positive number. Now fix a non-negative smooth function $ \kappa : (0,\epsilon) \rightarrow \mathbb{R} $, not identically $ 0 $, such that $ \supp (\kappa) \subset (\epsilon /3, \epsilon /2) $, and such that $ \| \kappa \|_{\infty} $ is small enough. Let $ \nu : [0,1] \rightarrow \mathbb{R} $ be a smooth function, which will be specified later. Now we define $ F' : [0,1] \times [0,1] \rightarrow M $ by $ F'(s,t) = F(s,t) $ for $ (s,t) \in [0,1] \times (\{0\} \cup [\epsilon,1]) $, and by $ F'(s,t) = (t,0,\kappa(t) \cos(\nu(s) t), \kappa(t) \sin(\nu(s) t), 0,0, \ldots ,0,0) $, for $ (s,t) \in [0,1] \times (0,\epsilon) $, where the latter equality is written in the chosen local coordinates on $ U $. We have $ \omega(F'|_{[0,s] \times [0,1]}) = \omega(F'|_{[0,s] \times [0,\epsilon]}) + \omega(F'|_{[0,s] \times [\epsilon,1]}) = \omega(F'|_{[0,s] \times [0,\epsilon]}) + \omega(F|_{[0,s] \times [0,1]}) = \frac{1}{2} (\nu(0) - \nu(s)) \int_{0}^{\epsilon} \kappa^2(t) dt +  \omega(F|_{[0,s] \times [0,1]}) $, where the latter equality follows from a simple computation. Hence if we define the function $ \nu $ to be $ \nu(s) = 2 \omega(F|_{[0,s] \times [0,1]})/ (\int_0^{\epsilon} \kappa^2(t) dt ) $, for $ s \in [0,1] $, then we get $ \omega( F'|_{[0,s] \times [0,1]}) = 0 $ for every $ s \in [0,1] $. Now replace $ F $ by $ F' $.
\end{remark}

Now, by lemma~\ref{L:ham-isotopy-curves-fixed-end} one can find a compactly supported Hamiltonian $ H' : W \times [0,1] \rightarrow \mathbb{R} $ such that on some neighbourhood of $ \{ F(0,0), F(0,1) \} $ we have $ H'(\cdot,s) = 0 $ for every $ s \in [0,1] $, and such that the time-1 map $ \psi' $ of $ H' $ satisfies $ \psi' \circ F(0,t) = F(1,t) $, for every $ t \in [0,1] $. Hence denoting $ v_1' := \psi' \circ v_1 $, we get that $ v_1' = v_2 $ on $ \gamma([0,1]) $ and moreover $ v_1' = v_2 $ on a neighbourhood of the endpoints $ \{ z_1,z_2 \} $. \\ \\
{\bf (a) Step II} \\
Recall that $ \gamma(t) = (t,0) $ for $ t \in [0,1] $, and denote by $ \hat{\gamma} $ the curve $ \hat{\gamma} : [0,1] \rightarrow W $, $ \hat{\gamma}(t) = v_1' \circ
\gamma (t) = v_2 \circ \gamma (t) $. Define $ X_0'(\hat{\gamma}(t)) = \frac{\partial}{\partial y} v_1'(t,0) $, $ X_1'(\hat{\gamma}(t)) = \frac{\partial}{\partial y} v_2(t,0) $ and $ Y(\hat{\gamma}(t)) = \frac{\partial}{\partial x} v_1'(t,0) = \frac{\partial}{\partial x} v_2(t,0) $ (here $ v_1' = v_1'(x,y) $, $ v_2 = v_2(x,y) $, where $ (x,y) \in \overline{V} \subset \mathbb{R}^2 $), so that $ X_0',X_1',Y $ are vector fields along the curve $ \hat{\gamma} $. Since $ v_1',v_2 $ are symplectic, we get that $ \omega_{\st}(Y,X_0') = \omega_{\st}(Y,X_1') = 1 $ at each point $ \hat{\gamma}(t) $, for $ t \in [0,1] $. Hence if we define $ X_s' = (1-s) X_0' + s X_1' $ for $ s \in [0,1] $, then $ \omega_{\st}(Y,X_s') = 1 $ at each point $ \hat{\gamma}(t) $, for $ t \in [0,1] $. Hence it is not hard to see that for some neighbourhood $ G' \supset \gamma([0,1]) $, $ G' \subset G $, such that $ G' $ is a topological disc bounded by a closed smooth simple curve, there exists a smooth family of smooth symplectic maps $ w_s : \overline{G'} \rightarrow W $, $ w_s^*\omega_{\st} = \omega_{\st} $, $ s \in [0,1] $, such that on the union of $ \gamma([0,1]) $ with a neighbourhood of $ \{ z_1,z_2 \} $ we have $ w_s = v_1' = v_2 $ for every $ s \in [0,1] $, such that $ w_0 = v_1' $ and $ w_1 = v_2 $ on a neighbourhood of $ \gamma([0,1]) $, and such that $ \frac{\partial}{\partial y} w_s(t,0) = X_s'(\hat{\gamma}(t)) $, $  \frac{\partial}{\partial x} w_s(t,0) = Y(\hat{\gamma}(t)) $, for each $ s,t \in [0,1] $. This family $w_s$ can be realized as follows: write $v_1'(t,y)=\hat{\gamma}(t)+yX_0'(\hat{\gamma}(t))+R'(t,y)$ and $v_2(t,y)=\hat{\gamma}(t)+yX_1'(\hat{\gamma}(t))+R_2(t,y)$, where $R'(t,y),R_2(t,y)\in O(|y|^2)$. Then $\tilde w_s(t,y):=\hat{\gamma}(t)+yX_s'(\hat{\gamma}(t))+sR_2(y,t)+(1-s)R'(y,t)$ interpolates between $v_1'$ and $v_2$. Moreover, the restriction of $w_s$ to a sufficiently small \nbd $G'$ of $\gamma$ is symplectic in the sense that $w_s^*\om_\st$ is an area form, but it does not verify yet that $w_s^*\om_\st=\om_\st$. This last point is achieved by a source reparametrization, and by changing $ G' $ to a smaller \nbd if necessary (applying a parametric version of the Moser's trick).

Now we apply lemma~\ref{L:from-isotopy-to-flow-discs} to conclude that there exists a compactly supported Hamiltonian function $ H'' : W \times [0,1] \rightarrow \mathbb{R} $ whose Hamiltonian flow isotopes $ w_0 $ to $ w_1 $ through the family $ w_{s} $, $ s \in [0,1] $. The points on the image by $ v_1' $ of the union of $ \gamma([0,1]) $ with a neighbourhood of $ \{ z_1,z_2 \} $, stay fixed under the flow of $ H'' $. This implies that $ H''(\cdot,s) $ is constant and its differential is zero on the image by $ v_1' $ of the union of $ \gamma([0,1]) $ with a neighbourhood of $ \{ z_1,z_2 \} $, at each time $ s \in [0,1] $. Without loss of generality we may assume that $ H''(\cdot,s) $ and its differential are zero on the image by $ v_1' $ of the union of $ \gamma([0,1]) $ with a neighbourhood of $ \{ z_1,z_2 \} $, at each time $ s \in [0,1] $ (if not, then we can achieve this by adding to $ H'' $ a function depending solely on time and then making a cutoff outside the union of the images of $ w_s $, $ s \in [0,1] $). But then if we multiply $ H'' $ by a function which equals $ 1 $ on the complement of a small neighbourhood of $ \{ v_1'(z_1),v_1'(z_2) \} $, and equals $ 0 $ on a smaller neighbourhood of $ \{ v_1'(z_1),v_1'(z_2) \} $, the time-$s$ map of the flow of the resulting Hamiltonian function $ \tilde{H}'' $ still maps $ w_0 $ to $ w_s $, but we moreover have that $ \tilde{H}'' = 0 $  on a neighbourhood of $ \{ v_1'(z_1),v_1'(z_2) \} = \{ v_1(z_1),v_1(z_2) \} = \{ v_2(z_1),v_2(z_2) \} $. Finally, the concatenation of the flows of $ H' $ and $ \tilde{H}'' $ give us the desired Hamiltonian flow.\\ \\

\noindent {\bf (b)} First, choose $ 0 < \epsilon < 1/2 $ such that $ v_1 = v_2 $ on a \nbd of $ \gamma((0,\epsilon] \cup [1-\epsilon,1)) $. Then, similarly as in the proof of the absolute case (a), by applying lemma~\ref{L:isotopy-curves-1} we can find a smooth homotopy $ F : [0,1] \times [\epsilon,1-\epsilon] \rightarrow W $, such that $ F(0,t) = v_1 \circ \gamma(t) $, $ F(1,t) = v_2 \circ \gamma(t) $ for $ t \in [\epsilon,1-\epsilon] $, such that for some $ 0 < \delta < 1/2 - \epsilon $ we have $ F(s,t) = v_1 \circ \gamma(t) = v_2 \circ \gamma (t) $ for $ s \in [0,1] $ and $ t \in [\epsilon,\epsilon+\delta] \cup [1-\epsilon-\delta,1-\epsilon] $, and such that for each $ s \in [0,1] $ the curve $ [\epsilon,1-\epsilon] \rightarrow W $, $ t \mapsto F(s,t) $ is smoothly embedded. Then, since we are in dimension $ 2n \geqslant 4 $, after slightly perturbing $ F $ we may assume that $ \im F \cap v_1 \circ \gamma ((0,\epsilon) \cup (1-\epsilon,1)) = \emptyset $. Moreover, similarly as in the proof of the absolute case (a), after performing one more $ \cc^0 $-small perturbation of $ F $ near $ t = \epsilon, 1-\epsilon $, we may in addition assume that the actions of curves $ [\epsilon,1-\epsilon] \rightarrow W $, $ t \mapsto F(s,t) $ are all equal, when $ s \in [0,1] $. Now, by the proof of the absolute case (a), we can find a compactly supported Hamiltonian function $ \tilde{H} : W \times [0,1] \rightarrow \mathbb{R} $, such that on a \nbd of $ \{ v_1 \circ \gamma(\epsilon), v_1 \circ \gamma(1-\epsilon) \} $ we have $ \tilde{H}(\cdot,s) = 0 $ for all $ s \in [0,1] $, and such that $ \phi_{\tilde{H}}^1 \circ v_1 = v_2 $ on a \nbd of $ \gamma([\epsilon,1-\epsilon]) $, where $ \phi_{\tilde{H}}^1 $ is the time-$1$ map of the Hamiltonian flow generated by $ \tilde{H} $. Choose sufficiently small neighbourhoods $ W' \Subset W'' \Subset W $ of $ \im F $, such that $ \tilde{H}(\cdot,s) = 0 $ on a \nbd of $ v_1 \circ \gamma ((0,\epsilon] \cup [1-\epsilon,1)) \cap W'' $, for all $ s \in [0,1] $. Then any $ (W',W'') $ cut-off of $ \tilde{H} $ gives a desired Hamiltonian function. \cqfd

\noindent{\it Proof of lemma~\ref{L:isotopy-discs-fixed-bdry}:} 
Choose a $1$-form $\lambda$ which is a primitive of $\om$ on $ W $, $ d\lambda = \omega $. We divide the proof into two steps. In the first step we find a Hamiltonian flow that takes the disc $ v_1 $ to the disc $ v_2 $ while the boundary of the disc is being kept fixed. In the second step we correct the Hamiltonian flow from the first step so that now a neighbourhood of the boundary is being kept fixed during the isotopy. \\ \\
{\bf Step I} \\
By lemma~\ref{L:isotopy-closed-discs}, there exists a compactly supported Hamiltonian isotopy $ \phi^u $, $ u \in [0,1] $ of $ W $, such that $ \phi^1 \circ v_1 = v_2 $. Define a map $ F : [0,1] \times S^1 \rightarrow W $ by $ F(u,t) = \phi^u(v_1(t)) $, while here we identify $ S^1 \cong \partial G $. Denote $ \gamma(t) :=  F(0,t) = F(1,t) $ for every $ t \in S^1 $. Our aim is to find a smooth path $ \Phi^{u} $, $ u \in [0,1] $ of compactly supported Hamiltonian diffeomorphisms of $ W $ such that $ \Phi^u ( \gamma(t)) = F(u,t) $ for every $ u \in [0,1] $, $ t \in S^1 $, such that we moreover have $ \Phi^0 = \Phi^1 = id $. Provided that we have such $ \Phi^u $, $ u \in [0,1] $, we can define the Hamiltonian flow $ \psi_1^u = (\Phi^u)^{-1} \circ \phi^u $, and then we have $ \psi_1^1 \circ v_1 = v_2 $ and $ \psi_1^u (v_1(z)) = v_1(z) = v_2(z) $ for every $ u \in [0,1] $ and $ z \in \partial G $. 

First, use lemma~\ref{L:isotopy-curves-2} to find a smooth map $ \hat{F} : [0,1] \times [0,1] \times S^1 \rightarrow W $ such that we have $ \hat{F}(u,0,t) = \hat{F}(0,s,t) = \hat{F}(1,s,t) = \gamma(t) $, $ \hat{F}(u,1,t) = F(u,t) $ for every $ u,s \in [0,1] $ and $ t \in S^1 $, and such that for any $ u, s \in [0,1] $, the curve $ S^1 \rightarrow W $, $ t \mapsto F(u,s,t) $ is smoothly embedded. After slightly perturbing $ \hat{F} $, we may assume that we moreover have that for every $ u,s \in [0,1] $, the $ \lambda $-action of the curve $ S^1 \rightarrow W $, $ t \mapsto \hat{F}(u,s,t) $, equals  the $ \lambda $-action of $ \gamma $, or in other words that for every $ u_0,s_0 \in [0,1] $, the symplectic area of the cylinder $ [0,s_0] \times S^1 \rightarrow W $, $ (s,t) \mapsto \hat{F}(u_0,s,t) $ equals zero.  

\begin{remark}
One possible way to obtain such a perturbation is as follows. Choose a closed arc $ I \subset S^1 $, and apply lemma~\ref{L:ham-isotopy-curves} to the restriction of $ \hat{F} |_{ [0,1] \times [0,1] \times I } $. We obtain a smooth family $ \psi_{u,s} \in \Ham(W,\omega) $, $ u,s \in [0,1] $ of compactly supported Hamiltonian diffeomorphisms of $ W $, such that we have $ \psi_{u,s}(\gamma(t)) = \psi_{u,s}(\hat{F}(u,0,t)) = \hat{F}(u,s,t) $ for every $ (u,s,t) \in [0,1] \times [0,1] \times I $. If we define $ \hat{F}' : [0,1] \times [0,1] \times I \rightarrow W $ by $ \hat{F}' (u,s,t) = \psi_{u,s}^{-1} \circ \hat{F}(u,s,t) $, then we have $ \hat{F}'(u,s,t) = \gamma(t) $ for every $ (u,s,t) \in [0,1] \times [0,1] \times I $. But now, using the fact that all the curves $ S^1 \rightarrow W $, $ t \mapsto \hat{F}'(u,s,t) $ coincide with the curve $ \gamma $ on $ I $, we can perturb $ \hat{F}' $ similarly to a perturbation scheme which was done in Remark~\ref{R:perturbation}, and obtain a new map $ \hat{F}'' : [0,1] \times [0,1] \times S^1 \rightarrow W $ such that for every $ u,s \in [0,1] $, the curve $ S^1 \rightarrow W $, $ t \mapsto \hat{F}''(u,s,t) $ is smoothly embedded and has the same $ \lambda $-action as $ \gamma $, and moreover such that for any $ u,s \in [0,1] $ for which the $ \lambda $-action of the curve $ S^1 \rightarrow W $, $ t \mapsto \hat{F}'(u,s,t) $ already equals  the $ \lambda $-action of $ \gamma $, we have $ \hat{F}''(u,s,t) = \hat{F}'(u,s,t) $ for all $ t \in [0,1] $. Then the map $ [0,1] \times [0,1] \times S^1 \rightarrow W $, $ (u,s,t) \mapsto \psi_{u,s} \circ \hat{F}''(u,s,t) $ is a desired perturbation of $ \hat{F} $.
\end{remark}

Now we apply lemma~\ref{L:ham-isotopy-circles} to obtain a smooth family of Hamiltonian functions $ H_u : W \times [0,1] \rightarrow \mathbb{R} $, $ H_u = H_u(x,s) $, $ u \in [0,1] $ with Hamiltonian flows $ \psi_{H_u}^s $, $ s \in [0,1] $, such that we have $ \psi_{H_u}^s ( \hat{F}(u,0,t)) = \hat{F}(u,s,t) $, and moreover the following holds: if for some $ u_0,s_0 \in [0,1] $ we have $ \frac{\partial}{\partial s} \hat{F}(u_0,s_0,t) = 0 $ for any $ t \in S^1 $, then $ H_{u_0}(x,s_0) = 0 $ for every $ x \in M $. But this implies that $ \psi_{H_0}^1 = \psi_{H_1}^1 = id $, and if we set $ \Phi^u := \psi_{H_u}^1 $, and define the Hamiltonian flow $ \psi_1^u = (\Phi^u)^{-1} \circ \phi^u $, then we get $ \psi_1^1 \circ v_1 = v_2 $ on $ \overline{G} $, and $ \psi_1^u (v_1(z)) = v_1(z) = v_2(z) $ for every $ u \in [0,1] $ and $ z \in \partial G $. \\ \\
{\bf Step II} \\
This step is analogical to the step II in the proof of lemma~\ref{L:isotopy-nbd-edge}. Consider the coordinates $ \rho \in [0,r) $, $ t \in S^1 \cong \mathbb{R} / 2\pi \mathbb{Z} $ on $ \overline{G} \setminus \{ 0 \} $ such that for $ z \in \overline{G} \setminus \{ 0 \} $ we have $ z = (r-\rho)t $, and look at $ v_1 = v_1(\rho,t) $, $ v_2 = v_2(\rho,t) $. Define vector fields $ X_{u,0}(\gamma(t)) = \frac{\partial}{\partial \rho} \psi_1^u \circ v_1(0,t) $, $ X_{u,1}(\gamma(t)) = \frac{\partial}{\partial \rho} v_1(0,t) = \frac{\partial}{\partial \rho} v_2(0,t) $ and $ Y(\gamma(t)) = \frac{\partial}{\partial t} v_1(0,t) = \frac{\partial}{\partial t} v_2(0,t) $, so that $ X_{u,0},X_{u,1},Y $ are vector fields along the curve $ \gamma $. Since $ \psi_1^u \circ v_1,v_2 $ are symplectic, we get that $ \omega_{\st}(Y,X_{u,0}) = \omega_{\st}(Y,X_{u,1}) = 1 $ at each point $ \gamma(t) $ for $ t \in S^1 $. If we define $ X_{u,s} = (1-s) X_{u,0} + s X_{u,1} $ for $ s \in [0,1] $, then $ \omega_{\st}(Y,X_{u,s}) = 1 $ at each point $ \gamma(t) $ for $ t \in S^1 $, and moreover $ X_{0,s} = X_{1,s} = X_{0,0} $ for every $ s \in [0,1] $. Hence it is not hard to see that for some neighbourhood $ G' $ of $ \partial G $, $ G' \subset \overline{G} $, such that $ G' $ is a topological annulus bounded by a smooth simple curve in $ G $ and $ \partial G $, there exists a smooth family of smooth symplectic embeddings $ \hat{v}_{u,s} : \overline{G'} \rightarrow W $, $ \hat{v}_{u,s}(z) = \hat{v}(u,s,z) $, $ \hat{v}_{u,s}^*\omega_{\st} = \omega_{\st} $, $ u, s \in [0,1] $, such that $ \hat{v}_{u,s}(0,t) = \gamma(t) $ for all $ t \in S^1 $, $ u,s \in [0,1] $, such that $ \hat{v}_{0,s} = \hat{v}_{1,s} = v_1 = v_2 $ on $ \overline{G'} $ for each $ s \in [0,1] $, and such that $ \frac{\partial}{\partial \rho} \hat{v}_{u,s}(0,t) = X_{u,s}(\gamma(t)) $ for each $ t \in S^1 $. Now applying lemma~\ref{L:from-isotopy-to-flow-annuli}, we find a smooth family of compactly supported Hamiltonian functions $ H_u : W \times [0,1] \rightarrow \mathbb{R} $, $ H_u = H_u(x,s) $, $ u \in [0,1] $ with Hamiltonian flows $ \psi_{H_u}^s $, $ s \in [0,1] $, such that we have $ \psi_{H_u}^s ( \hat{v}(u,0,z)) = \hat{v}(u,s,z) $, and moreover the following holds: if for some $ u_0,s_0 \in [0,1] $ we have $ \frac{\partial}{\partial s} \hat{v}(u_0,s_0,z) = 0 $ for any $ z \in \overline{V'} $, then $ H_{u_0}(x,s_0) = 0 $ for every $ x \in W $. If we now define the Hamiltonian flow $ \psi_2^u : W \rightarrow W $ by $ \psi_2^u = \psi_{H_u}^1 \circ \psi_1^u $, then we get that $ \psi_2^1 \circ v_1 = v_2 $ on $ \overline{G} $, and $ \psi_2^u (v_1(z)) = v_1(z) = v_2(z) $ for every $ u \in [0,1] $ and $ z \in \overline{G'} $. Let $ H_2 : W \times [0,1] \rightarrow \mathbb{R} $, $ H_2 = H_2(x,u) $ be the compactly supported Hamiltonian function of the flow $ \psi_2^u $. Since the points of $ v_1(\overline{G'}) $ stay fixed under the flow $ \psi_2^u $, the differential of $ H_2(\cdot,u) $ vanishes at each point of $ v_1(\overline{G'}) $, in particular $ H_2(\cdot,u) $ is constant on $ v_1(\overline{G'}) $, for each $ u \in [0,1] $. After adding to $ H_2 $ a function depending solely on time, and making a cutoff outside $ \cup_{t \in [0,1]} \psi_2^t(v_1(\overline{G})) $, if necessary, we may without loss of generality moreover assume that $ H_2(\cdot,u) = 0 $ on $ v_1(\overline{G'}) $, for each $ u \in [0,1] $. Finally, if we make a cutoff of the Hamiltonian function $ H_2 $, multiplying it by a function on $ W $ which equals $ 1 $ outside a small neighbourhood of $ u_1(\partial G) $ and equals $ 0 $ on a smaller neighbourhood of $ u_1(\partial G) $, then we obtain a Hamiltonian function $ H : W \times [0,1] \rightarrow \mathbb{R} $, $ H = H(x,u) $ with a Hamiltonian flow $ \psi^u $, $ u \in [0,1] $, such that on a neighbourhood of $ v_1(\partial G) $ we have $ H(\cdot,u) = 0 $ for every $ u \in [0,1] $, and such that for the time-1 map $ \psi^1 $ of the flow of $ H $, we have $ \psi^1 \circ v_1 = v_2 $ on $ \overline{G} $. \cqfd

\section{Relations between questions \ldots}\label{sec:relquest}

\begin{lemma} \label{L:sympl-homogen-generic}
Every smooth submanifold of a symplectic manifold has a dense relatively open subset which is a union of symplectically homogeneous relatively open subsets. 
\end{lemma}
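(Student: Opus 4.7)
The plan is to encode presymplectic-ness as the level sets of a single integer-valued function on $X$ and then exploit lower semi-continuity. For each $x\in X$, define the rank
\[
r(x) \;:=\; \dim T_xX - \dim\ker\bigl(\omega|_{T_xX}\bigr).
\]
A relatively open subset $U\subset X$ is presymplectic (i.e.\ symplectically homogeneous in the sense of the definition preceding the lemma) if and only if $r$ is constant on $U$, since the symplectic isomorphism type of a skew bilinear form on a fixed-dimensional vector space is determined by its rank. Thus I only need to produce a dense relatively open subset of $X$ on which $r$ is locally constant; the union of the connected components (or, more plainly, of all the relatively open level sets) of $r$ on that subset will then give the desired decomposition.

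Next I would verify that $r$ is lower semi-continuous on $X$. The point is that $r(x)\geqslant 2k$ exactly when $(\omega|_{T_xX})^k\neq 0$ as an element of $\Lambda^{2k}(T_xX)^*$, and in any local smooth frame of $TX$ the non-vanishing of this $2k$-form is a local open condition, given by the non-vanishing of some $2k\times 2k$ Pfaffian/minor of the (smooth) matrix of $\omega|_{TX}$. Equivalently, each sublevel set $\{x : r(x)\leqslant 2k\}$ is relatively closed in $X$. Note that $r$ takes only finitely many values, since it is bounded by $\dim X$ and, being the rank of a skew form, is always even.

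To conclude, I would argue density by a one-shot maximum argument on arbitrary open subsets. Let $U\subset X$ be any non-empty relatively open subset. Because $r$ takes only finitely many values on $U$, the quantity $K:=\max\{r(x):x\in U\}$ is attained. The set
\[
V \;:=\; \{x\in U : r(x)=K\}
\]
is non-empty by the choice of $K$, and is relatively open in $U$ by the lower semi-continuity of $r$ combined with the global bound $r\leqslant K$ on $U$. On $V$, the rank is constantly equal to $K$, so $V$ is a presymplectic relatively open subset of $X$ contained in $U$. Taking the union $W$ of all presymplectic relatively open subsets of $X$, one gets a relatively open subset of $X$ which is a union of presymplectic pieces and which meets every non-empty relatively open $U\subset X$, hence is dense.

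I do not foresee a genuine obstacle; the whole content is the standard fact that the rank of a smoothly varying family of skew bilinear forms is lower semi-continuous and integer-valued. The only subtlety worth stating explicitly is the attainment of the maximum $K$ (rather than only a supremum), which is guaranteed by the finiteness of the range of $r$ on $X$.
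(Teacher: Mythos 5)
Your proof is correct and follows essentially the same route as the paper's: both pick, inside an arbitrary non-empty relatively open $U\subset X$, the locus where the rank of $\omega|_{T_xX}$ is maximal (equivalently, the kernel dimension minimal), and use semi-continuity of that integer-valued invariant to see that this locus is relatively open and presymplectic. The only cosmetic difference is that you phrase it via lower semi-continuity of the rank rather than upper semi-continuity of the kernel dimension, and you spell out the Pfaffian/minor justification and the final union-and-density step, which the paper leaves implicit.
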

\noindent{\it Proof:}
Let $ X \subset M $ be a smooth submanifold of a symplectic manifold $ (M,\omega) $. It is enough to show that for every relatively open $ U \subset X $ there exists relatively open symplectically homogeneous subset $ V \subset U $. To show the latter, choose a point $ x_0 \in U $ such that the dimension of the kernel of $ \omega $ in $ T_{x_0} X $ is minimal. Since for $ x \in U $, the dimension of the kernel of $ \omega $ in $ T_x X $ is an upper semi-continuous function of $ x $, it is enough to take $ V $ to be a small relatively open neighbourhood of $ x_0 $ in $ U $. 
\cqfd

\begin{lemma} \label{L:quesA-implies-quesB}
We have an affirmative answer to question \ref{q:Eli-Grom-submanifolds} for given $ X \subset M $ provided that for each relatively open
$ Y \subset X \times S^2 $ we have a positive answer to question \ref{q:c0rigid'}  for $ Y $ inside $ M \times S^2 $. In particular, the same is true for question \ref{q:c0rigid} instead of question \ref{q:c0rigid'}.
\end{lemma}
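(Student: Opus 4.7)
The plan is to stabilise $X$ by a factor of $S^2$, and then to feed the stabilised pair into the hypothesis about question~\ref{q:c0rigid'}. Let $h:M\to M'$ be a symplectic homeomorphism whose restriction $h_{|X}:X\to X':=h(X)$ is a diffeomorphism onto a smooth submanifold. First I would verify that the product map $H:=h\times \id_{S^2}:M\times S^2\to M'\times S^2$ is a symplectic homeomorphism of the product symplectic manifolds: if $h_i:U_i\to M'$ is a sequence of symplectic embeddings $\cc^0$-converging to $h$, then $h_i\times \id_{S^2}:U_i\times S^2\to M'\times S^2$ is a sequence of symplectic embeddings $\cc^0$-converging to $H$ on compact subsets, so $H\in \sympeo(M\times S^2,M'\times S^2)$. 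Moreover $H$ restricts to a diffeomorphism $X\times S^2\to X'\times S^2$.

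Next, I would record the elementary but crucial algebraic observation that, with respect to the restricted product symplectic forms $\om_{|X}\oplus\om_{S^2}$ and $\om'_{|X'}\oplus\om_{S^2}$ on $X\times S^2$ and $X'\times S^2$, the restriction $H_{|X\times S^2}=h_{|X}\times \id_{S^2}$ is a symplectomorphism if and only if $h_{|X}$ is a symplectomorphism. Indeed the pointwise pullback computes to
\[
(dh_{|X}\oplus \id)^*(\om'_{|X'}\oplus \om_{S^2})=(dh_{|X})^*\om'_{|X'}\oplus \om_{S^2},
\]
which equals $\om_{|X}\oplus \om_{S^2}$ exactly when $(dh_{|X})^*\om'_{|X'}=\om_{|X}$. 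It therefore suffices to show that $H_{|X\times S^2}$ is symplectic, which is exactly the conclusion of question~\ref{q:Eli-Grom-submanifolds} applied to the submanifold $X\times S^2\subset M\times S^2$, $X'\times S^2\subset M'\times S^2$.

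I would then apply the hypothesis systematically at every point $(x_0,p_0)\in X\times S^2$, working with relatively open neighbourhoods $Y\subset X\times S^2$ of $(x_0,p_0)$. For each such $Y$, the hypothesis, applied to the symplectic homeomorphism $H$ on $M\times S^2$, provides a symplectomorphism $f_Y:Y\to H(Y)$ between $Y$ and $H(Y)$ equipped with the restricted ambient product forms. The task is then to bootstrap from the mere existence of each $f_Y$ to the assertion that the canonical restriction $H_{|Y}$ is itself symplectic. The $S^2$ factor is essential here because the known factor $\om_{S^2}$ in the product form can be used to calibrate the unknown contribution coming from $(h_{|X})^*\om'_{|X'}$: by shrinking $Y$ about $(x_0,p_0)$ and comparing Darboux normal forms of the symplectomorphic pairs $(Y,f_Y(Y))$, together with the $\cc^0$-control provided by the approximating symplectic embeddings $h_i\times \id_{S^2}$, one recovers pointwise that $dH^*_{(x_0,p_0)}(\om'_{|X'}\oplus\om_{S^2})=\om_{|X}\oplus\om_{S^2}$ on $T_{(x_0,p_0)}(X\times S^2)$, and hence that $h_{|X}$ is symplectic at $x_0$.

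The main obstacle is exactly this last bootstrapping step: the hypothesis only provides \emph{some} symplectomorphism $f_Y$, not information about $H_{|Y}$ itself, and one must use the smoothness of $h_{|X}$ together with the product structure of $H$ to transfer symplectic rigidity from $f_Y$ to $H_{|Y}$ in the limit $Y\to \{(x_0,p_0)\}$. For the final clause of the lemma, the implication with question~\ref{q:c0rigid} in place of question~\ref{q:c0rigid'} is immediate, since an affirmative answer to the ambient version~\ref{q:c0rigid} produces a global symplectic diffeomorphism of $M\times S^2$ taking $Y$ onto $H(Y)$, which restricts to an intrinsic symplectic diffeomorphism between $Y$ and $H(Y)$, thereby also answering question~\ref{q:c0rigid'} affirmatively.
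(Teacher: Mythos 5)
Your setup is sound and matches the paper's: you correctly pass to the stabilised homeomorphism $H=h\times\id_{S^2}$, reduce the assertion to showing $H_{|X\times S^2}$ is symplectic, and observe at the end that \ref{q:c0rigid} trivially implies \ref{q:c0rigid'}. However, the heart of the proof --- which you yourself flag as ``the main obstacle'' --- is left as a vague gesture (``comparing Darboux normal forms... together with the $\cc^0$-control provided by the approximating symplectic embeddings''), and the route you sketch would not work. Knowing that $Y$ and $H(Y)$ are symplectomorphic via \emph{some} diffeomorphism $f_Y$ gives no direct handle on the differential of $H_{|Y}$; there is no ``Darboux normal form'' to compare when $X$ is presymplectic but not symplectic; and the $\cc^0$-control from the approximating sequence $h_i\times\id$ plays no role in this lemma at all --- what is used is only the existence of $f_Y$ and the smoothness of $h_{|X}$.

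The paper's actual argument fills this gap as follows: first reduce, via Lemma~\ref{L:sympl-homogen-generic}, to the case where $X$ is symplectically homogeneous (a dense-open exhaustion suffices). In a local model, $X$ is identified with a linear presymplectic subspace $L$, and $A:=d_{x_0}(h_{|X})$ is a linear map $L\to L$. Taking $Y=\epsilon K$ for a strictly convex $K\subset L$, any symplectomorphism $f_Y:Y\to Y'$ must respect the characteristic foliations (whose intersections with $Y$, $Y'$ are connected for small $\epsilon$), and hence splits as $f_Y(z,t)=(\phi(z),\psi(z,t))$ with $\phi$ a symplectomorphism between the reductions $\pi_z(Y)$ and $\pi_z(Y')$. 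Picking any symplectic capacity $c$, one has $c(\pi_z(Y))=c(\pi_z(Y'))$; letting $\epsilon\to 0$ and using that $h_{|X}$ is differentiable at $x_0$ gives $c(\pi_z(K))=c(\pi_z(A(K)))$ for all convex $K$, which pins down $A$ as symplectic or antisymplectic. This scaling/capacity argument is the indispensable content you are missing. Finally, note you have also misattributed the role of the $S^2$ factor: it is not used to ``calibrate'' anything in the capacity comparison (that works entirely inside $M$); its sole purpose is to upgrade ``symplectic or antisymplectic'' to ``symplectic'', since $\hat h$ acts as the identity on the $S^2$ factor, leaving no room for the sign $-1$.
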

\noindent{\it Proof:}
First of all, it is enough to show that for given $ X \subset M $, if for every relatively open $ Y \subset X $ we have a positive answer to question \ref{q:c0rigid'} for $ Y $ inside $ M $, then for any symplectic homeomorphism $ h : M \rightarrow M' $ such that $ X' = h(X) \subset M' $ is smooth submanifold and such that the restriction $ h_{|X} : X \rightarrow X' $ is a diffeomorphism, the restriction is either symplectic or antisymplectic. Indeed, assume that we have shown this, and let $ X \subset M $ satisfy the assumptions of the lemma. Let $ h : M \rightarrow M' $ be a symplectic homeomorphism, such that $ h(X) = X' \subset M' $ is a smooth submanifold and such that $ h_{|X} : X \rightarrow X' $ is a diffeomorphism. Define the symplectic homeomorphism \fonction{\hat{h}}{ M \times S^2 }{M' \times S^2}{(x,z)}{(h(x),z).} Since the assumptions of the lemma are satisfied, $ \hat{h}(X \times S^2) = X' \times S^2 \subset M' \times S^2 $ is a smooth submanifold, and the restriction $ \hat{h}_{|X \times S^2} : X \times S^2 \rightarrow X' \times S^2 $ is a diffeomorphism, we conclude that the restriction is either symplectic or antisymplectic. However, since $ \hat{h}_{|X \times S^2} $ acts as the identity on the $ S^2 $-factor, the only possibility is that $ \hat{h}_{|X \times S^2} $ is symplectic. Therefore $ h_{|X} : X \rightarrow X' $ is symplectic.

By lemma~\ref{L:sympl-homogen-generic}, $ X $ has a dense open subset which is a union of symplectically homogeneous relatively open subsets. Since it is enough to show that $ h $ is either symplectic or antisymplectic on this dense open subset, we may assume without loss of generality that $ X $ is itself a symplectically homogeneous submanifold. By our assumptions, $ X $ and $ X' $ are symplectomorphic, hence $ X' $ is also symplectically homogeneous, having the same dimension and symplectic co-rank as $ X $. Now, for given $ x_0 \in X $ and for $ x_0' = h(x_0) \in X' $ we need to show that the differential of $ h_{|X} : X \rightarrow X' $ at $ x_0 $ is either symplectic or antisymplectic. There is a symplectic embedding $ i : B \rightarrow M $ of a small ball $ B \subset \mathbb{R}^{2n} $ centred at the origin, and a symplectic embedding $ i' : B' \rightarrow M $ of a small ball $ B' \subset \mathbb{R}^{2n} $ centred at the origin, such that $ i(0) = x_0 $, $ i'(0) = x_0' $, and such that $ i^{-1}(X) = B \cap L $ and $ i'^{-1}(X') = B' \cap L $, where $ L =  \{ (x_1,y_1,\ldots,x_n,y_n) \; | \; x_{m+1} = \ldots = x_{n} = y_{m+r+1} = \ldots = y_n = 0 \} \subset \mathbb{R}^{2n} $. Let us identify $ B $ with $ i(B) $ via $ i $ and identify $ B' $ and $ i'(B') $ via $ i' $. Denote the differential of $ h : X \rightarrow X' $ at $ x_0 $ by $ A $, and we may think of $ A $ as a linear map $ A : L \rightarrow L $. Now fix some smooth strictly convex bounded open set $ K \subset L $. Take $ \epsilon > 0 $ small enough, and consider $ Y = \epsilon K \subset X $. By our assumption, $ Y $ is symplectomorphic to $ Y' = h(Y) \subset X' $, i.e. there exists a symplectic diffeomorphism $ f : Y \rightarrow Y' $. Moreover, $ Y $ is strictly convex, and since $ \epsilon $ is small and $ h_{|X} : X \rightarrow X' $ is a diffeomorphsim, it follows that the intersection of each of the leaves of the characteristic foliation of $ X $ with $ Y $ is connected, and that the intersection of each of the leaves of the characteristic foliation of $ X' $ with $ Y' $ is connected. Hence it follows that $ f $ on $ Y $ has the form $ f(z,t) = (\phi(z),\psi(z,t)) $, where $ z = (x_1,y_1,\ldots,x_m,y_m) $, $ t = (y_{m+1},\ldots,y_{m+r}) $, and $ \phi $ is a symplectomorphism from $ \pi_z (Y) $ onto $ \pi_z(Y') $, where $ \pi_z : L \rightarrow \mathbb{R}^{2m} \times \{ 0_{r} \} $ is the orthogonal projection. In particular, $ \pi_z(Y) $ and $ \pi_z(Y') $ are symplectomorphic. If we choose any symplectic capacity $ c $, it follows that 
$ c(\pi_z(Y)) = c(\pi_z(Y')) $. But we have $ \lim_{\epsilon \rightarrow 0} \frac{c(\pi_z(Y))}{\epsilon^2} = c(\pi_z(K)) $ and $ \lim_{\epsilon \rightarrow 0} \frac{c(\pi_z(Y'))}{\epsilon^2} = c(\pi_z(A(K))) $. Hence it follows that $ c(\pi_z(K)) = c(\pi_z(A(K))) $ for any strictly convex bounded open body with smooth boundary $ K \subset L $. By continuity, we conclude that $ c(\pi_z(K)) = c(\pi_z(A(K))) $ for any convex bounded open body $ K \subset L $. From here it is easy to show that $ A : L \rightarrow L $ is either a symplectic or an antisymplectic linear map, and we leave this to the reader.\cqfd

\begin{lemma} \label{L:quesD'-implies-quesB}
For given $ X \subset M $, a positive answer to question \ref{q:subsympeo'} implies positive answer to question \ref{q:Eli-Grom-submanifolds}. In particular, for given $ X \subset M $, a positive answer to question \ref{q:subsympeo} implies positive answer to question \ref{q:Eli-Grom-submanifolds}.
\end{lemma}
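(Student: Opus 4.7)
The plan is to adapt the capacity-based argument of Lemma~\ref{L:quesA-implies-quesB} to the setting of question~\ref{q:subsympeo'}, replacing the single symplectomorphism provided by question~\ref{q:c0rigid'} with the approximating sequence $(f_k)_{k\geqslant 1}$ of symplectic diffeomorphisms given by question~\ref{q:subsympeo'}. A first reduction, by the same stabilisation trick used in the proof of Lemma~\ref{L:quesA-implies-quesB} (consider $\hat h := h\times \mathrm{id}_{S^2}$ on $X\times S^2\subset M\times S^2$, approximated by the sequence $\hat f_k := f_k\times \mathrm{id}_{S^2}$), reduces the task to showing that at every point $x_0\in X$ the differential $A := d(h_{|X})_{x_0}$ is either symplectic or antisymplectic; the identity factor on the $S^2$-component then rules out the antisymplectic alternative. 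By Lemma~\ref{L:sympl-homogen-generic} together with continuity of $h_{|X}^*\omega'_{|X'}-\omega_{|X}$ in $x_0$, it suffices to treat points $x_0$ lying in a presymplectically homogeneous relatively open subset of $X$.

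The main work will be carried out in Darboux charts $i:B\to M$ and $i':B'\to M'$ adapted to $X$ and $X'$ respectively (as in the proof of Lemma~\ref{L:quesA-implies-quesB}), sending $x_0$ and $x_0'=h(x_0)$ to the origin and identifying $X$, $X'$ with $B\cap L$, $B'\cap L$, where $L$ is the standard presymplectic subspace of $\R^{2n}$. Write $\pi_z:L\to\R^{2m}$ for the projection onto the symplectic factor, set $f:=h_{|X}$, and fix a strictly convex bounded open body $K\subset L$ with smooth boundary. For small $\eps>0$, set $Y_\eps:=\eps K$. Since $Y_\eps$ is convex and the characteristic leaves of $X$ through $Y_\eps$ are straight line segments in the chart, $Y_\eps$ meets each leaf in a connected set; the same then holds for $f_k(Y_\eps)$ because $f_k$ is symplectic and hence takes characteristic leaves to characteristic leaves. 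Consequently $f_k$ descends to a symplectomorphism $\pi_z(Y_\eps)\to\pi_z(f_k(Y_\eps))$, yielding $c(\pi_z(f_k(Y_\eps)))=c(\pi_z(Y_\eps))$ for any symplectic capacity $c$.

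The passage $k\to\infty$ will be handled by $\cc^0$-bracketing: since $(f_k)\to f$ and $(f_k^{-1})\to f^{-1}$ uniformly on compact sets (cf.~Proposition~\ref{prop:defsympeo}), for every $\delta>0$ and $k$ sufficiently large one has $f_k((1-\delta)Y_\eps)\subset f(Y_\eps)\subset f_k((1+\delta)Y_\eps)$, so monotonicity of $c$ combined with the descent computation above gives $c(\pi_z((1-\delta)Y_\eps))\leqslant c(\pi_z(f(Y_\eps)))\leqslant c(\pi_z((1+\delta)Y_\eps))$; sending $\delta\to 0$ yields $c(\pi_z(f(Y_\eps)))=c(\pi_z(Y_\eps))=\eps^2 c(\pi_z(K))$. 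Applying the Taylor expansion $f(\eps K)=\eps A(K)+O(\eps^2)$ at the origin and bracketing $\pi_z(f(Y_\eps))$ between $\eps\pi_z((1-\eta_\eps)A(K))$ and $\eps\pi_z((1+\eta_\eps)A(K))$ for some $\eta_\eps=O(\eps)$, dividing by $\eps^2$ and letting $\eps\to 0$ gives $c(\pi_z(A(K)))=c(\pi_z(K))$ for every convex body $K\subset L$. From here, the derivation that $A$ is symplectic or antisymplectic is identical to the corresponding step in the proof of Lemma~\ref{L:quesA-implies-quesB}, and together with the stabilisation above this establishes the lemma.

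The main subtlety I expect to handle carefully is the second bracketing step (controlling the $O(\eps^2)$ Taylor remainder so that it is absorbed by an $O(\eps)$ dilation of $A(K)$, modulo capacity scaling), and also verifying that the characteristic-foliation argument giving the symplectic descent for $f_k$ is uniform as $\eps\to 0$; the $\cc^0$-passage to the limit in $k$ is then routine monotonicity.
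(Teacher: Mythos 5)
Your proposal is correct, but it takes a genuinely different (and considerably longer) route than the paper. You adapt the capacity machinery of Lemma~\ref{L:quesA-implies-quesB}: stabilise by $S^2$ to rule out the antisymplectic alternative, reduce to the differential $A$ at each point, use the adapted-chart projection $\pi_z$ together with a $\cc^0$-bracketing of $f(Y_\eps)$ between $f_k((1\mp\delta)Y_\eps)$ to show $c(\pi_z(A(K)))=c(\pi_z(K))$ for every convex body $K$, and conclude $A$ is symplectic. All of this works (the bracketing and the Taylor-remainder absorption that you flag as subtleties are indeed fine, and the characteristic-foliation connectedness argument you import from Lemma~\ref{L:quesA-implies-quesB} is actually superfluous here, since the triangular form of $f_k$ in the adapted chart is automatic). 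The paper, by contrast, observes that in adapted Darboux coordinates each symplectic diffeomorphism $f_k$ of a presymplectic submanifold automatically has the triangular shape $f_k(z,t)=(\phi_k(z),\psi_k(z,t))$ with $\phi_k$ a symplectic embedding of the transverse symplectic factor; since $f_k\to f$ in $\cc^0$, $f$ inherits the same form with $\phi_k\to\phi$, and the classical Eliashberg--Gromov theorem applied to the sequence $\phi_k$ immediately gives that $\phi$ --- and hence $f$ --- is symplectic, with no stabilisation or bracketing needed. In short, the paper shows the lemma is a direct corollary of Eliashberg--Gromov, while you essentially re-derive the relevant Eliashberg--Gromov rigidity from scratch via capacities; both are valid, but the paper's reduction is more economical and makes the structural point (this is nothing but a relative Eliashberg--Gromov statement) transparent.
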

\noindent{\it Proof:}
It is clearly enough to show the following generalisation of the Eliashberg-Gromov theorem: if $ X \subset M $, $ X' \subset M' $ are smooth submanifolds, and $ f_k : X \rightarrow X' $, $ k = 1,2, \ldots $ is a sequence of symplectic diffeomorphisms, that $ \mathcal{C}^0 $-converge to a diffeomorphism $ f : X \rightarrow X' $, then $ f $ is symplectic. Let us show this. By lemma~\ref{L:sympl-homogen-generic}, $ X $ and $ X' $ have dense subsets, each of which is a union of symplectically homogeneous relatively open subsets. Hence it is enough to show that if $ U \subset X $, $ U'  \subset X' $ are relatively open connected symplectically homogeneous subsets, such that $ f(U) \Subset U' $, then the restriction $ f_{|U} : U \rightarrow U' $ is a symplectic embedding. We have $ f_k(U) \Subset U' $ for large $ k $, hence $ U $ and $ U' $ has the same symplectic co-rank. It is enough to show that for given $ x_0 \in U $, $ f $ is symplectic near $ x_0 $. Denote $ x_0'= f(x_0) $. There is a symplectic embedding $ i : B \rightarrow M $ of a small ball $ B \subset \mathbb{R}^{2n} $ centred at the origin, and a symplectic embedding $ i' : B' \rightarrow M $ of a small ball $ B' \subset \mathbb{R}^{2n} $ centred at the origin, such that $ i(0) = x_0 $, $ i'(0) = x_0' $, and such that $ i^{-1}(U) = B \cap L $ and $ i'^{-1}(U') = B' \cap L $, where $ L =  \{ (x_1,y_1,\ldots,x_n,y_n) \; | \; x_{m+1} = \ldots = x_{n} = y_{m+r+1} = \ldots = y_n = 0 \} \subset \mathbb{R}^{2n} $. Let us identify $ B $ with $ i(B) $ via $ i $ and identify $ B' $ and $ i'(B') $ via $ i' $. Since $ f_k $ are symplectic, on some small neighbourhood $ 0 \in V \subset L $ they have the form $ f_k(z,t) = (\phi_k(z),\psi_k(z,t)) $, where $ z = (x_1,y_1,\ldots,x_m,y_m) $, $ t = (y_{m+1},\ldots,y_{m+r}) $, and $ \phi_k $ are symplectic embeddings from $ \pi_z (V) $ into $ \mathbb{R}^{2m} $, where $ \pi_z : L \rightarrow \mathbb{R}^{2m} \times \{ 0_{r} \} $ is the orthogonal projection. Since the sequence $ f_k $ $ \mathcal{C}^0 $-converges to $ f $, it follows that $ f $ on $ V $ also has the form $ f(z,t) = (\phi(z),\psi(z,t)) $, where the sequence of embeddings $ \phi_k $ $ \mathcal{C}^0 $ converges to $ \phi $. Now the statement follows from the proof of the Eliashberg-Gromov theorem.\cqfd

{\footnotesize
\bibliographystyle{alpha}
\bibliography{biblio.bib}

\begin{thebibliography}{McD91}

\bibitem[Che98]{chekanov}
Yu.~V. Chekanov.
\newblock Lagrangian intersections, symplectic energy, and areas of holomorphic
  curves.
\newblock {\em Duke Math. J.}, 95(1):213--226, 1998.

\bibitem[EG91]{elgr}
Y.~Eliashberg and M.~Gromov.
\newblock Convex symplectic manifolds.
\newblock In {\em Several complex variables and complex geometry, {P}art 2
  ({S}anta {C}ruz, {CA}, 1989)}, volume~52 of {\em Proc. Sympos. Pure Math.},
  pages 135--162. Amer. Math. Soc., Providence, RI, 1991.

\bibitem[EM02]{elmi}
Y.~Eliashberg and N.~Mishachev.
\newblock {\em Introduction to the {$h$}-principle}, volume~48 of {\em Graduate
  Studies in Mathematics}.
\newblock American Mathematical Society, Providence, RI, 2002.

\bibitem[GG04]{gingur}
V.~L. Ginzburg and B.~Z. G\"urel.
\newblock Relative hofer-zehnder capacity and periodic orbits in twisted
  cotangent bundles.
\newblock {\em Duke Math. J.}, 123(1):1--47, 2004.

\bibitem[Gro85]{gromov}
M.~Gromov.
\newblock Pseudoholomorphic curves in symplectic manifolds.
\newblock {\em Invent. Math.}, 82(2):307--347, 1985.

\bibitem[Gro86]{gromov2}
M.~Gromov.
\newblock {\em Partial differential relations}, volume~9 of {\em Ergebnisse der
  Mathematik und ihrer Grenzgebiete (3) [Results in Mathematics and Related
  Areas (3)]}.
\newblock Springer-Verlag, Berlin, 1986.

\bibitem[HLS15]{hulese}
V.~Humili\`ere, R.~Leclercq, and S.~Seyfaddini.
\newblock Coisotropic rigidity and $\cc^0$-symplectic geometry.
\newblock {\em Duke Math. J.}, 164(4):767--799, 2015.

\bibitem[HS70]{hssh}
W.~Hsiang and J.-L. Shaneson.
\newblock Fake tori.
\newblock In {\em Topology of {M}anifolds ({P}roc. {I}nst., {U}niv. of
  {G}eorgia, {A}thens, {G}a., 1969)}, pages 18--51. Markham, Chicago, Ill.,
  1970.

\bibitem[HZ94]{hoze}
H.~Hofer and E.~Zehnder.
\newblock {\em Symplectic invariants and {H}amiltonian dynamics}.
\newblock Birkh\"auser Advanced Texts: Basler Lehrb\"ucher. 1994.

\bibitem[LS94]{lasi}
F.~Laudenbach and J.-C. Sikorav.
\newblock Hamiltonian disjunction and limits of {L}agrangian submanifolds.
\newblock {\em Internat. Math. Res. Notices}, (4):161 ff., approx.\ 8 pp.\
  (electronic), 1994.

\bibitem[Lu98]{lu}
Guangcun Lu.
\newblock The {W}einstein conjecture on some symplectic manifolds containing
  the holomorphic spheres.
\newblock {\em Kyushu J. Math}, 52(2):331--351, 1998.

\bibitem[Mac04]{macarini}
L.~Macarini.
\newblock Hofer-{Z}ehnder capacity and {H}amiltonian circle actions.
\newblock {\em Commun. Contemp. Math.}, 6(6):913--945, 2004.

\bibitem[McD91]{mcduff}
D.~McDuff.
\newblock Blow ups and symplectic embeddings in dimension {$4$}.
\newblock {\em Topology}, 30(3):409--421, 1991.

\bibitem[OM07]{muoh}
Y.-G. Oh and S.~M{\"u}ller.
\newblock The group of {H}amiltonian homeomorphisms and {$C^0$}-symplectic
  topology.
\newblock {\em J. Symplectic Geom.}, 5(2):167--219, 2007.

\bibitem[Ops09]{opshtein}
E.~Opshtein.
\newblock {$\cc^0$}-rigidity of characteristics in symplectic geometry.
\newblock {\em Ann. Sci. \'Ec. Norm. Sup\'er. (4)}, 42(5):857--864, 2009.

\bibitem[Sch00]{schwarz}
M.~Schwarz.
\newblock On the action spectrum for closed symplectically aspherical
  manifolds.
\newblock {\em Pacific J. Math.}, 193(2):419--461, 2000.

\bibitem[Sch06]{schlenk}
F.~Schlenk.
\newblock Applications of {H}ofer's geometry to {H}amiltonian dynamics.
\newblock {\em Comment. Math. Helv.}, 81(1):105--121, 2006.

\bibitem[Ush10]{usher}
Michael Usher.
\newblock The sharp energy-capacity inequality.
\newblock {\em Commun. Contemp. Math.}, 12(3):457--473, 2010.

\end{thebibliography}
}

%\begin{thebibliography}{999999}
%\end{thebibliography}

\bigskip
\noindent Lev Buhovski\\
School of Mathematical Sciences, Tel Aviv University \\
{\it e-mail}: levbuh@post.tau.ac.il
\bigskip

\bigskip
\noindent Emmanuel Opshtein\\
Institut de Recherche Math\'{e}matique Avanc\'{e}e \\
UMR  7501, Universit\'{e} de Strasbourg et CNRS \\
{\it e-mail}: opshtein@math.unistra.fr
\bigskip

\end{document}